%%%%%%%%%%%%%%%%%%%%%%% file template.tex %%%%%%%%%%%%%%%%%%%%%%%%%
%
% This is a general template file for the LaTeX package SVJour3
% for Springer journals.          Springer Heidelberg 2010/09/16
%
% Copy it to a new file with a new name and use it as the basis
% for your article. Delete % signs as needed.
%
% This template includes a few options for different layouts and
% content for various journals. Please consult a previous issue of
% your journal as needed.
%
%%%%%%%%%%%%%%%%%%%%%%%%%%%%%%%%%%%%%%%%%%%%%%%%%%%%%%%%%%%%%%%%%%%
%
%
\RequirePackage{fix-cm}
\documentclass[smallextended]{svjour3}       % onecolumn (second format)
\smartqed  % flush right qed marks, e.g. at end of proof
\usepackage{amsmath, amssymb, bm, mathrsfs}
\usepackage{cite, color, graphicx,dsfont,accents}
\usepackage[colorlinks=true,linkcolor=blue,backref=page]{hyperref}
\renewcommand*{\backref}[1]{}
\makeatletter
\def\widebar{\accentset{{\cc@style\underline{\mskip10mu}}}}
\makeatother
\newcommand{\cl}{\mathop{\rm cl}\nolimits}

%\renewcommand{\thesection}{\arabic{section}.}

%%% remove the predefined environments

%%% redefine the theorem environments
\spnewtheorem{theorem}{Theorem}[section]{\bfseries}{\itshape}

\spnewtheorem{lemma}[theorem]{Lemma}{\bfseries}{\itshape}
\spnewtheorem{corollary}[theorem]{Corollary}{\bfseries}{\itshape}
\spnewtheorem{problem}[theorem]{Problem}{\bfseries}{\itshape}
\spnewtheorem{definition}[theorem]{Definition}{\bfseries}{\itshape}

\spnewtheorem{remark}[theorem]{Remark}{\bfseries}{\upshape}
\spnewtheorem{assumption}[theorem]{Assumption}{\bfseries}{\itshape}

\newcommand{\re}{\mathop{\rm Re}\nolimits}

\newcommand{\dom}{\mathop{\rm dom}}

 \makeatletter

\@addtoreset{equation}{section}
\makeatother

%
% \usepackage{mathptmx}      % use Times fonts if available on your TeX system
%
% insert here the call for the packages your document requires
%\usepackage{latexsym}
% etc.
%
% please place your own definitions here and don't use \def but
% \newcommand{}{}
%
% Insert the name of "your journal" with
% \journalname{myjournal}
%
\begin{document}

\title{Sampled-data Output Regulation of Unstable Well-posed Infinite-dimensional Systems
	with Constant Reference and Disturbance Signals
	\thanks{This work was supported by JSPS KAKENHI Grant Numbers JP17K14699.}
}
%\subtitle{Do you have a subtitle?\\ If so, write it here}

\titlerunning{Sampled-data Output Regulation of Unstable Well-posed Systems}        % if too long for running head

\author{Masashi Wakaiki        \and
        Hideki Sano %etc.
}

%\authorrunning{Short form of author list} % if too long for running head

\institute{M.~Wakaiki \at
              Graduate School of System Informatics, Kobe University, Nada, Kobe, Hyogo 657-8501, Japan \\
              Tel.: +8178-803-6232 \\
              Fax: +8178-803-6392\\
              \email{wakaiki@ruby.kobe-u.ac.jp}           %  \\
%             \emph{Present address:} of F. Author  %  if needed
           \and
           H.~Sano \at
              Graduate School of System Informatics, Kobe University, Nada, Kobe, Hyogo 657-8501, Japan \\
              Tel.: +8178-803-6380 \\
              Fax: +8178-803-6392\\
              \email{sano@crystal.kobe-u.ac.jp} 
}

\date{Received: date / Accepted: date}
% The correct dates will be entered by the editor

\maketitle

\begin{abstract}
We study the sample-data control problem of 
output tracking and disturbance rejection for
unstable well-posed linear infinite-dimensional systems with 
constant reference and disturbance signals.
We obtain a sufficient condition for the existence of
finite-dimensional sampled-data controllers that 
are solutions of this control problem.
To this end, we study the problem of
output tracking and disturbance rejection for infinite-dimensional discrete-time  systems
and propose a design method of finite-dimensional controllers by using a solution of
the Nevanlinna-Pick interpolation problem with both interior and boundary conditions.
We apply our results to systems  with state and 
output delays.
\keywords{
	Frequency-domain methods \and Output regulation \and Sampled-data control \and
	State-space methods \and Well-posed infinite-dimensional systems}
% \PACS{PACS code1 \and PACS code2 \and more}
\subclass{93B52 \and 93C05 \and 93C25 \and 93C35 \and 93C57 \and 93D15}
\end{abstract}

\section{Introduction}
Due to the development of computer technology,
digital controllers are commonly implemented for continuous-time plants.
We call such closed-loop systems {\em sampled-data systems}.
In addition to their practical motivation,
sampled-data systems yield theoretically interesting problems 
related to
 the combination of both continuous-time and discrete-time dynamics, and
various techniques such as the lifting approach \cite{Bamieh1992, Yamamoto1994, Yamamoto1996} and 
the frequency response operator approach \cite{Hagiwara1995, Araki1996}
have been also developed for the analysis and synthesis of sampled-data finite-dimensional systems.
Sampled-data control theory for infinite-dimensional systems has been
developed, e.g., in \cite{Rebarber1998,
	Rebarber2002, Logemann1997, Logemann2003, Logemann2005, Rebarber2006, Ke2009SIAM, Ke2009IEEE, Ke2009SCL, 
	Logemann2013,Selivanov2017}.
Several specifically relevant studies will be cited below again.
In this paper, we study the problem of sampled-data output regulation for 
unstable well-posed systems.
The main objective in our  control problem is
to find finite-dimensional digital controllers achieving  the
output tracking of
given constant reference signals in the presence of 
external constant disturbances.
A theory for well-posed systems has been extensively developed; see, e.g., 
the survey \cite{Weiss2001, Tucsnak2014} and the book \cite{Staffans2005}.
Well-posed systems allow unbounded control and observation operators and
provide a framework to formulate control problems for
systems governed by partial differential equations with
point control and observation and by 
functional differential equations with delays
in the state, input, and output variables.
%delay differential equations
%with delayed observation.

Our output regulation method is based on the internal model principle,
which was originally developed for finite-dimensional systems in \cite{Francis1975}
and was later generalized for infinite-dimensional systems with
finite-dimensional and infinite-dimensional exosystems in \cite{Yamamoto1988servo, Paunonen2010, Paunonen2014, 
	Paunonen2017TAC,Paunonen2017SIAM} and references therein.
In particular, output regulation of nonsmooth periodic signals has
been extensively studied as {\em repetitive control} \cite{hara1988}.
For regular systems, which is a subclass of well-posed systems,
the authors of \cite{Xu2004, Paunonen2016TAC, Paunonen2017SIAM,Boulite2018} have provided design methods of
continuous-time controllers for robust output regulation.
For stable well-posed systems with finite-dimensional exosystems,
low-gain controllers suggested by the internal model principle
have been constructed for the continuous-time setup in \cite{Logemann1997SIAM, Rebarber2003}
and for the sampled-data setup in \cite{Logemann1997, Ke2009SIAM, Ke2009IEEE, Ke2009SCL}.
The difficulty of the problem we consider arises from
the instability of well-posed systems.
If the system is unstable, then low-gain controllers
cannot achieve closed-loop stability.
Ukai and Iwazumi \cite{Ukai1990} have developed a state-space-based design method 
of finite-dimensional controllers for output regulation of unstable continuous-time
infinite-dimensional systems, by using residue mode filters proposed in \cite{Sakawa1983}.
On the other hand, we employ a frequency-domain technique based on coprime factorizations as in
\cite{Logemann1997SIAM,Logemann1997, Hamalainen2000, Laakkonen2015, Laakkonen2016}.
In particular, we extend a design method of  stabilizing sampled-data controllers in \cite{Logemann2013}
to output regulation.

Let $(A,B,C)$ and $\bf G$ be generating operators and a transfer function of a well-posed system $\Sigma$,
respectively.
The operator $A$ is the generator of a strongly continuous semigroup $\bf T$, which
governs the dynamics of the system without control.
The operators $B$ and $C$ are control and observation operators, respectively.
We consider only infinite-dimensional systems that has finite-dimensional input and output spaces with the 
same dimension. In other words,
the transfer function $\bf G$ is a square-matrix-valued function.
The well-posed system is connected with a discrete-time linear time-invariant 
controller $\Sigma_{\rm d}$ through
a zero-order hold $\mathcal{H}_{\tau}$ and a generalized sampler $\mathcal{S}_{\tau}$, 
where $\tau>0$ is a sampling period. 
Let $u,y$ be the input and output of the well-posed system $\Sigma$ and $u_{\rm d}, y_{\rm d}$ be
the input and output of the digital controller $\Sigma_{\rm d}$, respectively.
The generalized sampler $\mathcal{S}_{\tau}$ is written
as
\[
(\mathcal{S}_{\tau}y) (k) = \int^\tau_0 w(t) y(k\tau+t) dt \qquad \forall k \in \mathbb{Z}_+,
\] 
where the scalar weighting function $w$ belongs to $L^2(0,\tau)$ and satisfies $\int^\tau_0 w(t) dt = 1$.
In well-posed systems, the output $y$ is in $L^2_{\rm loc}$, and hence
the ideal sampling, i.e., point evaluation does not make sense.
The weighting function $w$ should be chosen so that the sampled-data system
is detectable.

Using the zero-order hold $\mathcal{H}_{\tau}$ and the generalized sampler $\mathcal{S}_{\tau}$,
we consider the sampled-data feedback of the form
\[
u = \mathcal{H}_{\tau} y_{\rm d} + v\mathds{1}_{\mathbb{R}_+}\qquad 
u_{\rm d} = y_{\rm ref}\mathds{1}_{\mathbb{Z}_+}  - \mathcal{S}_\tau y,
\]
where 
$v\mathds{1}_{\mathbb{R}_+}$ and $y_{\rm ref}\mathds{1}_{\mathbb{Z}_+} $ are 
constant external reference and disturbance signals, respectively.
Fig.~\ref{fig:sampled_data_sys} illustrates the sampled-data system we study.
Since the output $y$ of well-posed systems belongs to $L^2_{\rm loc}$, 
the output $y$ is not guaranteed to converge
to $y_{\rm ref}$ as $t \to \infty$. In this paper, we therefore consider
the convergence of the output in the ``energy'' sense, i.e.,
there exist constants $\Gamma_{\rm ref} > 0$ and $\alpha<0$ such that 
\begin{align*}
	\| 
	y - y_{\rm ref}\mathds{1}_{\mathbb{R}_+}
	\|_{L^2_\alpha} \leq 
	\Gamma_{\rm ref} &\left(
	\left\|
	\begin{bmatrix}
		x(0) \\
		x_{\rm d}(0)
	\end{bmatrix}
	\right\|_{X \times X_{\rm d}}
	+\|v\|_{\mathbb{C}^p}+ \|y_{\rm ref}\|_{\mathbb{C}^p}
	\right) 
\end{align*}
for 
all initial states $x(0) \in X$ of $\Sigma$ and $x_{\rm d}(0) \in X_{\rm d}$ of $\Sigma_{\rm d}$ and all 
$v,y_{\rm ref} \in \mathbb{C}^p$, where ${L^2_\alpha} $
is the $L^2$-space weighted by the exponential function $e^{-\alpha t}$.
The above condition means that as $t \to \infty$,
the ``energy'' of the restricted tracking error $(y - y_{\rm ref})|_{[t,\infty)}$ exponentially converges to zero.
If we embed a smoothing precompensator between the plant and the zero-order 
hold as proposed in \cite{Logemann2013},
then the  output $y$ exponentially converges to $y_{\rm ref}$ in the usual sense under 
a certain regularity assumption on initial states.

\begin{figure}[tb]
	\centering
	\includegraphics[width = 6cm]{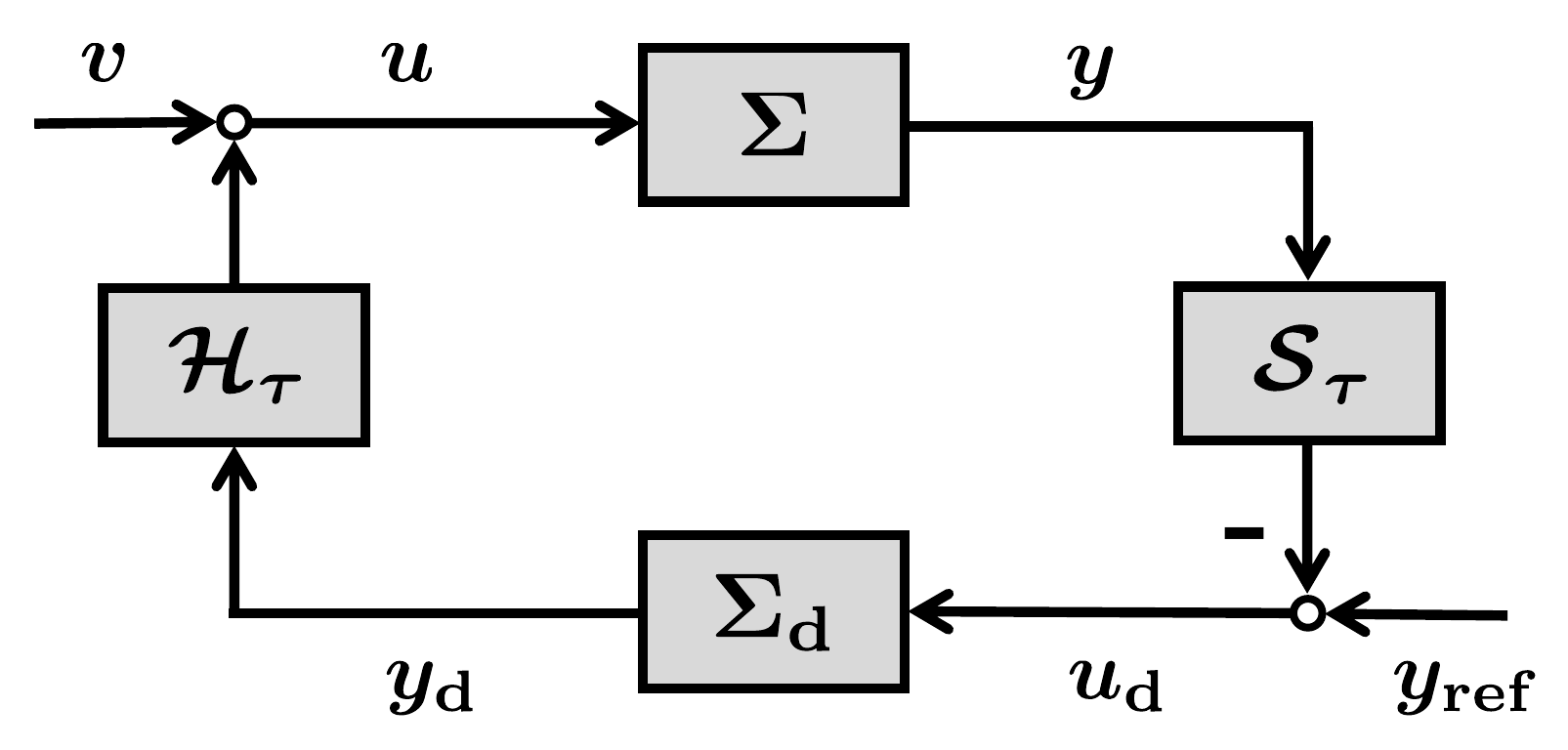}
	\caption{Sampled-data system.}
	\label{fig:sampled_data_sys}
\end{figure}

Before studying the sampled-data output regulation problem, 
we investigate an output regulation problem for  
infinite-dimensional discrete-time systems. In the discrete-time setup,
we propose a design method of
finite-dimensional controllers that achieve output regulation.
Although in the 
sampled-data setup, 
we consider only constant reference and disturbance signals, 
the proposed method in the discrete-time setup allows
reference and disturbance signals that are finite superpositions of sinusoids.
The construction of regulating controllers consists of two steps: 
First we design stabilizing controllers with a free parameter in $H^{\infty}$, using
the techniques developed in \cite{Logemann1992, Logemann2013}.
Next we choose the free parameter so that the controller incorporates an internal model
for output regulation.
The design problem of regulating controllers is reduced to the Nevanlinna-Pick interpolation problem with
both interior and boundary conditions.
In the reduced interpolation problem,
interior conditions are required for stabilization, whereas
boundary conditions arise from output tracking and disturbance rejection.
%Whereas the interior conditions are used for stabilization,
%output regulation leads to the boundary conditions.

Our main result, Theorem \ref{thm:servo_tracking}, states that 
%for a given unstable well-posed system,
there exists a finite-dimensional digital controller that achieves 
output regulation for constant reference and disturbance signals if the following conditions are satisfied:
\begin{enumerate}
	\renewcommand{\labelenumi}{(\roman{enumi})}
	\item The resolvent set of $A$ contains $0$.
	\item $\det {\bf G}(0) \not= 0$.
	\item The unstable part of the spectrum of $A$ consists of 
	finitely many eigenvalues with finite multiplicities.
	\item The semigroup generated by the stable part of $A$ is exponentially stable.
	\item The unstable part of $(A,B,C)$ is controllable and observable.
	\item For every nonzero integer $\ell$, $2\ell \pi i/ \tau$ does not belong to the spectrum of $A$.
	\item For every unstable eigenvalues $\lambda$ of $A$, $\int^\tau_0 w(t) e^{\lambda t} dt\not=0$.
	\item For every unstable eigenvalues $\lambda,\mu$ of $A$ and nonzero integer $\ell$, 
	$\tau(\lambda - \mu) \not= 2\ell \pi i$.
	\item The multiplicities of all unstable eigenvalues of $A$ are one. 
\end{enumerate}
The assumptions (iii)-- (viii) are used for 
sampled-data stabilization in
\cite{Logemann2013}. In fact, 
(iii)--(vii) are sufficient for the existence of sampled-data stabilizing controllers, and further,
(iii)--(viii) are necessary and sufficient in the single-input and single-output case.
In particular, (v)--(viii) is used to guarantee that the unstable part of 
the sampled-data system is controllable and observable.
We place
the assumptions (i) and (ii)  for output regulation.
The remaining assumption (ix) is used to 
reduce the design problem of stabilizing controllers to 
an interpolation problem of functions in the $H^{\infty}$-space.
In the multi-input and multi-output case,
the assumption (ix) makes it easy to obtain the associated interpolation conditions.
We can remove (ix) in the single-input and single-output case.
%Proposition 5 and Theorem 9 of \cite{Logemann2013} shows that
%the conditions (iii)--(vii) are sufficient for the existence of stabilizing controllers.
%Moreover, in the case of single-input single output case,  
%the conditions (iii)--(viii) becomes necessary and sufficient.

The paper is structured as follows.
In Section 2, we study an output regulation problem for infinite-dimensional discrete-time  systems.
In Section 3, we obtain a sufficient condition for the existence of 
finite-dimensional sampled-data regulating controllers for unstable well-posed systems with
constant reference and disturbance signals.
In \mbox{Section 4}, we illustrate our results by applying them to systems  with state and 
output delays.

\subsubsection*{Notation and terminology}
We denote by $\mathbb{Z}_+$ and $\mathbb{R}_+$ the set of nonnegative integers and the set of nonnegative real numbers, respectively.
For $\alpha \in \mathbb{R}$, we define $\mathbb{C}_{\alpha} :=
\{s \in \mathbb{C}: \re s > \alpha\}$, and
for $\eta >0$, $\mathbb{E}_{\eta} :=
\{z \in \mathbb{C}:|z|> \eta \}$.
We also define 
$\mathbb{D}:=
\{z \in \mathbb{C}:|z| < 1 \}$ and $\mathbb{T}:=
\{z \in \mathbb{C}:|z| =1 \}$. 
For a set $\Omega \subset \mathbb{C}$, its closure is denoted by $\cl (\Omega)$.
For an arbitrary set $\Omega_0$,
the indicator function of $\Omega \subset \Omega_0$ is denoted by
$\mathds{1}_{\Omega}$.
For a matrix $M \in \mathbb{C}^{p\times m}$, let us denote by $M^*$, $\widebar M$, and $M^{\rm adj}$ 
the conjugate transpose, the matrix with complex conjugate entries, 
and the adjugate matrix of $M$, respectively.

Let $X$ and $Y$ be Banach spaces. Let $\mathcal{L}(X,Y)$ denote
the space of all bounded linear operators from $X$ to $Y$.
We set $\mathcal{L}(X):= \mathcal{L}(X,X)$.
An operator $T \in \mathcal{L}(X)$ is said to be 
{\em power stable} if there exist $\Gamma \geq 1$ and $\rho \in (0,1)$
such that 
$
\|T^k\|_{\mathcal{L}(X)} \leq \Gamma \rho^k
$
for every $k \in \mathbb{Z}_+$.
Let  ${\bf T} = ({\bf T}_t)_{t\geq 0}$ be
a strongly continuous semigroup on $X$.
The exponential growth bound of ${\bf T}$ is denoted by
$\omega({\bf T})$, that is, $\omega({\bf T}) := \lim_{t \to \infty} \ln \|{\bf T}_t\|/t$.
We say that the strongly continuous semigroup ${\bf T}$ is {\em exponentially stable}
if $\omega({\bf T}) < 0$. 
For a linear operator $A$ from $X$ to $Y$,
let $\dom (A)$
denote the domain of $A$.
The spectrum and resolvent set of a linear operator $A: \dom (A) \subset X \to X$ are denoted by
$\sigma(A)$ and $\varrho(A)$, respectively.

%For $\alpha \in \mathbb{R}$ and a Banach space $X$, we define 
%the weighted $L^2$-space $L_\alpha^2(\mathbb{R}_+,X)$ by
%$L_\alpha^2(\mathbb{R}_+,X) := 
%\big\{
%f \in L_{\rm loc}^2(\mathbb{R}_+,X): e^{-\alpha \cdot }f(\cdot) \in L^2(\mathbb{R}_+,X)
%\big\}$
%and endow this space with the norm $\|f\|_{L^2_\alpha} := \|e^{-\alpha \cdot } f(\cdot)\|_{L^2}$.
For $\alpha \in \mathbb{R}$, we define
the weighted $L^2$-space $L_\alpha^2(\mathbb{R}_+,X)$ by 
$L_\alpha^2(\mathbb{R}_+,X) := \{
f:\mathbb{R}_+ \to X: e_{-\alpha} f \in L^2 (\mathbb{R}_+,X)
\}$, 
where $e_{-\alpha}(t) := e^{-\alpha t}$ for $t \in \mathbb{R}_+$, with
the norm $\|f\|_{L^2_\alpha} := \|e_{-\alpha}f\|_{L^2}$.
The space of all functions from $\mathbb{Z}_+$ to $X$ is denoted by $F(\mathbb{Z}_+,X)$. Set
$f^{\bigtriangledown}(k) := f(k+1)$ for every $k \in \mathbb{Z}_+$ and every $f \in F(\mathbb{Z}_+,X)$.
Let $\Omega = \mathbb{C}_{\alpha}$, $\Omega = \mathbb{E}_\eta$, or $\Omega =\mathbb{D}$.
Let $H^{\infty}(\Omega, \mathbb{C}^{p \times m})$ denote 
the space of all bounded holomorphic functions from $\Omega $ to $\mathbb{C}^{p\times m}$.
The norm of $H^{\infty}(\Omega, \mathbb{C}^{p \times m})$  is given by $\|\Phi\|_{\infty} := \sup_{s \in \Omega}\|\Phi(s)\|$.
We write $H^{\infty}(\Omega)$ for $H^{\infty}(\Omega, \mathbb{C})$.

\section{Discrete-time output regulation}
\label{sec:DTOR}
In this section, we construct finite-dimensional controllers for the
robust output regulation of infinite-dimensional discrete-time  systems.
Before proceeding to technical details, we describe the overview of this section.
A fundamental assumption throughout this paper is that 
an infinite-dimensional plant can be decomposed into
a finite-dimensional unstable part and an infinite-dimensional stable part.
To avoid spill-over effects \cite{Balas1978},
we cannot ignore the infinite-dimensional stable part completely 
in the design of stabilizing controllers. However,
it has been shown in \cite{Logemann1992, Logemann2013} that 
if  the infinite-dimensional stable part is  appropriately approximated
by a finite-dimensional stable system,
then we can design  finite-dimensional stabilizing controllers.
Now one may ask the following question for the problem of output regulation:
\begin{quote}
{\em By such an approximation-based method,
can we always construct stabilizing controllers that incorporate an internal model?}
\end{quote}

To stabilize the plant, the approximation error should be small. However,
it is possible that if the approximation error is smaller than a certain threshold,
then we cannot design stabilizing controllers with internal models by using
the finite-dimensional approximating  system.
We will show that 
such a situation does not occur under certain assumptions on the plant. 
The proof is based on two key facts: First, 
controllers incorporate  internal models if and only if their free parameters in $H^{\infty}(\mathbb{E}_1,\mathbb{C}^{p \times p})$
satisfy
certain
interpolation conditions on the boundary $\mathbb T$.
Second, 
the boundary Nevanlinna-Pick interpolation problem (see Problem A.7 in the appendix for details)
is always solvable.

In Section~\ref{subsec:DTassumption}, we formulate the problem of robust output regulation and
recall the concept of $p$-copy internal models.
In Section~\ref{subsec:DTmain_result}, we introduce assumptions of the plant and
provide the main result of this section, Theorem~\ref{thm:existence_servo_cont}.
We provide the proof of this theorem
in Sections~\ref{subsec:Prelim_lemma_DT} and \ref{subsec:Proof_main_result}.
In particular, Sections~\ref{subsec:Prelim_lemma_DT} is devoted to preliminary lemmas for the multi-input
and the multi-output case.
Section~\ref{subsec:Prelim_lemma_DT} may be skipped by
the readers interested only in the single-input and single-output case.

\subsection{Control objective}
\label{subsec:DTassumption}
In this section, we consider the following 
infinite-dimensional discrete-time  system:
\begin{subequations}
	\label{eq:plant}
	\begin{align}
		x^{\bigtriangledown}(k) &= A x(k) + Bu(k),\quad x(0) = x^0 \in X\\
		y(k) &= Cx(k) + Du(k),
	\end{align}
	where the state space $X$ is a separable complex Hilbert space, $A \in \mathcal{L}(X)$, $B \in \mathcal{L}(\mathbb{C}^p, X)$, 
	$C \in \mathcal{L}(X, \mathbb{C}^p)$, and $D \in \mathbb{C}^{p \times p}$.
\end{subequations}
We use a strictly causal controller
\begin{subequations}
	\label{eq:controller}
	\begin{align}
		x_{\rm d}^{\bigtriangledown}(k) &= P x_{\rm d}(k) + Qu_{\rm d}(k),\quad x_{\rm d}(0) = x_{\rm d}^0 \in X_{\rm d}\\
		y_{\rm d}(k) &= R x_{\rm d}(k),
	\end{align}
\end{subequations}
where the state space $X_{\rm d}$ is a complex Hilbert space,
$P \in \mathcal{L}(X_{\rm d})$, $Q \in \mathcal{L}(\mathbb{C}^p, X_{\rm d})$, and
$R \in \mathcal{L}(X_{\rm d}, \mathbb{C}^p)$.
The control objective is that the output $y$ tracks a given reference signal $y_{\rm ref}$ in the presence of
an external disturbance signal $v$. The reference and disturbance signals $y_{\rm ref}$ and $v$
are assumed to be generated by an exosystem of the form
\begin{subequations}
	\label{eq:exosystem}
	\begin{align}
		\xi^{\bigtriangledown}(k) &= S\xi(k),\qquad \xi(0) = \xi^0 \in  \mathbb{C}^{n} \\
		v(k) &= E\xi(k) \\
		y_{\rm ref}(k) &= F\xi(k),
	\end{align}
\end{subequations}
where $E \in \mathbb{C}^{p \times n}$, $F \in \mathbb{C}^{p \times n}$, and
\[
S := \text{diag} \big(e^{i\theta_1},\dots, e^{i\theta_n}\big) \text{~with $\theta_1,\dots,\theta_n \in [0,2\pi)$ distinct}.
\]

The input $u$ of the plant and the input $u_{\rm d}$ of the controller are given by
\[
u(k) = y_{\rm d}(k) + v(k),\qquad 
u_{\rm d}(k) =  y_{\rm ref}(k) - y(k) =: e(k).
\]
We can write the dynamics of the closed-loop system as
\begin{subequations}
	\label{eq:closed}
	\begin{align}
		\label{eq:closed_state}
		x_{\rm e}^{\bigtriangledown}(k) &= A_{\rm e}x_{\rm e}(k) + B_{\rm e} \xi(k),\quad x_{\rm e}(0) = x_{\rm e}^0 \\
		\label{eq:closed_error}
		e(k) &= C_{\rm e}x_{\rm e}(k) + D_{\rm e}\xi(k),
	\end{align}
\end{subequations}
where $x_{\rm e}(k) = 
\begin{bmatrix}
x(k) \\ x_{\rm d}(k)
\end{bmatrix}$, 
$x_{\rm e}^0 = 
\begin{bmatrix}
x^0\\ x_{\rm d}^0
\end{bmatrix}$,  and
\begin{subequations}
		\label{eq:Ae_def}
\begin{align}
	A_{\rm e} &:=
	\begin{bmatrix}
		A & ~~BR \\
		-QC & ~~P - QDR
	\end{bmatrix},\quad 
	B_{\rm e} :=
	\begin{bmatrix}
		BE \\
		Q(F-DE)
	\end{bmatrix}\\
	C_{\rm e} &:=
	-
	\begin{bmatrix}
		C &
		~~DR
	\end{bmatrix},\quad 
	D_{\rm e} :=
	F-DE.
	%\notag
\end{align}
\end{subequations}

For the controller in \eqref{eq:controller} represented by 
the operators $(P,Q,R)$, we consider a set of perturbed plants and exosystems $\mathcal{O}(P,Q,R)$ defined as follows.
\begin{definition}[Set of perturbed plants and exosystems]
	For given operators 
	$P \in \mathcal{L}(X_{\rm d})$, $Q \in \mathcal{L}(\mathbb{C}^p, X_{\rm d})$, and
	$R \in \mathcal{L}(X_{\rm d}, \mathbb{C}^p)$,
%	the operators $(\widetilde{A},\widetilde{B}$, $\widetilde{C},\widetilde{D},\widetilde{E},\widetilde{F})$ satisfy
%	$(\widetilde{A},\widetilde{B},\widetilde{C},\widetilde{D},\widetilde{E},\widetilde{F}) \in \mathcal{O}(P,Q,R)$ if 
$\mathcal{O}(P,Q,R)$ is the set of all $(\widetilde{A},\widetilde{B}$, $\widetilde{C},\widetilde{D},\widetilde{E},\widetilde{F})$
satisfying
	the following two conditions:
	\begin{enumerate}
		\item  
		$\widetilde{A} \in \mathcal{L}(X)$, $\widetilde{B} \in \mathcal{L}(\mathbb{C}^p, X)$, 
		$\widetilde{C} \in \mathcal{L}(X, \mathbb{C}^p)$, $\widetilde{D} \in \mathbb{C}^{p \times p}$,
		$\widetilde{E} \in \mathbb{C}^{p \times n}$, and $\widetilde{F} \in \mathbb{C}^{p \times n}$.

		\item  The perturbed operator $\widetilde{A}_{\rm e}$ defined by
		\[
		\widetilde{A}_{\rm e} := \begin{bmatrix}
		\widetilde{A} & ~~\widetilde{B}R \\
		-Q\widetilde{C} & ~~P - Q\widetilde{D}R
		\end{bmatrix}
		\]
		is power stable.
	\end{enumerate}
\end{definition}

If $A_{\rm e}$ is power stable, the conditions above are satisfied for any
bounded perturbations of sufficiently small norms.

In this section, we study a robust output regulation problem.
\begin{problem}[Robust output regulation for discrete-time systems]
	\label{prob:ROR}
	Given the plant \eqref{eq:plant} and the exosystem \eqref{eq:exosystem},
	find a controller \eqref{eq:controller} satisfying the following properties:
	\begin{description}
		\item[Stability:]
		The operator $A_{\rm e}$ is power stable. 
		
		\item[Tracking:]
		There exist $M_e >0$ and $\rho_e \in (0,1)$ such that 
		for	every initial state $x^0 \in X$, $x_{\rm d}^0 \in X_{\rm d}$, and $\xi^0 \in  \mathbb{C}^{n} $,
		the tracking error $e$ satisfies
		%	$\lim_{k\to \infty}e(k) = 0$
		\[
		\|e(k)\|_{\mathbb{C}^{p}} \leq M_e \rho_e^k
		\left(
		\left\|
		\begin{bmatrix}
		x^0 \\
		x_{\rm d}^0
		\end{bmatrix}
		\right\|_{X\times X_{\rm d}} + \|\xi^0\|_{\mathbb{C}^{n} }
		\right)
		\qquad \forall k \in \mathbb{Z}_+.
		\]
		
		\item[Robustness:]
		If the operators $(A,B,C,D,E,F)$ are changed to
		$(\widetilde{A},\widetilde{B},\widetilde{C},\widetilde{D},\widetilde{E},\widetilde{F}) \in \mathcal{O}(P,Q,R)$,
		then the above tracking condition still holds.
	\end{description}
\end{problem}

Before proceeding to the construction of finite-dimensional regulating controllers, 
we recall  the internal model principle.
In \cite{Paunonen2010}, a $p$-copy internal model has been introduced for
continuous-time systems.
The 
discrete-time counterpart has appeared in Section IV.B of \cite{Paunonen2017TAC}.
\begin{definition}[Definition~6.1 in \cite{Paunonen2010}]
	A controller  \eqref{eq:controller} is said to incorporate a $p$-copy internal model
	of the exosystem  \eqref{eq:exosystem} if 
	\begin{equation}
		\label{eq:internal_model}
		\dim {\rm ker}(e^{i\theta_\ell}I - P) \geq p\qquad \forall \ell \in \{1,\dots,n \}.
	\end{equation}
	%	and $P$ has at least $p$ independent Jordan chains of length greater than or 
	%	equal to one associated to the eigenvalue $e^{i\theta_\ell}$.
\end{definition}
\begin{theorem}[Theorem~IV.5 in \cite{Paunonen2017TAC}]
	\label{thm:internal_model}
	Suppose that $A_{\rm e}$ is power stable.
	The controller \eqref{eq:controller} incorporates a $p$-copy internal model of the 
	exosystem \eqref{eq:exosystem} if and only if it is a solution of Problem~\ref{prob:ROR}.
\end{theorem}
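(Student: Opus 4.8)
The plan is to first reduce the \emph{Tracking} requirement to a purely algebraic (Sylvester/regulator) condition, and then prove the two implications of the equivalence by exploiting the block structure of $A_{\rm e}$ together with power stability. Since $A_{\rm e}$ is power stable, $\sigma(A_{\rm e}) \subset \mathbb{D}$, whereas $\sigma(S) = \{e^{i\theta_1},\dots,e^{i\theta_n}\} \subset \mathbb{T}$; in particular $\sigma(A_{\rm e}) \cap \sigma(S) = \emptyset$, so the discrete Sylvester equation $\Sigma S = A_{\rm e}\Sigma + B_{\rm e}$ has a unique solution $\Sigma \in \mathcal{L}(\mathbb{C}^n, X\times X_{\rm d})$, which I write conformally as $\Sigma = \begin{bmatrix}\Sigma_1\\\Sigma_2\end{bmatrix}$. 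Setting $z(k) := x_{\rm e}(k) - \Sigma\xi(k)$ gives $z^{\bigtriangledown} = A_{\rm e}z$, hence $z(k) = A_{\rm e}^k z(0)\to 0$ power-exponentially and $e(k) = C_{\rm e}A_{\rm e}^k z(0) + \mathcal{E}S^k\xi^0$ with $\mathcal{E} := C_{\rm e}\Sigma + D_{\rm e}$. Because $\|S^k\xi^0\|$ does not decay, the tracking estimate of Problem~\ref{prob:ROR} holds for all initial states exactly when $\mathcal{E} = 0$, and the resulting bound has the required form via $\|z(0)\|\le \|x_{\rm e}^0\| + \|\Sigma\|\,\|\xi^0\|$. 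Reading off the controller-state block of the Sylvester equation column by column (using that $S$ is diagonal) yields the key identity $(e^{i\theta_\ell}I - P)\Sigma_2^{(\ell)} = Q\mathcal{E}^{(\ell)}$ for each $\ell$, where $\Sigma_2^{(\ell)},\mathcal{E}^{(\ell)}$ are the $\ell$-th columns of $\Sigma_2$ and of $\mathcal{E}$. The same reduction applies verbatim to any perturbed tuple in $\mathcal{O}(P,Q,R)$.

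For sufficiency, assume the $p$-copy internal model and fix a perturbed system in $\mathcal{O}(P,Q,R)$, so that $\widetilde{A}_{\rm e}$ is power stable and $e^{-i\theta_\ell}\notin\sigma(\widetilde{A}_{\rm e}^*)$. Pairing the perturbed identity $(e^{i\theta_\ell}I - P)\widetilde{\Sigma}_2^{(\ell)} = Q\widetilde{\mathcal{E}}^{(\ell)}$ with any $\phi\in\ker(e^{-i\theta_\ell}I - P^*)=\ker\big((e^{i\theta_\ell}I - P)^*\big)$ gives $\langle \widetilde{\mathcal{E}}^{(\ell)}, Q^*\phi\rangle_{\mathbb{C}^p}=0$. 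The decisive point is that power stability forces $Q^*$ to be injective on this adjoint eigenspace: if $Q^*\phi=0$ and $P^*\phi = e^{-i\theta_\ell}\phi$, then $w=\begin{bmatrix}0\\\phi\end{bmatrix}$ satisfies $\widetilde{A}_{\rm e}^*w = e^{-i\theta_\ell}w$, forcing $e^{i\theta_\ell}\in\sigma(\widetilde{A}_{\rm e})$, a contradiction. Since the eigenspaces are finite-dimensional, $\dim\ker(e^{-i\theta_\ell}I - P^*)=\dim\ker(e^{i\theta_\ell}I - P)\ge p$, so the injective map $\phi\mapsto Q^*\phi$ sends this space onto $\mathbb{C}^p$; hence $\widetilde{\mathcal{E}}^{(\ell)}=0$ for all $\ell$, i.e. $\widetilde{\mathcal{E}}=0$ and tracking holds robustly.

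For necessity, suppose the controller solves Problem~\ref{prob:ROR}. Crucially $\widetilde{A}_{\rm e}$ does not depend on $(\widetilde{E},\widetilde{F})$, so these may be varied freely while preserving power stability; the residual $\widetilde{\mathcal{E}}$ depends affinely on them and, by robustness, vanishes on a neighborhood of $(E,F)$, hence identically. Isolating the coefficient of the free exosystem gain gives, for each $\ell$, the condition $C_{\rm e}(e^{i\theta_\ell}I - A_{\rm e})^{-1}\begin{bmatrix}0\\Q\end{bmatrix} = -I$. Writing $\Lambda = \begin{bmatrix}\Lambda_1\\\Lambda_2\end{bmatrix}:=(e^{i\theta_\ell}I - A_{\rm e})^{-1}\begin{bmatrix}0\\Q\end{bmatrix}$ and combining this with the controller-state block of $(e^{i\theta_\ell}I - A_{\rm e})\Lambda=\begin{bmatrix}0\\Q\end{bmatrix}$ yields $(e^{i\theta_\ell}I - P)\Lambda_2 = Q(I - C\Lambda_1 - DR\Lambda_2)=0$, so $\operatorname{ran}\Lambda_2\subseteq\ker(e^{i\theta_\ell}I - P)$. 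It then remains to show $\operatorname{rank}\Lambda_2 = p$: a nonzero $\eta\in\ker\Lambda_2$ would give $\psi:=\Lambda_1\eta$ with $A\psi=e^{i\theta_\ell}\psi$ and $C\psi=\eta\ne0$, an observable plant mode on $\mathbb{T}$ that cannot persist under a generic perturbation of $\widetilde{A}$ in $\mathcal{O}(P,Q,R)$, contradicting robustness. This yields $\dim\ker(e^{i\theta_\ell}I - P)\ge p$ for every $\ell$, i.e. the $p$-copy internal model.

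The main obstacle is the necessity direction, which carries the genuine Francis--Wonham content. The reduction and the sufficiency argument become structurally clean once the adjoint-eigenvector trick is in place, but extracting the full kernel dimension $p$ (rather than a merely nontrivial kernel) from robustness requires choosing perturbations of the \emph{entire} tuple $(A,B,C,D,E,F)$ that probe each output direction independently and controlling $\operatorname{rank}\Lambda_2$ in the infinite-dimensional state space. Carefully justifying the finite-dimensionality/index facts that let one pass between $\ker(e^{i\theta_\ell}I - P)$ and $\ker(e^{-i\theta_\ell}I - P^*)$ in the multi-input multi-output case is the remaining delicate point.
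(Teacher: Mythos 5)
This theorem is not proved in the paper at all: it is quoted from \cite{Paunonen2017TAC} (Theorem~IV.5), so there is no in-paper argument to compare against, and your proposal has to stand on its own. Its skeleton is the right one and matches the standard internal-model-principle proofs: the Sylvester reduction $e(k)=C_{\rm e}A_{\rm e}^kz(0)+\mathcal{E}S^k\xi^0$ with $\mathcal{E}=C_{\rm e}\Sigma+D_{\rm e}$, the equivalence of tracking with $\mathcal{E}=0$, the block identity $(e^{i\theta_\ell}I-P)\Sigma_2^{(\ell)}=Q\mathcal{E}^{(\ell)}$, and the adjoint-eigenvector computation showing that $Q^*$ is injective on $\ker\bigl((e^{i\theta_\ell}I-P)^*\bigr)$ are all correct. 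But two steps are genuinely open.

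First, in the sufficiency direction you consume $\dim\ker\bigl((e^{i\theta_\ell}I-P)^*\bigr)\ge p$, whereas the internal model only gives $\dim\ker(e^{i\theta_\ell}I-P)\ge p$, and your justification ``since the eigenspaces are finite-dimensional'' is not a valid reason for these to coincide: for a bounded operator on an infinite-dimensional Hilbert space both kernels can be finite-dimensional and still differ (take $e^{i\theta_\ell}I-P$ a unilateral shift). The equality is an index statement, and this is exactly where closed-loop stability must be used a second time: $e^{i\theta_\ell}I-A_{\rm e}$ is invertible and differs from $\operatorname{diag}(e^{i\theta_\ell}I-A,\,e^{i\theta_\ell}I-P)$ by an operator of rank at most $2p$, so both diagonal blocks are Fredholm with indices summing to zero; since $e^{i\theta_\ell}\in\varrho(A)$ (assumption $\langle$a\ref{enu_Resol}$\rangle$, which is in force wherever the theorem is invoked here and is a standing hypothesis in \cite{Paunonen2017TAC}), the plant block has index $0$, hence so does $e^{i\theta_\ell}I-P$, and kernel and cokernel dimensions agree.

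Second, and more seriously, the necessity direction hinges on $\operatorname{rank}\Lambda_2=p$, and the justification offered --- that a nonzero $\eta\in\ker\Lambda_2$ yields an observable plant eigenvector at $e^{i\theta_\ell}$ which ``cannot persist under a generic perturbation of $\widetilde A$'' --- does not go through: in an infinite-dimensional state space a unimodular eigenvalue need not be isolated and need not be removable by a small bounded perturbation, and even if some perturbation removed it you would still have to verify membership in $\mathcal{O}(P,Q,R)$ and rerun the entire derivation for the perturbed plant. The robust route is to use the identity you discarded, namely the coefficient of $\widetilde E$, which reads $C_{\rm e}(e^{i\theta_\ell}I-A_{\rm e})^{-1}\left[\begin{smallmatrix}B\\-QD\end{smallmatrix}\right]=D$. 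Together with your identity $C_{\rm e}\Lambda=-I$ this gives $\ker Q=\{0\}$ (if $Q\eta=0$ then $\Lambda\eta=0$, so $\eta=-C_{\rm e}\Lambda\eta=0$) and $\im Q\cap\im(e^{i\theta_\ell}I-P)=\{0\}$, i.e.\ the $\mathcal{G}$-conditions; hence the cokernel of $e^{i\theta_\ell}I-P$ has dimension at least $p$, and the same index-zero fact as above converts this into $\dim\ker(e^{i\theta_\ell}I-P)\ge p$. So both directions ultimately rest on the Fredholm argument you left open, and necessity additionally needs the second perturbation identity rather than a genericity appeal.
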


\subsection{Output regulation by a finite-dimensional controller}
\label{subsec:DTmain_result}
Throughout this section, we impose the following assumptions:
\begin{enumerate}
	\renewcommand{\labelenumi}{$\langle$a\arabic{enumi}$\rangle$}
	\item $e^{i\theta_\ell} \in \varrho(A)$ for every $\ell \in \{1,\dots,n\}$. \label{enu_Resol}
	\item $\det \mathbf{G}(e^{i\theta_\ell}) \not= 0$  for every $\ell \in \{1,\dots,n \}$. \label{enu_Zero}
	\item There exist subspaces $X^+$ and $X^-$ of $X$ such that \label{enu_Dec}
	$\dim X^+ < \infty$ and $X = X^+ \oplus X^-$. 
	\item $A X^+ \subset X^+ $ and $AX^- \subset X^-$. \label{enu_Ainv}
\end{enumerate}
%\vspace{12pt}

Let us denote the projection operator from $X$ to $X^+$ by $\Pi$, and define
%\[
%A^+ := A|_{X^+},~
%A^- := A|_{X^-},~
%B^+ := \Pi B,~
%B^- := (I-\Pi) B,~
%C^+:= C|_{X^+},~
%C^-:= C|_{X^-}.
%\]
\begin{align*}
A^+ &:= A|_{X^+},\quad 
B^+ := \Pi B,\quad 
C^+:= C|_{X^+}\\
A^- &:= A|_{X^-},\quad 
B^- := (I-\Pi) B,\quad 
C^-:= C|_{X^-}.
\end{align*}
We place the remaining assumptions.
\begin{enumerate}
	\renewcommand{\labelenumi}{$\langle$a\arabic{enumi}$\rangle$}
	\setcounter{enumi}{4}
	\item $\sigma(A) \cap \cl(\mathbb{E}_1)$ consists of
	finitely many eigenvalues with finite algebraic multiplicities, $\sigma(A^+) = 
	\sigma(A) \cap \cl(\mathbb{E}_{1}) $, and there exists $\eta_0 \in (0,1)$ such that $\sigma(A^-) = \sigma(A) \cap 
	\big(\mathbb{C} \setminus \cl(\mathbb{E}_{\eta_0}) \big)$. \label{enu_A_minus_EXS}
	\item $(A^+,B^+,C^+)$ is controllable and observable.\label{enu_CD}
	\item The zeros of $\det (zI-A^+)$ are simple. \label{enu_Simple}
\end{enumerate}
%\vspace{12pt}

We place the assumptions $\langle$a\ref{enu_Resol}$\rangle$ and 
$\langle$a\ref{enu_Zero}$\rangle$ for robust output regulation.
The assumptions $\langle$a\ref{enu_Dec}$\rangle$--$\langle$a\ref{enu_CD}$\rangle$ are used for stabilization of 
infinite-dimensional discrete-time  systems; see, e.g.,  \cite{Logemann1992}.
%We need $\langle$a\ref{enu_Simple}$\rangle$ for a technical reason but can remove it in the case $p=1$.
We will show in Lemma~\ref{lem:det_zeros} below that the assumption
$\langle$a\ref{enu_Simple}$\rangle$ guarantees that 
$\dim \ker (\lambda I-A^+) = 1$ for every 
$\lambda \in \mathbb{C}$ satisfying $\det (\lambda I-A^+) = 0$.
This allows us to reduce the design problem of stabilizing controllers
to the problem of finding functions in $H^{\infty}(\mathbb{E}_1,\mathbb{C}^{p \times p})$
that satisfy elementary interpolation conditions, which will be shown 
in Lemma~\ref{lem:Bezout_interpolation}.
In the single-input and single-output case $p=1$,
we can remove $\langle$a\ref{enu_Simple}$\rangle$ as mentioned at the end of this section.
This is because 
it is much easier to translate stabilization into interpolation in the scalar-valued case
than in the matrix-valued case.

Under  $\langle$a\ref{enu_Ainv}$\rangle$ and $\langle$a\ref{enu_A_minus_EXS}$\rangle$,
we fix $\eta \in (\eta_0,1)$
and define the transfer function $\mathbf{G}$ of the plant \eqref{eq:plant}
by
\[
\mathbf{G}(z) := C(zI-A)^{-1}B+D \qquad \forall z \in \mathbb{E}_\eta \cap \varrho(A).
\]
We can decompose $\mathbf{G}$ into
\begin{equation*}
%\label{eq:G_decompose_DT}
\mathbf{G}(z) = \mathbf{G}^+(z) + \mathbf{G}^-(z) \qquad \forall z \in \mathbb{E}_\eta \cap \varrho(A),
\end{equation*}
where
\begin{equation}
\label{eq:G_plus_minus}
\mathbf{G}^+(z) := C^+(zI - A^+)^{-1}B^+,\quad
\mathbf{G}^-(z) := C^-(zI - A^-)^{-1}B^-+D
\end{equation}
and $\mathbf{G}^-  \in H^{\infty}(\mathbb{E}_\eta, \mathbb{C}^{p\times p})$.
By $\langle$a\ref{enu_CD}$\rangle$, the unstable part $\mathbf{G}^+$ of the plant has no
unstable pole-zero cancellations.
There exist $\mathbf{N}_+$, $\mathbf{D}_+$ with rational entries
in $H^{\infty}(\mathbb{E}_1,\mathbb{C}^{p \times p})$ such that 
\[
\mathbf{G}^+ = \mathbf{D}_+^{-1}\mathbf{N}_+
\]
and $\mathbf{N}_+, \mathbf{D}_+$ are left coprime over the sets of
rational functions in $H^{\infty}(\mathbb{E}_1,\mathbb{C}^{p \times p})$.
Choose such $\mathbf{N}_+, \mathbf{D}_+$ arbitrarily, and
let $\chi_1,\dots,\chi_\Upsilon$ be the zeros of $\det \mathbf{D}_+$ in
$\cl(\mathbb{E}_1)$.
Together with $\langle$a\ref{enu_CD}$\rangle$ and $\langle$a\ref{enu_Simple}$\rangle$,
Lemma~A.7.39 of \cite{Curtain1995} shows that 
these zeros are equal to the eigenvalues of $A^+$
and that
the orders of the zeros are one.

The objective of this section is to prove the following theorem constructively:
\begin{theorem}
	\label{thm:existence_servo_cont}
	Assume that {\rm $\langle$a\ref{enu_Resol}$\rangle$--$\langle$a\ref{enu_Simple}$\rangle$} hold.
	There exists a finite-dimensional controller \eqref{eq:controller} that 
	is a solution of the robust output regulation problem, Problem~\ref{prob:ROR}.
\end{theorem}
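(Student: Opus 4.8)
The plan is to invoke the internal model principle and reduce everything to an interpolation problem. By Theorem~\ref{thm:internal_model}, once $A_{\rm e}$ is power stable, a controller solves Problem~\ref{prob:ROR} if and only if it incorporates a $p$-copy internal model of the exosystem; hence it suffices to construct a single finite-dimensional controller \eqref{eq:controller} that is simultaneously stabilizing and endowed with such an internal model. I would build this controller by the coprime-factorization/Youla route: parametrize the finite-dimensional stabilizing controllers by a free parameter $\mathbf{Q}$ ranging over the rational elements of $H^\infty(\mathbb{E}_1,\mathbb{C}^{p\times p})$, and then pin down $\mathbf{Q}$ so that the internal model appears.

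First I would treat stabilization. Using the left-coprime factorization $\mathbf{G}^+=\mathbf{D}_+^{-1}\mathbf{N}_+$ and the finite-dimensional approximation of the stable part $\mathbf{G}^-\in H^\infty(\mathbb{E}_\eta,\mathbb{C}^{p\times p})$ in the spirit of \cite{Logemann1992,Logemann2013}, I would express every finite-dimensional controller that power-stabilizes the full plant through a free parameter $\mathbf{Q}$, subject to an $H^\infty$ norm constraint inherited from robust stabilization against the approximation error. Invoking Lemma~\ref{lem:det_zeros} (so that $\dim\ker(\lambda I-A^+)=1$ at each unstable eigenvalue, using $\langle$a\ref{enu_Simple}$\rangle$) together with Lemma~\ref{lem:Bezout_interpolation}, I would recast power stability of $A_{\rm e}$ as a finite set of \emph{interior} interpolation conditions on $\mathbf{Q}$ at the unstable poles $\chi_1,\dots,\chi_\Upsilon\in\cl(\mathbb{E}_1)$ of the plant.

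Next I would encode the internal model. The $p$-copy condition $\dim\ker(e^{i\theta_\ell}I-P)\geq p$ asks the controller to carry poles of the right structure at each exosystem frequency $e^{i\theta_\ell}\in\mathbb{T}$; since $e^{i\theta_\ell}\in\varrho(A)$ by $\langle$a\ref{enu_Resol}$\rangle$ and $\det\mathbf{G}(e^{i\theta_\ell})\neq 0$ by $\langle$a\ref{enu_Zero}$\rangle$, these frequencies are disjoint from the $\chi_j$ and the loop gain can be driven to infinity there. I would translate this requirement into a family of \emph{boundary} interpolation conditions on the same parameter $\mathbf{Q}$ at the points $e^{i\theta_\ell}$. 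With interior and boundary data assembled, the existence of an admissible rational $\mathbf{Q}\in H^\infty(\mathbb{E}_1,\mathbb{C}^{p\times p})$ is exactly the solvability of the boundary Nevanlinna--Pick problem, Problem~A.7; rationality of $\mathbf{Q}$ keeps the realization \eqref{eq:controller} finite-dimensional, and the construction yields both power stability and the $p$-copy internal model, so Theorem~\ref{thm:internal_model} completes the argument.

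I expect the main obstacle to lie in the third step for the multi-input, multi-output case: converting the operator condition $\dim\ker(e^{i\theta_\ell}I-P)\geq p$ into explicit matrix-valued boundary interpolation data for $\mathbf{Q}$, and confirming that these boundary constraints are compatible with the interior stabilization constraints so that the combined Nevanlinna--Pick problem stays solvable within the required norm bound. This is exactly where the simplicity assumption $\langle$a\ref{enu_Simple}$\rangle$ and the preliminary lemmas of Section~\ref{subsec:Prelim_lemma_DT} are needed; in the scalar case $p=1$ the translation becomes elementary and $\langle$a\ref{enu_Simple}$\rangle$ is no longer required.
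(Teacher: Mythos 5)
Your proposal follows essentially the same route as the paper's proof: stabilization is encoded as interior interpolation conditions on a free parameter via the B\'ezout identity (Lemmas~\ref{lem:det_zeros} and \ref{lem:Bezout_interpolation}), the $p$-copy internal model is encoded as boundary interpolation conditions at the $e^{i\theta_\ell}$, and the combined problem is solved by the Nevanlinna--Pick theory of the appendix, whose Theorem~A.8 shows the boundary data never obstruct solvability provided the boundary values stay inside the norm bound (which is exactly what the $2\delta^*$ estimate of Lemma~\ref{lem:nonzero_int_cond} and the choice $M_1=\max\{2\delta^*,M\}$ guarantee). The ``main obstacle'' you flag is resolved in the paper precisely as you anticipate --- via the boundary conditions \eqref{eq:Y_cond} on $\mathbf{Y}_+$, plus the factorization $\mathbf{K}=\mathbf{K}_1\mathbf{K}_2$ and Lemma~\ref{lem:no_pole_zero_can} to pass from the pole structure of the transfer function to the realization-level condition $\dim\ker(e^{i\theta_\ell}I-P)\geq p$.
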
	

\subsection{Preliminary lemmas}
\label{subsec:Prelim_lemma_DT}
Before proceeding to the proof of Theorem~\ref{thm:existence_servo_cont},
we show three preliminary results, all of which
are used for the multi-input and multi-output case $p>1$.
Hence the readers who are interested only in the single-input
and single-output case $p=1$ can skip this subsection.

The first lemma provides an upper bound on the norm of inverse matrices.
\begin{lemma}
	\label{lem:M2inverse}
	Let $V,W \in \mathbb{C}^{p \times p}$.
	If $V$ is invertible and if 
	\[\|V^{-1}\|_{\mathbb{C}^{p \times p}} \cdot \|V - W\|_{\mathbb{C}^{p \times p}} < 1,\] then $W$ is also invertible and 
	\begin{equation}
		\label{eq:M2_inverse}
		\|W^{-1}\|_{\mathbb{C}^{p \times p}} \leq 
		\frac{\|V^{-1}\|_{\mathbb{C}^{p \times p}}}{1 - \|V^{-1}\|_{\mathbb{C}^{p \times p}} \cdot\|V - W\|_{\mathbb{C}^{p \times p}} }.
	\end{equation}
\end{lemma}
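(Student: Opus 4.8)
The plan is to factor $W$ through $V$ and exploit a Neumann series. Writing $W = V - (V-W) = V\big(I - V^{-1}(V-W)\big)$, the problem reduces to showing that the matrix $I - V^{-1}(V-W)$ is invertible with a controlled inverse.

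First I would set $E := V^{-1}(V-W)$ and observe, by submultiplicativity of the induced operator norm on $\mathbb{C}^{p\times p}$, that $\|E\|_{\mathbb{C}^{p \times p}} \leq \|V^{-1}\|_{\mathbb{C}^{p \times p}} \cdot \|V-W\|_{\mathbb{C}^{p \times p}} < 1$ by hypothesis. Since $\|E\|_{\mathbb{C}^{p \times p}}$ is strictly less than one, the Neumann series $\sum_{k=0}^{\infty} E^k$ converges absolutely in $\mathbb{C}^{p\times p}$ and furnishes the inverse of $I - E$, together with the standard estimate $\|(I-E)^{-1}\|_{\mathbb{C}^{p \times p}} \leq \big(1 - \|E\|_{\mathbb{C}^{p \times p}}\big)^{-1} \leq \big(1 - \|V^{-1}\|_{\mathbb{C}^{p \times p}} \cdot \|V-W\|_{\mathbb{C}^{p \times p}}\big)^{-1}$.

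Next, since $W = V(I-E)$ is a product of two invertible matrices, $W$ is invertible with $W^{-1} = (I-E)^{-1} V^{-1}$. Taking norms and applying submultiplicativity once more yields $\|W^{-1}\|_{\mathbb{C}^{p \times p}} \leq \|(I-E)^{-1}\|_{\mathbb{C}^{p \times p}} \cdot \|V^{-1}\|_{\mathbb{C}^{p \times p}}$, and inserting the bound from the previous step gives exactly \eqref{eq:M2_inverse}.

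There is no genuine obstacle here: the statement is the finite-dimensional special case of the standard fact that the invertible matrices form an open set, and the only point requiring a line of justification is the convergence and norm estimate for the Neumann series, which is immediate once $\|E\|_{\mathbb{C}^{p \times p}}<1$. One could alternatively bypass series entirely and argue that $Wx=0$ forces $(I-E)x=0$, whence $\|x\| = \|Ex\| \leq \|E\|_{\mathbb{C}^{p \times p}}\,\|x\| < \|x\|$ unless $x=0$, establishing invertibility of $W$; but the Neumann-series route has the advantage of delivering the quantitative bound \eqref{eq:M2_inverse} directly rather than merely proving invertibility.
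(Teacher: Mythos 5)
Your proof is correct. It reaches the same conclusion as the paper but by a slightly different mechanism for the quantitative bound. The paper also begins by observing $\|I - V^{-1}W\|_{\mathbb{C}^{p\times p}} \leq \|V^{-1}\|_{\mathbb{C}^{p\times p}}\cdot\|V-W\|_{\mathbb{C}^{p\times p}} < 1$ to conclude invertibility of $W$, so that part coincides with yours. For the estimate \eqref{eq:M2_inverse}, however, the paper does not invoke the Neumann series bound $\|(I-E)^{-1}\|_{\mathbb{C}^{p\times p}} \leq (1-\|E\|_{\mathbb{C}^{p\times p}})^{-1}$; instead it uses the resolvent-type identity $V^{-1} - W^{-1} = V^{-1}(W-V)W^{-1}$, bounds $\|W^{-1}\|_{\mathbb{C}^{p\times p}} \leq \|V^{-1}\|_{\mathbb{C}^{p\times p}} + \|V^{-1}-W^{-1}\|_{\mathbb{C}^{p\times p}}$ by the triangle inequality, and then solves the resulting implicit inequality for $\|W^{-1}\|_{\mathbb{C}^{p\times p}}$. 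Your route is marginally more self-contained in that it produces the bound in one explicit step; the paper's route avoids series altogether and works with a single algebraic identity, at the cost of an extra rearrangement. Both are standard and both deliver exactly \eqref{eq:M2_inverse}; there is nothing to fix.
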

\begin{proof}
	Since 
	\[
	\|I - V^{-1} W\|_{\mathbb{C}^{p \times p}} \leq \|V^{-1}\|_{\mathbb{C}^{p \times p}}  \cdot \|V - W\|_{\mathbb{C}^{p \times p}} < 1,
	\]
	it follows that $V^{-1}W$ and hence $W$ are invertible.
	
	Using the identity
	\[
	V^{-1} - W^{-1} = V^{-1} (W - V) W^{-1},
	\]
	we obtain
	\[
	\|
	V^{-1} - W^{-1} 
	\|_{\mathbb{C}^{p \times p}}
	\leq 
	\|
	V^{-1}
	\|_{\mathbb{C}^{p \times p}}\cdot
	\|
	V - W
	\|_{\mathbb{C}^{p \times p}}	\cdot
	\|
	W^{-1}
	\|_{\mathbb{C}^{p \times p}}.
	\]
	This yields
	\begin{align*}
		\|
		W^{-1}
		\|_{\mathbb{C}^{p \times p}}	
		&\leq 
		\|
		V^{-1}
		\|_{\mathbb{C}^{p \times p}} + 	\|
		V^{-1} - W^{-1} 
		\|_{\mathbb{C}^{p \times p}} \\
		&\leq 
		\|
		V^{-1}
		\|_{\mathbb{C}^{p \times p}}  + 	\|
		V^{-1}
		\|_{\mathbb{C}^{p \times p}}\cdot
		\|
		V - W
		\|_{\mathbb{C}^{p \times p}}	\cdot
		\|
		W^{-1}
		\|_{\mathbb{C}^{p \times p}}.
	\end{align*}
	Thus, we obtain the desired inequality \eqref{eq:M2_inverse}.
	\qed
\end{proof}

The second preliminary result characterizes adjugate matrices.
\begin{lemma}
	\label{lem:det_zeros}
	For a region $\Omega \subset \mathbb{C}$,
	consider a holomorphic function $\Delta: \Omega \to \mathbb{C}^{p \times p}$.
	Suppose that $z_0 \in \Omega$ is a simple zero of $\det \Delta$. 
	Then $\dim \ker \Delta(z_0) = 1$.
	Furthermore, if a nonzero vector $\psi \in \mathbb{C}^p$ satisfies
	$\ker \Delta(z_0)^* = \{\alpha \psi:\alpha \in \mathbb{C}\}$, then
	there exist $\alpha_1,\dots,\alpha_p \in \mathbb{C}$ such that 
	$\alpha _\ell \not= 0$ for some $\ell \in \{1,\dots,p \}$ and 
	$\Delta^{\rm adj}(z_0)$ can be written as
	\begin{equation}
		\label{eq:adj_vector}
		\Delta^{\rm adj}(z_0) = 
		\begin{bmatrix}
			\alpha_1 \psi^* \\ \vdots \\ \alpha_p \psi^*
		\end{bmatrix}.
	\end{equation}
\end{lemma}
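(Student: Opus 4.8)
The plan is to extract everything from two classical facts about the adjugate: Jacobi's formula for the derivative of a determinant, and the defining identity $\Delta(z)\,\Delta^{\rm adj}(z)=\Delta^{\rm adj}(z)\,\Delta(z)=\det(\Delta(z))\,I$. First I would establish $\dim\ker\Delta(z_0)=1$. Since $z_0$ is a simple zero of $d:=\det\Delta$, we have $d(z_0)=0$ but $d'(z_0)\neq 0$. Jacobi's formula gives $d'(z)=\operatorname{tr}\!\big(\Delta^{\rm adj}(z)\,\Delta'(z)\big)$, so $d'(z_0)\neq 0$ forces $\Delta^{\rm adj}(z_0)\neq 0$. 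As the entries of $\Delta^{\rm adj}(z_0)$ are, up to sign, the $(p-1)\times(p-1)$ minors of $\Delta(z_0)$, this means $\Delta(z_0)$ has rank at least $p-1$; together with $\det\Delta(z_0)=0$, which forces rank at most $p-1$, the rank is exactly $p-1$, whence $\dim\ker\Delta(z_0)=1$.

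For the structure of the adjugate, I would evaluate the defining identity at $z_0$, where $\det\Delta(z_0)=0$, to obtain $\Delta(z_0)\,\Delta^{\rm adj}(z_0)=0$ and $\Delta^{\rm adj}(z_0)\,\Delta(z_0)=0$. The first identity shows $\im\Delta^{\rm adj}(z_0)\subset\ker\Delta(z_0)$, which is one-dimensional by the previous paragraph; since $\Delta^{\rm adj}(z_0)\neq 0$, picking $\phi$ spanning $\ker\Delta(z_0)$ I can factor $\Delta^{\rm adj}(z_0)=\phi\, r$ for a nonzero row vector $r\in\mathbb{C}^{1\times p}$. Plugging this into the second identity gives $\phi\,(r\,\Delta(z_0))=0$, and since $\phi\neq 0$ the row vector satisfies $r\,\Delta(z_0)=0$. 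Taking conjugate transposes, $\Delta(z_0)^*r^*=0$, so $r^*\in\ker\Delta(z_0)^*=\{\alpha\psi:\alpha\in\mathbb{C}\}$; hence $r^*=\beta\psi$ for some $\beta\in\mathbb{C}$ and $r=\overline{\beta}\,\psi^*$. Writing $\phi=(\phi_1,\dots,\phi_p)^\top$ and setting $\alpha_\ell:=\overline{\beta}\,\phi_\ell$, the $\ell$-th row of $\Delta^{\rm adj}(z_0)=\overline{\beta}\,\phi\,\psi^*$ is exactly $\alpha_\ell\psi^*$, which is the claimed form \eqref{eq:adj_vector}. Because $\Delta^{\rm adj}(z_0)\neq 0$ while $\psi\neq 0$, we must have $\overline{\beta}\neq 0$ and $\phi\neq 0$, so $\alpha_\ell\neq 0$ for at least one $\ell$.

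The only genuinely substantive step is the very first one: converting the analytic hypothesis ``simple zero'' into the purely algebraic statement $\Delta^{\rm adj}(z_0)\neq 0$. Jacobi's formula is the natural bridge, and once the adjugate is known to be nonzero the remaining argument is elementary linear algebra built on the rank-one image/kernel bookkeeping. I would double-check only the handling of conjugate transposes, carefully distinguishing $\ker\Delta(z_0)$ from $\ker\Delta(z_0)^*$, since the asymmetry between the right and left null spaces is exactly where adjoint and conjugation errors tend to creep in.
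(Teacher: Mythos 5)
Your proof is correct, but it reaches the conclusion by a genuinely different route from the paper's. For $\dim\ker\Delta(z_0)=1$, the paper argues by contradiction: if the kernel were at least two-dimensional, a change of basis would make two columns of $\Delta$ vanish at $z_0$, each contributing a factor of $(z-z_0)$ to $\det\Delta$ and contradicting simplicity of the zero. You instead invoke Jacobi's formula to get $\Delta^{\rm adj}(z_0)\neq 0$ directly from $(\det\Delta)'(z_0)\neq 0$, and then read off $\operatorname{rank}\Delta(z_0)=p-1$ from the nonvanishing of some $(p-1)\times(p-1)$ minor. This is cleaner and, crucially, it hands you the nonvanishing of the adjugate up front. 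The paper does not have this fact available and therefore must run a separate contradiction argument at the end (factoring $\Delta^{\rm adj}(s)=(s-z_0)F(s)$ and $\det\Delta(s)=(s-z_0)f(s)$, substituting into Cramer's rule to get $\Delta F=fI$, and deriving $f(z_0)\psi^*=0$) just to show that some $\alpha_\ell$ is nonzero; in your argument this is immediate from $\Delta^{\rm adj}(z_0)=\overline{\beta}\,\phi\psi^*\neq 0$. For the row structure itself both proofs rest on $\Delta^{\rm adj}(z_0)\Delta(z_0)=0$; the paper concludes directly that each row lies in $\ker\Delta(z_0)^*$ (transposed), while you pass through the rank-one factorization $\phi\,r$, which is equivalent but also pins down the column structure. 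Your handling of the conjugate transposes ($r^*=\beta\psi$ giving $r=\overline{\beta}\,\psi^*$) is correct. The trade-off is that your argument leans on Jacobi's formula as an external ingredient, whereas the paper's is self-contained, using only holomorphic factorization of vanishing columns and Cramer's rule; but yours is shorter and avoids the second contradiction argument entirely.
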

\begin{proof}
	Suppose, to get a contradiction, that $\dim \ker \Delta(z_0) \geq 2$.
	There exist nonzero vectors $\psi_1,\psi_2 \in \mathbb{C}^p$ such that $\psi_1, \psi_2$ are linearly independent and
	$\Delta(z_0)\psi_1 = 0$, $\Delta(z_0)\psi_2 = 0$. 
	Let ${\rm e}_1,\dots,{\rm e}_p$ be the standard basis of the $p$-dimensional Euclidean space.
	There exists an invertible matrix $U \in \mathbb{C}^{p \times p}$ such that  
	$\psi_1 = U {\rm e}_1$ and $\psi_2 = U {\rm e}_2$.
	Let us denote by $\Delta_\ell$
	the $\ell$th column vector of the product $\Delta U$.
	Then 
	\[
	\Delta_\ell(z_0) = \Delta(z_0)U{\rm e}_\ell =\Delta(z_0)\psi_\ell = 0\qquad \forall \ell \in \{1,2\}.
	\]
	Since each element of $\Delta U$ is  holomorphic, there exist
	vector-valued functions $\widehat \Delta_1$ and $\widehat \Delta_2$ with each entry holomorphic such that 
	$\Delta_1(z) = (z-z_0) \widehat \Delta_1(z)$ and
	$\Delta_2(z) = (z-z_0) \widehat \Delta_2(z)$.
	Thus,
	\begin{align*}
		\det \Delta(z) &= 
		\det (\Delta(z)  U)  \det U^{-1} \\
		&= (z - z_0 )^2 \det
		\begin{bmatrix}
			\widehat \Delta_1(z) &~~ \widehat \Delta_2(z) &~~ \Delta_3(z)
			&~~ \cdots &~~  \Delta_p(s)
		\end{bmatrix}
		\det U^{-1},
	\end{align*}
	which contradicts that $z_0$ is a simple zero.
	
	To prove the second assertion,  we employ Cramer's rule
	\begin{equation}
		\label{eq:cramers_rule}
		\Delta \Delta^{\rm adj} =  \Delta^{\rm adj}  \Delta = \det \Delta \cdot I.
	\end{equation}
	We obtain
	\[
	\Delta^{\rm adj}  (z_0) \Delta (z_0) = \det \Delta (z_0) I = 0.
	\]
	Since $\ker 
	\Delta(z_0)^*= 
	\{
	\alpha \psi :\alpha \in \mathbb{C}
	\}$, it follows that 
	all  the row vectors of $\Delta^{\rm adj}  (z_0)$ can be written as 
	$\alpha \psi^*$ for some $\alpha \in \mathbb{C}$.
	Thus \eqref{eq:adj_vector} holds.
	
	Finally, let us show the existence of a nonzero coefficient $\alpha_\ell$. By contradiction, assume that
	$\alpha_\ell = 0$ in \eqref{eq:adj_vector} for every $\ell \in \{1,\dots,p \}$. Then $\Delta^{\rm adj}(z_0) = 0$.
	Since $\Delta^{\rm adj}$ and $\det \Delta$ are holomorphic, 
	then there exist holomorphic functions $F$ and $f$ such that 
	\begin{equation}
	\label{eq:F_f}
	\Delta^{\rm adj}(s) = (s-z_0) F,\qquad \det \Delta(s) = (s-z_0) f.
	\end{equation}
	Since $z_0$ is a simple zero of $\det \Delta$, it follows that $f(z_0) \not=0$.
	Substituting \eqref{eq:F_f} to Cramer's rule \eqref{eq:cramers_rule}, we obtain
	\[
	\Delta F = f I.
	\]
	It follows that 
	\[
	0 = \psi^* \Delta(z_0) F(z_0)= f(z_0) \psi^*,
	\]
	which contradicts $f(z_0) \not= 0$ and $\psi \not=0$.
	\qed
\end{proof}

The third preliminary lemma provides a stabilizable and 
detectable realization of the series interconnection of two finite-dimensional 
systems.
\begin{lemma}
	\label{lem:no_pole_zero_can}
	For $\ell \in \{1,2\}$, consider the matrix pair $(P_{\ell},Q_\ell,R_\ell,S_\ell)$ with appropriate 
	dimensions and define
	the transfer function
	\[
	\mathbf{K}_\ell(z) := R_\ell(zI - P_\ell)^{-1}Q_\ell + S_\ell.
	\]
	Assume that $\sigma(P_1) \cap \sigma(P_2) \cap \cl(\mathbb{E}_1)= \emptyset$.
	Assume also that 
	$\mathbf{K}_1(\lambda)$ is full column rank for every $\lambda \in \sigma(P_2)\cap 
	\cl(\mathbb{E}_1)$ and that
	$\mathbf{K}_2(\lambda)$ is full row rank for every $\lambda \in \sigma(P_1) \cap 
	\cl(\mathbb{E}_1)$.
	If $(P_\ell,Q_\ell,R_\ell,S_\ell)$ is stabilizable and detectable for $\ell \in \{1,2\}$, then
	the realization of $\mathbf{K}_1\mathbf{K}_2$ given by
	\begin{equation}
		\label{eq:K1K2_realization}
		\left(
		\begin{bmatrix}
			P_1 & ~~Q_1R_2 \\ 0 & ~~P_2
		\end{bmatrix},~
		\begin{bmatrix}
			Q_1 S_2 \\  Q_2
		\end{bmatrix},~
		\begin{bmatrix}
			R_1 &  ~~S_1R_2
		\end{bmatrix},~
		S_1S_2
		\right)
	\end{equation}
	is stabilizable and detectable.
\end{lemma}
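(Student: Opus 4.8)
The plan is to verify stabilizability and detectability of the realization \eqref{eq:K1K2_realization} directly through the Hautus (PBH) test on the closed unstable region $\cl(\mathbb{E}_1)$. Recall that in this discrete-time setting a pair $(P,Q)$ is stabilizable if and only if $\begin{bmatrix} \lambda I - P & Q \end{bmatrix}$ has full row rank for every $\lambda \in \cl(\mathbb{E}_1)$, and dually a pair $(P,R)$ is detectable if and only if $\begin{bmatrix} \lambda I - P \\ R \end{bmatrix}$ has full column rank for every such $\lambda$. The two verifications are formally dual, so I would carry out the stabilizability argument in full and obtain detectability by transposition, interchanging the roles of the two subsystems and of ``full row rank'' and ``full column rank''.

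For stabilizability, denote the generator, input, and output matrices in \eqref{eq:K1K2_realization} by $\widehat P$, $\widehat Q$, and $\widehat R$. Suppose, to get a contradiction, that $(\widehat P,\widehat Q)$ is not stabilizable. Then there are $\lambda \in \cl(\mathbb{E}_1)$ and a nonzero left vector $\begin{bmatrix} w_1^* & w_2^* \end{bmatrix}$ with $\begin{bmatrix} w_1^* & w_2^* \end{bmatrix}\begin{bmatrix} \lambda I - \widehat P & \widehat Q \end{bmatrix} = 0$. Reading off the three block columns gives
\begin{align*}
	w_1^*(\lambda I - P_1) &= 0, \\
	w_2^*(\lambda I - P_2) &= w_1^* Q_1 R_2, \\
	w_1^* Q_1 S_2 + w_2^* Q_2 &= 0.
\end{align*}
I would then split into two cases. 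If $w_1 = 0$, the last two relations reduce to $w_2^*\begin{bmatrix} \lambda I - P_2 & Q_2 \end{bmatrix} = 0$ with $w_2 \neq 0$, contradicting the stabilizability of $(P_2,Q_2)$. If $w_1 \neq 0$, the first relation forces $\lambda \in \sigma(P_1) \cap \cl(\mathbb{E}_1)$; since then $\lambda \in \cl(\mathbb{E}_1)$, the hypothesis $\sigma(P_1) \cap \sigma(P_2) \cap \cl(\mathbb{E}_1) = \emptyset$ yields $\lambda \notin \sigma(P_2)$, so $\lambda I - P_2$ is invertible and the second relation gives $w_2^* = w_1^* Q_1 R_2(\lambda I - P_2)^{-1}$. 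Substituting into the third relation collapses the bracket exactly into $\mathbf{K}_2(\lambda)$, producing $w_1^* Q_1\, \mathbf{K}_2(\lambda) = 0$. Because $\mathbf{K}_2(\lambda)$ has full row rank for $\lambda \in \sigma(P_1) \cap \cl(\mathbb{E}_1)$ by hypothesis, this forces $w_1^* Q_1 = 0$, whence $w_1^*\begin{bmatrix} \lambda I - P_1 & Q_1 \end{bmatrix} = 0$ with $w_1 \neq 0$ contradicts the stabilizability of $(P_1,Q_1)$.

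The detectability of $(\widehat P,\widehat R)$ follows the dual pattern. A nonzero right null vector $\begin{bmatrix} v_1 \\ v_2 \end{bmatrix}$ of $\begin{bmatrix} \lambda I - \widehat P \\ \widehat R \end{bmatrix}$ yields the relations $(\lambda I - P_2)v_2 = 0$, $(\lambda I - P_1)v_1 = Q_1 R_2 v_2$, and $R_1 v_1 + S_1 R_2 v_2 = 0$. The case $v_2 = 0$ contradicts the detectability of $(P_1,R_1)$, while $v_2 \neq 0$ forces $\lambda \in \sigma(P_2) \cap \cl(\mathbb{E}_1)$, hence invertibility of $\lambda I - P_1$, and after substitution the identity $\mathbf{K}_1(\lambda) R_2 v_2 = 0$; the full-column-rank hypothesis on $\mathbf{K}_1(\lambda)$ then gives $R_2 v_2 = 0$ and a contradiction with the detectability of $(P_2,R_2)$. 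The computation is otherwise mechanical; the only point requiring care is threading the two rank hypotheses to the correct half of the proof — the full row rank of $\mathbf{K}_2$ enters only in the stabilizability argument and the full column rank of $\mathbf{K}_1$ only in the detectability argument — together with invoking the disjoint-spectra assumption precisely to invert the resolvent of the other subsystem in the mixed case.
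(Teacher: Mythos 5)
Your proposal is correct and follows essentially the same route as the paper: a PBH contradiction argument for stabilizability that uses the disjoint-spectra hypothesis to invert $\lambda I - P_2$ and the full row rank of $\mathbf{K}_2(\lambda)$ to kill $w_1^*Q_1$, with detectability obtained by the dual computation. The only cosmetic difference is that you organize the case split by whether $w_1=0$ rather than by whether $\lambda$ lies in $\sigma(P_1)$ or $\sigma(P_2)$, which amounts to the same thing.
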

\begin{proof}
	It is well known that \eqref{eq:K1K2_realization} is
	a realization of $\mathbf{K}_1\mathbf{K}_2$; see, e.g.,
	Section 3.6 of \cite{zhou1996}.
	It suffices to show that the realization \eqref{eq:K1K2_realization}
	is stabilizable and detectable.
	
	Assume, to reach a contradiction, that the realization \eqref{eq:K1K2_realization}
	is not stabilizable.
	Then there exist an eigenvalue $\lambda \in \sigma(P_1) \cup \sigma(P_2) $ with $|\lambda| \geq 1$ and
	vectors $\psi_1,\psi_2$ such that 
	\[
	\begin{bmatrix}
	\psi_1 \\ \psi_2 
	\end{bmatrix} \not= 0,\quad
	\begin{bmatrix}
	\psi_1^* & ~~\psi_2^*
	\end{bmatrix}
	\begin{bmatrix}
	\lambda I - P_1 & ~~-Q_1 R_2 \\
	0 & ~~\lambda I - P_2
	\end{bmatrix}	= 0,\quad
	\begin{bmatrix}
	\psi_1^* & ~~\psi_2^*
	\end{bmatrix}
	\begin{bmatrix}
	Q_1 S_2 \\  Q_2
	\end{bmatrix}
	=0.
	\]
	
	For the case $\lambda \in\sigma (P_2)$, 
	we obtain $\lambda \in \varrho(P_1)$ by the assumption 
	$\sigma(P_1) \cap \sigma(P_2) \cap \cl(\mathbb{E}_1)= \emptyset$,
	and hence $\psi_1 = 0$ from $\psi_1^*(\lambda I -P_1) = 0$.
	Therefore,
	\[
	\psi_2^* (\lambda I - P_2) = 0, \quad
	\psi_2^*Q_2 = 0. 
	\]
	Using the stabilizability of $(P_2,Q_2)$, we find $\psi_2=0$. This is a contradiction.
	
	Suppose next that $\lambda \in \sigma(P_1)$. Then $\lambda I - P_2$ is invertible
	by the assumption
	$\sigma(P_1) \cap \sigma(P_2) \cap \cl(\mathbb{E}_1)= \emptyset$.
	Therefore,
	\begin{equation}
		\label{eq:w1_w2_relation}
		\psi_2^* = \psi_1^* Q_1R_2 (\lambda I - P_2)^{-1}.
	\end{equation}
	We obtain 
	\[
	\psi_1^*Q_1 \mathbf{K}_2(\lambda)=
	\psi_1^*Q_1(R_2 (\lambda I - P_2)^{-1}Q_2  +S_2) = \begin{bmatrix}
	\psi_1^* & ~~\psi_2^*
	\end{bmatrix}
	\begin{bmatrix}
	Q_1 S_2 \\  Q_2
	\end{bmatrix} = 0.
	\]
	Since $ \mathbf{K}_2(\lambda)$ is full row rank, it follows that $\psi_1^*Q_1 = 0$.
	Together with $\psi_1^*(\lambda I - P_1) = 0$, this implies  $\psi_1= 0$ by
	the stabilizability of $(P_1,Q_1)$. Hence $\psi_2 = 0$ by \eqref{eq:w1_w2_relation}.
	This is a contradiction.
	Thus, the realization \eqref{eq:K1K2_realization}
	is  stabilizable. The detectability of the realization \eqref{eq:K1K2_realization}
	can be obtained in a similar way.
	\qed
\end{proof}

\subsection{Proof of Theorem~\ref{thm:existence_servo_cont}}
\label{subsec:Proof_main_result}
Let us start to prove Theorem~\ref{thm:existence_servo_cont}, by using 
Lemmas~\ref{lem:M2inverse}--\ref{lem:no_pole_zero_can}.
%We obtain an upper bound on the norm of a certain matrix-valued function at $z = e^{i\theta_{\ell}}$ for every
%$\ell \in \{1,\dots,n \}$.
To construct finite-dimensional regulating controllers,
we approximate the infinite-dimensional stable part $\bf G^-$ in \eqref{eq:G_plus_minus}
by
a rational function.
In the next result, the approximation error is used 
to characterize  the norm of a certain matrix,
which will appear in interpolation conditions on the boundary $\mathbb{T}$.
\begin{lemma}
	\label{lem:nonzero_int_cond}
	Assume that {\rm $\langle$a\ref{enu_Resol}$\rangle$--$\langle$a\ref{enu_Simple}$\rangle$} hold.
	Define
	\begin{equation}
		\label{eq:delta_star_def}
		\delta^* := \max \left\{\left\|
	%	\big(\mathbf{D}_+(e^{i\theta_\ell})\mathbf{G}(e^{i\theta_\ell})\big)^{-1}\right\|_{\mathbb{C}^{p \times p}}	: \ell \in \{1,\dots,n\} \right\}.
		(\mathbf{D}_+\mathbf{G})^{-1}(e^{i\theta_\ell})\right\|_{\mathbb{C}^{p \times p}}	: \ell \in \{1,\dots,n\} \right\}.
	\end{equation}
	For every rational function $\mathbf{R} \in H^{\infty}(\mathbb{E}_1,\mathbb{C}^{p \times p})$ satisfying
	\begin{equation}
		\label{eq:R_cond}
		\| 
		\mathbf{G}^- - \mathbf{R}
		\|_{H^{\infty}(\mathbb{E}_1)}  < \frac{1}{2 \delta^*\|\mathbf{D}_+\|_{H^{\infty}(\mathbb{E}_1)} },
	\end{equation}
	we obtain 
	\begin{equation}
		\label{eq:ND_min}
		\left\|(\mathbf{N}_++ \mathbf{D}_+ \mathbf{R})^{-1}(e^{i\theta_\ell}) \right\|_{\mathbb{C}^{p \times p}}	
		< 2\delta^*\qquad \forall \ell \in \{1,\dots,n  \}.
	\end{equation}
\end{lemma}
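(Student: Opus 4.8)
The plan is to apply the perturbation bound of Lemma~\ref{lem:M2inverse} pointwise at each frequency $e^{i\theta_\ell}$, taking the unperturbed matrix to be $V := (\mathbf{D}_+\mathbf{G})(e^{i\theta_\ell})$ and the perturbed matrix to be $W := (\mathbf{N}_++ \mathbf{D}_+ \mathbf{R})(e^{i\theta_\ell})$. The bridge between the two is the factorization identity
\[
\mathbf{D}_+\mathbf{G} = \mathbf{D}_+(\mathbf{G}^+ + \mathbf{G}^-) = \mathbf{N}_+ + \mathbf{D}_+\mathbf{G}^-,
\]
which follows at once from $\mathbf{G} = \mathbf{G}^+ + \mathbf{G}^-$ and $\mathbf{D}_+\mathbf{G}^+ = \mathbf{N}_+$. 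Consequently $V - W = \mathbf{D}_+(e^{i\theta_\ell})\,(\mathbf{G}^- - \mathbf{R})(e^{i\theta_\ell})$, so that replacing $\mathbf{G}^-$ by its rational approximant $\mathbf{R}$ is precisely the perturbation whose size is controlled by the hypothesis \eqref{eq:R_cond}.

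For the quantitative step, note first that $V$ is invertible with $\|V^{-1}\|_{\mathbb{C}^{p \times p}} \leq \delta^*$ directly from the definition \eqref{eq:delta_star_def}. Next I would estimate
\[
\|V - W\|_{\mathbb{C}^{p \times p}} \leq \|\mathbf{D}_+(e^{i\theta_\ell})\|_{\mathbb{C}^{p \times p}} \cdot \|(\mathbf{G}^- - \mathbf{R})(e^{i\theta_\ell})\|_{\mathbb{C}^{p \times p}} \leq \|\mathbf{D}_+\|_{H^{\infty}(\mathbb{E}_1)} \cdot \|\mathbf{G}^- - \mathbf{R}\|_{H^{\infty}(\mathbb{E}_1)} < \frac{1}{2\delta^*},
\]
the last inequality being \eqref{eq:R_cond}. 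This gives $\|V^{-1}\|_{\mathbb{C}^{p \times p}}\cdot\|V-W\|_{\mathbb{C}^{p \times p}} < \delta^* \cdot \tfrac{1}{2\delta^*} = \tfrac12 < 1$, so Lemma~\ref{lem:M2inverse} applies and yields
\[
\|W^{-1}\|_{\mathbb{C}^{p \times p}} \leq \frac{\|V^{-1}\|_{\mathbb{C}^{p \times p}}}{1 - \|V^{-1}\|_{\mathbb{C}^{p \times p}}\cdot\|V-W\|_{\mathbb{C}^{p \times p}}} < \frac{\delta^*}{1 - 1/2} = 2\delta^*,
\]
which is exactly \eqref{eq:ND_min}.

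The only analytic point requiring care, and the main (if mild) obstacle, is the passage from boundary values to $H^{\infty}$ norms used above, namely $\|\Phi(e^{i\theta_\ell})\|_{\mathbb{C}^{p \times p}} \leq \|\Phi\|_{H^{\infty}(\mathbb{E}_1)}$ for $\Phi \in \{\mathbf{D}_+,\ \mathbf{G}^- - \mathbf{R}\}$. This is not wholly automatic, since the $H^{\infty}(\mathbb{E}_1)$ norm is a supremum over the open region $\mathbb{E}_1$ whereas $e^{i\theta_\ell} \in \mathbb{T}$ lies on its boundary. I would justify it by observing that $\mathbf{D}_+$ and $\mathbf{R}$ are rational functions in $H^{\infty}(\mathbb{E}_1)$, hence have no poles in $\cl(\mathbb{E}_1)$, while $\mathbf{G}^-$ is holomorphic on $\mathbb{E}_\eta \supset \cl(\mathbb{E}_1)$ by $\langle$a\ref{enu_A_minus_EXS}$\rangle$; thus each such $\Phi$ extends continuously to $\cl(\mathbb{E}_1)$, and its value at the boundary point $e^{i\theta_\ell}$, being a limit of interior values, is bounded by $\|\Phi\|_{H^{\infty}(\mathbb{E}_1)}$. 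Invertibility of $V$ is in turn guaranteed by $\langle$a\ref{enu_Zero}$\rangle$ together with $\langle$a\ref{enu_Resol}$\rangle$, since $e^{i\theta_\ell}\in\varrho(A)$ prevents $e^{i\theta_\ell}$ from being a zero of $\det\mathbf{D}_+$; this is what makes $\delta^*$ finite and the whole argument meaningful.
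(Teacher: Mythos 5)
Your proposal is correct and follows essentially the same route as the paper: both identify $(\mathbf{D}_+\mathbf{G})(e^{i\theta_\ell}) = (\mathbf{N}_+ + \mathbf{D}_+\mathbf{G}^-)(e^{i\theta_\ell})$ as the unperturbed matrix, bound the perturbation $\mathbf{D}_+(e^{i\theta_\ell})(\mathbf{G}^- - \mathbf{R})(e^{i\theta_\ell})$ via the $H^\infty$ norms and \eqref{eq:R_cond}, and apply Lemma~\ref{lem:M2inverse} to conclude. Your explicit justification of the boundary-value inequality $\|\Phi(e^{i\theta_\ell})\|_{\mathbb{C}^{p\times p}} \leq \|\Phi\|_{H^{\infty}(\mathbb{E}_1)}$ is a point the paper uses without comment, and your treatment of it is correct.
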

\begin{proof}
	The assumption $\langle$a\ref{enu_Resol}$\rangle$ yields $\det \mathbf{D}_+(e^{i\theta_\ell}) \not= 0$ for every
	$\ell \in \{1,\dots,n\}$, 
	which together with $\langle$a\ref{enu_Zero}$\rangle$ implies that $\delta^*$ is well defined.
	Since 
	\[
	\mathbf{G} = \mathbf{G}^+ + \mathbf{G}^- = \mathbf{D}_+^{-1}\mathbf{N}_+ + \mathbf{G}^-,
	\]	
	we have from \eqref{eq:delta_star_def} that 
	\[
	\left\|(\mathbf{N}_++ \mathbf{D}_+ \mathbf{G}^{-})^{-1}(e^{i\theta_\ell}) \right\|_{\mathbb{C}^{p \times p}}	 =
	\left\|(\mathbf{D}_+\mathbf{G})^{-1}(e^{i\theta_\ell})\right\|_{\mathbb{C}^{p \times p}}	 \leq \delta^*
	\]
	for every $\ell \in \{1,\dots,n  \}$.
	Moreover, for every $\ell \in \{1,\dots, n\}$,
	\begin{align*}
		\left\|\mathbf{D}_+(e^{i\theta_\ell})\right\|_{\mathbb{C}^{p \times p}}	 &\leq \|\mathbf{D}_+\|_{H^{\infty}(\mathbb{E}_1)}\\		
		\left\|\mathbf{G}^-(e^{i\theta_\ell}) - \mathbf{R}(e^{i\theta_\ell})\right\|_{\mathbb{C}^{p \times p}}	 &\leq 	
		\| 
		\mathbf{G}^- - \mathbf{R}
		\|_{H^{\infty}(\mathbb{E}_1)}.
	\end{align*}
	Thus we conclude from Lemma~\ref{lem:M2inverse} and \eqref{eq:R_cond} 
	that for every $\ell \in \{1,\dots,n \}$, the matrix 
	$(\mathbf{N}_+ + \mathbf{D}_+\mathbf{R})(e^{i\theta_\ell})$
	is invertible and satisfies
	\begin{align*}
		&\left\|(\mathbf{N}_+ + \mathbf{D}_+\mathbf{R})^{-1}(e^{i\theta_\ell})\right\|_{\mathbb{C}^{p \times p}}\\
		&\qquad \leq 
		\frac{
			\left\|(\mathbf{N}_++ \mathbf{D}_+ \mathbf{G}^{-})^{-1}(e^{i\theta_\ell})\right\|_{\mathbb{C}^{p \times p}}	 
		}
		{
			1 - \left\|(\mathbf{N}_++ \mathbf{D}_+ \mathbf{G}^{-})^{-1}(e^{i\theta_\ell})\right\|_{\mathbb{C}^{p \times p}}	 \cdot
		 \|\mathbf{D}_+\|_{H^{\infty}(\mathbb{E}_1)} \cdot
			\| 
			\mathbf{G}^-  - \mathbf{R}
			\|_{H^{\infty}(\mathbb{E}_1)} 
		} \\
		&\qquad < 2\delta^*,
	\end{align*}
	which is the desired inequality.
	\qed
\end{proof}

For the rational functions $\mathbf{N}_+, \mathbf{D}_+$, which are left coprime over the sets of
rational functions in $H^{\infty}(\mathbb{E}_1,\mathbb{C}^{p \times p})$,
there exists a strictly proper rational function
$\mathbf{Y}_+ \in H^{\infty}(\mathbb{E}_1,\mathbb{C}^{p \times p})$ and 
a rational function $\mathbf{Z}_+\in H^{\infty}(\mathbb{E}_1,\mathbb{C}^{p \times p})$ such that
the B\'ezout identity
\begin{equation}
	\label{eq:bezout}
	\mathbf{N}_+ \mathbf{Y}_+ + \mathbf{D}_+ \mathbf{Z}_+ = I
\end{equation}
holds; see, e.g., Lemma~5.2.9 of \cite{vidyasagar1985} and its proof.
%Fix such $\mathbf{Y}_+, \mathbf{Z}_+$ arbitrarily, and
%assume that $M > 0$ satisfies 
%\[
%\|\mathbf{Y}_+\|_{H^{\infty}(\mathbb{E}_1)} < M.
%\]
We provide interpolation conditions that such a rational function
$\mathbf{Y}_+ \in H^{\infty}(\mathbb{E}_1,\mathbb{C}^{p \times p})$ satisfies,
as in Theorem~IV.3 of \cite{Wakaiki2014}. To that purpose, we see from 
Lemma~\ref{lem:det_zeros} and $\langle$a\ref{enu_Simple}$\rangle$ that,
for every $r \in \{1,\dots,\Upsilon\}$, there exists a nonzero vector $\psi_r \in \mathbb{C}^{p}$ such that 
$\ker \mathbf{D}_+(\chi_r)^* = \{
\alpha \psi_r: \alpha \in \mathbb{C}
\}$.
\begin{lemma}
	\label{lem:Bezout_interpolation}
	Suppose that {\rm $\langle$a\ref{enu_Resol}$\rangle$--$\langle$a\ref{enu_Simple}$\rangle$} are satisfied.
	A rational function  $\mathbf{Y}_+\in H^{\infty}(\mathbb{E}_1,\mathbb{C}^{p \times p})$ is strictly proper and
	satisfies the B\'ezout identity \eqref{eq:bezout}   for some rational function
	$\mathbf{Z}_+\in H^{\infty}(\mathbb{E}_1,\mathbb{C}^{p \times p})$ if and only if
	the interpolation conditions
	\begin{equation}
		\label{eq:first_pole}
		\mathbf{Y}_+(\infty) = 0,\qquad 
		\psi^*_{r} \mathbf{N}_+(\chi_r)
		\mathbf{Y}_+(\chi_r) = \psi^*_{r} 
		\quad \forall r \in \{1,\dots,\Upsilon  \}
	\end{equation}
	hold. Moreover, if 
	the latter part of 
	the interpolation conditions \eqref{eq:first_pole} holds, then
	a rational function
	\begin{equation}
		\label{eq:Z_plus_def}
		\mathbf{Z}_+ :=
		\frac{\mathbf{D}_+^{\rm adj} - \mathbf{D}_+^{\rm adj} \mathbf{N}_+\mathbf{Y}_+}{\det \mathbf{D}_+}
	\end{equation}
	satisfies $\mathbf{Z}_+\in H^{\infty}(\mathbb{E}_1,\mathbb{C}^{p \times p})$ and the B\'ezout idendity \eqref{eq:bezout}.
\end{lemma}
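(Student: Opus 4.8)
The plan is to prove the two implications of the equivalence separately, using the explicit formula \eqref{eq:Z_plus_def} to handle simultaneously the ``moreover'' clause and the sufficiency direction; the whole argument turns on a single cancellation at the points $\chi_r$ supplied by Lemma~\ref{lem:det_zeros}. For necessity, suppose $\mathbf{Y}_+$ is strictly proper and satisfies \eqref{eq:bezout} for some rational $\mathbf{Z}_+\in H^{\infty}(\mathbb{E}_1,\mathbb{C}^{p \times p})$. Strict properness is precisely $\mathbf{Y}_+(\infty)=0$. To obtain the conditions at the $\chi_r$, I would evaluate \eqref{eq:bezout} at $z=\chi_r$ --- legitimate since $\mathbf{N}_+,\mathbf{D}_+,\mathbf{Y}_+,\mathbf{Z}_+$ are rational with no poles in $\cl(\mathbb{E}_1)$ --- and left-multiply by $\psi_r^*$. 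Since $\psi_r\in\ker\mathbf{D}_+(\chi_r)^*$ gives $\psi_r^*\mathbf{D}_+(\chi_r)=0$, the $\mathbf{Z}_+$-term drops out and only $\psi_r^*\mathbf{N}_+(\chi_r)\mathbf{Y}_+(\chi_r)=\psi_r^*$ survives, which is the latter part of \eqref{eq:first_pole}.

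For the ``moreover'' clause, which also yields sufficiency, assume the latter part of \eqref{eq:first_pole} and define $\mathbf{Z}_+$ by \eqref{eq:Z_plus_def}. I would first verify \eqref{eq:bezout} as a formal rational identity: left-multiplying \eqref{eq:Z_plus_def} by $\mathbf{D}_+$ and using Cramer's rule $\mathbf{D}_+\mathbf{D}_+^{\rm adj}=\det\mathbf{D}_+\cdot I$ reduces $\mathbf{D}_+\mathbf{Z}_+$ to $I-\mathbf{N}_+\mathbf{Y}_+$, whence $\mathbf{N}_+\mathbf{Y}_++\mathbf{D}_+\mathbf{Z}_+=I$. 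It then remains to show $\mathbf{Z}_+\in H^{\infty}(\mathbb{E}_1,\mathbb{C}^{p \times p})$. Because $\mathbf{D}_+^{\rm adj},\mathbf{N}_+,\mathbf{Y}_+$ are pole-free on $\cl(\mathbb{E}_1)$, the numerator of \eqref{eq:Z_plus_def} is holomorphic there, so the only candidate poles of $\mathbf{Z}_+$ in $\cl(\mathbb{E}_1)$ are the zeros $\chi_1,\dots,\chi_\Upsilon$ of $\det\mathbf{D}_+$, each simple by $\langle$a\ref{enu_Simple}$\rangle$.

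The main obstacle is to show that the numerator $\mathbf{D}_+^{\rm adj}(I-\mathbf{N}_+\mathbf{Y}_+)$ vanishes at every $\chi_r$, so that the simple pole cancels. This is exactly where Lemma~\ref{lem:det_zeros} enters: it provides scalars $\alpha_1,\dots,\alpha_p$ for which the $\ell$th row of $\mathbf{D}_+^{\rm adj}(\chi_r)$ equals $\alpha_\ell\psi_r^*$, so the $\ell$th row of $\mathbf{D}_+^{\rm adj}(\chi_r)\big(I-\mathbf{N}_+(\chi_r)\mathbf{Y}_+(\chi_r)\big)$ equals $\alpha_\ell\big(\psi_r^*-\psi_r^*\mathbf{N}_+(\chi_r)\mathbf{Y}_+(\chi_r)\big)$, which is zero by the interpolation condition. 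Hence the numerator vanishes at the simple zero $\chi_r$ and $\mathbf{Z}_+$ has no pole there; at infinity, $\mathbf{N}_+(\infty)=0$ (as $\mathbf{G}^+$ is strictly proper) forces $\det\mathbf{D}_+(\infty)\neq0$ through coprimeness, and $\mathbf{Y}_+(\infty)=0$ keeps \eqref{eq:Z_plus_def} bounded, so $\mathbf{Z}_+$ is proper. Thus $\mathbf{Z}_+\in H^{\infty}(\mathbb{E}_1,\mathbb{C}^{p \times p})$, and sufficiency follows since the first condition in \eqref{eq:first_pole} is strict properness of $\mathbf{Y}_+$ while the constructed $\mathbf{Z}_+$ realizes \eqref{eq:bezout}.
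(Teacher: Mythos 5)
Your proof is correct and takes essentially the same route as the paper: the explicit formula \eqref{eq:Z_plus_def} for $\mathbf{Z}_+$, Cramer's rule, and the adjugate/kernel structure supplied by Lemma~\ref{lem:det_zeros} to cancel the simple zeros of $\det\mathbf{D}_+$ at the points $\chi_r$. The only (sound) tactical differences are that for necessity you left-multiply the B\'ezout identity by $\psi_r^*$ directly, using $\psi_r^*\mathbf{D}_+(\chi_r)=0$, where the paper instead multiplies by $\mathbf{D}_+^{\rm adj}$ and invokes Lemma~\ref{lem:det_zeros}; and for $\mathbf{Z}_+\in H^{\infty}(\mathbb{E}_1,\mathbb{C}^{p\times p})$ you argue directly that the numerator's zero at each simple $\chi_r$ cancels the pole and check properness at $\infty$ via $\mathbf{N}_+(\infty)=0$ and coprimeness, whereas the paper reaches the same conclusion by contradiction, citing the non-strict-properness of $\det\mathbf{D}_+$.
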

\begin{proof}
	It is clear that the strict properness of $\mathbf{Y}_+$ is equivalent to
	$\mathbf{Y}_+(\infty) = 0$.
	Suppose that rational functions $\mathbf{Y}_+,
	\mathbf{Z}_+\in H^{\infty}(\mathbb{E}_1,\mathbb{C}^{p \times p})$
	satisfy the B\'ezout identity \eqref{eq:bezout}.
	Using Cramer's rule for $\mathbf{D}_+$,
	we obtain
	\begin{equation}
		\label{eq:Dadj_Bezout}
		\mathbf{D}_+^{\rm adj} = \mathbf{D}_+^{\rm adj} 
		(\mathbf{N}_+ \mathbf{Y}_+ + \mathbf{D}_+ \mathbf{Z}_+) =
		\mathbf{D}_+^{\rm adj} 
		\mathbf{N}_+ \mathbf{Y}_+ + \det \mathbf{D}_+ \cdot \mathbf{Z}_+.
	\end{equation}
	For every $r \in \{1,\dots, \Upsilon\}$, we obtain $\det \mathbf{D}_+(\chi_r) = 0$ and hence
%	Since $\chi_1,\dots,\chi_\Upsilon$ are the zeros of $\det \mathbf{D}_+$, it follows that
	\[
	\mathbf{D}_+^{\rm adj}(\chi_r) =
	(\mathbf{D}_+^{\rm adj} 
	\mathbf{N}_+ \mathbf{Y}_+)(\chi_r).
	\]
	The second statement of
	Lemma~\ref{lem:det_zeros} shows that $\psi^*_{r} \mathbf{N}_+(\chi_r)
	\mathbf{Y}_+(\chi_r) = \psi^*_{r} $
	for every $r \in \{1,\dots,\Upsilon  \}$.
	
	Conversely, suppose that a rational function
	$\mathbf{Y}_+\in H^{\infty}(\mathbb{E}_1,\mathbb{C}^{p \times p})$
	satisfies
	the interpolation conditions \eqref{eq:first_pole}.
	To show that 
	the B\'ezout identity \eqref{eq:bezout} holds  for some rational function
	$\mathbf{Z}_+\in H^{\infty}(\mathbb{E}_1,\mathbb{C}^{p \times p})$, it suffices to prove that
	$\mathbf{Z}_+$ defined by \eqref{eq:Z_plus_def} satisfies the B\'ezout identity \eqref{eq:bezout}
	and $\mathbf{Z}_+\in H^{\infty}(\mathbb{E}_1,\mathbb{C}^{p \times p})$.
	
	Using Cramer's rule for $\mathbf{D}_+$, we find that 
	$\mathbf{Z}_+$ satisfies the B\'ezout identity \eqref{eq:bezout}.
	By way of contradiction, assume that $\mathbf{Z}_+ \not\in H^{\infty}(\mathbb{E}_1,\mathbb{C}^{p \times p})$.
	Let the $(j,\ell)$th entry $\mathbf{Z}_{+}^{j,\ell}$ of 
	$\mathbf{Z}_+ $ satisfy $\mathbf{Z}_{+}^{j,\ell}\not\in H^{\infty}(\mathbb{E}_1)$.
	By definition, $\mathbf{Z}_{+}^{j,\ell}$ is rational. Using again Cramer's rule for $\mathbf{D}_+$,
	we derive
	\[
	\det \mathbf{D}_+ \cdot \mathbf{Z}_+ =\mathbf{D}_+^{\rm adj}( I -
	\mathbf{N}_+ \mathbf{Y}_+) \in H^{\infty}(\mathbb{E}_1,\mathbb{C}^{p \times p}).
	\]
	Since a rational function $\det \mathbf{D}_+$ is not strictly proper by Theorem~4.3.12 of \cite{vidyasagar1985}, it follows that
	$\mathbf{Z}_{+}^{j,\ell}$ is proper. Therefore, there exists 
	a pole of the rational function $\mathbf{Z}_{+}^{j,\ell}$ in $\cl(\mathbb{E}_1)$ that
	is equal to a zero $\chi_{r_0}$ of $\det \mathbf{D}_+$. 
	Since $\chi_{r_0}$ is a simple zero, it follows that 
	\begin{equation}
		\label{eq:jl_Z}
		\big(\!\det \mathbf{D}_+ \cdot \mathbf{Z}_{+}^{j,\ell}\hspace{1pt}\big)(\chi_{r_0}) \not = 0. 
	\end{equation}
	However,
	by  the latter part of the interpolation conditions \eqref{eq:first_pole} and Lemma~\ref{lem:det_zeros}, we obtain
	\[
	(\det \mathbf{D}_+ \cdot \mathbf{Z}_+)(\chi_{r_0}) =
	\mathbf{D}_+^{\rm adj}(\chi_{r_0}) ( I -
	\big(\mathbf{N}_+ \mathbf{Y}_+)(\chi_{r_0})\big) = 0.
	\]
	This contradicts \eqref{eq:jl_Z}. 
	\qed
\end{proof}

Set $M > 0$ as in
\begin{equation}
	\label{eq:M_condition}
	M > \inf\big\{
	\|Y\|_{H^{\infty}(\mathbb{E}_1)}:
	\text{$\mathbf{Y}_+\in H^{\infty}(\mathbb{E}_1,\mathbb{C}^{p \times p})$ is rational and
		satisfies \eqref{eq:first_pole}}
	\big\}.
\end{equation}
Since there always exists a rational function $\mathbf{Y}_+\in H^{\infty}(\mathbb{E}_1,\mathbb{C}^{p \times p})$
satisfying the interpolation conditions \eqref{eq:first_pole}, 
the right side of \eqref{eq:M_condition} belongs to $\mathbb{R}_+$.

%Next, we prove the existence of rational functions satisfying a norm condition and
%boundary interpolation conditions in addition to the B\'ezout identity.
The boundary interpolation conditions in Lemma~\ref{lem:YZ_cond} below
is used for the incorporation of a $p$-copy
internal model.
\begin{lemma}
	\label{lem:YZ_cond}
	Assume that {\rm $\langle$a\ref{enu_Resol}$\rangle$--$\langle$a\ref{enu_Simple}$\rangle$}  hold, and define 
	$\delta^*>0$ by \eqref{eq:delta_star_def}.
	For every rational function $\mathbf{R}\in H^{\infty}(\mathbb{E}_1,\mathbb{C}^{p\times p})$ satisfying \eqref{eq:ND_min},
	%	for every $\mathbf{R}\in H^{\infty}(\mathbb{E}_1,\mathbb{C}^{p \times p})$
	%	satisfying \eqref{eq:R_cond},
	there exist a strictly proper rational function $\mathbf{Y}_+ \in H^{\infty}(\mathbb{E}_1,\mathbb{C}^{p\times p})$ and 
	a rational function $\mathbf{Z}_+ \in H^{\infty}(\mathbb{E}_1,\mathbb{C}^{p\times p})$ 
	such that $\mathbf{Y}_+$ satisfies the interpolation conditions
	\begin{subequations}
		\label{eq:Y_cond}
		\begin{align}
			\label{eq:boundary_int}
			&\mathbf{Y}_+(e^{i\theta_\ell}) = (\mathbf{N}_+ + \mathbf{D}_+\mathbf{R})^{-1}(e^{i\theta_\ell})
			\qquad \forall \ell =\{1,\dots,n \} \\
			\label{eq:boundary_int2}
			&\mathbf{Y}_+^{\prime}(e^{i\theta_\ell}) =- (\mathbf{N}_+ + \mathbf{D}_+\mathbf{R})^{-1}(e^{i\theta_\ell})
			\big(\mathbf{D}_+(e^{i\theta_\ell}) \\
			&\hspace{53pt}+ (\mathbf{N}_+ + \mathbf{D}_+\mathbf{R})^{\prime}(e^{i\theta_\ell}) (\mathbf{N}_+ 
			+ \mathbf{D}_+\mathbf{R})^{-1}(e^{i\theta_\ell}) \big)
			~~\quad \forall \ell =\{1,\dots,n \}, \notag
		\end{align}
	\end{subequations}
	the norm condition 
	\begin{align}
		\label{eq:norm_bound}
		\|\mathbf{Y}_+\|_{H^{\infty}(\mathbb{E}_1)} <
		\max 
		\left\{
		2\delta^*, M
		\right\},
	\end{align}
	and  the B\'ezout identity \eqref{eq:bezout} hold.
\end{lemma}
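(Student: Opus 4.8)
The plan is to read the conclusion as a single Nevanlinna--Pick interpolation problem carrying both interior and boundary data, to solve it by the solvability of the boundary Nevanlinna--Pick problem (Problem~A.7) established in the appendix, and then to recover $\mathbf{Z}_+$ from Lemma~\ref{lem:Bezout_interpolation}. First I would discharge the B\'ezout requirement: by Lemma~\ref{lem:Bezout_interpolation}, a strictly proper rational $\mathbf{Y}_+\in H^{\infty}(\mathbb{E}_1,\mathbb{C}^{p\times p})$ satisfies \eqref{eq:bezout} for some rational $\mathbf{Z}_+\in H^{\infty}(\mathbb{E}_1,\mathbb{C}^{p\times p})$ if and only if the interior conditions \eqref{eq:first_pole} hold, and in that case $\mathbf{Z}_+$ is furnished explicitly by \eqref{eq:Z_plus_def} and automatically lies in $H^{\infty}(\mathbb{E}_1,\mathbb{C}^{p\times p})$. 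Hence it suffices to construct a strictly proper rational $\mathbf{Y}_+$ meeting simultaneously the interior conditions \eqref{eq:first_pole}, the boundary conditions \eqref{eq:Y_cond}, and the norm bound \eqref{eq:norm_bound}; the companion $\mathbf{Z}_+$ then comes for free.

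Next I would set $\gamma:=\max\{2\delta^*,M\}$ and abbreviate the prescribed boundary data by $W_\ell:=(\mathbf{N}_++\mathbf{D}_+\mathbf{R})^{-1}(e^{i\theta_\ell})$, together with $V_\ell$ for the right-hand side of \eqref{eq:boundary_int2}. The hypothesis \eqref{eq:ND_min} yields $\|W_\ell\|_{\mathbb{C}^{p\times p}}<2\delta^*\le\gamma$, so each boundary value is a \emph{strict} contraction relative to the radius $\gamma$. This is the decisive feature: it places every $W_\ell$ in the interior of the matrix ball of radius $\gamma$ and so removes the Julia--Carath\'eodory positivity constraint that would otherwise couple the derivative datum $V_\ell$ to $W_\ell$. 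Because $\mathbf{Y}_+$ is rational and bounded on $\mathbb{E}_1$, it extends holomorphically across $\mathbb{T}$, so \eqref{eq:boundary_int}--\eqref{eq:boundary_int2} are genuine Hermite (value and first-derivative) conditions at the boundary nodes $e^{i\theta_\ell}\in\mathbb{T}$, which are disjoint from the stabilization nodes $\chi_r$ by $\langle$a\ref{enu_Resol}$\rangle$. The interior data are the left-tangential conditions $\psi^*_r\mathbf{N}_+(\chi_r)\mathbf{Y}_+(\chi_r)=\psi^*_r$ together with $\mathbf{Y}_+(\infty)=0$; by the very definition of $M$ in \eqref{eq:M_condition} these are already solvable by a rational function of $H^{\infty}(\mathbb{E}_1)$-norm strictly below $M\le\gamma$.

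I would then package this as an instance of Problem~A.7---interior data \eqref{eq:first_pole} at the nodes $\chi_r$ and at $\infty$, boundary data $(W_\ell,V_\ell)$ at the nodes $e^{i\theta_\ell}$, and target radius $\gamma$---and invoke its solvability. Its hypotheses are in force: the boundary values are strict contractions by \eqref{eq:ND_min}, and the interior subproblem is solvable within radius $\gamma$ by the choice of $M$. The resulting strictly proper rational $\mathbf{Y}_+\in H^{\infty}(\mathbb{E}_1,\mathbb{C}^{p\times p})$ then satisfies \eqref{eq:first_pole}, \eqref{eq:Y_cond}, and $\|\mathbf{Y}_+\|_{H^{\infty}(\mathbb{E}_1)}<\gamma$, i.e.\ \eqref{eq:norm_bound}. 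Finally, because \eqref{eq:first_pole} holds, Lemma~\ref{lem:Bezout_interpolation} returns the companion $\mathbf{Z}_+\in H^{\infty}(\mathbb{E}_1,\mathbb{C}^{p\times p})$ through \eqref{eq:Z_plus_def} and guarantees the B\'ezout identity \eqref{eq:bezout}, which completes the construction.

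The step I expect to be the main obstacle is the faithful reduction to Problem~A.7 and the compatibility of the two data sets under the single radius $\gamma$: one must verify that the infimal-norm level $M$ governing the interior conditions and the strict-contraction level $2\delta^*$ governing the boundary values combine into solvability at $\max\{2\delta^*,M\}$, rather than at some strictly larger radius, and that the one-sided form of \eqref{eq:first_pole} is admissible as (tangential) interior data in the appendix formulation.
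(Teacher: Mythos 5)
Your proposal is correct and follows essentially the same route as the paper: reduce the B\'ezout requirement to the interior conditions \eqref{eq:first_pole} via Lemma~\ref{lem:Bezout_interpolation} (with $\mathbf{Z}_+$ recovered from \eqref{eq:Z_plus_def}), then solve the resulting Nevanlinna--Pick problem with interior data \eqref{eq:first_pole} and boundary data \eqref{eq:Y_cond} at radius $\max\{2\delta^*,M\}$, whose solvability rests on Theorem~\ref{thm:No_boundary} (the boundary data, being strict contractions by \eqref{eq:ND_min}, impose no further obstruction beyond the interior problem, which is solvable within radius $M$ by the definition of $M$). Your closing remark about calibrating the single radius is exactly the point the appendix results are designed to settle, so there is no gap.
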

\begin{proof}
	Lemma~\ref{lem:Bezout_interpolation} shows that
	a rational function	$\mathbf{Y}_+ \in H^{\infty}(\mathbb{E}_1, \mathbb{C}^{p\times p})$ is 
	strictly proper and satisfies the B\'ezout identity \eqref{eq:bezout}  for some rational function
	$\mathbf{Z}_+\in H^{\infty}(\mathbb{E}_1, \mathbb{C}^{p\times p})$  if and only if
	the interpolation conditions \eqref{eq:first_pole} hold. Hence
	the problem of finding the desired $\mathbf{Y}_+,\mathbf{Z}_+ \in H^{\infty}(\mathbb{E}_1, \mathbb{C}^{p\times p})$
	is equivalent to that of finding a rational function 
	$\mathbf{Y}_+ \in H^{\infty}(\mathbb{E}_1, \mathbb{C}^{p\times p})$ satisfying
	the interior interpolation conditions \eqref{eq:first_pole}, 
	the boundary interpolation conditions \eqref{eq:Y_cond},
	and the norm condition \eqref{eq:norm_bound}, 
	which is called the  {\em Nevanlinna-Pick interpolation problem with both 
		interior and boundary conditions}; see Appendix~\ref{sec:NPI} for details.
	This interpolation problem is solvable if $R$ satisfies \eqref{eq:ND_min}.
	Once we obtain a solution $\mathbf{Y}_+ \in H^{\infty}(\mathbb{E}_1,\mathbb{C}^{p \times p})$
	of the interpolation problem,
	$\mathbf{Z}_+ \in H^{\infty}(\mathbb{E}_1,\mathbb{C}^{p \times p})$ defined by
	\eqref{eq:Z_plus_def} 
	satisfies the B\'ezout identity \eqref{eq:bezout}.
	\qed
\end{proof}

%\begin{lemma}
%	Assume that $\mathbf{G}(e^{i\theta_\ell}) \not= 0$ for every $\ell \in \{1,\dots,n \}$.
%	For every $\delta >0$,
%	there exists $M >0$ such that 
%	for every rational function $\mathbf{R} \in H^{\infty}(\mathbb{E}_1,\mathbb{C}^{p \times p})$
%	satisfying $\|\mathbf{G}^- - \mathbf{R}\|_{H^{\infty}(\mathbb{E}_1)} < \delta$, 
%	we obtain
%	a strictly proper rational function $\mathbf{Y}_+\in H^{\infty}(\mathbb{E}_1,\mathbb{C}^{p \times p})$ and
%	a rational function $\mathbf{Z}_+,\in H^{\infty}(\mathbb{E}_1,\mathbb{C}^{p \times p})$ satisfying
%	$\mathbf{N}_+ \mathbf{Y}_+ + \mathbf{D}_+ \mathbf{Z}_+ = 1$,
%	\begin{align*}
%	\mathbf{Y}_+(e^{i\theta_\ell}) &= \frac{1}{\mathbf{N}_+(e^{i\theta_\ell})  + \mathbf{D}_+(e^{i\theta_\ell})  \mathbf{R}(e^{i\theta_\ell}) }
%	\qquad \forall \ell =\{1,\dots,n \} ,
%	\end{align*}
%	and $\|\mathbf{Y}\|_{H^{\infty}(\mathbb{E}_1)} < M$.
%\end{lemma}
%Suppose that 
%$z_0 \in \mathbb{C}$ is a pole of
%a matrix-valued rational function $F$.
%The McMillan degree of $z_0$ as a pole of $F$
%is defined as the highest order it has as a pole of any minor of $F$.
%Suppose that $(P,Q,R)$ is controllable and observable.
%If $z_0 \in \mathbb{C}$ is a pole of $F$, its McMillan degree equals 
%its multiplicity as an eigenvalue of $P$.

\begin{lemma}
	\label{eq:K_property}
	Assume that  {\rm $\langle$a\ref{enu_Resol}$\rangle$--$\langle$a\ref{enu_Simple}$\rangle$} hold.
	Suppose that
	a rational function $\mathbf{R}\in H^{\infty}(\mathbb{E}_1,\mathbb{C}^{p \times p})$ satisfies \eqref{eq:ND_min}.
	Let a strictly proper rational function $\mathbf{Y}_+\in H^{\infty}(\mathbb{E}_1,\mathbb{C}^{p \times p})$ 
	and a proper rational function $\mathbf{Z}_+\in H^{\infty}(\mathbb{E}_1,\mathbb{C}^{p \times p})$ satisfy 
	 the interpolation conditions \eqref{eq:Y_cond} and the B\'ezout identity \eqref{eq:bezout}.
%	Let $\mathbf{Y}_+, \mathbf{Z}_+\in H^{\infty}(\mathbb{E}_1,\mathbb{C}^{p \times p})$ satisfy 
%	the conditions in Lemma~\ref{lem:YZ_cond}, that is,
%	$\mathbf{Y}_+$ is a strictly proper rational function, $\mathbf{Z}_+$ is a proper rational function,
%	and the B\'ezout identity \eqref{eq:bezout}, the interpolation conditions \eqref{eq:Y_cond},
%	and the norm condition \eqref{eq:norm_bound} are satisfied.
	Then the following results hold:
	\begin{enumerate}
		\renewcommand{\labelenumi}{\sf (\alph{enumi})}
		\item
		$\mathbf{Y}_+$ and $\mathbf{Z}_+ - \mathbf{R}\mathbf{Y}_+$
		are right coprime over the set of
		rational functions in $H^{\infty}(\mathbb{E}_1)$.
		\item The rational function defined by
		\begin{equation}
			\label{eq:K_def}
			\mathbf{K} := \mathbf{Y}_+(\mathbf{Z}_+ - \mathbf{R}\mathbf{Y}_+)^{-1}
		\end{equation}
		is strictly proper and satisfies
		\begin{equation}
			\label{eq:K_other}
			\mathbf{K} =\mathbf{Y}_+\big(I - (\mathbf{N}_+ + \mathbf{D}_+\mathbf{R} )\mathbf{Y}_+\big)^{-1}\mathbf{D}_+.
		\end{equation}
		\item 
		There exists a rational function 
		$\widehat{\mathbf{Z}}_+\in H^{\infty}(\mathbb{E}_1,\mathbb{C}^{p \times p})$ such that 
		\begin{subequations}
			\label{eq:tilde_Z}
			\begin{align}
				\label{eq:Z_zeros}
				(\mathbf{Z}_+ - \mathbf{R}\mathbf{Y}_+)(z) &= 
				%\left(
				\prod_{\ell=1}^n(z - e^{i\theta_\ell}) 
				%\right)
				\cdot \widehat{\mathbf{Z}}_+(z) \\
				\label{eq:det_tildeZ}
				\det \widehat{\mathbf{Z}}_+(e^{i\theta_\ell}) &\not= 0.
			\end{align}
		\end{subequations}
	\end{enumerate}
\end{lemma}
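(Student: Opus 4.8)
The plan is to reduce all three assertions to a single algebraic identity obtained by rewriting the B\'ezout identity \eqref{eq:bezout} in terms of $\mathbf{Z}_+ - \mathbf{R}\mathbf{Y}_+$. Abbreviating $\mathbf{M} := \mathbf{N}_+ + \mathbf{D}_+\mathbf{R}$, subtract $\mathbf{D}_+\mathbf{R}\mathbf{Y}_+$ from both sides of $\mathbf{N}_+\mathbf{Y}_+ + \mathbf{D}_+\mathbf{Z}_+ = I$ to obtain
\begin{equation}
	\label{eq:key_identity}
	\mathbf{M}\mathbf{Y}_+ + \mathbf{D}_+(\mathbf{Z}_+ - \mathbf{R}\mathbf{Y}_+) = I,
\end{equation}
equivalently $\mathbf{D}_+(\mathbf{Z}_+ - \mathbf{R}\mathbf{Y}_+) = I - \mathbf{M}\mathbf{Y}_+$. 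Since $\mathbf{N}_+$, $\mathbf{D}_+$, and $\mathbf{R}$ are rational functions in $H^{\infty}(\mathbb{E}_1,\mathbb{C}^{p\times p})$, so are $\mathbf{M}$ and $\mathbf{D}_+$; hence \eqref{eq:key_identity} is a B\'ezout identity for the pair $(\mathbf{Y}_+, \mathbf{Z}_+ - \mathbf{R}\mathbf{Y}_+)$ with coefficients $\mathbf{M}$ and $\mathbf{D}_+$, which establishes the right coprimeness (a).

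For (b), I would first check that $\mathbf{Z}_+ - \mathbf{R}\mathbf{Y}_+$ is invertible as a rational matrix: its value at infinity equals $\mathbf{Z}_+(\infty)$, because $\mathbf{R}\mathbf{Y}_+$ is strictly proper (a proper function times a strictly proper one), and evaluating \eqref{eq:bezout} at infinity with $\mathbf{Y}_+(\infty)=0$ gives $\mathbf{Z}_+(\infty) = \mathbf{D}_+(\infty)^{-1}$, which is invertible since $\det \mathbf{D}_+$ is not strictly proper (Theorem~4.3.12 of \cite{vidyasagar1985}). Thus $\mathbf{K}$ in \eqref{eq:K_def} is well defined; and since $\mathbf{Y}_+(\infty)=0$ while $(\mathbf{Z}_+ - \mathbf{R}\mathbf{Y}_+)^{-1}(\infty) = \mathbf{D}_+(\infty)$ is finite, $\mathbf{K}$ is strictly proper. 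Formula \eqref{eq:K_other} then follows by inverting $\mathbf{Z}_+ - \mathbf{R}\mathbf{Y}_+ = \mathbf{D}_+^{-1}(I - \mathbf{M}\mathbf{Y}_+)$ from \eqref{eq:key_identity} and substituting into \eqref{eq:K_def}.

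For (c), the boundary conditions \eqref{eq:Y_cond} do exactly the work required. The first condition \eqref{eq:boundary_int} gives $\mathbf{M}(e^{i\theta_\ell})\mathbf{Y}_+(e^{i\theta_\ell}) = I$, so the right-hand side $I - \mathbf{M}\mathbf{Y}_+$ of \eqref{eq:key_identity} vanishes at each $e^{i\theta_\ell}$; since $\det \mathbf{D}_+(e^{i\theta_\ell}) \not= 0$ by $\langle$a\ref{enu_Resol}$\rangle$, this forces $(\mathbf{Z}_+ - \mathbf{R}\mathbf{Y}_+)(e^{i\theta_\ell}) = 0$. Because the $e^{i\theta_\ell}$ are distinct, every entry is divisible by $\prod_{\ell=1}^n(z - e^{i\theta_\ell})$, and the quotient $\widehat{\mathbf{Z}}_+$ is rational with poles only among those of $\mathbf{Z}_+ - \mathbf{R}\mathbf{Y}_+$, i.e.\ in $\mathbb{D}$, so $\widehat{\mathbf{Z}}_+\in H^{\infty}(\mathbb{E}_1,\mathbb{C}^{p\times p})$ and \eqref{eq:Z_zeros} holds. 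To get \eqref{eq:det_tildeZ}, differentiate $\mathbf{D}_+(\mathbf{Z}_+ - \mathbf{R}\mathbf{Y}_+) = I - \mathbf{M}\mathbf{Y}_+$ and evaluate at $e^{i\theta_\ell}$; the term carrying $(\mathbf{Z}_+ - \mathbf{R}\mathbf{Y}_+)(e^{i\theta_\ell})=0$ drops out, leaving $\mathbf{D}_+(\mathbf{Z}_+ - \mathbf{R}\mathbf{Y}_+)' = -\mathbf{M}'\mathbf{Y}_+ - \mathbf{M}\mathbf{Y}_+'$ there. Substituting \eqref{eq:boundary_int} and \eqref{eq:boundary_int2}, the $\mathbf{M}'\mathbf{M}^{-1}$ contributions cancel and one is left with $\mathbf{D}_+(e^{i\theta_\ell})(\mathbf{Z}_+ - \mathbf{R}\mathbf{Y}_+)'(e^{i\theta_\ell}) = \mathbf{D}_+(e^{i\theta_\ell})$, whence $(\mathbf{Z}_+ - \mathbf{R}\mathbf{Y}_+)'(e^{i\theta_\ell}) = I$. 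Applying the product rule to \eqref{eq:Z_zeros} gives $(\mathbf{Z}_+ - \mathbf{R}\mathbf{Y}_+)'(e^{i\theta_\ell}) = \big(\prod_{m\neq \ell}(e^{i\theta_\ell} - e^{i\theta_m})\big)\widehat{\mathbf{Z}}_+(e^{i\theta_\ell})$ with a nonzero scalar factor, so $\widehat{\mathbf{Z}}_+(e^{i\theta_\ell})$ is a nonzero multiple of $I$, yielding \eqref{eq:det_tildeZ}.

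The only delicate step is the derivative computation in (c): this is where the technical second boundary condition \eqref{eq:boundary_int2} earns its keep, since it is engineered precisely so that the cross terms cancel and each zero of $\mathbf{Z}_+ - \mathbf{R}\mathbf{Y}_+$ on $\mathbb{T}$ is simple with an invertible (indeed identity) leading coefficient. Parts (a) and (b) are essentially formal consequences of \eqref{eq:key_identity}.
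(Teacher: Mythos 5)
Your proposal is correct and follows essentially the same route as the paper: rewrite the B\'ezout identity as $(\mathbf{N}_++\mathbf{D}_+\mathbf{R})\mathbf{Y}_+ + \mathbf{D}_+(\mathbf{Z}_+-\mathbf{R}\mathbf{Y}_+)=I$ to get (a) and \eqref{eq:K_other}, evaluate at infinity for strict properness, and use the two boundary conditions \eqref{eq:boundary_int}--\eqref{eq:boundary_int2} to show the factor vanishes at each $e^{i\theta_\ell}$ with derivative equal to $I$ there. Your derivative computation (differentiating the product and discarding the term multiplying the vanishing factor) matches the paper's, so there is nothing further to add.
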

\begin{proof}
	{\sf (a)}
	By the B\'ezout identity \eqref{eq:bezout},
	\begin{equation*}
%		\label{eq:Bezout_ND_plus}
		(\mathbf{N}_+ + \mathbf{D}_+\mathbf{R})\mathbf{Y}_+ + 
		\mathbf{D}_+ (\mathbf{Z}_+ - \mathbf{R} \mathbf{Y}_+) = I.
	\end{equation*}
	Hence $\mathbf{Y}_+$ and $\mathbf{Z}_+ - \mathbf{R}\mathbf{Y}_+$ are right coprime 
	over the sets of
	rational functions in $H^{\infty}(\mathbb{E}_1)$.

	{\sf (b)}
	Since
	$\mathbf{Y}_+(\infty) =0$, it follows from the B\'ezout identity \eqref{eq:bezout} that 
	$\mathbf{Z}_+(\infty)$ is invertible.
	Therefore, $\mathbf{K}(\infty) = 0$ and $\mathbf{K}$ is strictly proper.
	The B\'ezout identity \eqref{eq:bezout} also yields
	\begin{align}
		\label{eq:ZRY}
		\mathbf{Z}_+ - \mathbf{R} \mathbf{Y}_+ = 
		\mathbf{D}_+^{-1} (I - \mathbf{N}_+ \mathbf{Y}_+) - \mathbf{R} \mathbf{Y}_+ 
		=\mathbf{D}_+^{-1} (I - (\mathbf{N}_+ + \mathbf{D}_+\mathbf{R}) \mathbf{Y}_+).
	\end{align}
	Therefore, we obtain \eqref{eq:K_other}.
	
	{\sf (c)}
	To show the existence of a rational function $\widehat{\mathbf{Z}}_+\in H^{\infty}(\mathbb{E}_1,\mathbb{C}^{p \times p})$ 
	satisfying \eqref{eq:tilde_Z}, it suffices to prove
	\begin{align}
		\label{eq:ZRY_diff}
		(\mathbf{Z}_+ - \mathbf{R}\mathbf{Y}_+)(e^{i\theta_\ell}) = 0\quad
		\qquad \forall \ell \in \{1,\dots,n  \}
	\end{align}
	and $(\mathbf{Z}_+ - \mathbf{R}\mathbf{Y}_+)^{\prime}(e^{i\theta_\ell})$ is invertible
	for all $\ell \in \{1,\dots,n  \}$.
	We immediately obtain \eqref{eq:ZRY_diff}  from \eqref{eq:boundary_int} and \eqref{eq:ZRY}.
	We also have
	%	Using \eqref{eq:ZRY},
	%	we immediately obtain the case $j=0$ of \eqref{eq:ZRY_diff} from 
	%	the interpolation condition \eqref{eq:boundary_int}.
	\begin{align*}
		(\mathbf{Z}_+ - \mathbf{R}\mathbf{Y}_+)^{\prime} (e^{i\theta_\ell})=
		\big(
		\mathbf{D}_+^{-1}
		\big)(e^{i\theta_\ell})
		\big(
		I - (\mathbf{N}_+ + \mathbf{D}_+\mathbf{R}) \mathbf{Y}_+
		\big)^{\prime}(e^{i\theta_\ell})
	\end{align*}
	for every $\ell \in \{1,\dots,n\}$.
	The interpolation condition  \eqref{eq:boundary_int2}  yields
	\begin{align*}
		\big(I-(\mathbf{N}_+ + \mathbf{D}_+\mathbf{R}) \mathbf{Y}_+\big)^{\prime}(e^{i\theta_\ell}) &=
		\mathbf{D}_+(e^{i\theta_\ell})\qquad \forall \ell \in \{1,\dots,n\}.
	\end{align*}
	Thus, $(\mathbf{Z}_+ - \mathbf{R}\mathbf{Y}_+)^{\prime}(e^{i\theta_\ell}) = I$ 
	for all $\ell \in \{1,\dots,n  \}$. This completes the proof.
	\qed
\end{proof}

For $\delta^*$ in \eqref{eq:delta_star_def} and $M$ in \eqref{eq:M_condition},
define 
\begin{equation}
	\label{eq:M1_def}
	M_1 := 	\max 
	\left\{
	2\delta^*, M
	\right\}.
\end{equation}
The following lemma provides a sufficient condition for 
the robust output regulation problem to be solvable.
\begin{lemma}
	\label{lem:G_R_close}
	Assume that {\rm $\langle$a\ref{enu_Resol}$\rangle$--$\langle$a\ref{enu_Simple}$\rangle$} hold.
	Choose a rational function $\mathbf{R} \in H^{\infty}(\mathbb{E}_1,\mathbb{C}^{p \times p})$ so that
	\begin{equation}
		\label{eq:G_R_close}
		\|\mathbf{G}^- - \mathbf{R}\|_{H^{\infty}(\mathbb{E}_1)}  < 
		\frac{1}{M_1\|  \mathbf{D}_+  \|_{H^{\infty}(\mathbb{E}_1)}  }.
	\end{equation}
	Let a strictly proper rational function $\mathbf{Y}_+\in H^{\infty}(\mathbb{E}_1,\mathbb{C}^{p \times p})$ 
	and a proper rational function $\mathbf{Z}_+\in H^{\infty}(\mathbb{E}_1,\mathbb{C}^{p \times p})$ satisfy 
	the interpolation conditions \eqref{eq:Y_cond}, the norm condition
	\eqref{eq:norm_bound}, and
	the B\'ezout identity \eqref{eq:bezout}.
	Then there exists 
	a realization $(P,Q,R)$ of 
	the rational function $\mathbf{K}$ defined by \eqref{eq:K_def} such that
	the controller \eqref{eq:controller} with this realization $(P,Q,R)$ is a solution of Problem~\ref{prob:ROR}.
\end{lemma}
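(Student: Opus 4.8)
The plan is to verify the two hypotheses of the internal model principle, Theorem~\ref{thm:internal_model}: for a suitable realization $(P,Q,R)$ of the rational function $\mathbf{K}$ in \eqref{eq:K_def}, the closed-loop operator $A_{\rm e}$ in \eqref{eq:Ae_def} is power stable, and the resulting controller \eqref{eq:controller} incorporates a $p$-copy internal model, i.e.\ \eqref{eq:internal_model} holds. The starting point is to read off the pole structure of $\mathbf{K}$ at the exosystem frequencies. Combining \eqref{eq:K_other} with the identities $(I-(\mathbf{N}_++\mathbf{D}_+\mathbf{R})\mathbf{Y}_+)(e^{i\theta_\ell})=0$ and $(I-(\mathbf{N}_++\mathbf{D}_+\mathbf{R})\mathbf{Y}_+)'(e^{i\theta_\ell})=\mathbf{D}_+(e^{i\theta_\ell})$ established in the proof of Lemma~\ref{eq:K_property}(c), one finds that near each $e^{i\theta_\ell}$
\[
\mathbf{K}(z)=\frac{\mathbf{Y}_+(e^{i\theta_\ell})}{z-e^{i\theta_\ell}}+O(1),
\]
so that $\mathbf{K}$ has a simple pole at $e^{i\theta_\ell}$ whose residue $\mathbf{Y}_+(e^{i\theta_\ell})=(\mathbf{N}_++\mathbf{D}_+\mathbf{R})^{-1}(e^{i\theta_\ell})$ is invertible by \eqref{eq:boundary_int} and \eqref{eq:ND_min} (the latter following from \eqref{eq:G_R_close} via Lemma~\ref{lem:nonzero_int_cond}). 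This full-rank residue is exactly what will force $p$ independent copies of each mode into $P$.

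To turn this into a realization that is simultaneously stabilizable, detectable, and carries the internal model, I would write $\mathbf{K}=\mathbf{K}_1\mathbf{K}_2$ as a series connection with the internal model on the error side. Set $d(z):=\prod_{\ell=1}^n(z-e^{i\theta_\ell})$ and take $\mathbf{K}_2:=(d'/d)I_p$ and $\mathbf{K}_1:=(d/d')\mathbf{K}$. Then $\mathbf{K}_2$ is strictly proper with simple poles exactly at the $e^{i\theta_\ell}$ and residue $I$ there, so any minimal realization $(P_2,Q_2,R_2,0)$ satisfies $\dim\ker(e^{i\theta_\ell}I-P_2)=p$; moreover $\mathbf{K}_1$ is proper, its potential pole at $e^{i\theta_\ell}$ is cancelled by the zero of $d/d'$ so that $e^{i\theta_\ell}\in\varrho(P_1)$ for a minimal realization $(P_1,Q_1,R_1,S_1)$, and $\mathbf{K}_1(e^{i\theta_\ell})=\mathbf{Y}_+(e^{i\theta_\ell})$ is invertible. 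I would then check the hypotheses of Lemma~\ref{lem:no_pole_zero_can}: $\sigma(P_1)\cap\sigma(P_2)\cap\cl(\mathbb{E}_1)=\emptyset$ holds since $\sigma(P_2)\cap\cl(\mathbb{E}_1)=\{e^{i\theta_\ell}:\ell\}$ and $e^{i\theta_\ell}\in\varrho(P_1)$; $\mathbf{K}_1$ is full column rank at each $e^{i\theta_\ell}$ by the above; and $\mathbf{K}_2(\lambda)=(d'(\lambda)/d(\lambda))I_p$ is full row rank at every $\lambda\in\sigma(P_1)\cap\cl(\mathbb{E}_1)$ because such $\lambda$ avoids the zeros of $d$ and, by the Gauss--Lucas theorem, the zeros of $d'$ lie in $\mathbb{D}$. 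Lemma~\ref{lem:no_pole_zero_can} then yields the stabilizable and detectable realization \eqref{eq:K1K2_realization} of $\mathbf{K}$, which is strictly proper since $\mathbf{K}(\infty)=0$; calling it $(P,Q,R)$, the block-triangular form of $P$ lets me lift each eigenvector of $P_2$ at $e^{i\theta_\ell}$ to one of $P$ using $e^{i\theta_\ell}\in\varrho(P_1)$, giving $\dim\ker(e^{i\theta_\ell}I-P)\ge p$, i.e.\ \eqref{eq:internal_model}.

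It remains to prove that $A_{\rm e}$ is power stable, which I would do by coprime-factor robustness. The controller admits the right coprime factorization $\mathbf{K}=\mathbf{Y}_+(\mathbf{Z}_+-\mathbf{R}\mathbf{Y}_+)^{-1}$ by Lemma~\ref{eq:K_property}(a), while the true plant factors as $\mathbf{G}=\mathbf{D}_+^{-1}(\mathbf{N}_++\mathbf{D}_+\mathbf{G}^{-})$ with $\mathbf{G}^{-}\in H^{\infty}(\mathbb{E}_1)$, these factors being left coprime. Using the B\'ezout identity \eqref{eq:bezout}, the associated return difference is
\[
\mathbf{D}_+(\mathbf{Z}_+-\mathbf{R}\mathbf{Y}_+)+(\mathbf{N}_++\mathbf{D}_+\mathbf{G}^{-})\mathbf{Y}_+=I+\mathbf{D}_+(\mathbf{G}^{-}-\mathbf{R})\mathbf{Y}_+.
\]
By \eqref{eq:norm_bound}, \eqref{eq:M1_def}, and the closeness condition \eqref{eq:G_R_close} one gets $\|\mathbf{D}_+(\mathbf{G}^{-}-\mathbf{R})\mathbf{Y}_+\|_{H^{\infty}(\mathbb{E}_1)}\le\|\mathbf{D}_+\|_{H^{\infty}(\mathbb{E}_1)}\|\mathbf{G}^{-}-\mathbf{R}\|_{H^{\infty}(\mathbb{E}_1)}\|\mathbf{Y}_+\|_{H^{\infty}(\mathbb{E}_1)}<1$, so this return difference is invertible in $H^{\infty}(\mathbb{E}_1)$ by a Neumann series, which establishes input--output stability of the feedback interconnection. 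Finally I would lift input--output stability to power stability of $A_{\rm e}$: the plant realization is power-stabilizable and power-detectable because $(A^+,B^+,C^+)$ is controllable and observable by $\langle$a\ref{enu_CD}$\rangle$ while $A^-$ is power stable by $\langle$a\ref{enu_A_minus_EXS}$\rangle$, and the controller realization $(P,Q,R)$ is stabilizable and detectable by construction; under these conditions the closed-loop transfer functions lying in $H^{\infty}(\mathbb{E}_1)$ forces $A_{\rm e}$ to be power stable, as in the stabilization theory of \cite{Logemann1992}. With $A_{\rm e}$ power stable and the $p$-copy internal model in place, Theorem~\ref{thm:internal_model} shows that the controller solves Problem~\ref{prob:ROR}.

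The main obstacle I expect is this last lifting step: translating the frequency-domain invertibility of the return difference into power stability of the genuinely infinite-dimensional operator $A_{\rm e}$, which requires that neither the plant nor the controller realization hide unstable modes. The role of Lemma~\ref{lem:no_pole_zero_can} is precisely to supply a stabilizable and detectable controller realization, and verifying its rank hypotheses for the decomposition $\mathbf{K}=\mathbf{K}_1\mathbf{K}_2$ is where the boundary interpolation conditions \eqref{eq:Y_cond} and the factorization \eqref{eq:tilde_Z} of Lemma~\ref{eq:K_property}(c) are indispensable; by contrast the small-gain bound, rather than being delicate, falls out cleanly from \eqref{eq:norm_bound} and \eqref{eq:G_R_close}.
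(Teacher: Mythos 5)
Your proposal is correct and follows the same overall architecture as the paper's proof: reduce to Theorem~\ref{thm:internal_model}, build a stabilizable and detectable realization of $\mathbf{K}$ carrying the internal model via a series decomposition and Lemma~\ref{lem:no_pole_zero_can}, show the return difference $\mathbf{U}=I+\mathbf{D}_+(\mathbf{G}^--\mathbf{R})\mathbf{Y}_+$ is invertible in $H^{\infty}(\mathbb{E}_1,\mathbb{C}^{p\times p})$ by the small-gain estimate from \eqref{eq:norm_bound} and \eqref{eq:G_R_close}, and lift to power stability of $A_{\rm e}$ via Theorem~2 of \cite{Logemann1992}. The genuine difference is the factorization $\mathbf{K}=\mathbf{K}_1\mathbf{K}_2$. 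The paper takes $\mathbf{K}_1=\prod_{\ell}\frac{z-a}{z-e^{i\theta_\ell}}I$ and $\mathbf{K}_2=\mathbf{Y}_+\big((z-a)^n\widehat{\mathbf{Z}}_+\big)^{-1}$, so the $p$-copy internal model sits in the explicitly given $P_1$ and condition \eqref{eq:internal_model} is immediate from the block-triangular form, while $e^{i\theta_\ell}\notin\sigma(P_2)$ is obtained from the right coprimeness of $\mathbf{Y}_+$ and $(z-a)^n\widehat{\mathbf{Z}}_+$ together with \eqref{eq:det_tildeZ} and Lemma~A.7.39 of \cite{Curtain1995}. You instead put the internal model in the second factor $(d'/d)I$ and absorb everything else into $(d/d')\mathbf{K}$, verifying the rank hypotheses of Lemma~\ref{lem:no_pole_zero_can} by a direct residue computation ($\mathbf{K}_1(e^{i\theta_\ell})=\mathbf{Y}_+(e^{i\theta_\ell})$, which is where \eqref{eq:boundary_int2}, i.e.\ $(\mathbf{Z}_+-\mathbf{R}\mathbf{Y}_+)'(e^{i\theta_\ell})=I$, is essential) and by Gauss--Lucas for the zeros of $d'$; you then need the extra step of lifting eigenvectors of $P_2$ through $(e^{i\theta_\ell}I-P_1)^{-1}$ to get \eqref{eq:internal_model}, which the paper gets for free. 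Your variant avoids the auxiliary parameter $a$ and works with $\mathbf{K}$ itself rather than with the coprime factors, at the price of the residue/Gauss--Lucas bookkeeping; the paper's variant keeps the coprimeness of Lemma~\ref{eq:K_property}{\sf (a)} doing that work. Both are complete and correct.
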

\begin{proof}
	Let a rational function $\mathbf{R} \in H^{\infty}(\mathbb{E}_1,\mathbb{C}^{p \times p})$ satisfy
	\eqref{eq:G_R_close}.
	Since
	\[
	\frac{1}{M_1\| \mathbf{D}_+ \|_{H^{\infty}(\mathbb{E}_1)}} 
	\leq \frac{1}{2\delta^*\|  \mathbf{D}_+  \|_{H^{\infty}(\mathbb{E}_1)} },
	\]
	Lemma~\ref{lem:nonzero_int_cond} shows that $\mathbf{R}$ 
	satisfies \eqref{eq:ND_min}.
	
	Due to Theorem~\ref{thm:internal_model}, it is enough to prove that 
	there exists a realization $(P,Q,R)$ of 
	the rational function $\mathbf{K}$ defined by \eqref{eq:K_def} such that
	$A_{\rm e}$ defined by \eqref{eq:Ae_def} is power stable and \eqref{eq:internal_model} holds.
	
	Let us first find a stabilizable and detectable realization $(P,Q,R)$ of $\mathbf{K}$ satisfying \eqref{eq:internal_model}.
	In the single-input and single-output case $p=1$, 
	 Lemma~A.7.39 of \cite{Curtain1995} directly shows that
	a minimal realization $(P,Q,R)$ of $\mathbf{K}$ satisfies \eqref{eq:internal_model}.
	For the multi-input and multi-output case $p > 1$, we decompose $\mathbf{K}$ and then use 
	Lemma~\ref{lem:no_pole_zero_can}.
	Fix  $a \in (-1,1)$, and
%	For $\mathbf{Y}_+, \widehat{\mathbf{Z}}_+\in H^{\infty}(\mathbb{E}_1,\mathbb{C}^{p \times p})$ in Lemma~\ref{eq:K_property},
let a strictly proper rational function $\mathbf{Y}_+\in H^{\infty}(\mathbb{E}_1,\mathbb{C}^{p \times p})$ 
and a proper rational function $\mathbf{Z}_+\in H^{\infty}(\mathbb{E}_1,\mathbb{C}^{p \times p})$ satisfy 
the interpolation conditions \eqref{eq:Y_cond}, the norm condition
\eqref{eq:norm_bound}, and
the B\'ezout identity \eqref{eq:bezout}. 
Choose a rational function 
$\widehat{\mathbf{Z}}_+\in H^{\infty}(\mathbb{E}_1,\mathbb{C}^{p \times p})$ satisfying \eqref{eq:tilde_Z}.
	Define 
	\begin{equation}
		\label{eq:K1K2_def}
		\mathbf{K}_1(z) := \prod_{\ell=1}^n\frac{z-a}{z- e^{i\theta_\ell}} I,\quad
		\mathbf{K}_2(z) := \mathbf{Y}_+(z) \big((z-a)^{n} \widehat{\mathbf{Z}}_+(z)\big)^{-1}.
	\end{equation}
	%	Define 
	%	\[
	%	c_\ell := \frac{(e^{i\theta_\ell}-a)^n}{\prod_{j=1,j\not=\ell}^n (e^{i\theta_\ell}  -  e^{i\theta_{j}}  )},
	%	\]
	Then $\mathbf{K} = \mathbf{K}_1\mathbf{K}_2$.
	
	For every $\ell \in \{1,\dots,n \}$,
	let $c_\ell \in \mathbb{C}$ be the residue of $\prod_{j=1}^n\frac{z-a}{z- e^{i\theta_j}}$ at $z = e^{i\theta_\ell} $.
	Using the identity matrix $I$ with dimension $p$, we define
	\begin{equation}
		\label{eq:K1_realization}
		P_1 := \text{diag} \big(e^{i\theta_1} I,\dots, e^{i\theta_n}I\big),~
		Q_1:= 
		\begin{bmatrix}
			I \\ \vdots \\ I
		\end{bmatrix},~
		R_1 := 
		\begin{bmatrix}
			c_1 I & \cdots & c_n I
		\end{bmatrix},~
		S_1 := I.
	\end{equation}
	Then $(P_1,Q_1,R_1,S_1)$ is 
	a minimal realization of $\mathbf{K}_1$, and 
	\begin{equation}
	\label{eq:P1_IMP}
	\dim {\rm ker}(e^{i\theta_\ell}I - P_1) \geq p
	\qquad \forall \ell \in \{1,\dots,n \}.
	\end{equation}
	Let $(P_2,Q_2,R_2,S_2)$ be a minimal realization of
	$\mathbf{K}_2$.
	Since 
	$\mathbf{K}_2$ is strictly proper, it follows that  $S_2 = 0$.
	In addition, the realizations
	$(P_1,Q_1,R_1,S_1)$ and $(P_2,Q_2,R_2,S_2)$
	satisfy the conditions in Lemma~\ref{lem:no_pole_zero_can}.
%	In fact, since \eqref{eq:Bezout_ND_plus} leads to
%	\[
%	(\mathbf{N}_+ + \mathbf{D}_+\mathbf{R})\mathbf{Y}_+ + 
%	\mathbf{D}_+ 
%	\left(
%	\prod_{\ell = 1}^n
%	\frac{z - e^{i\theta_\ell}}{z-a}
%	\right)
%	(z-a)^n
%	\widehat{\mathbf{Z}}_+= I,
%	\]
By {\sf (a)} of Lemma~\ref{eq:K_property},
	$\mathbf{Y}_+$ and $(z-a)^{n} \widehat{\mathbf{Z}}_+$ are right coprime.
	Lemma~A.7.39 of \cite{Curtain1995} shows that 
	%	\[
	%	\sigma(P_2) = 
	%	\{a\} \cup
	%	\{
	%	z \in \mathbb{C}: 
	%	\det \widehat{\mathbf{Z}}_+(z) = 0
	%	\},
	%	\]
	%	and 
	every $\ell \in \{1,\dots,n \}$ satisfies $e^{i\theta_\ell} \not\in \sigma(P_2)$
	by \eqref{eq:det_tildeZ}. Hence $\sigma(P_1) \cap \sigma(P_2) \cap \cl(\mathbb{E}_1) = \emptyset$.
	By definition,
	$\det \mathbf{K}_1(\lambda) \not=0$ for every
	$\lambda \in \sigma(P_2) \cap \cl(\mathbb{E}_1)$.
	Since the interpolation condition \eqref{eq:boundary_int} implies that 
	$\mathbf{Y}_+(e^{i\theta_\ell})$ is invertible for every $\ell \in \{1,\dots,n \}$,
	it follows that 
	$\det \mathbf{K}_2(\lambda) \not=0$ for every
	$\lambda \in \sigma(P_1) \cap \cl(\mathbb{E}_1) =
	\{
	e^{i\theta_1},\dots, e^{i\theta_n}
	\}$.
	Therefore, Lemma~\ref{lem:no_pole_zero_can} shows that 
	the realization $(P,Q,R)$ of $\mathbf{K}=\mathbf{K}_1\mathbf{K}_2$
	in the form \eqref{eq:K1K2_realization} is stabilizable and detectable.
	By \eqref{eq:P1_IMP}, \eqref{eq:internal_model} is satisfied.
	
	We can see the power stability of $A_{\rm e}$
	from the same argument as in the proofs of 
	Theorem~7 in \cite{Logemann1992} and
	Theorem~9 in \cite{Logemann2013}.
	Using \eqref{eq:G_R_close} and $\|\mathbf{Y}_+\|_{H^{\infty}(\mathbb{E}_1)} < M_1$, we derive
	\begin{align*}
		\|\mathbf{D}_+(\mathbf{G}^- - \mathbf{R}) \mathbf{Y}_+\|_{H^{\infty}(\mathbb{E}_1)} 
		&\leq 
		\|\mathbf{D}_+\|_{H^{\infty}(\mathbb{E}_1)}  \cdot
		 \|\mathbf{G}^- - \mathbf{R} \|_{H^{\infty}(\mathbb{E}_1)}  \cdot
		\|  \mathbf{Y}_+  \|_{H^{\infty}(\mathbb{E}_1)} \\
		&< 1.	
	\end{align*}
	Therefore $\mathbf{U} := (\mathbf{D}_+\mathbf{G}^- +  \mathbf{N}_+ ) \mathbf{Y}_+ 
	+ \mathbf{D}_+ (\mathbf{Z}_+ - \mathbf{R} \mathbf{Y}_+)$
	satisfies
	\[
	\|
	\mathbf{U} - I
	\|_{H^{\infty}(\mathbb{E}_1)} 
	=
	\|\mathbf{D}_+(\mathbf{G}^- - \mathbf{R}) \mathbf{Y}_+\|_{H^{\infty}(\mathbb{E}_1)}  < 1,
	\]
	which yields $\mathbf{U}, \mathbf{U}^{-1} \in {H^{\infty}(\mathbb{E}_1, \mathbb{C}^{p\times p})} $.
	Since 
	\[
	(I + \mathbf{G} \mathbf{K} )^{-1}= 
	(\mathbf{Z}_+- \mathbf{R} \mathbf{Y}_+) \mathbf{U}^{-1}\mathbf{D}_+,\quad 
	\mathbf{G} = \mathbf{D}_+^{-1}(\mathbf{N}_+ + \mathbf{D}_+\mathbf{G}^- ),
	\]
	it follows that 
	\begin{align*}
		\begin{bmatrix}
			I & ~~-\mathbf{K} \\
			\mathbf{G} & ~~I
		\end{bmatrix}^{-1}
		&=
		%	\begin{bmatrix}
		%	\dfrac{1}
		%	{1 + \mathbf{G} \mathbf{K}} &
		%	 \dfrac{\mathbf{K}}
		%		{1 + \mathbf{G} \mathbf{K}} \\
		%	\dfrac{-\mathbf{G}}
		%		{1 + \mathbf{G} \mathbf{K}} & 
		%		1-
		%	\dfrac{1}
		%	{1 + \mathbf{G} \mathbf{K}}
		%	\end{bmatrix} \\
		\begin{bmatrix}
			I-
			\mathbf{K}(I + \mathbf{G} \mathbf{K})^{-1}\mathbf{G} &
			~~~\mathbf{K}
			(I + \mathbf{G} \mathbf{K})^{-1} \\
			-
			(I + \mathbf{G} \mathbf{K})^{-1}\mathbf{G} & 
			~~~(I + \mathbf{G} \mathbf{K})^{-1}
		\end{bmatrix} \\
		&=
		\begin{bmatrix}
			I- \mathbf{Y}_+\mathbf{U}^{-1} (\mathbf{N}_+ + \mathbf{D}_+\mathbf{G}^- )
			&
			~~\mathbf{Y}_+\mathbf{U}^{-1}  \mathbf{D}_+   \\
			-(\mathbf{Z}_+- \mathbf{R} \mathbf{Y}_+)\mathbf{U}^{-1}(\mathbf{N}_+ + \mathbf{D}_+\mathbf{G}^- )
			& 
			~~(\mathbf{Z}_+- \mathbf{R} \mathbf{Y}_+) \mathbf{U}^{-1} \mathbf{D}_+
		\end{bmatrix} \\
		&\in {H^{\infty}(\mathbb{E}_1, \mathbb{C}^{2p\times2p}}).
	\end{align*}
	A routine calculation similar to that for the finite-dimensional case in Lemma~5.3 of \cite{zhou1996}
	shows that
%	if $\bf G$ and $\bf K$ have realizations $(A,B,C,D)$ and  $(P,Q,R)$, respectively, then
for the transfer functions $\bf G$ of the plant \eqref{eq:plant} 
and ${\bf K}$ of the controller \eqref{eq:controller},
	\[
	\begin{bmatrix}
	I & ~~0 \\ D & ~~I
	\end{bmatrix}
	\begin{bmatrix}
	I & ~~-\mathbf{K} \\
	\mathbf{G} & ~~I
	\end{bmatrix}^{-1}
	\begin{bmatrix}
	I & ~~0 \\ D & ~~I
	\end{bmatrix}
	\]
	is
	the transfer function of the system
	\[
	\left(
	A_{\rm e},~
	\begin{bmatrix}
	B & ~~0 \\ 0  & ~~Q 
	\end{bmatrix}, 
	\begin{bmatrix}
	0 & ~~R \\ -C & ~~0 
	\end{bmatrix},~
	\begin{bmatrix}
	I & ~~0 \\ D & ~~I
	\end{bmatrix}
	\right).
	\]
	Hence
	Theorem~2 of \cite{Logemann1992}
	shows that
	$A_{\rm e}$ is power stable if
	\begin{equation}
		\label{eq:virtual_closed}
		\left(
		A_{\rm e},~
		\begin{bmatrix}
			B & ~~0 \\ 0  & ~~Q 
		\end{bmatrix}
		\right),\quad
		\left(
		\begin{bmatrix}
			0 & ~~R \\ -C & ~~0 
		\end{bmatrix},~
		A_{\rm e}
		\right)	
	\end{equation}
	is stabilizable and detectable, respectively, which 
	is equivalent to the stabilizablity and detectablility of 
	$(A,B,C)$ and $(P,Q,R)$.
	These properties of $(A,B,C)$ follow from $\langle$a\ref{enu_CD}$\rangle$, and
	we have already proved that $(P,Q,R)$ is stabilizable and detectable.
	This completes the proof.
	\qed
\end{proof}

We are now in a position to prove Theorem~\ref{thm:existence_servo_cont}.
\begin{proof}[of Theorem~\ref{thm:existence_servo_cont}]
	Due to
	Lemma~\ref{lem:G_R_close}, it remains to show 
	the existence of a rational function $\mathbf{R} \in H^{\infty}(\mathbb{E}_1,\mathbb{C}^{p \times p})$ 
	satisfying
	\eqref{eq:G_R_close}.
	
	Since $\mathbf{G}^- \in H^{\infty}(\mathbb{E}_\eta,\mathbb{C}^{p \times p})$ for some $\eta \in (0,1)$,
	it follows that 
	the Taylor expansion of $\mathbf{G}^-$ at $\infty$, 
	\[
	\mathbf{G}^- (z) = \sum_{j=0}^\infty G_jz^{-j},
	\]
	%	converges absolutely in $\mathbb{E}_{\eta}$ with $\eta \in (0,1)$,
	%	it follows that this expansion 
	converges uniformly in $\mathbb{E}_{1}$, i.e.,
	\[
	\lim_{N \to \infty}
	\sup_{z \in \mathbb{E}_1} 
	\left\|
	\mathbf{G}^-(z) -
	\sum_{j=0}^N G_j z^{-j}
	\right\|_{\mathbb{C}^{p \times p}}	 = 0.
	\]
	Thus \eqref{eq:G_R_close} holds with 
	\[
	\mathbf{R}^-(z) :=  \sum_{j=0}^N G_j z^{-j}
	\]
	for all sufficiently large $N \in \mathbb{N}$.
	\qed
\end{proof}

We summarize the proposed method for the construction of 
finite-dimensional regulating controllers.
The problem of
finding rational functions in the steps 2 and 5 of the procedure below
is called the Nevanlinna-Pick interpolation problem; see
Appendix~\ref{sec:NPI} for details.

\vspace{24pt}
\noindent
\textbf{Design procedure of controllers}
\begin{enumerate}
	\item Obtain a left-coprime factorization $\mathbf{D}_+^{-1}\mathbf{N}_+$ of a rational function $\mathbf{G}^+$
	over the set of rational functions in $H^{\infty}(\mathbb{E}_1)$. 
	\item Find $M >0$ satisfying \eqref{eq:M_condition}.
	\item Set $M_1 > 0$ as in \eqref{eq:M1_def}.
	\item Find a rational function $\mathbf{R} \in H^{\infty}(\mathbb{E}_1,\mathbb{C}^{p \times p})$ satisfying
	the norm condition \eqref{eq:G_R_close}.
	\item Find a rational function $\mathbf{Y}_+ \in H^{\infty}(\mathbb{E}_1,\mathbb{C}^{p \times p})$ satisfying
	the interpolation conditions \eqref{eq:first_pole}, \eqref{eq:Y_cond} and
	the norm condition $\|\mathbf{Y}_+\|_{H^{\infty}(\mathbb{E}_1)} < M_1$.
	\item Define a rational function $\mathbf{Z}_+ \in H^{\infty}(\mathbb{E}_1,\mathbb{C}^{p \times p})$ by 
	\eqref{eq:Z_plus_def}.
	\item Calculate a rational function $\widehat{\mathbf{Z}}_+ \in H^{\infty}(\mathbb{E}_1,\mathbb{C}^{p \times p})$ satisfying
	\eqref{eq:tilde_Z}.
	\item Define the minimal realization $(P_1,Q_1,R_1,S_1)$ as in \eqref{eq:K1_realization} and
	compute a minimal realization $(P_2,Q_2,R_2)$ of $\mathbf{K}_2$ defined by \eqref{eq:K1K2_def}.
	\item Calculate a realization \eqref{eq:K1K2_realization}, which is a realization of a regulating controller.
\end{enumerate}
\mbox{}

In the single-input and single-output case $p=1$, we can remove the assumption $\langle$a\ref{enu_Simple}$\rangle$ and the redundant 
steps 6--8 in the above design procedure. To see this,
let the multiplicity of the zeros $\chi_1,\dots,\chi_\Upsilon$  in $\cl(\mathbb{E}_1)$ of $\det (sI-A^+)$  be
$J_r \in \mathbb{N}$ for  $r\in \{1,\dots,\Upsilon\}$. If $M_1>0$ is sufficiently large,
then there exists a rational function
$\mathbf{Y_+}  \in H^{\infty}(\mathbb{E}_1)$ satisfying the interpolation conditions
\begin{subequations}
	\label{eq:int_SISO}
	\begin{align}
		\tag{\ref{eq:first_pole}$'$a}
		\mathbf{Y}_+ (\infty) &= 0,\quad
		\mathbf{Y}_+(\chi_r) = \frac{1}{\mathbf{N}_+(\chi_r)} \\
		\tag{\ref{eq:first_pole}$'$b}
		\mathbf{Y}_+^{(j)} (\chi_r) &= \frac{-1}{\mathbf{N}_+ (\chi_r) }
		\sum_{\ell=0}^{j-1} 
	%	\binom{j}{\ell}
	\frac{j!}{\ell! (j-\ell)!}
		\mathbf{N}^{(j - \ell)} (\chi_r)  \mathbf{Y}_+^{(\ell)} (\chi_r) 
	\end{align}
\end{subequations}
for all $r\in \{1,\dots,\Upsilon\}$, $j \in \{1,\dots, J_r \}$ and
\begin{align}
	\tag{\ref{eq:Y_cond}$'$}
	\label{eq:bound_SISO}
	\mathbf{Y}_+(e^{i\theta_\ell}) &= \frac{1}{\mathbf{N}_+(e^{i\theta_\ell})  + \mathbf{D}_+(e^{i\theta_\ell})  \mathbf{R}(e^{i\theta_\ell}) }
\end{align}
for all $\ell \in \{1,\dots,n \}$ and 
the norm condition $\|\mathbf{Y}_+\|_{H^{\infty}(\mathbb{E}_1)} < M_1$.
See, e.g., \cite{luxemburg2010} for an algorithm to compute a rational function
$\mathbf{Y_+}  \in H^{\infty}(\mathbb{E}_1)$ satisfying these interpolation and
norm conditions.
For a rational function $\mathbf{R} \in H^{\infty}(\mathbb{E}_1)$ 
satisfying
\eqref{eq:G_R_close}, 
\[
\mathbf{K} := \frac{
	\mathbf{Y}_+\mathbf{D}_+}{1 - (\mathbf{N}_+ + \mathbf{D}_+\mathbf{R} )\mathbf{Y}_+}
\]
is strictly proper, has a pole at $z = e^{i\theta_\ell}$ for all $\ell \in \{1,\dots,n \}$, and
satisfies
\[
\begin{bmatrix}
1 & ~~-\mathbf{K} \\
\mathbf{G} & ~~1
\end{bmatrix}^{-1}\in {H^{\infty}(\mathbb{E}_1, \mathbb{C}^{2\times2}}).
\]
As commented in the proof of Lemma~\ref{lem:G_R_close},
we see from
Lemma~A.7.39 of \cite{Curtain1995} that
a minimal realization $(P,Q,R)$ of $\mathbf{K}$ satisfies \eqref{eq:internal_model}.
%without decomposing $\mathbf{K}$ into $\mathbf{K} =\mathbf{K}_1 \mathbf{K}_2$.
Thus,
$(P,Q,R)$ is a realization of a regulating controller.
Since this result can be obtained by a slight modification 
of the argument for the multi-input
multi-output case $p>1$,
we omit the details for the sake of brevity.
%We can find an
%interpolating rational function in $H^{\infty}(\mathbb{E}_1)$ of minimal norm in Steps~2 and 5, e.g., 
%by Algorithm 6.3 in \cite{luxemburg2010}.

\section{Sampled-data output regulation for constant reference and disturbance signals}
\label{sec:SDOR}
In this section, we investigate sampled-data robust output regulation for unstable well-posed systems
with constant reference and disturbance signals.
To this end, we employ the results for discrete-time systems developed in Section~\ref{sec:DTOR}.
However, there remains two issues to be solved: 
\begin{itemize}
\item	{\em
What conditions are required for the original continuous-time system
in order to guarantee the conditions {\em $\langle$a\ref{enu_Resol}$\rangle$--$\langle$a\ref{enu_Simple}$\rangle$}
of the discretized system?
}
\item 
{\em
	Does output regulation at sampling instants imply continuous-time output regulation?
}
\end{itemize}

The main difficulty of the first problem  is to obtain the relationship
between  the transfer function $\mathbf{G}(s)$ of the original continuous-time system
and the transfer function $\mathbf{G}_\tau(z)$ of the discretized system with sampling period $\tau>0$.
We here show that $\mathbf{G}_\tau(1) = \mathbf{G}(0)$.
This equality allows us to check the assumption $\langle$a\ref{enu_Zero}$\rangle$, $\det \mathbf{G}_\tau(1) \not=0$, 
by using only $\mathbf{G}(s)$.
%Note that it is generally difficult to obtain $\mathbf{G}_\tau(z)$ explicitly for
%well-posed systems, compared with $\mathbf{G}(s)$.
For exponentially stable well-posed systems,
$\mathbf{G}_\tau(1) = \mathbf{G}(0)$
has been proved in Proposition~4.3 of \cite{Logemann1997} and Proposition~3.1 of \cite{Ke2009SCL}.
We extend these results to systems whose unstable part is finite-dimensional.
The point of the proof is to decompose $\mathbf{G}(s)$ into 
the unstable part $\mathbf{G}^+(s)$ and the stable part $\mathbf{G}^-(s)$.

For the second issue,
we first prove that 
the output has the limit as $t \to \infty$ in the ``energy'' sense.
%This is possible because we consider only constant reference signals.
Next, we show that 
this limit coincides the value of the constant reference signal if
 output regulation at sampling instants is achieved.
We further prove that if a smoothing precompensator 
is embedded between the zero-order hold and the plant, then
the output exponentially converges to the constant reference signal in the usual sense 
under a certain regularity condition on the initial states.

In Section~\ref{subsec:prelim_WPS}, we recall briefly some facts on well-posed continuous-time  systems.
In Section~\ref{subsec:CLS_SD}, we introduce sampled-data systems and formulate the problem 
of sampled-data robust output regulation for constant reference and disturbance signals.
We place assumptions on the original continuous-time systems in Section~\ref{subsec:SDassumption} and
reduce them to the assumptions $\langle$a\ref{enu_Resol}$\rangle$--$\langle$a\ref{enu_Simple}$\rangle$ 
on the discretized system in Section~\ref{subsec:properties_DS}.
Finally, Section~\ref{subsec:SDOR} is devoted to solving the sampled-data output regulation problem.

\subsection{Preliminaries on well-posed systems}
\label{subsec:prelim_WPS}
We provide brief preliminaries on well-posed linear systems and
refer the readers to the surveys \cite{Weiss2001, Tucsnak2014} and the book \cite{Staffans2005} for more details.
As a plant, we consider 
a well-posed system $\Sigma$ with state space $X$, input space $\mathbb{C}^p$,
and output space $\mathbb{C}^p$, generating operators $(A,B,C)$, 
transfer function $\mathbf{G}$, and input-output operator $G$.
Here $X$ is a separable complex Hilbert space with norm $\|\cdot\|$ and
$A$ is the generator of
a strongly continuous semigroup $\mathbf{T} = (\mathbf{T}_t)_{t\geq 0}$ on $X$.
The spaces $X_1$ and $X_{-1}$ are the interpolation and extrapolation spaces
associated with $\mathbf{T}$, respectively. For $\lambda \in \varrho(A)$,
the space $X_1$ is defined as $\dom (A)$ endowed with
the norm $\|\zeta \|_{1} := \|(\lambda I - A)\zeta \|$,
and $X_{-1}$ is the completion of $X$ with
respect to the norm $\|\zeta \|_{-1} := \|(\lambda I - A)^{-1}\zeta \|$.
Different choices of $\lambda$ lead to equivalent norms on $X_1$ and $X_{-1}$.
The semigroup $\bf T$ restricts to a strongly continuous semigroup on $X_1$, and
the generator of the restricted semigroup is the part of $A$ in $X_1$. Similarly,
$\bf T$ can be uniquely extended to a strongly continuous semigroup on $X_{-1}$, and
the generator of the extended semigroup is an extension of $A$ with domain $X$.
The restriction and extension of $\bf T$ have the same exponential growth bound
as the original semigroup $\bf T$.
We denote the restrictions and extensions of $\bf T$ and $A$ by the same symbols.
We refer the reader to Section~II.5 of \cite{Engel2000} and  Section~2.10 of \cite{Tucsnak2014}
for more details on the interpolation and extrapolation spaces.
%The restriction and extension of $\bf T$ have the same exponential growth bound
%as the original semigroup $\bf T$, and
%we denote  the exponential growth bound by $\omega(\mathbf{T})$, i.e.,
%\[
%\omega(\mathbf{T}) := \lim_{t\to \infty} \frac{\ln \|\mathbf{T}_t\|}{t}.
%\]

We place the following
conditions for the system node $(A,B,C,\mathbf{G})$  to be well posed:
\begin{itemize}
	\item
	The operator $B$ 
	satisfies $B \in \mathcal{L}(\mathbb{C}^p,X_{-1})$ and is an admissible control operator
	for $\mathbf T$, that is, for every $t \geq 0$, there exists $b_t\geq 0$ such that
	\[
	\left\|
	\int^t_0 {\mathbf T}_{t-s} Bu(s)ds
	\right\| \leq
	b_t \|u\|_{L^2(0,t)}\qquad \forall u\in L^2([0,t],\mathbb{C}^p).
	\]
	\item
	The operator $C$ satisfies
	$C \in \mathcal{L}(X_1,\mathbb{C}^p)$
	and is an admissible observation operator for $\mathbf{T}$, that is, 
	for every $t \geq 0$, there exists $c_t\geq 0$ such that
	\[
	\left(
	\int^t_0 \| C{\mathbf T}_{s}\zeta \|_{\mathbb{C}^{p}}	^2 ds
	\right)^{1/2} \leq 
	c_t \|\zeta \|\qquad \forall \zeta  \in X_1.
	\]

	\item 
	The transfer function $\mathbf{G}:\mathbb{C}_{\omega (\bf T)} \to \mathbb{C}^{p\times p}$ satisfies 
	\begin{equation}
	\label{eq:G_property}
	{\bf G}(s) - {\bf G} (\lambda) = 
	-(s-\lambda)C(sI-A)^{-1}(\lambda I - A)^{-1}B
	\quad
	\forall s,\lambda \in \mathbb{C}_{\omega (\bf T)}
%	\text{~with~} s \not=\lambda
	\end{equation}
	and
	$\mathbf{G} \in H^{\infty}(\mathbb{C}_{\alpha},\mathbb{C}^{p\times p})$
	for every $\alpha > \omega (\mathbf{T})$.
\end{itemize}

The transfer function 
$\mathbf{G}$ may have an analytic extension to a half plane
$\mathbb{C}_{\alpha}$ with $\alpha < \omega(\mathbf{T})$.
If it exists,
we say that $\mathbf{G}$ is holomorphic (meromorphic) on $\mathbb{C}_{\alpha}$ and
use the same symbol $\mathbf{G}$ for an analytic extension to a larger right half plane.
For every $\alpha > \omega (\mathbf{T})$,
the input-output operator $G: L^2_{\rm loc} (\mathbb{R}_+,\mathbb{C}^p) \to L^2_{\rm loc} (\mathbb{R}_+,\mathbb{C}^p)$
satisfies $G \in \mathcal{L} \big(L^2_{\alpha} (\mathbb{R}_+,\mathbb{C}^p),L^2_{\alpha} (\mathbb{R}_+,\mathbb{C}^p)\big)$
and 
\[
\big(
\mathfrak{L}(Gu)(s)
\big) = {\bf G}(s)\big(\mathfrak{L}(u) \big)(s)\qquad
\forall s \in \mathbb{C}_\alpha,~\forall u \in L^2_\alpha (\mathbb{R}_+,\mathbb{C}^p),
\]
where $\mathfrak{L}$ denotes the Laplace transform.

The $\Lambda$-extension $C_{\Lambda}$ of $C$ is defined by
\[
C_{\Lambda} \zeta := 
\lim_{s \to \infty,~\!\!s\in \mathbb{R}} Cs(sI-A)^{-1}\zeta 
\]
with domain $\dom (C_{\Lambda})$ consisting of those $\zeta \in X$ for which the limit exists.
For every $\zeta \in X$, we obtain ${\mathbf T}_t \zeta \in \dom (C_{\Lambda})$ for a.e. $t\geq 0$.
By the admissibility of $C$, for every $t \geq 0$, there exists $c_{t} \geq 0$ such that 
\[
\left(
\int^t_0 \|C_{\Lambda} {\bf T}_s \zeta\|_{\mathbb{C}^{p}}	^2 ds
\right)^{1/2} \leq c_t \|\zeta\|\qquad \forall \zeta \in X.
\]
If we define 
the operator $\Psi :X \to L^2_{\rm loc}(\mathbb{R}_+,\mathbb{C}^p)$ by
\[
(\Psi \zeta)(t) := C_{\Lambda} {\bf T}_t \zeta\qquad \forall \zeta\in X,~{\rm a.e.~} t \geq 0,
\] 
then $\Psi$
satisfies $\Psi \in \mathcal{L}\big(X, L^2_{\alpha}(\mathbb{R}_+,\mathbb{C}^p)\big)$
for every $\alpha > \omega (\bm T)$.
The Laplace transform of $\Phi \zeta$ is given by
$C(sI-A)^{-1}\zeta$ for every $\zeta \in X$ and every $s \in \mathbb{C}_{\omega(\bf T)}$.

Fix $\lambda \in \mathbb{C}_{\omega(\mathbf{T})}$ arbitrarily. Let $x$ and $y$
denote, respectively, the state and output functions of the well-posed system $\Sigma$ with
the initial condition $x(0)=x^0 \in X$ and the input function $u \in L^2_{\rm loc}(\mathbb{R}_+,\mathbb{C}^p)$. 
The state $x$ and the output $y$ satisfy
\begin{equation}
	\label{eq:solution_diff}
	x(t) = \mathbf{T}_t x^0 + \int^t_0 \mathbf{T}_{t-s} Bu(s) ds\qquad \forall t \geq 0,
\end{equation}
$x(t) - (\lambda I - A)^{-1}Bu(t) \in \dom (C_{\Lambda})$ for a.e. $t \geq 0$, and
\begin{subequations}
	\label{eq:well_posed}
	\begin{align}
		\label{eq:diff}
		\dot x(t) &= Ax(t) + Bu(t),\qquad x(0)= x^0 \in X\quad {\rm a.e.~} t \geq 0 \\
		\label{eq:output}
		y(t) &= C_{\Lambda}
		\big(x(t) - (\lambda I - A)^{-1}Bu(t)\big) + \mathbf{G}(\lambda )u(t)\quad {\rm a.e.~} t \geq 0,
	\end{align}
\end{subequations}
where the differential equation \eqref{eq:diff} is interpreted on $X_{-1}$.
We have from \eqref{eq:solution_diff} and \eqref{eq:output} that
for every $u \in L^2_{\rm loc}(\mathbb{R}_+,\mathbb{C}^p)$ and a.e. $t\geq0$, 
the input-output operator $G$ satisfies
\begin{equation}
	\label{eq:IOmap_formula}
	(Gu)(t) = C_{\Lambda}
	\left(
	\int^t_0 \mathbf{T}_{t-s} Bu(s) ds - 
	(\lambda I - A)^{-1}Bu(t)
	\right) + {\bf G}(\lambda)u(t).
\end{equation}

\subsection{Closed-loop system and control objective}
\label{subsec:CLS_SD}
Let $\tau >0$ denote the sampling period.
The zero-order hold operator $\mathcal{H}_{\tau}:F(\mathbb{Z}_+,\mathbb{C}^p)\to 
L^2_{\rm loc}(\mathbb{R}_+,\mathbb{C}^p)$ is defined by
\[
(\mathcal{H}_{\tau}f)(k\tau + t) := f(k) \qquad \forall t \in [0,\tau),~\forall k \in \mathbb{Z}_+.
\]
The generalized sampling operator $\mathcal{S}_{\tau}: L^2_{\rm loc}(\mathbb{R}_+,\mathbb{C}^p)
\to F(\mathbb{Z}_+,\mathbb{C}^p)$ is defined by
\[
(\mathcal{S}_{\tau}g) (k) := \int^\tau_0 w(t) g(k\tau+t) dt \qquad \forall k \in \mathbb{Z}_+,
\] 
where  the scalar weighting function $w$ satisfies $w \in L^2(0,\tau)$ and 
\[
\int^\tau_0 w(t) dt = 1.
\]
The outputs of well-posed systems are in $L^2_{\rm loc}$, and hence
the above type of generalized sampling is reasonable.
Note that controllers connected to the sampler above need to be
strictly causal, i.e., have no feedforward term.

We connect the continuous-time system \eqref{eq:well_posed} and the 
discrete-time controller \eqref{eq:controller} via the following
sampled-data feedback law:
\[
u = \mathcal{H}_{\tau} y_{\rm d} + v\mathds{1}_{\mathbb{R}_+},\qquad 
u_{\rm d} = y_{\rm ref}\mathds{1}_{\mathbb{Z}_+}  - \mathcal{S}_\tau y,
\]
where 
$ y_{\rm ref}\mathds{1}_{\mathbb{Z}_+} $ and 
$v\mathds{1}_{\mathbb{R}_+}$ with $y_{\rm ref} \in \mathbb{C}^p$ and
$v \in \mathbb{C}^p$
are the constant reference  and disturbance signals, respectively.
These signals are
constant, but their values $y_{\rm ref}$ and $v$ are 
unknown when we design controllers.
The dynamics of the sampled-data system is given by
\begin{subequations}
	\label{eq:sampled_data_sys}
	\begin{align}
		\dot x &= Ax + B(\mathcal{H}_{\tau} y_{\rm d} 
		+ v\mathds{1}_{\mathbb{R}_+}),
		\qquad x(0)= x^0 \in X\\\
		y &= C_{\Lambda}\big(x - (\lambda I - A)^{-1}B(\mathcal{H}_{\tau} y_{\rm d} + v\mathds{1}_{\mathbb{R}_+}) \big)+ \mathbf{G}(\lambda )
		(
		\mathcal{H}_{\tau} y_{\rm d} + v\mathds{1}_{\mathbb{R}_+}
		)\\
		x_{\rm d}^{\bigtriangledown}
		&= P x_{\rm d}+ Q(y_{\rm ref}\mathds{1}_{\mathbb{Z}_+}  
		- \mathcal{S}_\tau y),\quad x_{\rm d}(0) = x_{\rm d}^0 \in X_{\rm d} \\
		y_{\rm d}&= R x_{\rm d}.
	\end{align}
\end{subequations}

We define the exponential stability of this sampled-data system.
\begin{definition}[Exponential stability]
	The sampled-data system \eqref{eq:sampled_data_sys} is exponentially stable if
	there exist $\Gamma \geq 1$ and $\gamma >0$ such that 
	\begin{align}
		\label{eq:ES_SD}
		&\left\|
		\begin{bmatrix}
			x(k\tau + t) \\
			x_{\rm d}(k)
		\end{bmatrix}
		\right\|_{X\times X_{\rm d}}\hspace{-3pt}
		\leq 
		\Gamma 
		\left(
		e^{-\gamma(k\tau+t)}
		\left\|
		\begin{bmatrix}
			x^0 \\
			x_{\rm d}^0
		\end{bmatrix}
		\right\|_{X\times X_{\rm d}}\hspace{-3pt}
		+ \|y_{\rm ref}\|_{\mathbb{C}^p} +\|v\|_{\mathbb{C}^p} 
		\right)  \\
		&\qquad\qquad 
		\forall k\in \mathbb{Z}_+,~\forall t \in [0,\tau),~
		\forall x^0 \in X,~\forall x_{\rm d}^0 \in X_{\rm d},~
		\forall y_{\rm ref}, v \in \mathbb{C}^p. \notag
	\end{align}
\end{definition}

We consider a set of perturbed plants $\mathcal{O}_{\rm s}(P,Q,R)$ defined as follows.
\begin{definition}[Set of perturbed plants]
	For given operators $P \in \mathcal{L}(X_{\rm d})$, $Q \in \mathcal{L}(\mathbb{C}^p, X_{\rm d})$, and
	$R \in \mathcal{L}(X_{\rm d}, \mathbb{C}^p)$,
	$\mathcal{O}_{\rm s}(P,Q,R)$ is the set of system nodes
	$(\widetilde{A},\widetilde{B},\widetilde{C},\widetilde{\bf G})$
%	satisfy $(\widetilde{A},\widetilde{B},\widetilde{C},\widetilde{\bf G}) \in \mathcal{O}_{\rm s}(P,Q,R)$ if 
	satisfying the following two conditions:
	\begin{enumerate}
		\item  
		The operators $(\widetilde{A},\widetilde{B},\widetilde{C})$ and the transfer function $\widetilde{\bf G}$
		generate a well-posed system with state space $X$, input space $\mathbb{C}^p$,
		and output space $\mathbb{C}^p$.

		\item  The perturbed sampled-data system, in which 
		the system node $(A,B,C,{\bf G})$
		is changed to
		$(\widetilde{A},\widetilde{B},\widetilde{C},\widetilde{\bf G})$,
		is exponentially stable.
	\end{enumerate}
\end{definition}

In this section, we study the following sampled-data robust output regulation problem. 
\begin{problem}[Robust output regulation for sampled-data systems]
	\label{prob:OR_SD}
	Find a controller \eqref{eq:controller} such that 
	the following three properties hold for
	the sampled-data system \eqref{eq:sampled_data_sys}:
	\begin{description}
		\item[Stability:]
		The sampled-data system \eqref{eq:sampled_data_sys}  is exponentially stable.
		%		There exist $\Gamma \geq 1$ and $\gamma >0$ such that 
		%		\begin{align}
		%		\label{eq:ES_SD}
		%		&\left\|
		%		\begin{bmatrix}
		%		x(k\tau + t) \\
		%		x_{\rm d}(k)
		%		\end{bmatrix}
		%		\right\|_{X\times X_{\rm d}}\hspace{-3pt}
		%		\leq 
		%		\Gamma 
		%		\left(
		%		e^{-\gamma(k\tau+t)}
		%		\left\|
		%		\begin{bmatrix}
		%		x^0 \\
		%		x_{\rm d}^0
		%		\end{bmatrix}
		%		\right\|_{X\times X_{\rm d}}\hspace{-3pt}
		%		+\|v\|_{\mathbb{C}^p} + \|y_{\rm ref}\|_{\mathbb{C}^p}
		%		\right)  \\
		%		&\qquad\qquad 
		%		\forall k\in \mathbb{Z}_+,~\forall t \in [0,\tau),~
		%		\forall x^0 \in X,~\forall x_{\rm d}^0 \in X_{\rm d},~
		%		\forall v,y_{\rm ref} \in \mathbb{C}^p. \notag
		%		\end{align}
		\item[Tracking:]
		There exist $\Gamma_{\rm ref} >0$ and $\alpha < 0$ such that 
		\begin{align}
			\label{eq:TR_SD}
			\| 
			y - y_{\rm ref}\mathds{1}_{\mathbb{R}_+}
			\|_{L^2_\alpha} \leq 
			\Gamma_{\rm ref} &\left(
			\left\|
			\begin{bmatrix}
				x^0 \\
				x_{\rm d}^0
			\end{bmatrix}
			\right\|_{X\times X_{\rm d}} + \|y_{\rm ref}\|_{\mathbb{C}^p}
			+\|v\|_{\mathbb{C}^p} 
			\right) \\
			&\qquad \forall x^0 \in X,~\forall x_d^0 \in X_d,~
			\forall y_{\rm ref}, v \in \mathbb{C}^p. \notag
		\end{align}

		\item[Robustness:]
		If the system node $(A,B,C,\bf G)$ 
		is changed to
		$(\widetilde{A},\widetilde{B},\widetilde{C},\widetilde{\bf G}) \in \mathcal{O}_{\rm s}(P,Q,R)$,
		then the above tracking property still holds.
	\end{description}
\end{problem}

\subsection{Assumptions on well-posed systems}
\label{subsec:SDassumption}
In what follows, we impose several assumptions on the well-posed system \eqref{eq:well_posed}. 

\begin{enumerate}
	\renewcommand{\labelenumi}{$\langle$b\arabic{enumi}$\rangle$}
	\item $0 \in \varrho(A)$. \label{enu_Resol_SD}
	\item $\det \mathbf{G}(0) \not= 0$. \label{enu_Zero_SD}
	\item There exists $\varepsilon>0$ such that 
	$\sigma(A) \cap  \cl(\mathbb{C}_{-\varepsilon})$ consists of finitely many
	isolated eigenvalues of $A$ with finite algebraic multiplicities. \label{enu_uns_finite}
\end{enumerate}

Under the assumption $\langle$b\ref{enu_uns_finite}$\rangle$, 
we obtain the following spectral decomposition of $X$ for $A$; see, e.g.,
Lemma~2.5.7 of \cite{Curtain1995} or Proposition~IV.1.16 of \cite{Engel2000}.
There exists a rectifiable, closed, 
simple curve $\Phi$ in $\mathbb{C}$ 
enclosing an open set that contains $\sigma(A) \cap  \cl(\mathbb{C}_{0})$ in its interior and
$\sigma(A) \cap  \big( \mathbb{C} \setminus \cl(\mathbb{C}_{0})\big)$
in its exterior. The operator
\begin{equation}
\label{eq:projection}
\Pi := \frac{1}{2\pi i} \int_{\Phi} (sI-A)^{-1}ds
\end{equation}
is a projection on $X$. Define $X^+ := \Pi X$ and $X^- := (I-\Pi)X$.
Then $X = X^+ \oplus X^-$, 
$\dim X^+ < \infty$, and $X^+ \subset X_1$.
The subspaces
$X^+$ and $X^-$ are 
${\mathbf T}_t$-invariant for all $t\geq 0$.

Define 
\[
A^+:= A|_{X^+},\quad 
\mathbf{T}_t^+ := \mathbf{T}_t|_{X^+},\quad
A^-:= A|_{X_1 \cap X^-},\quad
\mathbf{T}_t^- := \mathbf{T}_t|_{X^-}.
\]
Then 
\[
\sigma(A^+) = \sigma(A) \cap \cl(\mathbb{C}_{0}),\quad
\sigma(A^-) = \sigma(A) \cap  \big( \mathbb{C} \setminus \cl(\mathbb{C}_{0})\big),
\]
and
$\mathbf{T}^+ := (\mathbf{T}^+_t)_{t\geq 0}$ and $\mathbf{T}^- := (\mathbf{T}^-_t)_{t\geq 0}$ 
are strongly continuous semigroups on $X^+$ and $X^-$ with
generators $A^+$ and $A^-$, respectively.
The projection operator $\Pi$ on $X$ can be extended to a projection $\Pi_{-1}$
on $X_{-1}$, and $\Pi_{-1}X_{-1}= \Pi X = X^+$. We define
\[
B^+ := \Pi_{-1}B,\quad 
C^+ := C|_{X^+},\quad
B^- := (I-\Pi_{-1})B,\quad
C^- := C|_{X_1\cap X^-}.
\]
We can uniquely extend
the semigroup $\mathbf{T}_t^-$
to a strongly continuous semigroup on $(X^-)_{-1}$,
and the generator of the extended semigroup is an extension of $A^{-}$.
The same symbols $\mathbf{T}_t^-$ and $A^-$ will be used to denote the extensions.
Note that we can identity $(X^-)_{-1}$ and $(X_{-1})^{-} := (I-\Pi_{-1})X_{-1}$ as mentioned
in the footnote 2 on p. 1357 of \cite{Logemann2005}.

We are now in a position to formulate the remaining assumptions.
\begin{enumerate}
	\renewcommand{\labelenumi}{$\langle$b\arabic{enumi}$\rangle$}
	\setcounter{enumi}{3}
	\item The strongly continuous semigroup $\mathbf{T}^- = (\mathbf{T}^-_t)_{t\geq 0}$ 
	is exponentially stable.\label{enu_A_minus_EXS_SD}
	\item $(A^+,B^+,C^+)$ is controllable and observable.\label{enu_CD_SD}
	\item $2\ell \pi i /\tau \not\in \sigma(A^+)$ for every $\ell \in \mathbb{Z} \setminus \{0\}$.  \label{enu_spec_IA}
	\item $\int^{\tau}_0 w(t) e^{\lambda t}dt \not= 0$ for every $\lambda \in \sigma(A^+)$. \label{enu_weight}
	\item $\tau(\lambda - \mu) \not=2\ell \pi i$ for every $\lambda,\mu \in \sigma(A^+)$ 
	and for every $\ell \in \mathbb{Z} \setminus\{0\}$. \label{enu_two_eig_relation}
	\item The zeros of $\det (sI-A^+)$ are simple.  \label{enu_Simple_SD}
\end{enumerate}

As in the discrete-time case,
we assume $\langle$b\ref{enu_Resol_SD}$\rangle$ and 
$\langle$b\ref{enu_Zero_SD}$\rangle$ for output regulation.
For the design of regulating controllers, we place the assumption $\langle$b\ref{enu_Simple_SD}$\rangle$ but
can remove it in the single-input and single-output case $p=1$, as commented in Section~\ref{sec:DTOR}.
Proposition~5 and Theorem~9 of \cite{Logemann2013} show that for the existence of stabilizing controllers,
the conditions $\langle$b\ref{enu_uns_finite}$\rangle$--$\langle$b\ref{enu_weight}$\rangle$ are sufficient, and
the conditions $\langle$b\ref{enu_uns_finite}$\rangle$--$\langle$b\ref{enu_two_eig_relation}$\rangle$ are necessary and sufficient
in the case $p=1$.

Define the input-output operator $G^+$ of the finite-dimensional system $(A^+,B^+,C^+)$ by
\[
(G^+u)(t) := \int^t_0 C^+ e^{A^+(t-s)} B^+ u(s) ds\qquad \forall t \geq 0,~
\forall u \in L^2_{\rm loc}(\mathbb{R}_+,\mathbb{C}^p).
\]
and define $G^- := G- G^+$.
We use the following result on the decomposition of the output:
\begin{lemma}[Lemma~4.2 in \cite{Logemann2005}]
	\label{lem:y_decomp}
	Assume that {\rm $\langle$b\ref{enu_uns_finite}$\rangle$} holds.
	There exists a well-posed system $\Sigma^-$ with generating operator $(A^-,B^-,C^-)$
	and input-output operator $G^- $. For every $x^0 \in X$ and 
	every $u\in L^2_{\rm loc}(\mathbb{R}_+,\mathbb{C}^p)$, the output $y$ of the well-posed system \eqref{eq:well_posed}
	can be written in the form
	\begin{equation}
		\label{eq:y_decomp}
		y(t) = C^+ \Pi x(t) + (C^-)_{\Lambda} {\bf T}_t^- (I- \Pi)x^0 + (G^-u)(t)\qquad
		{\rm a.e.~}t\geq 0.
	\end{equation}
	The $\Lambda$-extension of $C^-$ satisfies
	\begin{equation}
		\label{eq:C_minus_lambda}
		(C^-)_{\Lambda} \zeta = C_{\Lambda}\zeta \qquad
		\forall \zeta \in \dom \big( (C^-)_{\Lambda}\big) = \dom  (C_{\Lambda}) \cap X^-.
	\end{equation}
\end{lemma}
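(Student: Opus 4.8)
The plan is to verify in turn the three assertions of the lemma: the well-posedness of $\Sigma^-$, the validity of the decomposition \eqref{eq:y_decomp}, and the $\Lambda$-extension identity \eqref{eq:C_minus_lambda}. Throughout I would exploit that $X^+$ and $X^-$ are ${\bf T}_t$-invariant and that the projection $\Pi$ in \eqref{eq:projection} extends to $\Pi_{-1}$ on $X_{-1}$, commuting with the extended semigroup. It is convenient to treat the identity \eqref{eq:C_minus_lambda} first, since it is needed to make sense of the stable part of \eqref{eq:y_decomp}.

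For \eqref{eq:C_minus_lambda} I would argue directly from the definition of the $\Lambda$-extension. For $\zeta\in X^-$ and $s$ real and large, $(sI-A)^{-1}\zeta=(sI-A^-)^{-1}\zeta\in X^-$, and since $s(sI-A^-)^{-1}\zeta\in X_1\cap X^-$ we have $Cs(sI-A)^{-1}\zeta = C^- s(sI-A^-)^{-1}\zeta$. Hence the limits defining $C_\Lambda\zeta$ and $(C^-)_\Lambda\zeta$ coincide whenever either exists, which gives both $\dom\big((C^-)_\Lambda\big)=\dom(C_\Lambda)\cap X^-$ and the stated equality. To establish that $(A^-,B^-,C^-)$ together with $G^-:=G-G^+$ generates a well-posed system, I would check admissibility and the transfer-function compatibility separately. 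Admissibility of $B^-$ for ${\bf T}^-$ follows by writing $\int_0^t {\bf T}^-_{t-s}B^- u(s)\,ds = (I-\Pi)\int_0^t {\bf T}_{t-s}Bu(s)\,ds$, which is legitimate because $\Pi_{-1}$ commutes with ${\bf T}_{t-s}$ on $X_{-1}$ and $(I-\Pi_{-1})B=B^-$; the estimate then follows from admissibility of $B$ and boundedness of $I-\Pi$. Dually, admissibility of $C^-$ follows from $C^- {\bf T}^-_s\zeta = C{\bf T}_s\zeta$ for $\zeta\in X_1\cap X^-$ together with the admissibility of $C$. For the compatibility condition I would verify \eqref{eq:G_property} for $(A^-,B^-,C^-)$: decomposing the resolvents along the invariant splitting gives $C(sI-A)^{-1}(\lambda I-A)^{-1}B = C^+(sI-A^+)^{-1}(\lambda I-A^+)^{-1}B^+ + C^-(sI-A^-)^{-1}(\lambda I-A^-)^{-1}B^-$ with no cross terms, and subtracting the finite-dimensional resolvent identity for $G^+$ yields exactly \eqref{eq:G_property} for $C^-,A^-,B^-$. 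Boundedness of $G^-$ on the relevant weighted $L^2$-spaces and membership in $H^\infty(\mathbb{C}_\alpha)$ for $\alpha>\omega({\bf T}^-)$ then follow from the corresponding properties of $G$ and of the rational function $G^+$.

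Finally, for \eqref{eq:y_decomp} I would apply the output representations \eqref{eq:output}--\eqref{eq:IOmap_formula} to $\Sigma$ and split along $\Pi$. Writing $x(t)-(\lambda I-A)^{-1}Bu(t)=\Pi(\cdots)+(I-\Pi)(\cdots)$, the $X^+$-part contributes $C^+\big(\Pi x(t)-(\lambda I-A^+)^{-1}B^+u(t)\big)+G^+(\lambda)u(t)$; because the finite-dimensional $\Sigma^+$ has no feedthrough, $G^+(\lambda)=C^+(\lambda I-A^+)^{-1}B^+$, so this collapses to $C^+\Pi x(t)$. The $X^-$-part, using \eqref{eq:C_minus_lambda} and the variation-of-constants formula $(I-\Pi)x(t)={\bf T}^-_t(I-\Pi)x^0+\int_0^t {\bf T}^-_{t-s}B^-u(s)\,ds$, reproduces $(C^-)_\Lambda{\bf T}^-_t(I-\Pi)x^0+(G^-u)(t)$ via \eqref{eq:IOmap_formula} applied to $\Sigma^-$. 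Summing the two parts yields \eqref{eq:y_decomp}.

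I expect the main obstacle to be the first step: handling the unbounded control operator $B\in\mathcal{L}(\mathbb{C}^p,X_{-1})$ and the $\Lambda$-extension of $C$ rigorously, in particular justifying that $\Pi_{-1}$ commutes with the extended semigroup on $X_{-1}$ and that the input-output operator of the abstractly constructed $\Sigma^-$ genuinely equals $G^-=G-G^+$ on the weighted $L^2$-spaces, rather than merely sharing its transfer function. The almost-everywhere nature of the output representation and the care needed with the extrapolation space $(X^-)_{-1}$, identified with $(I-\Pi_{-1})X_{-1}$, supply the remaining technical overhead.
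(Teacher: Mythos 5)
The paper gives no proof of this lemma at all: it is imported verbatim as Lemma~4.2 of \cite{Logemann2005}, so there is nothing internal to compare against. Your reconstruction — establishing \eqref{eq:C_minus_lambda} from the resolvent-invariance of $X^-$, deducing admissibility of $B^-$ and $C^-$ and the resolvent identity \eqref{eq:G_property} for $(A^-,B^-,C^-)$ from the commutation of $\Pi_{-1}$ with the (extended) semigroup, and then splitting the output formula \eqref{eq:output} along $\Pi$ so that the feedthrough term of the finite-dimensional part cancels — is correct and is essentially the spectral-decomposition argument used in that reference; the one point you flag as an obstacle (that the abstractly constructed stable subsystem has input-output operator exactly $G-G^+$ and not merely the same transfer function) is settled by the injectivity of the Laplace transform on $L^2_\alpha$ for shift-invariant bounded operators.
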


\subsection{Properties of discretized systems}
\label{subsec:properties_DS}
To employ the discrete-time result developed in Section~\ref{sec:DTOR},
we here convert the sampled-data system to a discretized system
and then obtain the properties of the discretized system. 

First, we recall the discrete-time dynamics of the plant combined with
the zero-order hold and the sampler.
Define 
\[
A_{\tau} := \mathbf{T}_{\tau} \in \mathcal{L}(X).
\]
By the admissibility of $B$, the operator $B_{\tau}:
L^2([0,\tau],\mathbb{C}^p) \to X
$ defined by
\[
B_{\tau} g := \int^\tau_0 \mathbf{T}_t Bg(\tau - t)dt\qquad \forall g \in L^2([0,\tau],\mathbb{C}^p)
\]
satisfies $B_\tau \in \mathcal{L}(L^2([0,\tau],\mathbb{C}^p),X)$.
Similarly, by the admissibility of $C$, the operator $C_{\tau}:
X \to \mathbb{C}^p$ defined by
\[
C_{\tau}\zeta := \int^\tau_0 w(t) C_{\Lambda} \mathbf{T}_t \zeta dt
\qquad \forall \zeta \in X
\]
satisfies $C_{\tau} \in \mathcal{L}(X,\mathbb{C}^p)$.
We define the operator $D_{\tau}:L^2([0,\tau],\mathbb{C}^p) \to \mathbb{C}^p$ by
\[
D_{\tau} g := \int^\tau_0 w(t)(Gg)(t)dt\qquad \forall g \in L^2([0,\tau],\mathbb{C}^p),
\]
which satisfies $D_{\tau} \in \mathcal{L}
\big(
L^2([0,\tau],\mathbb{C}^p), \mathbb{C}^p
\big)$.
For simplicity of notation , we set
\begin{align*}
	B_\tau \psi := B_\tau \big(\psi \mathds{1}_{[0,\tau]} \big),\quad 
	D_\tau \psi := D_\tau \big(\psi \mathds{1}_{[0,\tau]} \big)\qquad 
	\forall \psi \in \mathbb{C}^p.
\end{align*}

\begin{lemma}[Lemma~2 of \cite{Logemann2013}]
	\label{lem:discretization_plant}
	Let $u = \mathcal{H}_{\tau}f +g$, where $f \in F(\mathbb{Z}_+,\mathbb{C}^p)$
	and $g \in L^2_{\rm loc}(\mathbb{R}_+,\mathbb{C}^p)$, and let $x^0\in X$.
	Set $x(t)$ as in \eqref{eq:solution_diff}.
	Then 
	\begin{align*}
		x\big((k+1)\tau\big) &= A_\tau x(k\tau) + B_\tau f(k) +B_\tau \mathbf{L}_{k\tau}g \\
		(\mathcal{S}_\tau y)(k) &= C_\tau x(k\tau) + D_\tau f(k) +D_\tau \mathbf{L}_{k\tau}g,
	\end{align*}
	where $\mathbf{L}_{k\tau}g \in L^2([0,\tau],\mathbb{C}^p)$ is defined by
	$(\mathbf{L}_{k\tau}g)(t) = g(k\tau+t)$ for all $t \in [0,\tau]$. 
\end{lemma}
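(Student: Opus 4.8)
The plan is to establish the two identities separately, both as consequences of the variation-of-constants formula \eqref{eq:solution_diff} together with the output representation \eqref{eq:output} and the input-output formula \eqref{eq:IOmap_formula}. For the state recursion, I would evaluate \eqref{eq:solution_diff} at $t = (k+1)\tau$ and at $t = k\tau$, so that $x\big((k+1)\tau\big) - \mathbf{T}_\tau x(k\tau)$ telescopes to $\int_{k\tau}^{(k+1)\tau} \mathbf{T}_{(k+1)\tau - s} Bu(s)\,ds$. The substitution $s = k\tau + r$ turns this into $\int_0^\tau \mathbf{T}_{\tau - r} B u(k\tau + r)\,dr$, and since $u = \mathcal{H}_\tau f + g$ gives $u(k\tau + r) = f(k) + (\mathbf{L}_{k\tau} g)(r)$ for $r \in [0,\tau)$, a final change of variable $t = \tau - r$ identifies the two resulting integrals with $B_\tau f(k)$ and $B_\tau \mathbf{L}_{k\tau} g$ through the defining formula of $B_\tau$. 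As $A_\tau = \mathbf{T}_\tau$, this is the first claimed equation; this step is routine and uses only the admissibility of $B$.

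For the sampled output, the crucial intermediate step is the restart identity $y(k\tau + t) = C_\Lambda \mathbf{T}_t x(k\tau) + (G \mathbf{L}_{k\tau} u)(t)$ for a.e.\ $t \in [0,\tau)$. I would obtain it by first splitting the integral in \eqref{eq:solution_diff} at $k\tau$ to get the flow property $x(k\tau + t) = \mathbf{T}_t x(k\tau) + \int_0^t \mathbf{T}_{t-r} B (\mathbf{L}_{k\tau} u)(r)\,dr$, then inserting this into \eqref{eq:output} and recognizing the input contribution through \eqref{eq:IOmap_formula}. Applying $\mathcal{S}_\tau$ and using the definition of $C_\tau$ gives $\int_0^\tau w(t) C_\Lambda \mathbf{T}_t x(k\tau)\,dt = C_\tau x(k\tau)$, while the linearity and causality of $G$ let me replace $\mathbf{L}_{k\tau} u$ on $[0,\tau)$ by $f(k)\mathds{1}_{[0,\tau]} + \mathbf{L}_{k\tau} g$, so that integrating $G \mathbf{L}_{k\tau} u$ against $w$ produces $D_\tau f(k) + D_\tau \mathbf{L}_{k\tau} g$ by the definition of $D_\tau$.

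The main obstacle is this second identity, where one must work with the $\Lambda$-extension $C_\Lambda$ and the unbounded operators $B$ and $C$: I would need to verify that $x(k\tau + t) - (\lambda I - A)^{-1} B u(k\tau + t) \in \dom(C_\Lambda)$ for a.e.\ $t$ and that the causal, linear splitting of $G \mathbf{L}_{k\tau} u$ holds almost everywhere on $[0,\tau)$. This is precisely where the full well-posedness structure enters --- the admissibility of $B$ and $C$ and the time-invariance and causality of the input-output operator $G$ --- whereas the state recursion requires nothing beyond the mild-solution formula.
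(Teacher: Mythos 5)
The paper does not prove this lemma at all: it is imported verbatim as Lemma~2 of \cite{Logemann2013}, so there is no in-paper argument to compare against. Your reconstruction is correct and is essentially the standard proof from that reference: the state recursion follows from the variation-of-constants formula, the semigroup property, and the change of variables matching the definition of $B_\tau$, while the sampled-output formula rests on the restart (time-invariance and causality) identity $y(k\tau+t)=C_\Lambda\mathbf{T}_t x(k\tau)+(G\mathbf{L}_{k\tau}u)(t)$; the only point needing care, the additive splitting of $C_\Lambda$ across the free-response and forced-response terms, is justified exactly as you say by the facts that $\mathbf{T}_t\zeta\in\dom(C_\Lambda)$ for a.e.\ $t$ and that the forced part lies in $\dom(C_\Lambda)$ for a.e.\ $t$ by \eqref{eq:IOmap_formula}, both of which are recorded in Section~3.1.
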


\begin{remark}
Throughout this section, we exploit the discretized system in Lemma~\ref{lem:discretization_plant}.
Another approach for the analysis and synthesis of sampled-data systems is to
lift the plant and then apply a discrete-time technique for the lifted discrete-time plant.
This lifting approach is well established for finite-dimensional systems 
and
has the advantage that one can treat the
intersample behavior of  sampled-data systems in a unified, time-invariant fashion;
see, e.g., \cite{Bamieh1992, Yamamoto1994, Yamamoto1996}.
There are
two major reasons why we do not use the lifting approach in this study. First,
our problem, output regulation for 
constant reference and disturbance signals, is so simple
that we do not need to analyze intersample behaviors of sampled-data systems 
by the lifting approach.
Second, the transfer function of the lifted system is an operator-valued function,
and hence the discrete-time results developed in Section~\ref{sec:DTOR}  is not 
applicable. This is because, to apply the Nevanlinna-Pick interpolation problem,
we consider in Section~\ref{sec:DTOR} discrete-time systems whose transfer function is matrix-valued.
\end{remark}

We provide two lemmas on the discretized system. These lemmas
will be used to guarantee that the assumptions 
$\langle$a\ref{enu_Resol}$\rangle$--$\langle$a\ref{enu_Simple}$\rangle$
introduced in
Section~\ref{sec:DTOR} are satisfied for the discretized system.

Define 
\begin{align*}
A_\tau^+ &:= \mathbf{T}_\tau^+ = A_{\tau}|_{X^+},\quad
B_\tau^+ := \Pi B_\tau,\quad
C_\tau^+:= C_\tau|_{X^+} \\
A_\tau^- &:= \mathbf{T}_\tau^- = A_{\tau}|_{X^-},\quad
B_\tau^- := (I-\Pi) B_\tau,\quad
C_\tau^-:= C_\tau|_{X^-}
\end{align*}
and $D_{\tau}^+ :L^2([0,\tau],\mathbb{C}^p) \to \mathbb{C}^p$ by
\[
D_{\tau}^+ g := \int^\tau_0 w(t)(G^+g)(t)dt\qquad \forall g \in L^2([0,\tau],\mathbb{C}^p).
\]
For $\psi \in \mathbb{C}^p$, we also set
\[
B_{\tau}^+ \psi :=  B_{\tau}^+ (\psi  \mathds{1}_{[0,\tau]}),\quad
B_{\tau}^- \psi :=  B_{\tau}^- (\psi  \mathds{1}_{[0,\tau]}),\quad
D_{\tau}^+\psi := D_{\tau}^+ (\psi \mathds{1}_{[0,\tau]}).
\]
Let 
$\eta \in \big( 
e^{\tau \omega(T^{-})},1
\big)$.
On $\mathbb{E}_{\eta} \cap \varrho(A_{\tau})$, we define the transfer function $\mathbf{G}_{\tau}$ of the discretized system by
\begin{equation}
\label{eq:TF_DS}
\mathbf{G}_{\tau} (z) := C_\tau (zI - A_\tau)^{-1}B_\tau + D_\tau.
\end{equation}

The first lemma provides a property of the resolvent set of $A_{\tau}$.
\begin{lemma}
	\label{lem:1_Atau}
	If {\rm $\langle$b\ref{enu_Resol_SD}$\rangle$}, {\rm $\langle$b\ref{enu_uns_finite}$\rangle$},
	{\rm $\langle$b\ref{enu_A_minus_EXS_SD}$\rangle$}, and {\rm $\langle$b\ref{enu_spec_IA}$\rangle$} hold, then $1 \in \varrho(A_\tau)$.
\end{lemma}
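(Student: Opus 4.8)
The plan is to exploit the spectral decomposition $X = X^+\oplus X^-$ furnished by {\rm $\langle$b\ref{enu_uns_finite}$\rangle$} and to treat the finite-dimensional unstable block and the infinite-dimensional stable block of $A_\tau=\mathbf{T}_\tau$ separately. Since the projection $\Pi$ in \eqref{eq:projection} commutes with $\mathbf{T}_t$ for all $t\geq0$, the subspaces $X^\pm$ are $A_\tau$-invariant and $A_\tau$ is block-diagonal with respect to the decomposition, so that $\sigma(A_\tau)=\sigma(A_\tau^+)\cup\sigma(A_\tau^-)$, where $A_\tau^+=\mathbf{T}_\tau^+$ and $A_\tau^-=\mathbf{T}_\tau^-$. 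It therefore suffices to show that $1\notin\sigma(A_\tau^+)$ and $1\notin\sigma(A_\tau^-)$.

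For the stable block I would use {\rm $\langle$b\ref{enu_A_minus_EXS_SD}$\rangle$}: since $\mathbf{T}^-$ is a strongly continuous semigroup with $\omega(\mathbf{T}^-)<0$, the spectral radius of the single operator $A_\tau^-=\mathbf{T}_\tau^-$ equals $e^{\tau\omega(\mathbf{T}^-)}<1$ (see, e.g., \cite{Engel2000}); indeed this is precisely why $\eta$ can be chosen in $\big(e^{\tau\omega(\mathbf{T}^-)},1\big)$, so that $\sigma(A_\tau^-)\subset\mathbb{C}\setminus\cl(\mathbb{E}_\eta)$. In particular $|1|=1$ exceeds the spectral radius of $A_\tau^-$, giving $1\in\varrho(A_\tau^-)$. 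For the unstable block, $A^+$ acts on the finite-dimensional space $X^+$, so $A_\tau^+=e^{\tau A^+}$ and the spectral mapping theorem for the matrix exponential yields $\sigma(A_\tau^+)=\{e^{\tau\lambda}:\lambda\in\sigma(A^+)\}$. Thus $1\in\sigma(A_\tau^+)$ would force $e^{\tau\lambda}=1$, i.e. $\lambda=2\ell\pi i/\tau$ for some $\ell\in\mathbb{Z}$, for some $\lambda\in\sigma(A^+)$. The case $\ell=0$ is ruled out by {\rm $\langle$b\ref{enu_Resol_SD}$\rangle$}, since $0\in\varrho(A)\subset\varrho(A^+)$, and the cases $\ell\neq0$ are ruled out exactly by {\rm $\langle$b\ref{enu_spec_IA}$\rangle$}. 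Hence $1\notin\sigma(A_\tau^+)$, and combining the two blocks gives $1\in\varrho(A_\tau)$.

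I expect no genuine obstacle here; the argument is short and each step is standard. The only points requiring a little care are the two structural facts invoked: that the spectrum of the bounded operator $A_\tau$ splits as the union of the spectra of its restrictions to the invariant subspaces $X^\pm$, and that the spectral radius of a semigroup operator $\mathbf{T}_\tau^-$ equals $e^{\tau\omega(\mathbf{T}^-)}$ rather than merely being bounded by it. Both are classical, but it is worth noting that the semigroup spectral-radius identity (not just an inequality) is what makes the choice of $\eta$ consistent and pins $\sigma(A_\tau^-)$ strictly inside the open unit disc. With these in hand, the remaining reasoning reduces to the elementary observation that $e^{\tau\lambda}=1$ on $\sigma(A^+)$ is excluded jointly by {\rm $\langle$b\ref{enu_Resol_SD}$\rangle$} and {\rm $\langle$b\ref{enu_spec_IA}$\rangle$}.
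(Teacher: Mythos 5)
Your proposal is correct and follows essentially the same route as the paper: split $A_\tau$ along the spectral decomposition $X=X^+\oplus X^-$, apply the spectral mapping theorem to the finite-dimensional block (with $\langle$b1$\rangle$ excluding $\ell=0$ and $\langle$b6$\rangle$ excluding $\ell\neq0$), and use exponential stability of $\mathbf{T}^-$ to place $\sigma(A_\tau^-)$ inside the open unit disc. The only cosmetic difference is that you invoke the spectral-radius identity $r(\mathbf{T}_\tau^-)=e^{\tau\omega(\mathbf{T}^-)}$ where the paper phrases the same fact as power stability of $A_\tau^-$.
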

\begin{proof}
	Since $X^+$ and $X^-$ are 
	$A_{\tau}$-invariant, it is enough to show that $1 \in \varrho(A_\tau^+) \cap \varrho(A_\tau^-)$.
	By {\rm $\langle$b\ref{enu_Resol_SD}$\rangle$}, we obtain
	$0 \in \varrho(A^+)$. 
	Together with {\rm $\langle$b\ref{enu_spec_IA}$\rangle$}, this yields 
	$2\ell \pi i /\tau \not\in \sigma(A^+)$ for every $\ell \in \mathbb{Z}$.
	By the spectral mapping theorem,
	\begin{equation}
		\label{eq:SMT}
		\sigma\big(e^{\tau A^+}\big) = e^{\tau \sigma(A^+)}.
	\end{equation}
	Therefore, $1 \not\in \sigma\big(e^{\tau A^+}\big) = \sigma (A_\tau^+)$. On the other hand, 
	{\rm $\langle$b\ref{enu_A_minus_EXS_SD}$\rangle$} leads to the power stability of $A_\tau^-$, and hence
	$1 \in \varrho(A_\tau^-)$. This completes the proof
	\qed
\end{proof}

The second lemma gives a relationship between
the transfer functions of the original continuous-time system
and the discretized system. This result will be used to verify
the assumption $\langle$a\ref{enu_Zero}$\rangle$ on
the discretized system as well as to obtain $\delta^*$ in \eqref{eq:delta_star_def}.
\begin{lemma}
	\label{lem:G_Gtau}
	If {\rm $\langle$b\ref{enu_Resol_SD}$\rangle$}, {\rm $\langle$b\ref{enu_uns_finite}$\rangle$}, and 
	{\rm $\langle$b\ref{enu_A_minus_EXS_SD}$\rangle$} hold, then 
	$\mathbf{G}_\tau(1) = \mathbf{G}(0)$.
\end{lemma}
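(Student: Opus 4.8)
The plan is to exploit the spectral decomposition $X = X^+ \oplus X^-$ supplied by $\langle$b\ref{enu_uns_finite}$\rangle$ and to verify the claimed identity separately on the finite-dimensional unstable part and on the infinite-dimensional stable part, as the preceding discussion suggests. First I would split both transfer functions: since $X^+$ and $X^-$ are $A_\tau$-invariant, the resolvent $(zI-A_\tau)^{-1}$ decomposes along $X^+\oplus X^-$, and on setting $D_\tau^-\psi := \int_0^\tau w(t)(G^-(\psi\mathds{1}_{[0,\tau]}))(t)\,dt$ one has $\mathbf{G}_\tau = \mathbf{G}_\tau^+ + \mathbf{G}_\tau^-$ with $\mathbf{G}_\tau^{\pm}(z) = C_\tau^{\pm}(zI-A_\tau^{\pm})^{-1}B_\tau^{\pm} + D_\tau^{\pm}$, where $D_\tau = D_\tau^+ + D_\tau^-$ because $G = G^+ + G^-$. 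Likewise $\mathbf{G} = \mathbf{G}^+ + \mathbf{G}^-$. It then suffices to prove $\mathbf{G}_\tau^+(1) = \mathbf{G}^+(0)$ and $\mathbf{G}_\tau^-(1) = \mathbf{G}^-(0)$; note that $\mathbf{G}_\tau(1)$ is well defined because $1 \in \varrho(A_\tau)$ by Lemma~\ref{lem:1_Atau}.

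For the stable part I would invoke the existing stable-system results. By Lemma~\ref{lem:y_decomp}, the operators $(A^-,B^-,C^-)$ together with $G^-$ generate a well-posed system $\Sigma^-$, and by \eqref{eq:C_minus_lambda} the operators $C_\tau^-$, $B_\tau^-$, $D_\tau^-$ are precisely the discretized operators of $\Sigma^-$, so $\mathbf{G}_\tau^-$ is its discretized transfer function. Since $\mathbf{T}^-$ is exponentially stable by $\langle$b\ref{enu_A_minus_EXS_SD}$\rangle$, the operator $A_\tau^- = \mathbf{T}_\tau^-$ is power stable, $1 \in \varrho(A_\tau^-)$, and $0 \in \varrho(A^-)$. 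The identity for exponentially stable well-posed systems, Proposition~4.3 in \cite{Logemann1997} (see also Proposition~3.1 in \cite{Ke2009SCL}), then applies verbatim to $\Sigma^-$ and gives $\mathbf{G}_\tau^-(1) = \mathbf{G}^-(0)$.

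For the finite-dimensional system $(A^+,B^+,C^+)$ I would compute directly. Abbreviating $A := A^+$, $B := B^+$, $C := C^+$ and $M := \int_0^\tau w(t)e^{tA}\,dt$, one has $A_\tau^+ = e^{\tau A}$, $B_\tau^+ = \big(\int_0^\tau e^{tA}\,dt\big)B$, and $C_\tau^+ = CM$; using $\int_0^\tau w(t)\,dt = 1$ and $A\int_0^t e^{sA}\,ds = e^{tA}-I$ one obtains $D_\tau^+ = CA^{-1}(M - I)B$, where $A^{-1}$ exists by $\langle$b\ref{enu_Resol_SD}$\rangle$. The key identity $A\int_0^\tau e^{tA}\,dt = e^{\tau A} - I$, together with the commutativity of $A$ and $e^{\tau A}$ and with $1 \in \varrho(A_\tau^+)$, gives $(I - e^{\tau A})^{-1}\big(\int_0^\tau e^{tA}\,dt\big) = -A^{-1}$. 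Substituting this into $\mathbf{G}_\tau^+(1) = C_\tau^+(I - A_\tau^+)^{-1}B_\tau^+ + D_\tau^+$ cancels the two terms containing $M$ and leaves $\mathbf{G}_\tau^+(1) = -CA^{-1}B = -C^+(A^+)^{-1}B^+ = \mathbf{G}^+(0)$. Adding the two contributions yields $\mathbf{G}_\tau(1) = \mathbf{G}^+(0) + \mathbf{G}^-(0) = \mathbf{G}(0)$.

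The step I expect to demand the most care is the compatibility of the discretization with the spectral decomposition used in the first two paragraphs: I must verify that the projected and restricted operators $C_\tau^\pm$, $B_\tau^\pm$, $D_\tau^\pm$ really coincide with the discretized operators of $\Sigma^\pm$, which relies on the identification $(C^-)_\Lambda = C_\Lambda$ on $\dom(C_\Lambda)\cap X^-$ from \eqref{eq:C_minus_lambda} and on the splitting $G = G^+ + G^-$. Once this is secured, the stable part can be delegated to the cited results as a black box, and the unstable part reduces to the elementary finite-dimensional cancellation above.
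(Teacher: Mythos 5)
Your proposal is correct and rests on the same skeleton as the paper's proof: split along $X = X^+\oplus X^-$, handle the finite-dimensional part by an explicit resolvent computation, and handle the stable part separately. Where you differ is the stable half. The paper does \emph{not} invoke Proposition~4.3 of \cite{Logemann1997} as a black box; it reproves the relevant identity, $\mathbf{G}^-(0)\psi = -C_\tau^-(A^-)^{-1}B^-\psi + D_\tau\psi - D_\tau^+\psi$, directly, using the representation of $G^-$ on constant inputs, the identity $\int_0^t \mathbf{T}^-_s B^-\psi\,ds = \mathbf{T}^-_t(A^-)^{-1}B^-\psi - (A^-)^{-1}B^-\psi$, and the compatibility \eqref{eq:C_minus_lambda} of the $\Lambda$-extensions from Lemma~\ref{lem:y_decomp}. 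Your citation-based route is legitimate, but it shifts the work to precisely the verification you flag at the end: that the sampler, hold, and weight applied to $\Sigma^-$ yield the operators $B_\tau^-$, $C_\tau^-$, $D_\tau^-$ obtained by projecting and restricting the discretization of the full system. That check genuinely uses \eqref{eq:C_minus_lambda} and the splitting $G = G^+ + G^-$, and once written out it is most of the computation the paper does anyway; so the two proofs end up of comparable length, yours importing the final step from the literature and the paper's staying self-contained. Your finite-dimensional computation is an algebraic rearrangement of the paper's: the paper expresses $\mathbf{G}^+(0)$ through $C_\tau^+$ and $D_\tau^+$ and uses $B_\tau^+\psi = (A_\tau^+ - I)(A^+)^{-1}B^+\psi$, whereas you invert $(I - e^{\tau A^+})$ directly; both hinge on $A^+\int_0^\tau e^{tA^+}\,dt = e^{\tau A^+} - I$ and commutativity, and both are fine.

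One shared wrinkle worth noting: the hypotheses of the lemma do not include {\rm $\langle$b\ref{enu_spec_IA}$\rangle$}, yet $1\in\varrho(A_\tau^+)$ --- needed for $\mathbf{G}_\tau(1)$ to be defined via \eqref{eq:TF_DS} --- can fail without it (e.g. if $2\pi i/\tau\in\sigma(A^+)$). You appeal to Lemma~\ref{lem:1_Atau}, whose hypotheses do include {\rm $\langle$b\ref{enu_spec_IA}$\rangle$}, so your proof tacitly uses an assumption not granted in the statement; the paper's proof has the same issue, simply evaluating at $z=1$ without comment. The statement should be read either under the standing assumptions of Section~\ref{subsec:SDassumption} or as holding whenever $1\in\varrho(A_\tau)$. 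This is not a defect of your argument relative to the paper's.
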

\begin{proof}
	Define
	\[
	\mathbf{G}^+ (s) := C^+ (sI - A^+)^{-1}B^+, \qquad
	\mathbf{G}^- (s) := \mathbf{G} (s) - \mathbf{G}_+(s) .
	\]
	Clearly, 
	$\mathbf{G}^+$ is the transfer function of a finite-dimensional system with
	generating matrices
	$(A^+,B^+,C^+)$ and input-output operator $G^+$.
	%	\[
	%	(G^+u)(t) := \int^t_0 C^+ e^{A^+(t-s)} B^+ u(s)ds \qquad \forall t \in \mathbb{R}_+,~\forall 
	%	u \in L^2_{\rm loc}(\mathbb{R}_+,\mathbb{C}^p).
	%	\]
	By
	Lemma~\ref{lem:y_decomp},
	$\mathbf{G}^-$ is the transfer function of the exponentially stable well-posed system 
	with generating operators $(A^-,B^-,C^-)$  and input-output operator $G^- $.
	
	We first show that
	\begin{equation}
		\label{eq:G+0}
		\mathbf{G}^+(0)\psi = -C_{\tau}^+(A^+)^{-1}B^+\psi  + D_\tau^+\psi \qquad \forall \psi  \in \mathbb{C}^p,
	\end{equation}
	where $A^+$ is invertible by {\rm $\langle$b\ref{enu_Resol_SD}$\rangle$}. Since if $g(t) \equiv \psi \in \mathbb{C}^p$,
	then
	\[
	(G^+ g)(t) = C^+ (e^{A^+ t} - I) (A^+)^{-1}B^+\psi,
	\]
	it follows from $\int^\tau_0 w(t)dt = 1$ that 
	\begin{align*}
		D_{\tau}^+\psi  &=
		C_\tau^+ (A^+)^{-1}B^+\psi - C^+(A^+)^{-1}B^+\psi\qquad \forall \psi \in \mathbb{C}^p.
	\end{align*}
	Thus, \eqref{eq:G+0} holds.
	
	Since $\mathbf{T}^-$ is exponentially stable by {\rm $\langle$b\ref{enu_A_minus_EXS_SD}$\rangle$}, $A^-$ is boundedly invertible.
	Next we shall prove that
	\begin{equation}
		\label{eq:G-0}
		\mathbf{G}^-(0)\psi  = -C_{\tau}^-(A^-)^{-1}B^{-}\psi  + D_\tau\psi  - D_\tau^+\psi \qquad \forall \psi  \in \mathbb{C}^p.
	\end{equation}
	By definition,
	\[
	D_\tau g - D_\tau^+ g = 
	\int^\tau_0 w(t) (G^-g)(t)dt\qquad \forall g\in L^2([0,\tau],\mathbb{C}^p). 
	\]
	Similarly to \eqref{eq:IOmap_formula}, we obtain
	\begin{align*}
		&(G^-g)(t) = 
		(C^-)_{\Lambda}	\left(
		\int^t_0 \mathbf{T}^-_{s} B^- g(t-s)ds + (A^-)^{-1}B^-g(t)
		\right)
		+\mathbf{G}^-(0)g(t) \\
		&\hspace{180pt}
		\forall g \in L^2_{\rm loc}(\mathbb{R}_+,\mathbb{C}^p),~{\rm a.e.~} t\geq 0,
	\end{align*}
	Using
	\[
	\int^t_0 \mathbf{T}^-_s  B^- \psi ds 
	= \mathbf{T}^-_t (A^-)^{-1}B^- \psi -  (A^-)^{-1}B^- \psi \qquad \forall \psi \in \mathbb{C}^p,
	\]
	and $\int^\tau_0 w(t)dt = 1$, we obtain
	\begin{align*}
		\int^\tau_0 w(t) \big(G^- (\psi \mathds{1}_{[0,\tau]})\big)(t)dt 
		= 
		\int^\tau_0 w(t) 
		(C^-)_{\Lambda}	
		\mathbf{T}^-_t (A^-)^{-1}B^- \psi dt
		+\mathbf{G}^-(0)\psi
	\end{align*}
	for every $\psi \in \mathbb{C}^p$.
	By \eqref{eq:C_minus_lambda},
	\begin{align*}
		D_\tau \psi - D_\tau^+ \psi 
		&= 
		\int^\tau_0 w(t)
		C_{\Lambda} \mathbf{T}_t (A^-)^{-1}B^- \psi dt 
		+\mathbf{G}^-(0)\psi\\
		&=C_\tau^{-}(A^-)^{-1}B^- \psi+\mathbf{G}^-(0)\psi \qquad \forall \psi \in \mathbb{C}^p,
	\end{align*}
	and \eqref{eq:G-0} holds.

	By definition
	\[
	\mathbf{G}_\tau(z)\psi  = C_\tau^+ (zI - A_\tau^+)^{-1}B_\tau^+\psi  + C_\tau^- (zI - A_\tau^-)^{-1}B_\tau^-\psi  + D_\tau\psi 
	\]
	for every $\psi  \in \mathbb{C}^p$
	and every $z \in \mathbb{E}_{\eta} \cap \varrho(A_{\tau})$ with $\eta \in \big( 
	e^{\tau \omega(T^{-})},1
	\big)$.
	Combining \eqref{eq:G+0}, \eqref{eq:G-0}, and 
	\begin{align*}
	B_{\tau}^+ \psi = (A_{\tau}^+ - I) (A^+)^{-1} B^+ \psi,\quad 
	B_{\tau}^- \psi = (A_{\tau}^- - I) (A^-)^{-1} B^- \psi \qquad \forall \psi \in \mathbb{C}^p,
	\end{align*}
	we obtain
	\begin{align*}
		\mathbf{G}_\tau(1) \psi 
		%	&=
		%	 C_\tau (I - A_\tau)^{-1}B_\tau \psi  + D_\tau \psi \\
		&=
		C_\tau^+ (I - A_\tau^+)^{-1}B_\tau^+ \psi 
		+
		C_\tau^- (I - A_\tau^-)^{-1}B_\tau^- \psi  + D_\tau  \psi \\
		&=
		- C_\tau^+ (A^+)^{-1}B^+ \psi   
		- C_\tau^- (A^-)^{-1}B^- \psi  + D_\tau  \psi \\
		&=
		(\mathbf{G}^+(0) - D_\tau^+)\psi   + (\mathbf{G}^-(0) - D_\tau + D_\tau^+)\psi  
		+ D_\tau  \psi  \\
		&= \mathbf{G}^+(0)\psi +  \mathbf{G}^-(0) \psi = \mathbf{G}(0)\psi \qquad \forall \psi \in \mathbb{C}^p.
	\end{align*}
	Thus we obtain $\mathbf{G}_\tau(1)=\mathbf{G}(0)$.
	\qed
\end{proof}

\subsection{Output regulation by a finite-dimensional digital controller}
\label{subsec:SDOR}
Using Theorem~\ref{thm:existence_servo_cont},
here we present two results on sampled-data output regulation for 
constant reference and disturbance signals.
First, we show that 
the output converges to the constant reference signal in the ``energy'' sense.
Next, 
we consider sampled-data systems with smoothing precompensators.
The output of such a sampled-data system is continuous
under a certain regularity condition on the initial states.
Hence we can prove that the output exponentially converges to
the constant reference signal in the usual sense.

The following lemma, which is a part of Proposition 3 in \cite{Logemann2013}, 
connects the power stability of the discretized system
and the exponential stability of the sampled-data system.
\begin{lemma}[Proposition 3 in \cite{Logemann2013}]
	\label{lem:PS_ES}
	The sampled-data system \eqref{eq:sampled_data_sys} is exponentially stable
	if and only if the operator $A_{\rm e}$ defined by
	\begin{align}
		\label{eq:Discretized_CLS_A}
	A_{\rm e} :=
	\begin{bmatrix}
	A_\tau & ~~B_\tau R \\
	-QC_\tau  & ~~P - QD_\tau R
	\end{bmatrix}
	\end{align}
	is power stable.
\end{lemma}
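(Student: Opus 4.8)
The plan is to reduce the continuous-time stability question to a discrete-time one by tracking the state only at the sampling instants $k\tau$, and then to recover the intersample behaviour by a uniform estimate on each interval $[k\tau,(k+1)\tau)$. First I would apply Lemma~\ref{lem:discretization_plant} with $f(k)=y_{\rm d}(k)=Rx_{\rm d}(k)$ and $g=v\mathds{1}_{\mathbb{R}_+}$. Since $v$ is constant, $\mathbf{L}_{k\tau}(v\mathds{1}_{\mathbb{R}_+})=v\mathds{1}_{[0,\tau]}$ for every $k\in\mathbb{Z}_+$, so the discretized plant equations combine with the controller recursion $x_{\rm d}^{\bigtriangledown}=Px_{\rm d}+Q(y_{\rm ref}-\mathcal{S}_\tau y)$ into the single affine recursion
\begin{equation*}
	\chi(k+1)=A_{\rm e}\chi(k)+B_{\rm e}\begin{bmatrix}v\\ y_{\rm ref}\end{bmatrix},\qquad
	\chi(k):=\begin{bmatrix}x(k\tau)\\ x_{\rm d}(k)\end{bmatrix},\qquad
	B_{\rm e}:=\begin{bmatrix}B_\tau & 0\\ -QD_\tau & Q\end{bmatrix},
\end{equation*}
where $A_{\rm e}$ is exactly the operator in \eqref{eq:Discretized_CLS_A} and $B_{\rm e}\in\mathcal{L}(\mathbb{C}^{2p},X\times X_{\rm d})$ is bounded.

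For the necessity direction I would set $v=0$ and $y_{\rm ref}=0$, so that $\chi(k)=A_{\rm e}^k\chi(0)$. Evaluating the exponential-stability estimate \eqref{eq:ES_SD} at $t=0$ gives $\|\chi(k)\|_{X\times X_{\rm d}}\le\Gamma\, e^{-\gamma\tau k}\|\chi(0)\|_{X\times X_{\rm d}}$ for every $\chi(0)\in X\times X_{\rm d}$, whence $\|A_{\rm e}^k\|\le\Gamma\,(e^{-\gamma\tau})^k$ with $e^{-\gamma\tau}\in(0,1)$; this is precisely power stability of $A_{\rm e}$.

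For the sufficiency direction, power stability furnishes $\Gamma\ge1$ and $\rho\in(0,1)$ with $\|A_{\rm e}^k\|\le\Gamma\rho^k$. Solving the affine recursion and summing the geometric series yields
\begin{equation*}
	\|\chi(k)\|_{X\times X_{\rm d}}\le \Gamma\rho^k\|\chi(0)\|_{X\times X_{\rm d}}
	+\frac{\Gamma\|B_{\rm e}\|}{1-\rho}\big(\|v\|_{\mathbb{C}^p}+\|y_{\rm ref}\|_{\mathbb{C}^p}\big),
\end{equation*}
which is the required bound at the sampling instants with $\gamma=-(\ln\rho)/\tau$. It remains to control the intersample values $x(k\tau+t)$, $t\in[0,\tau)$. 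On this interval the hold keeps $u\equiv Rx_{\rm d}(k)+v$, so \eqref{eq:solution_diff} and the semigroup property give
\begin{equation*}
	x(k\tau+t)=\mathbf{T}_t\,x(k\tau)+\left(\int_0^t \mathbf{T}_{t-s}B\,ds\right)\big(Rx_{\rm d}(k)+v\big),
\end{equation*}
and I would bound the first term by $\sup_{t\in[0,\tau]}\|\mathbf{T}_t\|$, which is finite by strong continuity, and the second term by the admissibility of $B$. Combining these with the sampled-instant estimate above produces the full continuous-time bound \eqref{eq:ES_SD}.

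I expect the intersample estimate to be the only genuinely delicate step: it is where the well-posedness of $\Sigma$, specifically the admissibility of the possibly unbounded control operator $B$ together with the uniform boundedness of $\mathbf{T}_t$ on $[0,\tau]$, must be invoked to pass from a bound at the discrete instants to a bound valid for all $t\ge0$. Everything else is the bounded affine-recursion bookkeeping already carried out in the proof of Proposition~3 of \cite{Logemann2013}.
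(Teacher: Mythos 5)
Your proof is correct and is essentially the standard argument: discretize via Lemma~\ref{lem:discretization_plant} to obtain the affine recursion driven by $A_{\rm e}$, read off power stability from \eqref{eq:ES_SD} at $t=0$ with $v=y_{\rm ref}=0$ for necessity, and for sufficiency combine the geometric bound at sampling instants with the intersample estimate furnished by $\sup_{t\in[0,\tau]}\|\mathbf{T}_t\|<\infty$ and the admissibility of $B$ applied to the piecewise-constant input. The paper does not reprove this lemma but simply cites Proposition~3 of \cite{Logemann2013}, whose proof proceeds along exactly these lines, so your argument faithfully reconstructs the omitted proof.
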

%The above simple result can be obtained by a slight modification of the proof
%of .

\begin{theorem} 
	\label{thm:servo_tracking}
	Assume that {\rm $\langle$b\ref{enu_Resol_SD}$\rangle$--$\langle$b\ref{enu_Simple_SD}$\rangle$} hold.
	There exists a finite-dimensional controller \eqref{eq:controller} 
	that is a solution of Problem~\ref{prob:OR_SD}.
\end{theorem}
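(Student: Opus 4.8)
The plan is to reduce the sampled-data problem to the discrete-time problem of Section~\ref{sec:DTOR}, apply Theorem~\ref{thm:existence_servo_cont} to the discretized system of Lemma~\ref{lem:discretization_plant}, and then lift discrete regulation at the sampling instants to continuous-time tracking in the energy norm. Since the reference and disturbance are constant, the only relevant exosystem frequency is $z=1$, and the operative internal-model requirement is $\dim\ker(I-P)\geq p$.

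First I would verify that the discretized plant $(A_\tau,B_\tau,C_\tau,D_\tau)$ satisfies $\langle$a\ref{enu_Resol}$\rangle$--$\langle$a\ref{enu_Simple}$\rangle$. Here $\langle$a\ref{enu_Resol}$\rangle$ (that $1\in\varrho(A_\tau)$) is exactly Lemma~\ref{lem:1_Atau}; $\langle$a\ref{enu_Zero}$\rangle$ follows from Lemma~\ref{lem:G_Gtau}, which gives $\mathbf{G}_\tau(1)=\mathbf{G}(0)$, together with $\langle$b\ref{enu_Zero_SD}$\rangle$; $\langle$a\ref{enu_Dec}$\rangle$ and $\langle$a\ref{enu_Ainv}$\rangle$ come from the spectral decomposition $X=X^+\oplus X^-$ and the $A_\tau=\mathbf{T}_\tau$-invariance of $X^\pm$. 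For $\langle$a\ref{enu_A_minus_EXS}$\rangle$ I would use the spectral mapping $\sigma(A_\tau^+)=e^{\tau\sigma(A^+)}$: since $\sigma(A^+)\subset\cl(\mathbb{C}_0)$ we get $\sigma(A_\tau^+)\subset\cl(\mathbb{E}_1)$, and since $\mathbf{T}^-$ is exponentially stable by $\langle$b\ref{enu_A_minus_EXS_SD}$\rangle$ the operator $A_\tau^-$ is power stable, so $\sigma(A_\tau^-)\subset\mathbb{D}$ and a suitable $\eta_0\in(0,1)$ exists. Conditions $\langle$a\ref{enu_CD}$\rangle$ and $\langle$a\ref{enu_Simple}$\rangle$ are the sampled-data preservation statements: $\langle$b\ref{enu_CD_SD}$\rangle$--$\langle$b\ref{enu_two_eig_relation}$\rangle$ give controllability and observability of $(A_\tau^+,B_\tau^+,C_\tau^+)$ as in Proposition~5 of \cite{Logemann2013}, while $\langle$b\ref{enu_two_eig_relation}$\rangle$ and $\langle$b\ref{enu_Simple_SD}$\rangle$ ensure that the distinct simple eigenvalues $\lambda_j$ of $A^+$ map to distinct simple eigenvalues $e^{\tau\lambda_j}$ of $A_\tau^+$. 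Theorem~\ref{thm:existence_servo_cont} then yields a finite-dimensional controller $(P,Q,R)$ with $A_{\rm e}$ in \eqref{eq:Discretized_CLS_A} power stable and carrying a $p$-copy internal model at $z=1$. By Lemma~\ref{lem:PS_ES}, power stability of $A_{\rm e}$ is equivalent to exponential stability of \eqref{eq:sampled_data_sys}, which settles the \textbf{Stability} requirement.

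For \textbf{Tracking} I would first construct the steady state. Power stability of $A_{\rm e}$ makes the discretized closed loop converge under the constant forcing, so $x(k\tau)\to x_\infty$ and $x_{\rm d}(k)\to x_{{\rm d},\infty}$; hence the hold input converges to the constant $u_\infty:=Rx_{{\rm d},\infty}+v$. Using $\langle$b\ref{enu_Resol_SD}$\rangle$, the continuous equilibrium $x_\infty=-A^{-1}Bu_\infty$ exists and satisfies $\mathbf{T}_t x_\infty+\int_0^t\mathbf{T}_sBu_\infty\,ds=x_\infty$, so $x(k\tau+t)\to x_\infty$ for all $t$ and the (genuinely constant) steady-state output is $y_\infty=\mathbf{G}(0)u_\infty$. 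Because $\int_0^\tau w(t)\,dt=1$, the sampled steady-state output equals $y_\infty$, whereas discrete regulation from Theorem~\ref{thm:existence_servo_cont} forces $(\mathcal{S}_\tau y)(k)\to y_{\rm ref}$; therefore $y_\infty=y_{\rm ref}$. To obtain the weighted estimate I pass to the deviation variables $\hat{x}=x-x_\infty$, $\hat{x}_{\rm d}=x_{\rm d}-x_{{\rm d},\infty}$, for which the external signals cancel and $\hat{u}=\mathcal{H}_\tau(R\hat{x}_{\rm d})$. Applying the output decomposition of Lemma~\ref{lem:y_decomp} to $\hat{y}=y-y_{\rm ref}\mathds{1}_{\mathbb{R}_+}$,
\begin{equation*}
\hat{y}(t)=C^+\Pi\hat{x}(t)+(C^-)_{\Lambda}\mathbf{T}_t^-(I-\Pi)\hat{x}^0+(G^-\hat{u})(t),
\end{equation*}
I would bound each term in $L^2_\alpha$ for some $\alpha\in\big(\max\{\omega(\mathbf{T}^-),-\gamma\},0\big)$: the first decays exponentially by \eqref{eq:ES_SD} and boundedness of $C^+\Pi$, the second by admissibility of $C^-$ and exponential stability of $\mathbf{T}^-$, and the third by $G^-\in\mathcal{L}(L^2_\alpha)$ together with $\|\hat{u}\|_{L^2_\alpha}\lesssim\|\hat{x}_{\rm d}^0\|$. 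Since $\|\hat{x}^0\|+\|\hat{x}_{\rm d}^0\|\lesssim\|x^0\|+\|x_{\rm d}^0\|+\|y_{\rm ref}\|+\|v\|$, this yields \eqref{eq:TR_SD}.

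\textbf{Robustness} follows by repeating the argument for a perturbed node in $\mathcal{O}_{\rm s}(P,Q,R)$: exponential stability of the perturbed sampled-data system is, again by Lemma~\ref{lem:PS_ES}, power stability of $\widetilde{A}_{\rm e}$, the unchanged controller still carries the $p$-copy internal model, and the discrete-time robustness built into Theorem~\ref{thm:existence_servo_cont} gives $(\mathcal{S}_\tau y)(k)\to y_{\rm ref}$; the steady-state identification and the energy estimate then go through as before. The main obstacle I anticipate is precisely this \textbf{Tracking} step: establishing that the continuous steady state is constant rather than merely $\tau$-periodic (this is where $\langle$b\ref{enu_Resol_SD}$\rangle$ is essential), pinning down $y_\infty=y_{\rm ref}$ through the sampled regulation and $\int_0^\tau w(t)\,dt=1$, and assembling the weighted $L^2_\alpha$ bound from admissibility and the $L^2_\alpha$-boundedness of $G^-$.
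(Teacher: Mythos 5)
Your proposal is correct and takes essentially the same route as the paper: discretize via Lemma~\ref{lem:discretization_plant}, verify $\langle$a\ref{enu_Resol}$\rangle$--$\langle$a\ref{enu_Simple}$\rangle$ using Lemmas~\ref{lem:1_Atau} and \ref{lem:G_Gtau} together with the results of \cite{Logemann2013}, invoke Theorem~\ref{thm:existence_servo_cont} and Lemma~\ref{lem:PS_ES}, identify the continuous equilibrium $Ax_\infty+Bu_\infty=0$, use $\int_0^\tau w(t)\,dt=1$ and discrete regulation to pin down $y_\infty=y_{\rm ref}$, and estimate the deviation output through the decomposition of Lemma~\ref{lem:y_decomp}. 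The one step you treat as immediate --- that the output along the equilibrium trajectory with constant input $u_\infty$ is the \emph{constant} $\mathbf{G}(0)u_\infty$, which is what licenses your identity $\hat y = y - y_{\rm ref}\mathds{1}_{\mathbb{R}_+}$ for the shifted system --- is not automatic for well-posed systems with unbounded $C$ (the output is only defined a.e.), and is precisely what the paper's $y_1\equiv 0$ Laplace-transform/uniqueness argument supplies.
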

\begin{proof}
	One can say that the constant reference and disturbance signals 
	$y_{\rm ref} ,v  \in \mathbb{C}^p$ are 
	generated from the exosystem \eqref{eq:exosystem} with $S = 1$:
	\begin{subequations}
		\label{eq:exosystem_SD}
	\begin{align}
		\xi^{\bigtriangledown}(k) &= \xi(k),\qquad \xi(0) = \xi^0 \in  \mathbb{C}\\
		v(k) &= E\xi(k) \\
		y_{\rm ref}(k) &= F\xi(k)
	\end{align}
	\end{subequations}
	for some unknown constant matrices $E\in \mathbb{C}^{p\times 1}$ and 
	$F \in \mathbb{C}^{p\times 1}$.
	Since
	\[
%	u(k\tau+t) = y_{\rm d}(k)+ v\qquad \forall t \in [0,\tau),~\forall k \in \mathbb{Z}_+,
u= \mathcal{H}_{\tau}y_{\rm d}+ v\mathds{1}_{\mathbb{R}_+} 
	\]
	Lemma~\ref{lem:discretization_plant} yields the following closed-loop dynamics at sampling instants:
	\begin{subequations}
		\label{eq:closed_sample}
		\begin{align}
			\label{eq:closed_state_sample}
			x_{\rm e}^{\bigtriangledown}(k)&= A_{\rm e}x_{\rm e}(k) + B_{\rm e} \xi^0,\quad x_{\rm e}(0) = x_{\rm e}^0 \\
			\label{eq:closed_error_sample}
			e(k) &= C_{\rm e}x_{\rm e}(k) + D_{\rm e}\xi^0,
		\end{align}
	\end{subequations}
	where $e(k) := y_{\rm ref}- (\mathcal{S}_\tau y)(k)$, $x_{\rm e}(k) := 
	\begin{bmatrix}
	x(k\tau) \\ x_{\rm d}(k)
	\end{bmatrix}$, 
	$x_{\rm e}^0 := 
	\begin{bmatrix}
	x^0\\ x_{\rm d}^0
	\end{bmatrix}$, $A_{\rm e}$ is defined by \eqref{eq:Discretized_CLS_A}, and
	\begin{align}
		B_{\rm e} :=
		\begin{bmatrix}
			B_\tau E \\
			Q(F-D_\tau E)
		\end{bmatrix},\quad 
		C_{\rm e} :=
		-
		\begin{bmatrix}
			C_\tau  &
			~~D_\tau R
		\end{bmatrix},\quad
		D_{\rm e} :=
		F-D_\tau E.
	\end{align}
	%Note that since $S=1$ in \eqref{eq:exosystem}, 
	%it follows that $w(k) = w^0$ for every $k \in \mathbb{Z}_+$.
	
	To employ the discrete-time result, Theorem~\ref{thm:existence_servo_cont},
	we first show that the assumptions in Theorem~\ref{thm:existence_servo_cont}
	are satisfied for
	the discrete-time plant $(A_\tau,B_\tau,C_\tau,D_\tau)$.
	By Lemmas~\ref{lem:1_Atau} and \ref{lem:G_Gtau}, we find that
	\begin{enumerate}
		\renewcommand{\labelenumi}{$\langle$a\arabic{enumi}$^\prime$$\rangle$}
		\item $1 \in \varrho(A_\tau)$;
		\item $\det \mathbf{G}_\tau(1) \not= 0$. 
	\end{enumerate}
	The assumption $\langle$b\ref{enu_uns_finite}$\rangle$
	implies that 
	\begin{enumerate}
		\renewcommand{\labelenumi}{$\langle$a\arabic{enumi}$^\prime$$\rangle$}
		\setcounter{enumi}{2}
		\item There exist subspaces $X^+$ and $X^-$ with $\dim X^+ < \infty$ such that 
		$X = X^+ \oplus X^-$. 
		\item $A_\tau X^+ \subset X^+ $ and $A_\tau X^- \subset X^-$
	\end{enumerate}
	By $\langle$b\ref{enu_uns_finite}$\rangle$--$\langle$b\ref{enu_two_eig_relation}$\rangle$,
	the following conditions hold: 
	\begin{enumerate}
		\renewcommand{\labelenumi}{$\langle$a\arabic{enumi}$^\prime$$\rangle$}
		\setcounter{enumi}{4}
		\item $\sigma(A_\tau) \cap \cl(\mathbb{E}_1)$ consists of
		finitely many eigenvalues with finite algebraic multiplicities, $\sigma(A^+_\tau) = 
		\sigma(A_\tau) \cap \cl(\mathbb{E}_{1}) $, and there exists $\eta \in (0,1)$ such that $\sigma(A^-_\tau) = \sigma(A_\tau) \cap 
		\big(\mathbb{C} \setminus \cl(\mathbb{E}_{\eta}) \big)$. 
		\item  $(A^+_\tau,B^+_\tau,C^+_\tau)$ is controllable and observable. \label{enu_CD_SD_dis}
	\end{enumerate}
	Here we used  Proposition~5 and Theorem~9 in \cite{Logemann2013} to see that
	$\langle$A\ref{enu_CD_SD_dis}$^\prime$$\rangle$ holds.
	Finally we find from $\langle$b\ref{enu_two_eig_relation}$\rangle$, $\langle$b\ref{enu_Simple_SD}$\rangle$, and 
	the spectral mapping theorem \eqref{eq:SMT} that 
	\begin{enumerate}
		\renewcommand{\labelenumi}{$\langle$a\arabic{enumi}$^\prime$$\rangle$}
		\setcounter{enumi}{6}
		\item 
		The zeros of $\det (zI-A^+_\tau)$ are simple. 
	\end{enumerate}
	Thus,
	Theorem~\ref{thm:existence_servo_cont} shows
	the existence of a finite-dimensional controller  that
	is a solution of the robust output regulation problem, Problem~\ref{prob:ROR},
	for
	the discrete-time plant $(A_\tau,B_\tau,C_\tau,D_\tau)$ and the exosystem \eqref{eq:exosystem_SD}.
	The power stability of $A_{\rm e}$ is equivalent to the exponential stability \eqref{eq:ES_SD}
	by Lemma~\ref{lem:PS_ES}.
	
	We next show that the tracking property holds.
	Let $x^0 \in X$,  $x_{\rm d}^0 \in X_{\rm d}$, and 
	$y_{\rm ref}, v  \in \mathbb{C}^p$ be given.
	Since $A_{\rm e}$ is power stable, it follows that $(I-A_{\rm e})$ is invertible.
	By \eqref{eq:closed_state_sample}, 
	\begin{align*}
	x_{\rm e}^{\bigtriangledown}(k) - 
	(I-A_{\rm e})^{-1} B_{\rm e} \xi^0 &=
	A_{\rm e} x_{\rm e}(k) + 
	(
	I - (I-A_{\rm e})^{-1}
	) B_{\rm e}  \xi^0 \\ &=
	A_{\rm e} (
	x_{\rm e}(k) - (I-A_{\rm e})^{-1}
	B_{\rm e}  \xi^0
	)\qquad \forall k \in \mathbb{Z}_+.
	\end{align*}
	Using again the power stability of $A_{\rm e}$, we find that
	there exist $\Gamma_1 > 0$ and $\rho \in (0,1)$ such that 
	\begin{equation}
		\label{eq:xe_conv}
		\|
		x_{\rm e}(k) - (I-A_{\rm e})^{-1} B_{\rm e} \xi^0
		\|_{X\times X_{\rm d}} \leq \Gamma_1 \rho^k\big(
		\|x_{\rm e}^0\|_{X\times X_{\rm d}} +\|y_{\rm ref}\|_{\mathbb{C}^p}  + \|v\|_{\mathbb{C}^p}
		\big).
	\end{equation}
	Define 
	\[
	\begin{bmatrix}
	x^{\infty} \\ x_{\rm d}^{\infty}
	\end{bmatrix}
	:= (I - A_{\rm e})^{-1}B_{\rm e}\xi^0,\quad
	u^{\infty} := Rx_{\rm d}^{\infty} + v.
	\]
	As shown in the proof of Theorem~10 in \cite{Logemann2013},
	we have from the assumptions $\langle$b\ref{enu_uns_finite}$\rangle$, $\langle$b\ref{enu_A_minus_EXS_SD}$\rangle$, 
	and
	$\langle$b\ref{enu_spec_IA}$\rangle$ that
	\begin{equation}
\label{eq:x_u_infty}
Ax^{\infty} + Bu^{\infty} = 0
\end{equation}
and
	\[
	x^{\infty} = {\bf T}_t x^{\infty} + \int^t_0 {\bf T}_s Bu^{\infty} ds\qquad \forall t \in [0,\tau].
	\]
	Since 
	\[
	x(k\tau+t) = {\bf T}_t x(k\tau) + \int^t_0 {\bf T}_s B (Rx_{\rm d}(k)+v) ds\qquad \forall t \in [0,\tau],~
	\forall k \in \mathbb{Z}_+,
	\]
	together with the admissibility of $B$ (or Lemma~2.2 of \cite{Logemann2003}),
	\eqref{eq:xe_conv} implies that there exists $\Gamma_2 >0$  such that  
	\begin{align*}
		\big\|
		x(k\tau+t) - x^{\infty}
		\big\|
		&\leq 
		\|{\bf T}_t\| \cdot \big\|x(k\tau) -x^{\infty} \big\| + 	
		\left\|
		\int^t_0 
		{\bf T}_s BR(x_{\rm d}(k) - x_{\rm d}^{\infty}) ds
		\right\| \\
		&\leq \Gamma_2 \rho^k
		\big(
		\|x_{\rm e}^0\|_{X\times X_{\rm d}} + \|y_{\rm ref}\|_{\mathbb{C}^p } +\|v\|_{\mathbb{C}^p}
		\big)
	\end{align*}
	for all $t \in [0,\tau]$ and all $k \in \mathbb{Z}_+$.
	Using \eqref{eq:xe_conv} again, we have that for 
	$\Gamma_3 := \|R\| \Gamma_1$,
	\[
	\big\|
	u(k\tau+t)- u^{\infty}
	\big\|_{\mathbb{C}^p}\leq 
	\Gamma_3 \rho^k	\big(
	\|x_{\rm e}^0\|_{X\times X_{\rm d}} + \|y_{\rm ref}\|_{\mathbb{C}^p  }+ \|v\|_{\mathbb{C}^p}
	\big)
	\]
	for all $t \in [0,\tau]$ and all $k \in \mathbb{Z}_+$.
	Therefore, there exist $\Gamma_4>0$ and $\alpha_1 < 0$ such that 
	\begin{equation}
		\label{eq:x_u_L2}
		\|x - x^{\infty}\mathds{1}_{\mathbb{R}_+}\|_{L^2_{\alpha_1 }} 
		+ \|u - u^{\infty}\mathds{1}_{\mathbb{R}_+}\|_{L^2_{\alpha_1 }}\leq 
		\Gamma_4 
		\big(
		\|x_{\rm e}^0\|_{X\times X_{\rm d}}  + \|y_{\rm ref}\|_{\mathbb{C}^p }+\|v\|_{\mathbb{C}^p}
		\big).
	\end{equation}

	Define 
	\[
	x^{\infty}_- := (I-\Pi)x^{\infty},\quad
	x^0_- := (I-\Pi)x^{0},\quad
	y^\infty := {\bf G}^{-}(0) u^{\infty} +  C^+\Pi x^{\infty}.
	\]
	Recall that the output $y$ can be written in the form \eqref{eq:y_decomp}.
	Then we obtain
	\begin{equation}
	\label{eq:y_y_inf}
	y(t) - y^{\infty}\mathds{1}_{\mathbb{R}_+}
	=
	y_1(t) + y_2(t) + y_3(t)\qquad
	{\rm a.e.~}t\geq 0,
	\end{equation}
	where
	\begin{align*}
	y_1 &:= (C^-)_{\Lambda} {\bf T}^-  x_-^{\infty} + 
	G^- (u^{\infty}\mathds{1}_{\mathbb{R}_+}) -   {\bf G}^{-}(0) u^{\infty}\mathds{1}_{\mathbb{R}_+}  \\
	y_2 &:= (C^-)_{\Lambda} {\bf T}^-(x_-^0 - x_-^{\infty}) 
	+
	G^- (u - u^{\infty}\mathds{1}_{\mathbb{R}_+}) 
	\\
	y_3 &:= 
	C^+\Pi (x - x^{\infty}\mathds{1}_{\mathbb{R}_+}).
\end{align*}
	By \eqref{eq:x_u_infty},
\begin{equation*}
A^- x^{\infty}_- + B^- u^{\infty} =	
(I - \Pi_{-1}) (Ax^{\infty} + Bu^{\infty}) = 0.
\end{equation*}
	Since \eqref{eq:G_property} yields
\begin{align*}
\mathcal{L}(
G^- (u^{\infty}\mathds{1}_{\mathbb{R}_+}) -   {\bf G}^{-}(0) u^{\infty} \mathds{1}_{\mathbb{R}_+}
)(s)
&=
\frac{{\bf G}^{-}(s) - {\bf G}^{-}(0)}{s} u^{\infty}\\
&=
C^{-}(sI-A^-)^{-1}(A^-)^{-1} B^-u^{\infty}
\end{align*}
for every $s \in \mathbb{C}_0$,
the Laplace transform of $y_1$ satisfies
\[
\mathfrak{L}(y_1)(s) 
=
C^{-}(sI-A^-)^{-1}(A^-)^{-1} (A^-x^\infty_- + B^-u^{\infty}) = 0
\qquad \forall s \in \mathbb{C}_0.
\]
The uniqueness of the Laplace transform (see, e.g., 
Theorem~1.7.3 in \cite{Arendt2001}) yields
\begin{equation}
\label{eq:y1}
y_1(t) = 0\qquad \text{a.e. $t \geq 0$.}
\end{equation}

By the exponential stability of ${\bf T}_t^-$
and the admissibility of $C^-$, 
there exists $\Gamma_5 >0$ and $\alpha_2 < 0$ such that 
\begin{equation}
\label{eq:y2}
\|y_2\|_{L^2_{\alpha_2}} \leq \Gamma_5 \big(
\|x^0 - x^\infty\| + \|u - u^\infty\mathds{1}_{\mathbb{R}_+} \|_{L^2_{\alpha_2}} 
\big).
\end{equation}
By definition, there exists $\Gamma_6>0$ such that 
\begin{equation}
\label{eq:y2_2}
\|x^{\infty}\| \leq \Gamma_6 \big(
 \|y_{\rm ref}\|_{\mathbb{C}^p} + \|v\|_{\mathbb{C}^p} 
\big).
\end{equation}
In terms of $y_3$, we obtain
\begin{equation}
\label{eq:y3}
\|y_3\|_{L^2_{\alpha_1}} \leq \|C^+ \Pi \|_{\mathcal{L}(X,\mathbb{C}^p)}
\cdot \|x- x^{\infty}\mathds{1}_{\mathbb{R}_+} \|_{L^2_{\alpha_1}}.
\end{equation}
Combining \eqref{eq:y1}--\eqref{eq:y3} with \eqref{eq:y_y_inf},
we have that there exists $\Gamma_7 >0$ and $\alpha_3:= \max\{\alpha_1,\alpha_2\}<0$ such that 
\begin{equation}
\label{eq:y_CPi}
\|y-y^{\infty} \mathds{1}_{\mathbb{R}_+}\|_{L^2_{\alpha_3}} 
\leq \Gamma_7 	\big(
\|x_{\rm e}^0\|_{X\times X_{\rm d}} +\|v\|_{\mathbb{C}^p} + \|y_{\rm ref}\|_{\mathbb{C}^p}
\big),
\end{equation}
 which yields 
	\[
	\int^\tau_0 
	\left\|
	y(k\tau + t) - 
	y^{\infty} 
	\right\|^2_{\mathbb{C}^p} dt \to 0\qquad (k \to \infty).
	\]
	Since $\int^\tau_0 w(t) dt = 1$, it follows that 
	\begin{align*}
		\left\|(\mathcal{S}_\tau y) (k) -   
		y^{\infty} 
		\right\|_{\mathbb{C}^p}  
		& \leq 
		\int^\tau_0 
		\|
		w(t) 
		(
		y(k\tau + t) - 
		y^{\infty} 
		) 
		\|_{\mathbb{C}^p} dt \\
		&\leq 
		\sqrt{\int^\tau_0 
		|w(t)|^2 dt} \cdot 
		\sqrt{\int^\tau_0	
		\|
		y(k\tau + t) - 
		y^{\infty} 
		\|^2_{\mathbb{C}^p} dt} \to 0
	\end{align*}
	as $k \to \infty$.
	Therefore, the sampled output $\mathcal{S}_\tau y$
	converges to $y^{\infty}$.
	
	On the other hand, 
	the tracking property and the robustness property with respect to
	exosystems of the discretized system 
	implies that
	for every $y_{\rm ref},v \in \mathbb{C}^p$, 
	$(S_{\tau}y)(k) \to y_{\rm ref}$ as $k \to \infty$.
	This means that $y^{\infty} = y_{\rm ref}$.
	Thus, the tracking property is obtained from \eqref{eq:y_CPi}.
	
	Finally, we prove the robustness property.
	Let $(P,Q,R)$ be the realization of the controller \eqref{eq:controller} and
	$(\widetilde{A},\widetilde{B},\widetilde{C},\widetilde{\bf G})$ be the perturbed system node
	in $\mathcal{O}_{\rm s}(P,Q,R)$.
	Define the operator $\widetilde{A}_{\rm e}$ as in \eqref{eq:Discretized_CLS_A} by using
	$(\widetilde{A},\widetilde{B},\widetilde{C},\widetilde{\bf G})$.
	By assumption, the perturbed sampled-data system is exponentially stable.
	Using Lemma~\ref{lem:PS_ES}, we find that $\widetilde{A}_{\rm e}$ is power stable.
	Hence Theorem~\ref{thm:internal_model} shows that 
	for every $y_{\rm ref},v \in \mathbb{C}^p$, 
	the sampled output $S_{\tau}y$ of the perturbed plant satisfies
	$\lim_{k \to \infty}(S_{\tau}y)(k) = y_{\rm ref}$.
	In the argument to obtain \eqref{eq:y_CPi}, we used only
	the well-posedness of the system node 
	$(A,B,C,\bf G)$, the power stability of $A_{\rm e}$, 
	the assumptions $\langle$b\ref{enu_uns_finite}$\rangle$, $\langle$b\ref{enu_A_minus_EXS_SD}$\rangle$, 
	and
	$\langle$b\ref{enu_spec_IA}$\rangle$.
	The perturbed system node $(\widetilde{A},\widetilde{B},\widetilde{C},\widetilde{\bf G})$ is 
	well posed by assumption.
	The power stability of $\widetilde{A}_{\rm e}$ has  been already proved.
	By Proposition~3 and Theorem~9 in \cite{Logemann2013},
	% and Lemma~\ref{lem:PS_ES},
	the assumptions $\langle$b\ref{enu_uns_finite}$\rangle$, $\langle$b\ref{enu_A_minus_EXS_SD}$\rangle$, 
	and
	$\langle$b\ref{enu_spec_IA}$\rangle$
	hold for $(\widetilde{A},\widetilde{B},\widetilde{C},\widetilde{\bf G})$.
	Hence, repeating the argument as above, we obtain the tracking property of 
	the perturbed sampled-data system.
%	
%	The robustness property follows from Theorem~\ref{thm:internal_model},
%	the equivalence of the discrete-time power stability and the sampled-data exponential 
%	stability
%	in Proposition~3 in \cite{Logemann2013}, and
%	the above argument on the tracking property. 
	\qed
\end{proof}

\begin{remark}
	\label{rem:convergence}
		As seen in the proof of Theorem~\ref{thm:servo_tracking},
		the states $x(t)$ and $x_{\rm d}(k)$ exponentially converge to $x^\infty$ and $x_{\rm d}^\infty$, respectively,
		where
		\[
		\begin{bmatrix}
		x^\infty \\ x_{\rm d}^\infty
		\end{bmatrix}
		=
		\begin{bmatrix}
		I-A_{\tau} & ~~B_{\tau}R \\
		-QC_{\tau} & ~~I - P + QD_{\tau}R
		\end{bmatrix}^{-1}
		\begin{bmatrix}
		B_\tau v \\
		Q (y_{\rm ref} - D_\tau v)
		\end{bmatrix}.
		\]
\end{remark}

Since the output $y$ may not be continuous,
Theorem~\ref{thm:servo_tracking} does not guarantee that 
$y(t) \to y_{\rm ref}$ as $t \to \infty$.
To address this issue, we use a smoothing stable precompensator $\Sigma_{\rm p}$ of the form
\begin{equation}
	\label{eq:pre_compensater}
	\dot x_{\rm p} = -ax_{\rm p} + u_{\rm p},\quad
	x_{\rm p}(0) = x_{\rm p}^0 \in \mathbb{C}^p,
\end{equation}
where $a > 0$.
Consider the sampled-data system consisting of the digital controller \eqref{eq:controller},
the well-posed plant \eqref{eq:well_posed}, the precompensator \eqref{eq:pre_compensater}, and the feedback law
\[
u = x_{\rm p},\qquad u_{\rm p} = \mathcal{H}_{\tau} y_{\rm d} + v\mathds{1}_{\mathbb{R}_+},\qquad 
u_{\rm d} = y_{\rm ref}\mathds{1}_{\mathbb{Z}_+}  - \mathcal{S}_\tau y.
\]
Fig.~\ref{fig:sampled_data_sys_pre} illustrates the sampled-data system 
with a precompensator.
\begin{figure}[tb]
	\centering
	\includegraphics[width = 6cm]{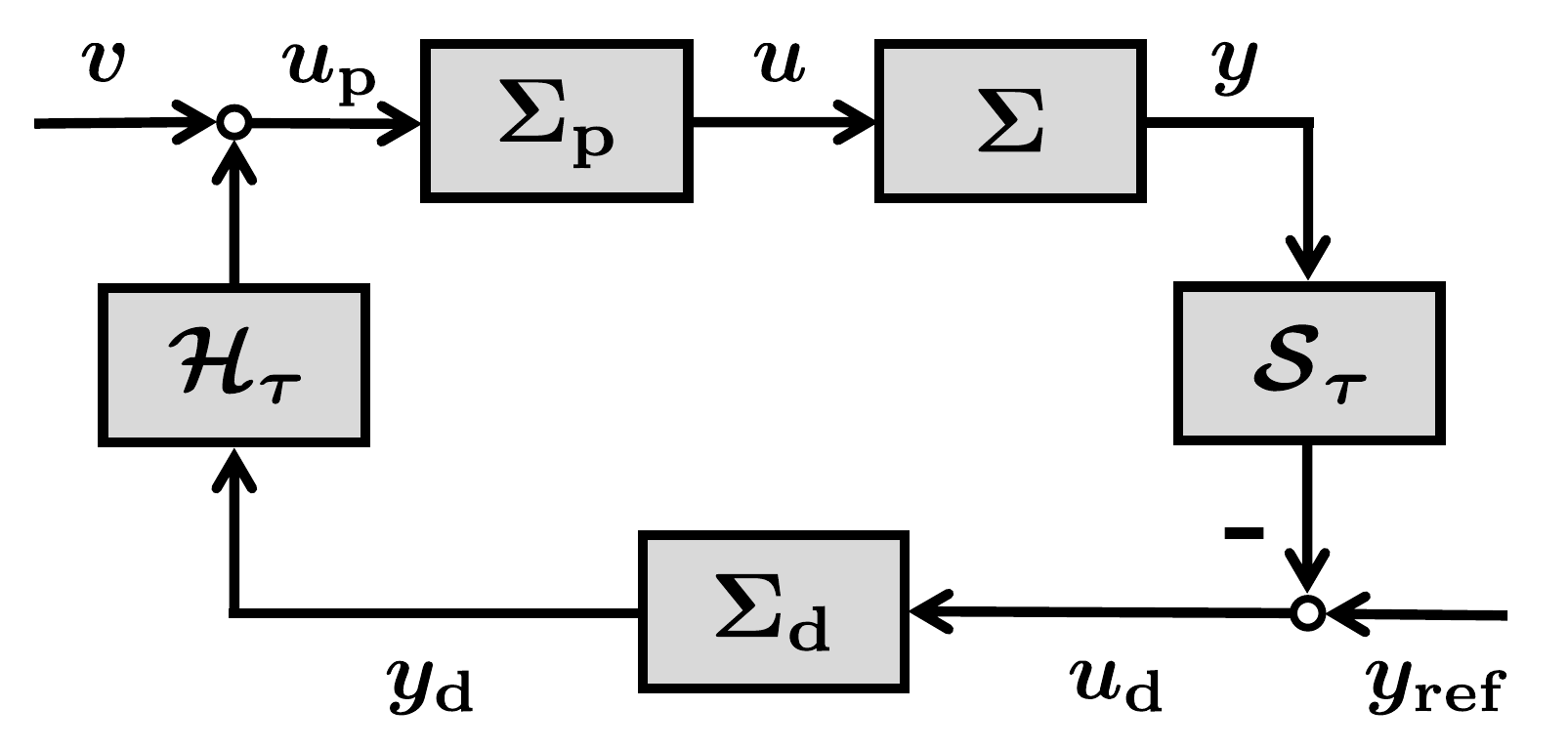}
	\caption{Sampled-data system with precompensator.}
	\label{fig:sampled_data_sys_pre}
\end{figure}

The new plant $\widehat \Sigma$, which is
the interconnection of the plant $\Sigma$ and the precompensator $\Sigma_{\rm p}$, is a well-posed system 
with state space $\widehat X := X \times \mathbb{C}^p$, input space $\mathbb{C}^p$,
and output space $\mathbb{C}^p$.
The generating operators $(\widehat A,\widehat B,\widehat C)$ of $\widehat \Sigma$ are given by
\begin{align*}
	\widehat A &:= 
	\begin{bmatrix}
		A & B \\ 0 & -aI
	\end{bmatrix}
	\text{~with~}
	\dom (\widehat A) := 
	\left\{
	\begin{bmatrix}
		x \\ x_{\rm p}
	\end{bmatrix} \in X \times \mathbb{C}^p
	: Ax + Bx_{\rm p} \in X
	\right\}\\
	\widehat B &:= 
	\begin{bmatrix}
	0 \\ I
	\end{bmatrix},\quad 
	\widehat C 
	\begin{bmatrix}
		x \\ x_{\rm p}
	\end{bmatrix} :=
	C(x - (\lambda I - A)^{-1} Bx_{\rm p} ) + \mathbf{G}(\lambda) x_{\rm p}
	~~ \forall 	\begin{bmatrix}
		x \\ x_{\rm p}
	\end{bmatrix} \in \dom (\widehat  A),
\end{align*}
where $\lambda \in \varrho(A)$.
The
transfer function $\widehat{\bf G}$ of $\widehat \Sigma$ is 
$\widehat{\bf G}(s) := {\bf G}(s) /(s+a)$.
\begin{theorem}
	\label{thm:servo_tracking_y}
	If the assumptions {\rm $\langle$b\ref{enu_Resol_SD}$\rangle$--$\langle$b\ref{enu_Simple_SD}$\rangle$} hold, then
	%	Further, suppose that 
	%	a weighting function $w\in L^2[0,\tau)$ for $\mathcal{S}_\tau$ satisfies 
	%	\[
	%	\int_0^{\tau} w(s) ds = 1.
	%	\]
	there exists a finite-dimensional controller \eqref{eq:controller}  that 
	is a solution of Problem~\ref{prob:OR_SD} in the context of the interconnected plant $\widehat \Sigma$.
	Furthermore, if a controller in the form \eqref{eq:controller} satisfies
	the stability property and the tracking property in 
	Problem~\ref{prob:OR_SD} for  the interconnected plant $\widehat \Sigma$,
	then the following convergence property holds:
	Let  $x_{\rm d}^0 \in X_{\rm d}$ and 
	$v, y_{\rm ref} \in \mathbb{C}^p$ be arbitrary and let the initial states
	$x^0 \in X$ and $x_{\rm p}^0 \in \mathbb{C}^p$ be such that 
	$\mathbf{T}_{t_0}(Ax^0 + Bx_{\rm p}^0) \in X$ for some $t_0 \geq0$. Then
	there exist
a function $y_{\rm c}: \mathbb{R}_+ \to \mathbb{C}^p$ and
a constant $\alpha < 0$
such that 
\begin{enumerate}
	\item
	$y_{\rm c}$ coincides with the output $y$ of $\widehat \Sigma$ for a.e. $t \geq 0$,
	is continuous on $[t_0,\infty)$, and satisfies
	\begin{equation*}
%	\label{eq:y_limit}
	\lim_{t \to \infty} (y_{\rm c}(t) - y_{\rm ref} ) e^{-\alpha t} = 0;
	\end{equation*}
	\item 
	$\alpha$ is independent of 
	$x^0 \in X, x_{\rm p}^0 \in \mathbb{C}^p$, $x_{\rm d}^0 \in X_{\rm d}$, and
	$y_{\rm ref},v \in \mathbb{C}^p$.
\end{enumerate}
\end{theorem}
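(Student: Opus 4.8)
The plan is to derive both assertions from Theorem~\ref{thm:servo_tracking} applied to the interconnected plant $\widehat\Sigma$, and then to upgrade the energy-sense tracking to pointwise convergence by exploiting that the precompensator renders the control operator bounded. For the existence part I would first check that $\langle\mathrm{b}1\rangle$--$\langle\mathrm{b}9\rangle$ are inherited by $\widehat\Sigma$ for any $a>0$. Since $\sigma(\widehat A)=\sigma(A)\cup\{-a\}$ with $-a<0$, the added point lies strictly inside the stable region; hence $0\in\varrho(\widehat A)$, giving $\langle\mathrm{b}1\rangle$, and the spectral decomposition of $\widehat A$ has the same finite unstable subspace as $A$, namely $\widehat X^+=X^+\times\{0\}$, so $\langle\mathrm{b}3\rangle$ holds and $\widehat{\mathbf T}^-$ is exponentially stable ($\langle\mathrm{b}4\rangle$). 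A short computation with the block-triangular resolvent gives $\widehat A^+=A^+$, $\widehat C^+=C^+$, and $\widehat B^+=(A^++aI)^{-1}B^+$; as $(A^++aI)^{-1}$ is boundedly invertible and commutes with $A^+$, controllability and observability of $(A^+,B^+,C^+)$ transfer to $(\widehat A^+,\widehat B^+,\widehat C^+)$, yielding $\langle\mathrm{b}5\rangle$. Because $\sigma(\widehat A^+)=\sigma(A^+)$, assumptions $\langle\mathrm{b}6\rangle$--$\langle\mathrm{b}9\rangle$ are unchanged, while $\widehat{\bf G}(0)={\bf G}(0)/a$ gives $\langle\mathrm{b}2\rangle$. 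Theorem~\ref{thm:servo_tracking} then produces a finite-dimensional controller solving Problem~\ref{prob:OR_SD} for $\widehat\Sigma$.

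For the convergence part I assume a controller providing stability and energy-sense tracking for $\widehat\Sigma$. Two preliminary facts are needed. Since $\widehat B\in\mathcal L(\mathbb C^p,\widehat X)$ is bounded, $\widehat x(t)=(x(t),x_{\rm p}(t))$ is a genuine mild solution, continuous in $\widehat X$; Lemma~\ref{lem:PS_ES} and Remark~\ref{rem:convergence} give $\widehat x(k\tau)\to\widehat x^\infty$ exponentially, and propagating over each sampling interval with the equilibrium identity $\widehat A\widehat x^\infty+\widehat B u_{\rm p}^\infty=0$ upgrades this to $\widehat x(t)\to\widehat x^\infty$ exponentially, uniformly on $[k\tau,(k+1)\tau]$. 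Moreover, exactly as in the proof of Theorem~\ref{thm:servo_tracking}, the sampled output satisfies $(\mathcal S_\tau y)(k)\to y^\infty$, while tracking forces $(\mathcal S_\tau y)(k)\to y_{\rm ref}$, whence the steady-state output obeys $y^\infty=y_{\rm ref}$.

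The heart of the matter is the continuity of $y$ on $[t_0,\infty)$, for which I would use the decomposition of Lemma~\ref{lem:y_decomp} applied to $\widehat\Sigma$,
\[
y(t)=\widehat C^+\widehat\Pi\,\widehat x(t)+(\widehat C^-)_\Lambda\widehat{\mathbf T}^-_t(I-\widehat\Pi)\widehat x^0+(\widehat G^-u_{\rm p})(t).
\]
The finite-dimensional term is continuous on $\mathbb R_+$ and converges to $\widehat C^+\widehat\Pi\widehat x^\infty$ at the power-stability rate. Because $\widehat{\bf G}(s)={\bf G}(s)/(s+a)$ is strictly proper, the stable part $\widehat{\bf G}^-$ is strictly proper and lies in $H^2$, so its impulse response $h^-$ is locally $L^2$ and $(\widehat G^-u_{\rm p})(t)=\int_0^t h^-(t-s)u_{\rm p}(s)\,ds$ is continuous for the bounded piecewise-constant input $u_{\rm p}$, converging to $\widehat{\bf G}^-(0)u_{\rm p}^\infty$ exponentially. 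The remaining free-response term is the genuine obstacle, since $\widehat C^-$ is unbounded and $t\mapsto(\widehat C^-)_\Lambda\widehat{\mathbf T}^-_t\zeta$ is a priori only $L^2_{\rm loc}$. Here the regularity hypothesis is decisive: I would show that $\mathbf T_{t_0}(Ax^0+Bx_{\rm p}^0)\in X$ is equivalent to $\widehat{\mathbf T}_{t_0}\widehat x^0\in\dom(\widehat A)$, using $\widehat A\widehat{\mathbf T}_{t_0}\widehat x^0=\widehat{\mathbf T}_{t_0}\widehat A\widehat x^0$ with $\widehat A\widehat x^0=(Ax^0+Bx_{\rm p}^0,-ax_{\rm p}^0)$ and the admissibility of $B$ to absorb the term $\int_0^{t_0}\mathbf T_{t_0-s}B(-ae^{-as}x_{\rm p}^0)\,ds\in X$. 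Projecting gives $\widehat{\mathbf T}^-_{t_0}(I-\widehat\Pi)\widehat x^0\in\dom(\widehat A^-)$, so for $t\ge t_0$ the free-response trajectory stays in the domain and $(\widehat C^-)_\Lambda\widehat{\mathbf T}^-_t(I-\widehat\Pi)\widehat x^0=\widehat C^-\widehat{\mathbf T}^-_{t-t_0}\widehat{\mathbf T}^-_{t_0}(I-\widehat\Pi)\widehat x^0$ is continuous and decays exponentially.

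Combining the three continuous representatives produces $y_{\rm c}$, continuous on $[t_0,\infty)$, agreeing with $y$ almost everywhere, with each piece converging exponentially to a limit whose sum equals $y^\infty=y_{\rm ref}$. Taking $\alpha<0$ to be the maximum of the power-stability rate $(\ln\rho)/\tau$ and the growth bound $\omega(\widehat{\mathbf T}^-)$ (with a small margin) yields $\lim_{t\to\infty}(y_{\rm c}(t)-y_{\rm ref})e^{-\alpha t}=0$, and since all constants and rates in the three terms are uniform over the data, $\alpha$ is independent of $x^0,x_{\rm p}^0,x_{\rm d}^0,y_{\rm ref},v$. I expect the free-response term to be the main difficulty, as it requires reconciling the weak $L^2_{\rm loc}$ regularity of the $\Lambda$-extension with the desired continuity precisely through the hypothesis on $t_0$; the forced term is handled cleanly by strict properness of $\widehat{\bf G}$, and the finite-dimensional term is immediate.
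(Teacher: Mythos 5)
Your proposal is correct, but for the convergence assertion it takes a genuinely different route from the paper. The paper does \emph{not} apply the output decomposition of Lemma~\ref{lem:y_decomp} to the interconnected plant $\widehat\Sigma$; instead it decomposes the output of the \emph{original} plant $\Sigma$ driven by the precompensator state $x_{\rm p}$, writes $y-y^{\infty}=y_1+y_2+y_3$ with $y_1$ killed by a Laplace-transform identity, and then invokes Proposition~2.1 of \cite{Logemann2005} as a black box to produce the continuous representative of the transient term $y_2$; the hypotheses of that proposition are exactly where the smoothing enters, namely ${\bf T}_{t_0}^-\big(A^-(x_-^0-x_-^{\infty})+B^-(x_{\rm p}^0-x_{\rm p}^{\infty})\big)\in X^-$ together with $x_{\rm p}-x_{\rm p}^{\infty}\mathds{1}_{\mathbb{R}_+}$ and $\dot x_{\rm p}$ lying in $L^2_{\beta_2}$ for some $\beta_2\in(\omega({\bf T}^-),0)$. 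You instead decompose the output of $\widehat\Sigma$ itself and exploit the boundedness of $\widehat B$: the free response is handled by domain invariance ($\widehat{\mathbf T}_{t_0}\widehat x^0\in\dom(\widehat A)$, correctly shown equivalent to the stated hypothesis via admissibility of $B$), and the forced response by strict properness of $\widehat{\bf G}^-$. Both routes are sound and locate the role of the regularity hypothesis in the same place. What the paper's approach buys is that the forced term needs no impulse-response representation at all, at the cost of citing an external result; what yours buys is self-containedness, at the cost of two steps you should make explicit: (i) that $\widehat{\bf G}^-\in H^2(\mathbb{C}_\beta)$ for some $\beta<0$ (a partial-fraction argument splitting ${\bf G}^+(s)/(s+a)$ plus the bound ${\bf G}^-(s)/(s+a)=O(1/|s|)$ on a shifted half-plane), and (ii) the Paley--Wiener identification of the abstract input-output operator $\widehat G^-$ with convolution against the resulting $L^1\cap L^2$ kernel, via uniqueness of Laplace transforms on $L^2_\alpha$ inputs. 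Neither is a gap in substance, only in exposition. Your verification of the first assertion is also done by hand (block-triangular resolvent, $\widehat B^+=(A^++aI)^{-1}B^+$ commuting with $A^+$), where the paper simply cites Proposition~5 and the proof of Theorem~11 of \cite{Logemann2013}; your computation is consistent with that citation.
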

\begin{proof}
	Due to Theorem~\ref{thm:servo_tracking},
	the first assertion follows if 
	the assumptions {\rm $\langle$b\ref{enu_Resol_SD}$\rangle$--$\langle$b\ref{enu_Simple_SD}$\rangle$} 
	are satisfied in the context of  the interconnected plant $\widehat \Sigma$.
	Among these assumptions,  {\rm $\langle$b\ref{enu_uns_finite}$\rangle$--$\langle$b\ref{enu_weight}$\rangle$} 
	hold in the context of $\widehat \Sigma$  by Proposition~5 and the proof of
	Theorem~11 in \cite{Logemann2013}.
	By the definition of $\widehat A$ and 
	$\widehat{\mathbf{G}}$,
	the remaining assumptions 
	{\rm $\langle$b\ref{enu_Resol_SD}$\rangle$}, {\rm $\langle$b\ref{enu_Zero_SD}$\rangle$}, {\rm $\langle$b\ref{enu_two_eig_relation}$\rangle$},
	and {\rm $\langle$b\ref{enu_Simple_SD}$\rangle$} hold in the context of $\widehat \Sigma$.
	
	We prove the second assertion.
	Define the operator $\widehat A_{\rm e}$ as in
	\eqref{eq:Discretized_CLS_A} by using the interconnected plant $\widehat \Sigma$.
	By Lemma~\ref{lem:PS_ES},
	the stability property implies the power stability of $\widehat A_{\rm e}$.
	By Proposition~3 and Theorems~9 and 10 in \cite{Logemann2013},
	% and Lemma~\ref{lem:PS_ES},
	the assumptions $\langle$b\ref{enu_uns_finite}$\rangle$, $\langle$b\ref{enu_A_minus_EXS_SD}$\rangle$, 
	and
	$\langle$b\ref{enu_spec_IA}$\rangle$
	hold in the context of both $\Sigma$ and $\widehat \Sigma$. 
	
	Let  $x_{\rm d}^0 \in X_{\rm d}$ and 
	$v, y_{\rm ref} \in \mathbb{C}^p$ be given, and
	let $t_0 \geq0$, $x^0 \in X$, and $x_{\rm p}^0 \in \mathbb{C}^p$ be
	such that
	$\mathbf{T}_{t_0}(Ax^0 + Bx_{\rm p}^0) \in X$.
	It can be shown as
	in the proof of Theorem~\ref{thm:servo_tracking} that there exist 
	$x^{\infty} \in X$, $x^{\infty}_{\rm p} \in \mathbb{C}^p$, $x^{\infty}_{\rm d} \in X_{\rm d}$,
	$\widehat \Gamma > 0$, and $\widehat \rho \in (0,1)$ such that 
	\begin{align}
		\label{eq:state_conv_filter}
		&\|
		x(k\tau + t) - x^{\infty}
		\| +
		\|
		x_{\rm p}(k\tau + t) - x^{\infty}_{\rm p} 
		\|_{\mathbb{C}^p} + 
		\|
		x_{\rm d}(k) - x^{\infty}_{\rm d} 
		\|_{X_{\rm d}} 	\\
		&~ \leq \widehat\Gamma \widehat\rho^{\hspace{1.5pt}k}\big(
		\|x^0\| +\|x^0_{\rm p} \|_{\mathbb{C}^p} +\|x^0_{\rm d} \|_{X_{\rm d}}   + \|y_{\rm ref}\|_{\mathbb{C}^p}+\|v\|_{\mathbb{C}^p}
		\big)~~\forall t \in [0,\tau),~
		\forall k \in \mathbb{Z}_+.\notag 
	\end{align}
	Similarly to \eqref{eq:x_u_infty},
	we obtain
	\begin{equation}
		\label{eq:stady}
		\widehat A
		\begin{bmatrix}
			x^\infty \\ x^{\infty}_{\rm p}
		\end{bmatrix} + 
		\widehat B  (Rx_{\rm d}^{\infty} + v) = 0.
	\end{equation}
%	It is shown in the proof of Theorem~11 in \cite{Logemann2013} that
%	if $\langle$b\ref{enu_uns_finite}$\rangle$ and $\langle$b\ref{enu_A_minus_EXS_SD}$\rangle$
%	hold in the context of the interconnected plant $\widehat \Sigma$, then
%	$\langle$b\ref{enu_uns_finite}$\rangle$ and $\langle$b\ref{enu_A_minus_EXS_SD}$\rangle$
%	hold for the original plant $\Sigma$.
	Using the projection $\Pi$ on $X$ given in \eqref{eq:projection},
	we define 
	\[
	x^{\infty}_- := (I-\Pi)x^{\infty},\quad
	x^0_- := (I-\Pi)x^{0},\quad
	y^\infty := {\bf G}^{-}(0) x_{\rm p}^{\infty} +  C^+\Pi x^{\infty}.
	\]
	Lemma~\ref{lem:y_decomp} yields
	\[
	y(t) - y^{\infty}\mathds{1}_{\mathbb{R}_+}
	=
	y_1(t) + y_2(t) + y_3(t)\qquad
	{\rm a.e.~}t\geq 0,
	\]
	where
	\begin{align*}
		y_1 &:= (C^-)_{\Lambda} {\bf T}^-  x_-^{\infty} + 
		G^- (x_{\rm p}^{\infty}\mathds{1}_{\mathbb{R}_+}) -   {\bf G}^{-}(0) x_{\rm p}^{\infty}\mathds{1}_{\mathbb{R}_+}  \\
		y_2 &:= (C^-)_{\Lambda} {\bf T}^-(x_-^0 - x_-^{\infty}) 
		+
		G^- (x_{\rm p} - x_{\rm p}^{\infty}\mathds{1}_{\mathbb{R}_+}) 
		\\
		y_3 &:= 
		C^+\Pi (x - x^{\infty}\mathds{1}_{\mathbb{R}_+}).
	\end{align*}
	
	We can show that $y_1(t) = 0$ for a.e. $t\geq0$ in the same way
	as in the proof of Theorem~\ref{thm:servo_tracking}. In fact,
	using \eqref{eq:stady}, 
	we obtain
	\begin{equation}
		\label{eq:A-_B-}
		A^- x^{\infty}_- + B^- x_{\rm p}^{\infty} =	
		(I - \Pi_{-1}) (Ax^{\infty} + Bx_{\rm p}^{\infty}) = 0.
	\end{equation}
	By \eqref{eq:G_property},
	\begin{align*}
		\mathcal{L}(
		G^- (x_{\rm p}^{\infty}\mathds{1}_{\mathbb{R}_+}) -   {\bf G}^{-}(0) x_{\rm p}^{\infty} \mathds{1}_{\mathbb{R}_+}
		)(s)
		&=
		\frac{{\bf G}^{-}(s) - {\bf G}^{-}(0)}{s} x_{\rm p}^{\infty}\\
		&=
		C^{-}(sI-A^-)^{-1}(A^-)^{-1} B^-x_{\rm p}^{\infty}
	\end{align*}
	for every $s \in \mathbb{C}_0$.
	Hence
	the Laplace transform of $y_1$ is given by
	\[
	\mathfrak{L}(y_1)(s) 
	=
	C^{-}(sI-A^-)^{-1}(A^-)^{-1} (A^-x^\infty_- + B^-x_{\rm p}^{\infty}) = 0
	\qquad \forall s \in \mathbb{C}_0.
	\]
	Thus we obtain
	$
	y_1(t) = 0
	$
	for a.e.~$t \geq 0$.
	
	We next investigate continuity and convergence of $y_2$.
By Proposition~2.1 of \cite{Logemann2005},
if
\begin{equation}
\label{eq:error_inclusion}
{\bf T}_{t_0}^- \big(A^- (x_-^0 - x_-^{\infty})  + B^- (x^0_{\rm p} - x^\infty_{\rm p}  )\big) \in X^-
\end{equation}
and if
$x_{\rm p} - x_{\rm p}^{\infty}\mathds{1}_{\mathbb{R}_+} \in  L^{2}_{\beta_2}(\mathbb{R}_+,\mathbb{C}^p)$ with
$\dot x_{\rm p} \in L^{2}_{\beta_2}(\mathbb{R}_+,\mathbb{C}^p)$
for some $\beta_2 \in (\omega({\bf T}^-),0)$,
then there exists
a function $y_{2,\rm c} :\mathbb{R}_+ \to \mathbb{C}^p$  such that 
$y_{2,\rm c}$  coincides with $y_2$ for a.e.~$ t \geq 0 $, is continuous on $[t_0, \infty)$, 
and satisfies $\lim_{t\to \infty}y_{2,\rm c}(t) e^{-\beta_2 t}=0$.
	
	Since 
	${\bf T}_{t_0} (Ax^0 + Bx_{\rm p}^0) \in X$ by assumption, it follows that
	\[
	{\bf T}_{t_0}^- (A^- x_-^0 + B^- x^0_{\rm p}) =
	{\bf T}_{t_0}^- (I - \Pi_{-1}) (Ax^0 + Bx_{\rm p}^0) \in X^-.
	\]
	This together with \eqref{eq:A-_B-} yields
	\eqref{eq:error_inclusion}.
	
	Let us show that $x_{\rm p} - x_{\rm p}^{\infty}\mathds{1}_{\mathbb{R}_+} \in  
	L^{2}_{\beta_2}(\mathbb{R}_+,\mathbb{C}^p)$ and
$\dot x_{\rm p} \in L^{2}_{\beta_2}(\mathbb{R}_+,\mathbb{C}^p)$
for some $\beta_2 \in (\omega({\bf T}^-),0)$.
Recall that
\[
\dot x_{\rm p} = -ax_{\rm p} + \mathcal{H}_{\tau} Rx_{\rm d} + v\mathds{1}_{\mathbb{R}_+}.
\]
Since  \eqref{eq:stady} yields
\[
-ax^{\infty}_{\rm p} + Rx_{\rm d}^{\infty} + v = 0,
\]
it follows that 
\[
\dot x_{\rm p}  = -a(x_{\rm p} - x^{\infty}_{\rm p} \mathds{1}_{\mathbb{R}_+}) + \mathcal{H}_{\tau} R(x_{\rm d} - 
x_{\rm d}^{\infty} \mathds{1}_{\mathbb{Z}_+}).
\]
By
\eqref{eq:state_conv_filter},
there exists $\beta_2 \in (\omega({\bf T}^-),0)$ such that 
$x_{\rm p} - x_{\rm p}^{\infty}\mathds{1}_{\mathbb{R}_+} \in  L^{2}_{\beta_2}(\mathbb{R}_+,\mathbb{C}^p)$ and
$\dot x_{\rm p} \in L^{2}_{\beta_2}(\mathbb{R}_+,\mathbb{C}^p)$.
	
	Since $x$ is continuous, it follows that $y_3$ is also continuous.
	Invoking \eqref{eq:state_conv_filter}, we have that 
	$\lim _{t\to \infty} y_3(t) e^{-\beta_3 t} = 0$ for some $\beta_3 < 0$. Thus
	$y_{\rm c}:=y_{2,\rm c} + y_3 + y^{\infty}\mathds{1}_{\mathbb{R}_+}$ 
	coincides $y$ almost everywhere in $\mathbb{R}_+$,
	is continuous on $[t_0, \infty)$, and $\lim_{t \to \infty} (y_{\rm c}(t)- y^{\infty})e^{-\alpha t} = 0$ for $\alpha :=
	\max\{\beta_2,\beta_3\} < 0$. By construction,
	$\alpha$ is independent of 
	$x^0 \in X, x_{\rm p}^0 \in \mathbb{C}^p$, $x_{\rm d}^0 \in X_{\rm d}$, and
	$y_{\rm ref},v \in \mathbb{C}^p$.

	Finally, we prove that $y_{\rm ref} = y^{\infty}$.
	Since $\int^\tau_0 w(t) dt = 1$, it follows that, 
	for every $k \in \mathbb{Z}_+$ with $k\tau > t_0$,
	\begin{align*}
		\big\|(\mathcal{S}_\tau y)(k) - y^\infty \big\|_{\mathbb{C}^p}
		&\leq \int^\tau_0 \|w(t)(y_{\rm c}(k\tau+t) - y^\infty)\|_{\mathbb{C}^p} dt \\
		&\leq\sqrt{ \tau}  \|w\|_{L^2(0,\tau)} \max_{0\leq t \leq \tau} \|y_{\rm c}(k\tau+t) - y^\infty\|_{\mathbb{C}^p}.
	\end{align*}
	Therefore,
	$
	\lim_{k\to\infty}(\mathcal{S}_\tau y)(k) = y^\infty.
	$
	On the other hand, from the tracking property,
	it follows that $\| 
	y - y_{\rm ref}\mathds{1}_{\mathbb{R}_+}
	\|_{L^2} < \infty$. Hence
		\begin{align*}
	\left\|(\mathcal{S}_\tau y) (k) -   
	y_{\rm ref}
	\right\|_{\mathbb{C}^p}  
	& \leq 
	\int^\tau_0 
	\|
	w(t) 
	(
	y(k\tau + t) - 
	y_{\rm ref}
	) 
	\|_{\mathbb{C}^p} dt \\
	&\leq 
	\sqrt{\int^\tau_0 
		|w(t)|^2 dt} \cdot 
	\sqrt{\int^\tau_0	
		\|
		y(k\tau + t) - 
		y_{\rm ref}
		\|^2_{\mathbb{C}^p} dt} \to 0
	\end{align*}
	as $k \to \infty$.
	Thus,
	$y_{\rm ref} = y^{\infty}$.
	%	For every $t \in [0,\tau)$, 
	%	\[
	%	|y(k\tau+t) - y_{\rm ref}|
	%	\leq
	%	|
	%	y(k\tau+t) - y^\infty
	%	|
	%	+
	%	|
	%	y^\infty - (\mathcal{S}_\tau y)(k)
	%	|
	%	+
	%	|
	%	(\mathcal{S}_\tau y)(k) - y_{\rm ref}
	%	| \to 0
	%	\]
	%	as $k \to \infty$. Thus, we obtain \eqref{eq:y_limit}.
	This completes the proof.
	\qed
\end{proof}

%\begin{lemma}[Lemma~8 of \cite{Logemann2013}]
%	Assume that {\rm (A1)} and {\rm (A2)} hold.
%	Then $\mathbf{G}_{\tau} = \mathbf{G}_{\tau}^+ + \mathbf{G}_{\tau}^-$.
%	Moreover, $\mathbf{G}_{\tau}^+$ is rational and strictly proper and 
%	$\mathbf{G}_{\tau}^- \in H^{\infty}(\mathbb{E}_{\eta}, \mathbb{C}^{p\times m})$ for some $\eta \in (0,1)$.
%\end{lemma}

%There exits matrices $\mathbf{N}_+$, $\mathbf{D}_+$, $\mathbf{Y}_+$,
%and $\mathbf{Z}_+$ with rational entries in $H^{\infty}(\mathbb{E}_1)$ such that 
%$\mathbf{N}_+$ and $\mathbf{Y}_+$ are strictly proper and 
%\[
%\mathbf{G}_{\tau}^+ = \mathbf{D}_+^{-1}\mathbf{N}_+,\quad
%\mathbf{N}_+ \mathbf{Y}_+ + \mathbf{D}_+ \mathbf{Z}_+ = I.
%\]
%Since 
%the Taylor expansion of $\mathbf{G}_{\tau}^-$ at $\infty$, 
%\[
%\mathbf{G}_{\tau}^- (z) = \sum_{j=0}^\infty G_jz^{-j}
%\]
%converges absolutely in $\mathbb{E}_{\eta}$,
%it follows that
%\[
%\lim_{N \to \infty}
%\sup_{z \in \mathbb{E}_1} 
%\left\|
%\mathbf{G}_{\tau}^-(z) -
%\sum_{j=0}^N G_j z^{-j}
%\right\| = 0.
%\]
%Therefore,
%there exists a rational matrix $\mathbf{R} \in H^{\infty}(\mathbb{E}_1, \mathbb{C}^{p\times m})$ such that
%\[
%\|
%\mathbf{D}_+(\mathbf{G}_\tau^- - \mathbf{R}) \mathbf{Y}_+
%\|_{H^{\infty}(\mathbb{E}_1, \mathbb{C}^{p\times p})} < 1.
%\]

\section{Application to delay systems}
In this section, we study sampled-data output regulation for 
systems with state and output delays. This illustrates
Theorem~\ref{thm:servo_tracking} and the design procedure of
finite-dimensional regulating controllers in Section~\ref{sec:DTOR}.
For delay systems, the problem of output regulation has been investigated in
\cite{Fridman2003, Toledo2003, Yoon2016} and the reference therein.
Recently,
the solvability of the output regulation problem for delay systems with 
infinite-dimensional state spaces has been characterized by the associated regulator equations in 
\cite{Paunonen2017ACC}.
In the studies above, continuous-time output regulation is considered, whereas
we here study sampled-data output regulation for delay systems, focusing on
constant reference and disturbance signals.

First, the delay system we consider and its state-space representation are introduced. Next,
in Section~\ref{subsec:Decomp_delay}, we  decompose delay systems
into a finite-dimensional unstable part and an infinite-dimensional stable part, 
and then approximate the infinite-dimensional stable part by a finite-dimensional system
for the design of regulating controllers.
In Section~\ref{subsec:NS}, we finally present a numerical example to illustrate the proposed design method.
Throughout this section, we use the same
notation as in Section~\ref{sec:SDOR}.

For $q, \widehat q \in \mathbb{N}$,
let $h_q > h_{q-1} > \cdots > h_1 > 0$ and 
$h_q \geq \widehat h_{\widehat q}> \widehat h_{\widehat q-1}> \cdots > \widehat h_1 \geq 0$.
Consider the following delay system:
\begin{subequations}
	\label{eq:delay_system}
	\begin{align}
	\dot z(t) &=A_0z(t)  + \sum_{j=1}^q A_j z(t-h_j)+b u(t), \quad  t \geq 0\\
	y(t) &=  \sum_{\ell=1}^{\widehat q} c_\ell z(t-\widehat h_\ell),  \quad t \geq 0\\
	z(0) &= z^0,\qquad z(\theta) = \varpi (\theta),\quad \theta \in [-h_q,0],
	\end{align}
\end{subequations}
where $z(t) \in \mathbb{C}^n, u(t), y(t) \in \mathbb{C}$ are the state,
the input, and the output of the system, respectively, $A_j \in \mathbb{C}^{n \times n}$, $b \in \mathbb{C}^{n \times 1}$, 
$c_\ell \in \mathbb{C}^{1 \times n}$ for every $j\in\{0,\dots,q\}$ and for every $\ell \in \{1,\dots, \widehat q \}$, 
$ z^0 \in \mathbb{C}^n$,
and
$\varpi \in L^2\big([-h_q,0],\mathbb{C}^n \big)$.
In \eqref{eq:delay_system}, 
$h_1,\dots, h_q$ and 
$\widehat h_1,\dots, \widehat h_{\widehat q}$ represent the state delay and the output delay, respectively.
We assume that the input $u$ satisfies $u \in L^2_{\rm loc}(\mathbb{R}_+)$.

%Using the standard result on well-posed systems, we find that 
%It is well known (see, for example, Section 2.4 of \cite{Curtain1995})
The state space of the delay system
\eqref{eq:delay_system} is given by
$X = \mathbb{C}^n \oplus L^2\big([-h_q,0],\mathbb{C}^n \big)$ with the standard inner product:
\[
\left(
\begin{bmatrix}
\zeta_1 \\ \varpi _1
\end{bmatrix},
\begin{bmatrix}
\zeta_2 \\ \varpi _2
\end{bmatrix}
\right)
:=
(\zeta_1,\zeta_2)_{\mathbb{C}^n} 
+ (\varpi _1,\varpi _2)_{L^2(-h_q,0)}.
\] 
The generating operators $(A,B,C)$ of
the delay system \eqref{eq:delay_system} are given by
\begin{align*}
A 
\begin{bmatrix}
\zeta \\ \varpi 
\end{bmatrix}
=
\begin{bmatrix}
A_0 \zeta + \sum_{j=1}^q A_j \varpi (-h_j) \\
\frac{d\varpi}{d\theta}
\end{bmatrix}
\end{align*}
with domain
\[
\dom (A) = 
\left\{
\begin{bmatrix}
\zeta \\ \varpi 
\end{bmatrix}
\in \mathbb{C}^n \oplus W^{1,2}\big([-h_q,0],\mathbb{C}^n \big) :
\varpi (0) = \zeta
\right\}
\]
and
\begin{align*}
Bs &= 
\begin{bmatrix}
bs \\ 0
\end{bmatrix}\qquad \forall s \in \mathbb{C} \\
C \begin{bmatrix}
\zeta \\ \varpi 
\end{bmatrix} &= 
\sum_{\ell=1}^{\widehat q}c_\ell
\varpi \big(\hspace{-2pt}-\widehat h_\ell \big)\qquad
\forall  \begin{bmatrix}
\zeta \\ \varpi 
\end{bmatrix} \in X_1.
\end{align*}
The transfer function of the delay system \eqref{eq:delay_system} is given by
\[
\mathbf{G}(s) = \sum_{\ell=1}^{\widehat q} e^{-\widehat h_\ell s}c_\ell \Delta(s)^{-1}b,
\quad \text{where~}
\Delta(s) :=sI - 
A_0 - \sum_{j=1}^q A_j e^{-h_j s}.
\]
The derivation of the generating operators and  the transfer function of delay systems  can be found, e.g.,
in Chapters 2--4 of \cite{Curtain1995} (for the case without output delays).
One can see from  Lemma 2.4.3 in \cite{Curtain1995}
that  $C$ is admissible. Hence,
Theorem~5.1 in \cite{Curtain1989} implies that 
the delay system \eqref{eq:delay_system} defines a well-posed system.
See, e.g, \cite{Hadd2005,Bounit2006} for the well-posedness of more general delay systems.

Let $\bf T$ be the strongly continuous semigroup generating $A$, and
define
\begin{equation}
\label{eq:well_posed_delay}
\begin{bmatrix}
x_1(t) \\ x_2(t)
\end{bmatrix} 
:=
x(t) 
:=
{\bf T}(t) x^0 + \int^t_0 {\bf T}(t-s)Bu(s)ds,\quad
x^0 :=
\begin{bmatrix}
z^0 \\ \varpi
\end{bmatrix}.
\end{equation}
It is shown
in Example~3.1.9 of \cite{Curtain1995} that 
$x_1(t) = z(t)$ and $x_2(t) = z(t+\cdot)$ hold for all $t \geq 0$, where
$z$ is the solution of \eqref{eq:delay_system}.
Furthermore, $z$ is absolutely continuous on $[0,\infty)$; see, e.g., 
Theorem~2.4.1 in \cite{Curtain1995}.
Hence $x(t) \in X_1$ for every $t \geq h_q$, and $y$ is (absolutely) continuous on
$[{\widehat h}_{\widehat q},\infty)$. 
For completeness, we show in Appendix~\ref{sec:lambda_extension} that
$y(t)= C_{\Lambda}x(t)$ for a.e. $t \geq 0$.

%For completeness, we include in Appendix~\ref{sec:lambda_extension} the proof of the fact
%\[
%x(t) \in \text{dom}(C_{\Lambda}),\quad
%y(t)= C_{\Lambda}x(t)\qquad \text{a.e.~} t \geq 0.
%\]

The output $y$ of this delay system exponentially converges to a constant
reference signal without a precompensator. In fact,
once we construct a controller that is a solution of Problem~\ref{prob:OR_SD},
$z(t)$ exponentially converges to some $z^{\infty} \in \mathbb{C}^n$;
%the resulting sampled-data system satisfies the following property:
%There exist $\Gamma >0$ and $\sigma >0$ such that
%\begin{align*}
%\| 
%z(t) - z^{\infty}
%\|_{\mathbb{C}^n}
%&\leq \Gamma e^{-\sigma t} 
%\left(
%\|x^0\| + \|x_{\rm d}^0\|_{X_{\rm d}}
%+ \|v\|_{\mathbb{C}} + \|y_{\rm ref}\|_{\mathbb{C}} 
%\right) \\
%&\qquad \qquad  \forall t \geq 0,~x^0 \in X,~x_{\rm d}^0 \in X_{\rm d},~v,y_{\rm ref} \in \mathbb{C};
%\end{align*}
see, e.g, Remark~\ref{rem:convergence}.
Since
$y$ is continuous on $[{\widehat h}_{\widehat q},\infty)$,
we have from the argument in the last paragraph of the proof of Theorem~\ref{thm:servo_tracking_y} 
that 
$y(t)$ also exponentially converges to 
$y_{\rm ref} = \sum_{\ell=1}^{\widehat q} c_{\ell} z^{\infty}$.
%and 
%\[
%\| 
%y(t) - y_{\rm ref}
%\|_{\mathbb{C}}
%\leq \sum_{\ell=1}^{\widehat q} \|c_{\ell} \| \cdot \Gamma e^{-\sigma t} 
%\left(
%\|x^0\| + \|x_{\rm d}^0\|_{X_{\rm d}}
%+ \|v\|_{\mathbb{C}} + \|y_{\rm ref}\|_{\mathbb{C}} 
%\right)
%\qquad  \forall t \geq h_q.
%\]

\subsection{Decomposition of delay systems into stable and unstable parts}
\label{subsec:Decomp_delay}
By Theorem~2.4.6 of  \cite{Curtain1995}, all elements of $\sigma(A)$ are the eigenvalues of $A$ with 
finite multiplicities, and
\[
\sigma(A) = \{s \in \mathbb{C}: {\rm det}\Delta(s)=  0 \}.
\]
For every $\varepsilon \in \mathbb{R}$, $\sigma(A) \cap \cl(\mathbb C_{-\varepsilon})$ consists of
finitely many isolated eigenvalues of $A$. Hence the assumption 
$\langle$b\ref{enu_uns_finite}$\rangle$ in Section~\ref{subsec:SDassumption} holds.
We place the following assumption on the eigenvalues of $A$ in $\cl(\mathbb C_0)$.

%\begin{assumption}
%	The zeros, $\gamma_1,\dots,\gamma_N$, of $\det \Delta$ in $ \overline{\mathbb C}_0$ are simple,
%%	The holomorphic function $\det \Delta$ has only simple zeros $\gamma_1,\dots,\gamma_N$ in 
%%	$ \overline{\mathbb C}_0$, 
%	and further $\dim \ker \Delta(\gamma_m) = 1$ for every $m\in \{1,\dots,N\}$.
%\end{assumption}
%
%Suppose that $\gamma_m \in \overline{\mathbb C}_0$ a simple zero of $\det \Delta$.
%If $n=1$, then  $\dim \ker \Delta(\gamma_m) = 1$ always holds for every $m\in \{1,\dots,N\}$.
%Moreover, if $n=2$ and if $\dim \ker \Delta(\gamma_m) \not= 1$, then 
%there exists $\widehat \Delta$ with holomorphic entries such that 
%$\Delta(\lambda) = (\lambda - \gamma_m) \widehat \Delta(\lambda)$.
%Therefore, $\det \Delta(\lambda) = (\lambda - \gamma_m)^2 \det \widehat \Delta(\lambda)$,
%which contradicts that $\gamma_m$ is simple. Thus, $\dim \ker \Delta(\gamma_m) = 1$ always holds
%for the case $n=2$.

\begin{assumption}
	\label{assump:simple_zeros}
	The zeros, $\gamma_1,\dots,\gamma_N$, of $\det \Delta$ in $ \cl(\mathbb C_0)$ are simple.
	%	The holomorphic function $\det \Delta$ has only simple zeros $\gamma_1,\dots,\gamma_N$ in 
	%	$ \overline{\mathbb C}_0$, 
\end{assumption}

Using Lemma~\ref{lem:det_zeros}, we find that $\dim \ker \Delta(\gamma_m) = 1$ for every $m\in \{1,\dots,N\}$ 
under Assumption~\ref{assump:simple_zeros}.
%Suppose that $\gamma_m \in \overline{\mathbb C}_0$ a simple zero of $\det \Delta$.
%If $n=1$, then  $\dim \ker \Delta(\gamma_m) = 1$ always holds for every $m\in \{1,\dots,N\}$.
%Moreover, if $n=2$ and if $\dim \ker \Delta(\gamma_m) \not= 1$, then 
%there exists $\widehat \Delta$ with holomorphic entries such that 
%$\Delta(\lambda) = (\lambda - \gamma_m) \widehat \Delta(\lambda)$.
%Therefore, $\det \Delta(\lambda) = (\lambda - \gamma_m)^2 \det \widehat \Delta(\lambda)$,
%which contradicts that $\gamma_m$ is simple. Thus, $\dim \ker \Delta(\gamma_m) = 1$ always holds
%for the case $n=2$. 
By
Theorem~2.4.6 and Corollary~2.4.7 of \cite{Curtain1995},
the order and the multiplicity of 
the eigenvalues $\gamma_1,\dots,\gamma_N$ of $A$ are both one.
For $m \in \{1,\dots,N\}$,
let nonzero vectors $\varsigma_m, \nu_m \in \mathbb{C}^n$ satisfy
$\Delta(\gamma_m)\varsigma_m = 0$ and $\Delta({\widebar \gamma}_m)^*\nu_m = 0$, respectively.
%\[
%\left(
%\overline{\gamma}_m I - 
%A_0^* - \sum_{j=1}^q A_j^* e^{-h_j \overline{\gamma}_m}\right) \nu_m = 0.
%\]
By 
Theorem~2.4.6 and Lemma 2.4.9 of \cite{Curtain1995},
the eigenvector $\phi_m$ of $A$ corresponding to the eigenvalue $\gamma_m$ and 
the eigenvector $\psi_m$ of $A^*$ corresponding to the  eigenvalue $\widebar{\gamma}_m$
are given by
\begin{equation}
\label{eq:eigen_vector}
\phi_m := 
\begin{bmatrix}
\varsigma_m \\ \varpi_m \varsigma_m
\end{bmatrix},\qquad
\psi_m := 
\frac{1}{
	{\widebar d}_m
}
\begin{bmatrix}
\nu_m \\ 
\sum_{j=1}^q e^{-\widebar{\gamma}_m h_j}  \frac{\mathds{1}_{[-h_j,0]}}{\varpi_m^*} A_j^*\nu_m
\end{bmatrix},
\end{equation}
where $\varpi_m(\theta) := e^{\gamma_m \theta}$, 
$\varpi_m^*(\theta) := e^{\widebar{\gamma}_m \theta}$ for every $\theta \in [-h_q,0]$ and
\[
d_m :=
(\varsigma_m, \nu_m)_{\mathbb{C}^n} + 
\sum_{j=1}^q
h_j e^{-\gamma_m h_j} (A_j\varsigma_m,\nu_m)_{\mathbb{C}^n} .
\]
By definition,
$\phi_m$ and $\psi_m$ satisfy $(\phi_m,\psi_m) = 1$ for every $m \in \{1,\dots,N\}$.
In addition, since
\begin{align*}
\gamma_m(\phi_m, \psi_j)
&= 
(A\phi_m, \psi_j)=
(\phi_m, A^*\psi_j) =
\gamma_j (\phi_m, \psi_j)\qquad
\forall m,j \in \{1,\dots,N \},
\end{align*}
it follows that $(\phi_m, \psi_j) = 0$ if 
$m \not=j$.

Let 
$\Phi$ be a  rectifiable, closed, 
simple curve $\Phi$ in $\mathbb{C}$ 
enclosing an open set that contains $\sigma(A) \cap  \cl(\mathbb{C}_{0})$ in its interior and
$\sigma(A) \cap  \big( \mathbb{C} \setminus \cl(\mathbb{C}_{0}) \big)$
in its exterior.
The spectral projection $\Pi$ corresponding to 
$\sigma(A) \cap  \big( \mathbb{C} \setminus \cl(\mathbb{C}_{0}) \big)$ is 
defined by
\[
\Pi x := \frac{1}{2\pi i}
\int_\Phi (sI-A)^{-1}x ds,
\]
which, by Lemma 2.5.7 of \cite{Curtain1995}, satisfies
\begin{equation}
\label{eq:Pi_delay}
\Pi x  = \sum_{m=1}^N (x,\psi_m) \phi_m \qquad \forall x \in X.
\end{equation}
Hence
\begin{align*}
X^+ &:= \Pi X = 
\left\{
\sum_{m=1}^N s_m \phi_m: s_m\in \mathbb{C}\quad \forall m\in \{1,\dots,N\}
\right\} 
\end{align*}
and for  $s, s_1,\dots,s_N \in \mathbb{C}$, the 
operators $A^+$, $B^+$, $C^+$, and $\mathbf{T}^+$ defined as
in Section~\ref{subsec:SDassumption} satisfy
\begin{align*}
A^+ 
\left(
\sum_{m=1}^Ns_m \phi_m
\right) &=
\sum_{m=1}^Ns_m \gamma_m \phi_m\\
B^+s &= s
\sum_{m=1}^N \frac{(b,\nu_m)_{\mathbb{C}^n}}{d_m} \phi_m \\
C^+
\left(
\sum_{m=1}^Ns_m \phi_m
\right) &= 
\sum_{m=1}^N s_m
\sum_{\ell=1}^{\widehat q}e^{-\gamma_m \widehat h_\ell} c_\ell \varsigma_m\\
\mathbf{T}^+_t \left(
\sum_{m=1}^Ns_m \phi_m
\right) &= 
\sum_{m=1}^Ns_m e^{\gamma_m t} \phi_m \qquad \forall t \geq 0.
\end{align*}
We obtain
\begin{align*}
\mathbf{G}^+(s) := 
C^+(sI-A^+)^{-1}B^+ =  
\sum_{m=1}^N\frac{\kappa_m}{s-\gamma_m},
%\mathbf{G}^-(s) = \mathbf{G}(s) - \mathbf{G}_+(s).
\end{align*}
where
\[
\kappa_m := \frac{ (b,\nu_m)_{\mathbb{C}^n}}{d_m}
\sum_{\ell=1}^{\widehat q}e^{-\gamma_m \widehat h_\ell}c_\ell \varsigma_m.
\]
Furthermore, as shown in b. of the proof of Theorem~5.2.12 of \cite{Curtain1995},
$\mathbf T^-$ is exponentially stable. Therefore 
the assumption $\langle$b\ref{enu_A_minus_EXS_SD}$\rangle$ in Section~\ref{subsec:SDassumption} is satisfied.

Since for every $s, s_1,\dots,s_N \in \mathbb{C}$,
the operators $(A_\tau^+,B_\tau^+,C_\tau^+)$ defined as in Section~\ref{subsec:properties_DS}
satisfy
\begin{align*}
A_\tau^+
\left(
\sum_{m=1}^Ns_m \phi_m
\right) &= \sum_{m=1}^Ns_m e^{\gamma_m \tau}\phi_m\\
B_\tau^+ s &= s \sum_{m=1}^N
\frac{(b,\nu_m)_{\mathbb{C}^n}}{d_m}
\frac{e^{\gamma_m \tau} - 1}{\gamma_m}  \phi_m \\
C_\tau^+\left(
\sum_{m=1}^Ns_m \phi_m
\right)&=
\sum_{m=1}^N s_m \int^\tau_0 w(t) e^{\gamma_m t} dt
\sum_{\ell=1}^{\widehat q}e^{-\gamma_m \widehat h_\ell} c_\ell \varsigma_m,
\end{align*}
it follows that 
\[
\mathbf{G}^+_\tau (z) :=
C_\tau^+ (zI-A_\tau^+)^{-1}B_\tau^+ = 
\sum_{m=1}^N
\frac{\alpha_m}{z-e^{\gamma_m \tau}},
\]
where
\[
\alpha_m := \kappa_m \frac{e^{\gamma_m \tau} - 1}{\gamma_m}\int^\tau_0 w(t) e^{\gamma_m t} dt.
\]

%Let us denote by $\mathbf{G}_\tau$ the transfer function of the discretized system of 
As in Example on pp.~1221--1223 of \cite{Logemann2013},
one can construct the approximation $\mathbf{R}$ of $\mathbf{G}_\tau^- = \mathbf{G}_\tau - \mathbf{G}^+_\tau$ as follows.
Define the input-output map $G^+: L^2_{\rm loc}(\mathbb{R}_+)\to L^2_{\rm loc}(\mathbb{R}_+)$ by
\[
(G^+u)(t) := \sum_{m=1}^N  \kappa_m \int^t_0 e^{\gamma_m (t-s)} u(s)ds,
\]
whose transfer function is given by $\mathbf{G}^+$.
Similarly, we denote by $G_\tau^+: F(\mathbb{Z}_+) \to F(\mathbb{Z}_+)$ the discrete-time input-output operator 
associated with the 
transfer function $\mathbf{G}_\tau^+$:
\[
(G_\tau^+ f)(k) := \sum_{m=1}^N\alpha_m \sum_{\ell=0}^{k-1} e^{(k - \ell - 1)\gamma_m \tau} f(\ell).
\]
A routine calculation shows that 
\begin{equation*}
%\label{eq:SJH}
\mathcal{S}_\tau G^+ \mathcal{H}_\tau = 
G_\tau^+
+
\sum_{m=1}^N 
\frac{\kappa_m(\beta_m - 1)}{\gamma_m}I,
\quad
\text{where~}
\beta_m := \int^\tau_0 w(t) e^{\gamma_m t}dt.
\end{equation*}
%The input-output map $H$ is given by
%\[
%G^- = G - G^+.
%\] 
This yields
\[
\mathcal{S}_\tau G \mathcal{H}_\tau  = 
G_\tau^+ + \sum_{m=1}^N 
\frac{\kappa_m(\beta_m - 1)}{\gamma_m}I+ \mathcal{S}_\tau G^- \mathcal{H}_\tau .
\]
Note that $\mathcal{S}_\tau G \mathcal{H}_\tau$ is the discrete-time input-output operator associated with the 
transfer function
$\mathbf{G}_\tau$.
Then we obtain
\begin{align*}
\mathbf{G}^-_\tau = \mathbf{G}_\tau -   \mathbf{G}^+_\tau 
=   \sum_{m=1}^N 
\frac{\kappa_m(\beta_m - 1)}{\gamma_m} + \mathbf{H}_\tau,
\end{align*}
where $\mathbf{H}_\tau$ is the transfer function of the discrete-time input-output operator $\mathcal{S}_\tau G^- \mathcal{H}_\tau$.
Choose a rational function $\mathbf{R} \in H^{\infty}(\mathbb{E}_1)$
as a constant function 
\[
\mathbf{R}(z) \equiv \sum_{m=1}^N 
\frac{\kappa_m(\beta_m - 1)}{\gamma_m} .
\] 
A simple calculation gives
%that $\mathcal{H}_\tau$ and $\mathcal{S}_\tau$ satisfy
\[
\|\mathcal{H}_\tau\|_{\mathcal{L}(l^2(\mathbb{Z}_+), L^2(\mathbb{R}_+))} = \sqrt{\tau},\qquad
\|
\mathcal{S}_\tau
\|_{\mathcal{L}(L^2(\mathbb{R}_+), l^2(\mathbb{Z}_+))} = \|w\|_{L^2(0,\tau)}.
\]
Noting that 
the transfer function $\mathbf{G}^- = \mathbf{G} - \mathbf{G}^+$ of an exponentially stable 
well-posed system satisfies $\mathbf{G}^- \in H^{\infty}(\mathbb{C}_0)$,
we obtain
\[
\|\mathbf{G}^-_\tau - \mathbf{R}
\|_{H^{\infty}(\mathbb{E}_1)} =
\|\mathbf H_\tau \|_{H^{\infty}(\mathbb{E}_1)} 
\leq \sqrt{\tau} \|w\|_{L^2(0,\tau)} \cdot \|\mathbf{G}^-\|_{H^{\infty}(\mathbb{C}_0)}.
\]
Thus, if 
\begin{equation}
\label{eq:G+_cond_delay}
\|\mathbf{G}^-\|_{H^{\infty}(\mathbb{C}_0)} = \|\mathbf{G} - \mathbf{G}^+\|_{H^{\infty}(\mathbb{C}_0)} < 
\frac
{1}{\sqrt{\tau}M_1 \|w\|_{L^2(0,\tau)} \cdot  \| \mathbf{D}_+  \|_{H^{\infty}(\mathbb{E}_1)}},
\end{equation}
then we can design a regulating controller,
where $M_1 >0$ is defined as in \eqref{eq:M1_def}
and $\mathbf{N}_+/\mathbf{D}_+$  is a coprime factorization of $\mathbf{G}_\tau^+$
over the set of rational functions in $\in H^{\infty}(\mathbb{E}_1)$.

\subsection{Numerical simulation}
\label{subsec:NS}
In what follows, we consider the case 
$q=\widehat q = 1$
$A_0 =A_1= 0.2$, $b=1$, $c_1 = 1$, $h_1 = 1$, $\widehat h_1 = 0.1$, $\tau = 2$, $w(t) \equiv 1/2$.
We first show that 
\[g(s) := s-A_0-A_1e^{-h_1s} = s-0.2-0.2e^{-s}
\] 
has only one zero in $\cl(\mathbb{C}_0)$
in a way similar to Example~5.2.13 of \cite{Curtain1995}.
Define $g_1(s):= s-1$ and $g_2(s) := 0.8 - 0.2e^{-s}$.
For every $s \in \mathbb{C}_0$ satisfying $|s| >2$, 
we obtain $|g_1(s)| \geq |s| - 1 > 1$ and $|g_2(s)| \leq 0.8 + 0.2|e^{-s}| \leq 1$.
Therefore, $|g_1(s)| > |g_2(s)|$ for every $s \in \mathbb{C}_0$ with $|s| > 2$.
On the other hand,
for every $\omega \in \mathbb{R}$, 
$|g_1(i\omega)|^2 = 1+\omega^2$ and $|g_2(i\omega)|^2 = 0.68 - 0.32 \cos \omega$. Hence
\begin{equation}
\label{eq:g12_imaginary}
|g_1(i\omega)| > |g_2(i\omega)|\qquad \forall \omega \in \mathbb{R}.
\end{equation}
Rouche's theorem shows that 
$g_1$ and $g = g_1 + g_2$ have the same number of zeros in $\mathbb{C}_0$, where
each zero is counted as many times as its multiplicity.
Thus, $g$ has only one simple zero in $\mathbb{C}_0$.
Moreover, 
\eqref{eq:g12_imaginary} yields
\[
|g(i\omega)| \geq |g_1(i\omega)| - |g_2(i\omega)| > 0\qquad \omega \in \mathbb{R},
\]
and hence $g$ has no zeros on the imaginary axis. 
Since
$g(s)$
is negative at $s = 0$ and
positive at $s = +\infty$, it follows that 
the zero of $g$ in $\cl(\mathbb{C}_0)$ is real. 
Thus, the generator $A$ has only an eigenvalue at $s=\gamma\approx 0.3421$ in $\cl(\mathbb{C}_0)$, and
the assumption $\langle$b\ref{enu_Resol_SD}$\rangle$ in Section~\ref{subsec:SDassumption} 
is satisfied.

The transfer function of the delay system \eqref{eq:delay_system} is given by
\[
\mathbf{G}(s) = \frac{e^{-\widehat h_1s}}{s-A_0-A_1e^{-h_1s}}.
\]
Since 
\[
\mathbf{G}(0) = \frac{-1}{A_0+A_1}= -\frac{5}{2} \not = 0,
\] 
it follows that the assumption $\langle$b\ref{enu_Zero_SD}$\rangle$ in Section~\ref{subsec:SDassumption} holds. 

By \eqref{eq:eigen_vector},
the eigenvectors $\phi$ of $A$ and $\psi$ of $A^*$ 
corresponding to the eigenvalue $\gamma$
are given by
\[
\phi= 
\begin{bmatrix}
1 \\ \varpi_\gamma
\end{bmatrix},\qquad
\psi = 
\frac{1}{d}
\begin{bmatrix}
1 \\ A_1e^{-\gamma h_1} /\varpi_\gamma
\end{bmatrix},
\]
where $\varpi_\gamma(\theta) := e^{\gamma \theta}$ for every $\theta \in [-h_1,0]$ and
$d := 1+A_1h_1e^{-\gamma h_1}$.
Then $\phi$ and $\psi$ satisfy $(\phi,\psi) = 1$.
It follows from \eqref{eq:Pi_delay} that the projection $\Pi$ is given by
\[
\Pi x = (x,\psi) \phi\qquad \forall x \in X.
\]
Hence, $X^+ = \Pi X = 
\{
s \phi: s\in \mathbb{C}
\}$. For $s \in \mathbb{C}$,
\[
A^+(s \phi) = s\gamma \phi,\quad
B^+s= \frac{s}{d} \phi ,\quad
C^+(s \phi) = se^{-\gamma \widehat h_1}
\]
and 
\[
\mathbf{T}^+_t(s \phi) = se^{\gamma t} \phi\qquad \forall t\geq 0.
\]
In the previous subsection, we have showed that the assumptions
$\langle$b\ref{enu_uns_finite}$\rangle$ and $\langle$b\ref{enu_A_minus_EXS_SD}$\rangle$ in Section~\ref{subsec:SDassumption} hold.
The assumptions $\langle$b\ref{enu_CD_SD}$\rangle$--$\langle$b\ref{enu_Simple_SD}$\rangle$
are clearly satisfied.
The transfer function of the unstable part of $\mathbf{G}$ is given by
\begin{align*}
\mathbf{G}^+(s)  = \frac{\kappa}{s-\gamma}	\qquad
\text{where~~}\kappa := \frac{e^{-\gamma \widehat h_1}}{d}.
\end{align*}
Similarly, the transfer function of the unstable part of $\mathbf{G}_\tau$ is
\begin{align*}	
\mathbf{G}^+_\tau (z) =
\frac{\alpha}{z-e^{\gamma \tau}},
\qquad
\text{where~~}
\alpha := 
\kappa
\frac{e^{\gamma \tau} - 1}{\gamma}
\int^\tau_0 w(t) e^{\gamma t} dt.
\end{align*}
%Let $G^+: L^2_{\rm loc}(\mathbb{R}_+)\to L^2_{\rm loc}(\mathbb{R}_+)$  be the  input output operator with
%transfer function $\mathbf{G}^+$.

%Since $A_\tau^+ = e^{\gamma \tau}$ and
%\begin{align*}
%B_\tau^+ s 
%= 
%\Pi B_\tau s, \qquad 
%%=
%%\int^\tau_0 e^{\gamma t}B^+s dt 
%%=\frac{s(e^{\gamma \tau} - 1)}{\gamma (1+A_0h_1e^{-\gamma h_1})}  \phi\\
%C_\tau^+(s\phi) =
%%\int^\tau_0 w(t) C^+ (e^{\gamma t} s\phi) dt = 
%s e^{-\widehat h_1\gamma}\int^\tau_0 w(t) e^{\gamma t} dt,
%\end{align*}
%it follows that 
%\[
%\mathbf{G}^+_\tau (z) =
%%C_\tau^+(zI - A_\tau^+)^{-1}B_\tau^+ 
%%= 
%\frac{\alpha}{z-e^{\gamma \tau}},
%\quad
%\text{where~}
%\alpha := 
%\kappa
%\frac{e^{\gamma \tau} - 1}{\gamma}
%\int^\tau_0 w(t) e^{\gamma t} dt.
%\]
%We denote by $G_\tau^+: F(\mathbb{Z}_+) \to F(\mathbb{Z}_+)$ the discrete-time input output operator with
%transfer function $\mathbf{G}_\tau^+$.
%, namely,
%\[
%(G_\tau^+ f)(k) = \alpha \sum_{\ell=0}^{k-1} e^{(k - \ell - 1)\gamma \tau} f(\ell).
%\]
Define 
\[
\mathbf{N}_+(z) := \frac{\alpha}{z-a},\quad
\mathbf{D}_+(z) := \frac{z-e^{\gamma \tau}}{z-a},\qquad
\text{where $a := 0.9$.}
\]
Then $\mathbf{N}_+/ \mathbf{D}_+$ is a coprime factorization
of $\mathbf{G}^+_\tau$ over the set of rational functions in $H^{\infty}(\mathbb{E}_1)$.
Using Lemma~\ref{lem:G_Gtau}, we obtain
\[
\delta^* := 
\left|\frac{1}{\mathbf{D}_+(1) \mathbf{G}_\tau(1)}\right| = 
\left|
\frac{1}{
	\mathbf{D}_+(1) \mathbf{G}(0)} 
\right| = 
\frac{(1-a)(A_0+A_1)}{e^{\gamma \tau}-1}.
\]
There exists a rational function $\mathbf{Y}_+ \in H^{\infty}(\mathbb{E}_1)$ satisfying
interpolation conditions \eqref{eq:first_pole}
and the norm condition 	$\|\mathbf{Y}_+\|_{H^{\infty}(\mathbb{E}_1)} < 1 =: M$.

Choose a rational function $\mathbf{R} \in H^{\infty}(\mathbb{E}_1)$
as a constant function 
\[
\mathbf{R}(z) \equiv \frac{\kappa (\beta -1)}{\gamma},
\quad
\text{where~}
\beta := \int^\tau_0 w(t) e^{\gamma t}dt,
\]
and let us next show that \eqref{eq:G+_cond_delay} is satisfied.
A numerical computation shows that 
\[
\|\mathbf{G} -\mathbf{G}^+ \|_{H^{\infty}(\mathbb{C}_0)} = \sup_{\omega \in \mathbb{R}} |
\mathbf{G}(i\omega) -\mathbf{G}^+ (i\omega)
| < 0.1.
\]
Define 
\[
M_1 := \max 
\left\{
2\delta^*, M
\right\} = M = 1.
\]
Since  
$ \|w\|_{L^2(0,\tau)} =1/\sqrt{\tau}$ for the case $w(t) \equiv 1/\tau$,
it follows that 
\[
\frac
{1}{\sqrt{\tau}M_1 \|w\|_{L^2(0,\tau)} \|  \mathbf{D}_+  \|_{H^{\infty}(\mathbb{E}_1)}}
= \frac{1}{M_1\|\mathbf{D}_+ 
	\|_{H^{\infty}(\mathbb{E}_1)} } \approx 0.1018,
\]
and hence \eqref{eq:G+_cond_delay} is satisfied.

Define
a rational function $\mathbf{Y}_+ \in H^{\infty}(\mathbb{E}_1)$ by
\[
\mathbf{Y}_+(z) := 
\frac{0.7712 z - 0.7602}{z^2 - 0.7328 z},
\]
which satisfies
the interpolation conditions \eqref{eq:first_pole}, \eqref{eq:Y_cond} and 
the norm condition $\|\mathbf{Y}_+\|_{H^{\infty}(\mathbb{E}_1)} < M_1$.
By the construction used in the proof of Theorem~\ref{thm:existence_servo_cont},
a minimal realization of the digital controller
\[
\mathbf{K}(z) :=
\frac{
	(\mathbf{Y}_+\mathbf{D}_+)(z)}{1 - (\mathbf{N}_++ \mathbf{D}_+\mathbf{R} )(z)\mathbf{Y}_+(z)}
=
\frac{
	0.7712 z - 0.7602}
{  z^2 - 0.4814 z - 0.5186}
\]
is a solution of Problem~\ref{prob:OR_SD}.

Figs.~\ref{fig:output} and~\ref{fig:input} illustrate the time responses 
of the output $y$ and the input $u$, respectively.
The
initial states of the plant and the controller are chosen as
$z^0 = 2$, $\varpi(\theta) \equiv 2$, and $x_{\rm d}^0 =\begin{bmatrix}
0  & ~~0
\end{bmatrix}^*$, respectively.
The reference and disturbance signals are given by
$y_{\rm ref} = 1$ and $v \in \{-1,0,1\}$, respectively.

\begin{figure}[tb]
	\centering
	\includegraphics[width = 7cm]{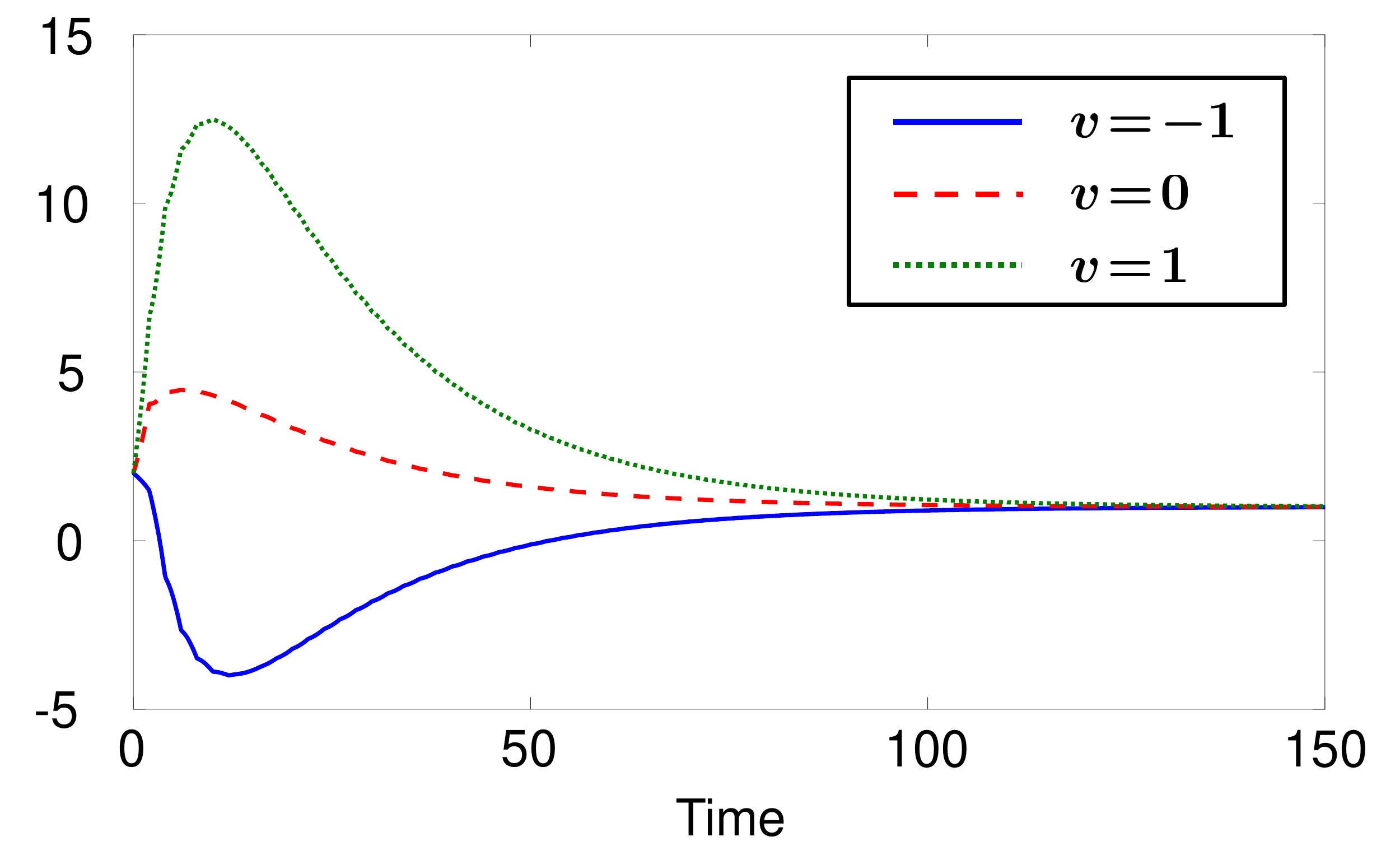}
	\caption{Time response of $y$ with $y_{\rm ref} = 1$.}
	\label{fig:output}
\end{figure}

\begin{figure}[tb]
	\centering
	\includegraphics[width = 7cm]{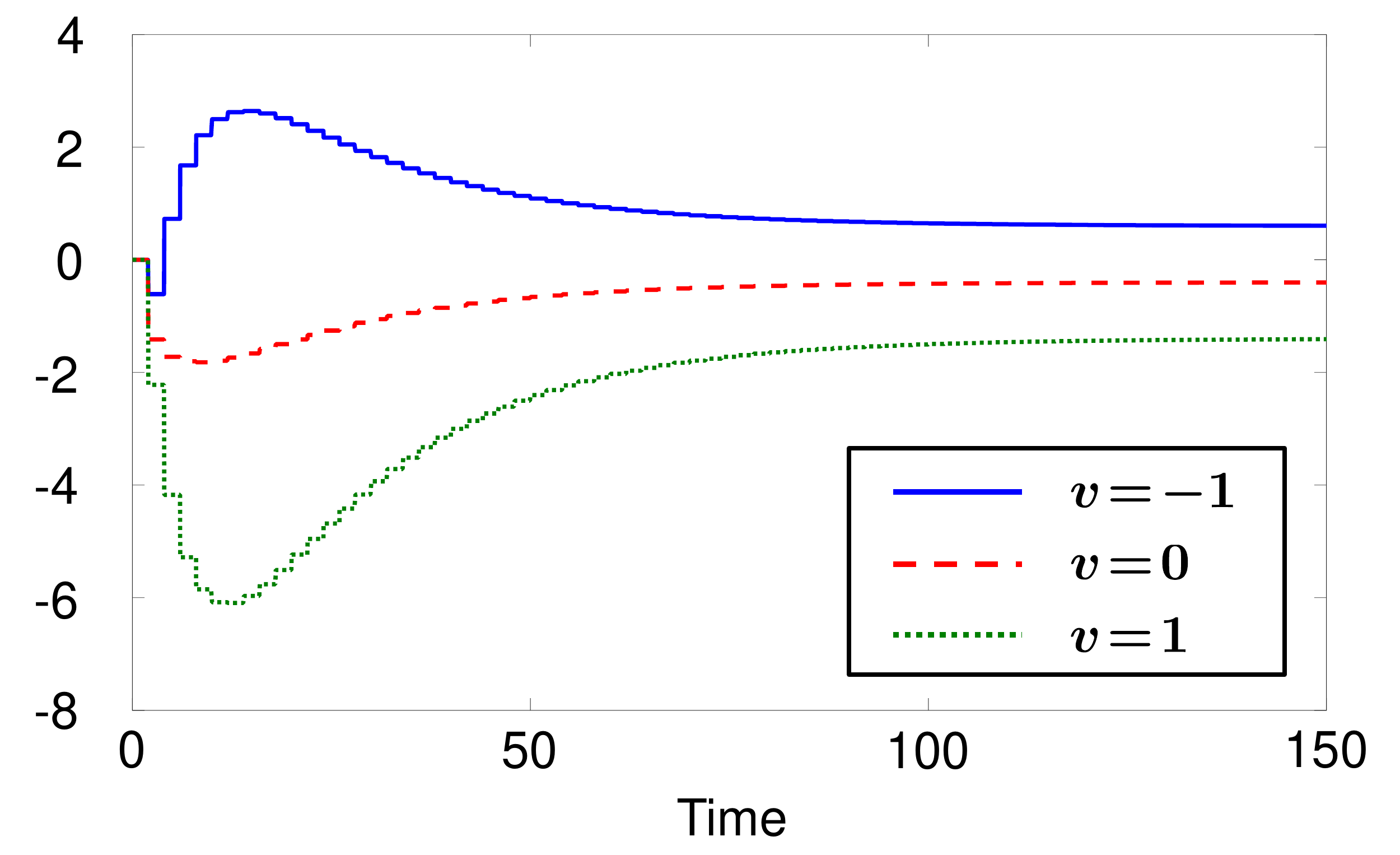}
	\caption{Time response of $u$ with $y_{\rm ref} = 1$.}
	\label{fig:input}
\end{figure}

\section{Conclusion}
We have studied the sampled-data output regulation problem
for infinite-dimensional systems with constant reference and disturbance signals.
Our main contribution is to obtain a sufficient condition for
this control problem to be solvable with a finite-dimensional controller.
To this end, we have proposed a design method of
finite-dimensional controllers for the robust output regulation of
infinite-dimensional discrete-time  systems.
In the controller design,
the discrete-time output regulation problem has been
reduced to the Nevanlinna-Pick interpolation problem.
We have also applied the obtained results to systems with state and output delays.
In future work on sampled-data output regulation,
we are planning to design generalized hold functions for infinite-dimensional systems
with general reference and disturbance signals.

%% The Appendices part is started with the command \appendix;
%% appendix sections are then done as normal sections
\appendix
{\normalsize
	\renewcommand{\thesection}{\Alph{section}}
\section{Nevanlinna-Pick interpolation problem}
\label{sec:NPI}

In this section, we obtain a necessary and sufficient condition for 
the solvability of the interpolation problem to which we reduce
the design problem of regulating controllers. 
In the process, we also show 
how to construct a solution of the interpolation problem.
Although we consider $H^{\infty}(\mathbb{E}_1,\mathbb{C}^{p \times q})$ in Section~\ref{sec:DTOR},
 the standard theory of the Nevanlinna-Pick interpolation problem 
 uses $H^{\infty}(\mathbb{D},\mathbb{C}^{p \times q})$.
Hence, it is convenient to map $\mathbb{E}_1$ to $\mathbb{D}$ via the 
bilinear transformation
$\varphi: \mathbb{E}_1 \to \mathbb{D}:z \mapsto 1/z$.

In Section~\ref{subsec:Int_pro_interior}, we recall basic facts on
the Nevanlinna-Pick interpolation problem only with conditions on the interior $\mathbb{D}$.
Section~\ref{subsec:Int_pro_IB} is devoted to solving
the Nevanlinna-Pick interpolation problem with conditions on both the interior $\mathbb{D}$ and the boundary $\mathbb{T}$.
As in \cite{luxemburg2010,wakaiki2012}, we transform this problem
into the Nevanlinna-Pick interpolation problem only with conditions on the boundary $\mathbb{T}$,
which is always solvable.

\subsection{Interpolation problem only with interior conditions}
\label{subsec:Int_pro_interior}
First we consider interpolation problems only with interior 
interpolation conditions.
\begin{problem}[Chapter 18 in \cite{ball1990}, Section II in \cite{kimura1987}]
	\label{probl:NPI_int}
	Suppose that  $\alpha_1,\dots,\alpha_n \in \mathbb{D}$ are distinct.
	Let vector pairs $(\xi_\ell, \eta_\ell)\in\mathbb{C}^p \times 
	\mathbb{C}^q$ satisfy
	\begin{equation}
		\label{eq:tangential_data_cond}
		\|\xi_\ell\|_{\mathbb{C}^p} > \|\eta_\ell\|_{\mathbb{C}^q}  \qquad \forall \ell\in \{1,\dots, n\}.
	\end{equation}
	Find $\Phi \in H^{\infty}(\mathbb{D}, \mathbb{C}^{p\times q})$ such that
	$\|\Phi\|_{H^{\infty}(\mathbb{D})} < 1$ and 
	\begin{align*}
		\xi_\ell^* \Phi(\alpha_\ell) &= \eta_\ell^*\qquad \forall \ell \in \{1,\dots, n\}.
	\end{align*}
\end{problem}

We call this problem the {\em Nevanlinna-Pick interpolation problem with $n$ interpolation data
	$(\alpha_\ell, \xi_\ell,\eta_\ell)_{\ell=1}^n$}.
The solvability of Problem~\ref{probl:NPI_int} can be 
characterized by the so-called Pick matrix.
\begin{theorem}[Theorem~18.2.3 in \cite{ball1990}, Theorem~2 in \cite{kimura1987}]
	\label{thm:Pick}
	Consider Problem~\ref{probl:NPI_int}.
	Define the Pick matrix $P$ by
	\begin{equation*}
%		\label{eq:PickMatrix_tan}
		P := 
		\begin{bmatrix}
			P_{1,1} & \cdots & P_{1,n} \\
			\vdots & & \vdots \\
			P_{n,1} & \cdots & P_{n,n}
		\end{bmatrix},
		\quad
		\text{where~}
				P_{j,\ell} := 
		\frac{\xi_j^*\xi_\ell - \eta_j^*\eta_\ell}{1-\alpha_j{\widebar \alpha}_\ell}\quad 
		\forall j,\ell \in \{1,\dots,n\}.
	\end{equation*}
	Problem~\ref{probl:NPI_int} is solvable if and only if
	$P$ is positive definite.
\end{theorem}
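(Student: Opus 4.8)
The plan is to treat the two implications of Theorem~\ref{thm:Pick} separately, proving necessity by a reproducing-kernel argument and sufficiency by an explicit $J$-inner linear-fractional parametrization (which simultaneously furnishes the promised construction of a solution). For necessity, suppose $\Phi \in H^{\infty}(\mathbb{D},\mathbb{C}^{p\times q})$ satisfies $\|\Phi\|_{H^{\infty}(\mathbb{D})} < 1$ and $\xi_\ell^*\Phi(\alpha_\ell) = \eta_\ell^*$ for all $\ell$. First I would invoke the Schur--Pick kernel fact: for any $\Phi$ in the closed unit ball of $H^{\infty}(\mathbb{D},\mathbb{C}^{p\times q})$ the $\mathbb{C}^{p\times p}$-valued kernel $K_\Phi(z,w) := (I_p - \Phi(z)\Phi(w)^*)/(1 - z\widebar w)$ is positive semidefinite on $\mathbb{D}$. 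Testing $K_\Phi$ on the vectors $a_\ell\xi_\ell$ at the nodes $\alpha_\ell$ and using $\xi_j^*\Phi(\alpha_j) = \eta_j^*$ together with $\Phi(\alpha_\ell)^*\xi_\ell = \eta_\ell$ collapses the result to $\sum_{j,\ell}\widebar a_j a_\ell P_{j,\ell} \geq 0$, whence $P \geq 0$. To upgrade to strict positivity I would set $\rho := \|\Phi\|_{H^{\infty}(\mathbb{D})} < 1$, split $K_\Phi = (1-\rho^2)\,I_p/(1-z\widebar w) + \rho^2 K_{\Phi/\rho}$, and note that the Szeg\H{o} part contributes the strictly positive Gram matrix $[\,\xi_j^*\xi_\ell/(1-\alpha_j\widebar\alpha_\ell)\,]_{j,\ell}$ (positive definite because the $\alpha_\ell$ are distinct and the $\xi_\ell$ nonzero), forcing $P > 0$. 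The data condition $\|\xi_\ell\|_{\mathbb{C}^p} > \|\eta_\ell\|_{\mathbb{C}^q}$ is exactly positivity of the diagonal entries $P_{\ell,\ell}$ and is thus a built-in consistency requirement.

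For sufficiency, which I expect to be the main obstacle, I would follow the state-space $J$-inner approach of Ball--Gohberg--Rodman and Kimura. Assuming $P > 0$, I collect the data into $Z := \mathrm{diag}(\alpha_1,\dots,\alpha_n)$ and the row-stacked matrices $\Xi$, $H$ formed from the tangential vectors $(\xi_\ell,\eta_\ell)$, and observe that $P$ is the unique solution of the associated Stein equation $P - Z^*PZ = \Xi^*\Xi - H^*H$ in the appropriate convention. Using $P^{-1}$ together with these data matrices I would write down a rational $(p+q)\times(p+q)$ matrix function $\Theta$ whose realization has its pole structure at the $\alpha_\ell$, and then verify the metric identity $\Theta(z)\,J\,\Theta(1/\widebar z)^* = J$ with $J := \mathrm{diag}(I_p,-I_q)$, i.e. that $\Theta$ is $J$-inner on $\mathbb{D}$. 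This $J$-inner verification is the crux of the whole theorem and is precisely where $P > 0$ is indispensable: it is the positivity (hence invertibility) of the Gramian $P$ appearing in the realization that makes the algebra close up and forces the metric identity to hold throughout $\mathbb{D}$.

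Finally I would record the linear-fractional parametrization. Writing $\Theta$ in $p,q$ block form as $\Theta = \bigl[\begin{smallmatrix}\Theta_{11} & \Theta_{12}\\ \Theta_{21} & \Theta_{22}\end{smallmatrix}\bigr]$, the map $G \mapsto (\Theta_{11}G + \Theta_{12})(\Theta_{21}G + \Theta_{22})^{-1}$ sends the strict Schur class $\{G \in H^{\infty}(\mathbb{D},\mathbb{C}^{p\times q}) : \|G\|_{H^{\infty}(\mathbb{D})} < 1\}$ onto the solution set of Problem~\ref{probl:NPI_int}. Both facts that make this work rest on the $J$-inner property: the denominator $\Theta_{21}G + \Theta_{22}$ stays invertible on all of $\mathbb{D}$, and the $J$-contractivity of $\Theta$ turns $\|G\|_{H^{\infty}(\mathbb{D})} < 1$ into $\|\Phi\|_{H^{\infty}(\mathbb{D})} < 1$, while the prescribed null-pole structure of $\Theta$ at the $\alpha_\ell$ forces the tangential conditions automatically. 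The particular choice $G \equiv 0$ then exhibits an explicit solution $\Phi = \Theta_{12}\Theta_{22}^{-1}$, delivering the constructive content promised in the appendix. The residual work is bookkeeping within the realization algebra; the conceptual weight lies entirely in the Stein equation and the $J$-inner identity.
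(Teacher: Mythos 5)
First, a point of orientation: the paper does not prove this theorem at all---it is quoted from Theorem~18.2.3 of \cite{ball1990} and Theorem~2 of \cite{kimura1987}, so there is no in-paper proof to compare yours against. The constructive apparatus the paper actually builds around the statement is the Schur--Nevanlinna recursion of Lemma~\ref{thm:Nevanlinna_Algorithm}, which peels off one interpolation node at a time and reduces the $n$-point problem to an $(n-1)$-point problem; that is a genuinely different, iterative route to sufficiency than your one-shot $J$-inner linear-fractional parametrization, and it is the route the paper relies on when it needs explicit rational solutions.

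On the merits of your proposal: the necessity half is complete and correct. Positivity of the kernel $(I_p-\Phi(z)\Phi(w)^*)/(1-z\widebar{w})$ for Schur-class $\Phi$, tested on the vectors $c_\ell\xi_\ell$ at the nodes $\alpha_\ell$, yields $P\geq 0$, and the splitting $K_\Phi=(1-\rho^2)I_p/(1-z\widebar{w})+\rho^2K_{\Phi/\rho}$ with $\rho:=\|\Phi\|_{H^{\infty}(\mathbb{D})}<1$ upgrades this to $P>0$, since the Szeg\H{o} Gram matrix $[\xi_j^*\xi_\ell/(1-\alpha_j\widebar{\alpha}_\ell)]_{j,\ell}$ is positive definite (the functions $z\mapsto\xi_\ell/(1-\widebar{\alpha}_\ell z)$ are linearly independent for distinct $\alpha_\ell$ and nonzero $\xi_\ell$, and $\xi_\ell\neq 0$ is forced by \eqref{eq:tangential_data_cond}). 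The sufficiency half, however, is a plan rather than a proof: you name the Stein equation, the $J$-inner function $\Theta$, and the linear-fractional map, but you never exhibit $\Theta$, never verify the metric identity and the $J$-contractivity on $\mathbb{D}$, and never establish that $\Theta_{21}G+\Theta_{22}$ is invertible on all of $\mathbb{D}$, that the image lies in the \emph{open} unit ball, or that the prescribed null-pole structure really forces the tangential conditions. By your own description this is ``the crux of the whole theorem,'' and it cannot be waved off as bookkeeping; as a standalone proof the hard direction is missing, though as a pointer to the argument in \cite{ball1990} and \cite{kimura1987} (which is all the paper itself offers) the outline is accurate.
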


Let us next introduce an algorithm to construct a solution of Problem~\ref{probl:NPI_int}.
To this end, 
define 
\[
\mathcal{B} := \{E \in \mathbb{C}^{p \times q} : \|E\|_{\mathbb{C}^{p\times q}}  < 1 \}.
\]
Let $I_p$ and $I_q$ be the identity matrix with dimension $p$ and $q$,
respectively.
For a matrix $E \in \mathcal{B}$, define
\begin{subequations}
	\label{def:ABCD_Tangential}
	\begin{align}
		A(E) &:= (I_p-EE^*)^{-1/2},\quad B(E) := -(I_p-EE^*)^{-1/2}E \\
		C(E) &:= -(I_q-E^*E)^{-1/2}E^*,\quad D(E) := (I_q-E^*E)^{-1/2},
	\end{align}
\end{subequations}
where $M^{-1/2}$ denotes the inverse of the Hermitian square root of 
a positive definite matrix $M$. 
Define the maps $ U_E$ and $V_E$ by 
\begin{align*}
	U_E&:\mathbb{C}^{p} \times \mathbb{C}^{q} \to \mathbb{C}^{p}:
	(\xi,\eta) \mapsto A(E)\xi + B(E)\eta \\
	V_E&:\mathbb{C}^{p} \times \mathbb{C}^{q} \to \mathbb{C}^{q}:
	(\xi,\eta) \mapsto C(E)\xi + D(E)\eta.	
\end{align*}

The mapping $T_E$ in the lemma below is useful for solving Problem~\ref{probl:NPI_int}.
\begin{lemma}[Lemma~6.5.10 in \cite{vidyasagar1985}]
	\label{lem:T_E_norm_preserve}
	For a matrix $E \in \mathcal{B}$, define the 
	matrices $A(E)$, $B(E)$, $C(E)$, and $D(E)$ by \eqref{def:ABCD_Tangential}. 
	The mapping
	\begin{equation}
		\label{def:T_E}
		T_E:\mathcal{B} \to \mathcal{B}:X \mapsto 
		\big(A(E)X+B(E)\big)
		\big(C(E)X+D(E)\big)^{-1}
	\end{equation}
	is well-defined and bijective. 
\end{lemma}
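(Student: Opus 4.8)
The plan is to recognize $T_E$ as the linear fractional (Möbius) transformation associated with the block matrix $\Theta := \begin{bmatrix} A(E) & B(E) \\ C(E) & D(E) \end{bmatrix}$ and to exploit a $J$-unitarity property of $\Theta$ with respect to the signature matrix $J := \begin{bmatrix} I_p & 0 \\ 0 & -I_q \end{bmatrix}$. First I would record the two intertwining identities $(I_p - EE^*)^{-1/2}E = E(I_q - E^*E)^{-1/2}$ and $(I_p - EE^*)^{-1}E = E(I_q-E^*E)^{-1}$, both of which follow from the functional calculus applied to $E(I_q-E^*E) = (I_p-EE^*)E$. Writing $P := (I_p-EE^*)^{-1/2}$ and $Q := (I_q-E^*E)^{-1/2}$, so that in \eqref{def:ABCD_Tangential} one has $A(E)=P$, $B(E)=-PE=-EQ$, $C(E)=-QE^*=-E^*P$, and $D(E)=Q$, a block computation using these identities together with the push-through relation $(I_p-EE^*)^{-1} = I_p + E(I_q-E^*E)^{-1}E^*$ yields $\Theta^* J \Theta = J$; since $J$ is invertible and $\Theta$ is square, this also gives $\Theta J \Theta^* = J$.

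The heart of the argument is then the fundamental identity obtained by evaluating $\begin{bmatrix} X^* & I_q \end{bmatrix}\Theta^* J \Theta \begin{bmatrix} X \\ I_q \end{bmatrix}$ in two ways:
\begin{equation*}
(A(E)X+B(E))^*(A(E)X+B(E)) - (C(E)X+D(E))^*(C(E)X+D(E)) = X^*X - I_q.
\end{equation*}
For $X \in \mathcal{B}$ we have $X^*X - I_q < 0$, so the identity forces $(C(E)X+D(E))^*(C(E)X+D(E)) > 0$; hence $C(E)X+D(E)$ is invertible and $T_E$ in \eqref{def:T_E} is well defined on $\mathcal{B}$. Setting $W := T_E(X)$ and substituting $A(E)X+B(E) = W(C(E)X+D(E))$ into the identity, I would then conjugate by $(C(E)X+D(E))^{-1}$ to obtain $(C(E)X+D(E))^*(W^*W-I_q)(C(E)X+D(E)) = X^*X - I_q < 0$, whence $W^*W - I_q < 0$, i.e. $\|W\| < 1$ and $W \in \mathcal{B}$. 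This establishes $T_E:\mathcal{B}\to\mathcal{B}$.

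For bijectivity I would invoke the composition rule for linear fractional transformations. The matrix $\Theta$ is invertible (being $J$-unitary), and $\Theta^{-1}$ is again $J$-unitary, so its associated transformation $S$ maps $\mathcal{B}$ into $\mathcal{B}$ by the identical argument. Because Möbius transformations compose according to matrix multiplication, $S\circ T_E$ and $T_E\circ S$ are the transformations attached to $\Theta^{-1}\Theta = I$ and $\Theta\Theta^{-1}=I$, both equal to the identity on $\mathcal{B}$; thus $S = T_E^{-1}$ and $T_E$ is a bijection.

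I expect the main obstacle to be the purely algebraic verification that $\Theta^* J \Theta = J$: keeping the mixed $P$-versus-$Q$ forms of the off-diagonal blocks $B(E)$ and $C(E)$ straight, and applying the intertwining and push-through identities in the right order so that the two off-diagonal blocks collapse to zero while the diagonal blocks reduce exactly to $I_p$ and $-I_q$. Once this $J$-unitarity is in hand, the well-definedness, the contractivity, and the bijectivity all follow mechanically from the single fundamental identity.
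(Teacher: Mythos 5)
Your proof is correct. Note that the paper does not supply its own argument for this statement: it is quoted verbatim from Lemma~6.5.10 of \cite{vidyasagar1985}, so there is nothing internal to compare against. Your route --- establishing the $J$-unitarity $\Theta^* J \Theta = J$ of the block matrix $\Theta$ via the intertwining relation $E(I_q-E^*E)^{-1/2}=(I_p-EE^*)^{-1/2}E$ and the push-through identity, extracting the fundamental identity
\begin{equation*}
(A(E)X+B(E))^*(A(E)X+B(E)) - (C(E)X+D(E))^*(C(E)X+D(E)) = X^*X - I_q,
\end{equation*}
and then reading off invertibility of $C(E)X+D(E)$, contractivity of $T_E(X)$, and bijectivity from the composition rule for linear fractional maps --- is essentially the classical argument, and it is also consistent with the paper's later use of the formula \eqref{eq:TE_inv}: your $\Theta^{-1}=J\Theta^*J$ has blocks $A(E),-B(E),-C(E),D(E)$, i.e.\ exactly the blocks of $T_{-E}$, matching the identity $T_E^{-1}=T_{-E}$ used implicitly in Lemma~\ref{thm:Nevanlinna_Algorithm}. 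The only point worth making explicit is that $\|\cdot\|_{\mathbb{C}^{p\times q}}$ must be the spectral norm, so that $\|X\|<1$ is equivalent to $I_q-X^*X>0$ (and so that the square roots in \eqref{def:ABCD_Tangential} exist); with any other norm the equivalence you invoke would fail. All steps, including the remark that invertibility of the square matrix $C(E)X+D(E)$ follows from positive definiteness of $(C(E)X+D(E))^*(C(E)X+D(E))\geq I_q-X^*X>0$, are sound.
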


A routine calculation shows that 
the inverse of $T_E$ is given by 
\begin{align}
	T_E^{-1}(Y) &= \big(A(E)-YC(E)\big)^{-1}
	\big(YD(E)-B(E)\big) \notag \\
	&= 
	\big(A(E)Y-B(E)\big)
	\big(\!-C(E)Y+D(E)\big)^{-1}.
	\label{eq:TE_inv}
\end{align}

\begin{lemma}[Lemma~1 in \cite{kimura1987}]
	\label{thm:Nevanlinna_Algorithm}
	Consider 
	Problem~\ref{probl:NPI_int} with $n$ interpolation data
	$(\alpha_\ell, \xi_\ell,\eta_\ell)_{\ell=1}^n$.
	Set $E := \xi_1  \eta_1^*/\| \xi_1\|^2_{\mathbb{C}^p} $ and
	define $A(E)$, $B(E)$, $C(E)$, and $D(E)$ as in \eqref{def:ABCD_Tangential}.
	Define also $\nu := U_E(\xi_1,\eta_1)$ and
	\begin{align}
%		E &:= \frac{\xi_1  \eta_1^* }{\| \xi_1\|^2_{\mathbb{C}^p} },\quad 
%		, \label{eq:nu_def} \\
		\kappa (z) &:= 
		\begin{cases}
			\frac{|\alpha_1|}{\alpha_1}\frac{z- \alpha_1}{1-{\widebar \alpha}_1 z} & \text{if $\alpha_1 \not= 0$} \\
			z & \text{if $\alpha_1 = 0$} 
		\end{cases},\quad 
		X := I_p + (\kappa -1)\frac{\nu \nu^*}{\| \nu\|^2_{\mathbb{C}^p} }. \label{eq:X_def_no_epsilon}
	\end{align}
	Problem~\ref{probl:NPI_int} with
	$n$ interpolation data 
	$(\alpha_\ell, \xi_\ell,\eta_\ell)_{\ell=1}^n$
	is solvable if and only if Problem~\ref{probl:NPI_int}  with
	$n-1$ interpolation data
	\begin{equation}
		\label{eq:int_data_reduced}
		\big(\alpha_\ell, X(\alpha_\ell)^*U_E(\xi_\ell,\eta_\ell),V_E(\xi_\ell,\eta_\ell)\big)_{\ell=2}^n
	\end{equation}
	is solvable. Moreover, 
	if  $\Phi_{n-1}$ is a solution 
	of the problem with $n-1$ interpolation data given in 
	\eqref{eq:int_data_reduced}, then
	\begin{equation}
		\label{eq:tangential_iteration}
		\Phi_n := T_{-E}\left(X \Phi_{n-1}\right)=
		\big(A(E)X \Phi_{n-1}-B(E)\big)
		\big(\!-C(E)X \Phi_{n-1}+D(E)\big)^{-1}
	\end{equation}
	is a solution $\Phi_n$ of the original problem 
	with $n$ interpolation data $(\alpha_\ell, \xi_\ell,\eta_\ell)_{\ell=1}^n$.
\end{lemma}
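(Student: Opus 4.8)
The plan is to realize the one-step reduction as a $J$-unitary (chain-scattering) transformation followed by the extraction of an elementary Blaschke factor. First I would collect the constant matrices of \eqref{def:ABCD_Tangential} into $\Theta(E) := \begin{bmatrix} A(E) & B(E) \\ C(E) & D(E)\end{bmatrix}$ and verify, using the push-through identity $E(I_q-E^*E)^{-1}=(I_p-EE^*)^{-1}E$, that $\Theta(E)^*J\,\Theta(E)=J$ with $J:=\begin{bmatrix} I_p & 0 \\ 0 & -I_q\end{bmatrix}$, and that $\Theta(-E)=\Theta(E)^{-1}$. Consequently the map $\begin{bmatrix}\xi\\\eta\end{bmatrix}\mapsto\begin{bmatrix}U_E(\xi,\eta)\\V_E(\xi,\eta)\end{bmatrix}=\Theta(E)\begin{bmatrix}\xi\\\eta\end{bmatrix}$ preserves the indefinite form $\|\xi\|^2-\|\eta\|^2$, and the choice $E=\xi_1\eta_1^*/\|\xi_1\|_{\mathbb{C}^p}^2$ is exactly the one that sends $\begin{bmatrix}\xi_1\\\eta_1\end{bmatrix}$ to $\begin{bmatrix}\nu\\0\end{bmatrix}$; here $\|\xi_1\|>\|\eta_1\|$ from \eqref{eq:tangential_data_cond} guarantees $\nu\neq0$, so $X$ in \eqref{eq:X_def_no_epsilon} is well defined.

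Next, since $T_{-E}$ restricts to a bijection of the open unit ball of $H^{\infty}(\mathbb{D},\mathbb{C}^{p\times q})$ onto itself (Lemma~\ref{lem:T_E_norm_preserve} applied pointwise, together with a compactness argument preserving the strict norm bound), every candidate solution $\Phi$ can be written uniquely as $\Phi=T_{-E}(\Psi)$ with $\Psi:=T_E(\Phi)$ in the ball. Writing $\begin{bmatrix}\Phi\\I\end{bmatrix}=\Theta(-E)\begin{bmatrix}\Psi\\I\end{bmatrix}\big(C(-E)\Psi+D(-E)\big)^{-1}$ and left-multiplying the constraint $\xi_\ell^*\Phi(\alpha_\ell)=\eta_\ell^*$ (equivalently $[\xi_\ell^*,\,-\eta_\ell^*]\begin{bmatrix}\Phi(\alpha_\ell)\\I\end{bmatrix}=0$), the $J$-unitarity gives $[\xi_\ell^*,\,-\eta_\ell^*]\Theta(-E)=[\hat\xi_\ell^*,\,-\hat\eta_\ell^*]$, where $\hat\xi_\ell:=U_E(\xi_\ell,\eta_\ell)$ and $\hat\eta_\ell:=V_E(\xi_\ell,\eta_\ell)$. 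Thus each constraint turns into $\hat\xi_\ell^*\Psi(\alpha_\ell)=\hat\eta_\ell^*$; for $\ell=1$ this reads $\nu^*\Psi(\alpha_1)=0$, and the $J$-isometry yields $\|\hat\xi_\ell\|^2-\|\hat\eta_\ell\|^2=\|\xi_\ell\|^2-\|\eta_\ell\|^2>0$, so in particular $\hat\xi_\ell\neq0$.

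The crux is to factor out the vanishing at $\alpha_1$. The matrix $X$ of \eqref{eq:X_def_no_epsilon} is \emph{inner} on $\mathbb{D}$ (unitary on $\mathbb{T}$, analytic inside), with $\nu^*X(z)=\kappa(z)\nu^*$ and $X(z)^{-1}=I_p+(\kappa(z)^{-1}-1)\nu\nu^*/\|\nu\|_{\mathbb{C}^p}^2$. I would prove that a ball function $\Psi\in H^{\infty}$ satisfies $\nu^*\Psi(\alpha_1)=0$ if and only if $\Psi=XG$ for some ball function $G\in H^{\infty}$: the implication $\Leftarrow$ is immediate from $\nu^*\Psi=\kappa\,\nu^*G$ and $\kappa(\alpha_1)=0$, while for $\Rightarrow$ one sets $G:=X^{-1}\Psi$ and checks that its only possible pole, at $\alpha_1$ in the direction $\nu$, is removable because the zero of $\nu^*\Psi$ there cancels the simple pole of $\kappa^{-1}$, and that $\|G\|_\infty=\|\Psi\|_\infty<1$ since $X$ is unitary on $\mathbb{T}$ (maximum principle). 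Substituting $\Psi=XG$ into $\hat\xi_\ell^*\Psi(\alpha_\ell)=\hat\eta_\ell^*$ for $\ell\geq2$ gives $\big(X(\alpha_\ell)^*\hat\xi_\ell\big)^*G(\alpha_\ell)=\hat\eta_\ell^*$; with $\tilde\xi_\ell:=X(\alpha_\ell)^*\hat\xi_\ell=X(\alpha_\ell)^*U_E(\xi_\ell,\eta_\ell)$ and $\tilde\eta_\ell:=\hat\eta_\ell=V_E(\xi_\ell,\eta_\ell)$ these are precisely the reduced data \eqref{eq:int_data_reduced}.

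Chaining these steps produces a bijection $\Phi\leftrightarrow G$ via $\Phi=T_{-E}(XG)=T_{-E}(X\Phi_{n-1})$ between solutions of the original $n$-data problem and solutions of the reduced $(n-1)$-data problem; the solvability equivalence is immediate, and unwinding $T_{-E}$ through $\Theta(-E)$ reproduces the explicit formula \eqref{eq:tangential_iteration}. I expect the main obstacle to be the factorization $\Psi=XG$ in the third paragraph, namely the removable-singularity and maximum-modulus bookkeeping ensuring $G\in H^{\infty}(\mathbb{D})$ with strict norm below one, and the verification that $X$ annihilates the transformed direction $\nu$ rather than $\hat\xi_1$. A secondary point to flag is that the reduced data need not satisfy $\|\tilde\xi_\ell\|>\|\tilde\eta_\ell\|$ a priori (as the scalar Schur step already shows); this causes no difficulty for the equivalence, since whenever either problem is solvable the correspondence forces $\|G(\alpha_\ell)\|<1$ and hence $\|\tilde\xi_\ell\|>\|\tilde\eta_\ell\|$, so the reduced problem is then a bona fide instance of Problem~\ref{probl:NPI_int}.
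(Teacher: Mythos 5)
The paper offers no proof of this lemma --- it is quoted as Lemma~1 of \cite{kimura1987} --- so there is no internal argument to compare against; your proof is correct and is essentially Kimura's own: the $J$-unitary chain-scattering identity $\Theta(E)^*J\Theta(E)=J$, the transformation of the tangential constraints under right-multiplication by $\Theta(-E)$, and the extraction of the inner factor $X$ via division by the Blaschke factor $\kappa$ constitute the standard proof of the tangential Schur--Nevanlinna step. The one point that genuinely requires the care you give it is the last one: the reduced data \eqref{eq:int_data_reduced} need not satisfy \eqref{eq:tangential_data_cond} a priori, but, as you observe, whenever either problem admits a ball-function solution that inequality is forced at each $\alpha_\ell$, so the stated equivalence is unaffected.
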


The iterative algorithm derived from Lemma~\ref{thm:Nevanlinna_Algorithm} is 
called the {\em Schur-Nevanlinna algorithm}.
Lemma~\ref{thm:Nevanlinna_Algorithm} also shows that
if the problem is solvable, then there exist always solutions whose elements are rational functions.

Note that $\nu$ given in Lemma~\ref{thm:Nevanlinna_Algorithm} is nonzero.
In fact, 
since
$
\|\xi_1\|_{\mathbb{C}^p}  > \|\eta_1\|_{\mathbb{C}^q} ,
$
it follows that 
\begin{equation*}
	A(E)^{-1}\nu = \xi_1 - E \eta_1 = \xi_1 - \frac{\|\eta_1\|_{\mathbb{C}^q}^2}{\|\xi_1\|_{\mathbb{C}^p}^2} \xi_1 \not= 0,
\end{equation*}
and hence $\nu \not= 0$. Furthermore, the matrix
$X$ defined by \eqref{eq:X_def_no_epsilon} satisfies
$X(\lambda)^{-1} = X(\lambda)^*$ for all $\lambda \in \mathbb{T}$ and
$\|X(z)\|_{\mathbb{C}^{p\times p}}  < 1$ for all $z \in \mathbb{D}$.

\subsection{Interpolation problem with both interior and boundary conditions}
\label{subsec:Int_pro_IB}
We next study interpolation problems with both interior and 
boundary conditions.
\begin{problem}
	\label{probl:NPI_int_bound}
	Suppose that  $\alpha_1,\dots,\alpha_n \in \mathbb{D}$ and
	$\lambda_1,\dots,\lambda_m \in \mathbb{T}$ are distinct.
	Consider vector pairs $(\xi_\ell, \eta_\ell)\in\mathbb{C}^p \times 
	\mathbb{C}^q$ for $ \ell \in \{1,\dots, n\}$ and matrices $F_j,G_j \in \mathbb{C}^{p\times q}$
	for $j\in \{1,\dots, m\}$, and 
	suppose that 
	\begin{subequations}
		\label{eq:tangent_matrix_nec}
	\begin{align}
	\|\xi_\ell\|_{\mathbb{C}^p} &> \|\eta_\ell\|_{\mathbb{C}^q}\qquad \forall \ell \in \{1,\dots, n\} \\
	\|F_j\|_{\mathbb{C}^{p\times q}} &< 1 \qquad \forall j\in \{1,\dots, m\}.
	\end{align}
	\end{subequations}
	Find a rational function $\Phi \in H^{\infty}(\mathbb{D}, \mathbb{C}^{p\times q})$ such that
	$\|\Phi\|_{H^{\infty}(\mathbb{D})} < 1$ and 
	\begin{subequations}
		\begin{align}
			\label{eq:int_cond}
			\xi_\ell^* \Phi(\alpha_\ell) &= \eta_\ell^*\qquad \forall \ell \in \{1,\dots, n\}\\
			\label{eq:bound_cond}
			\Phi(\lambda_j) &= F_j,\quad \Phi^{\prime}(\lambda_j) = G_j
			\qquad \forall j \in \{1,\dots, m\}.
		\end{align}
	\end{subequations}
\end{problem}

Problem~\ref{probl:NPI_int_bound} is called 
the {\em Nevanlinna-Pick interpolation problem with interior 
	interpolation data $(\alpha_\ell, \xi_\ell,\eta_\ell)_{\ell=1}^n$
	and boundary interpolation data $	(\lambda_j,F_j,G_j)_{j=1}^m$.
}
The scalar-valued case $p=q=1$ with more general interpolation conditions
has been studied in \cite{luxemburg2010}.

The following theorem implies that the solvability of
Problem~\ref{probl:NPI_int_bound}
depends only on its interior interpolation data.
\begin{theorem}
	\label{thm:No_boundary}
		Problem~\ref{probl:NPI_int_bound} with interior 
		interpolation data $(\alpha_\ell, \xi_\ell,\eta_\ell)_{\ell=1}^n$
		and boundary interpolation data $(\lambda_j,F_j,G_j)_{j=1}^m$ is
		solvable if and only if 
		Problem~\ref{probl:NPI_int}
		with interpolation data 
		$(\alpha_\ell,\xi_\ell,\eta_\ell)_{\ell=1}^n$ is solvable.
\end{theorem}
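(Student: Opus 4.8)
The plan is to prove the two implications separately, with essentially all the work in the converse direction. The forward implication is immediate: if $\Phi$ solves Problem~\ref{probl:NPI_int_bound}, then in particular it satisfies the interior conditions \eqref{eq:int_cond} and has $\|\Phi\|_{H^{\infty}(\mathbb{D})}<1$, so it already solves Problem~\ref{probl:NPI_int} with data $(\alpha_\ell,\xi_\ell,\eta_\ell)_{\ell=1}^n$. For the converse I would reduce the full problem to one carrying no interior data at all, by peeling the interior conditions off one at a time with the Schur--Nevanlinna algorithm of Lemma~\ref{thm:Nevanlinna_Algorithm}, while simultaneously transporting the boundary data through each step.

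Concretely, assume Problem~\ref{probl:NPI_int} is solvable. First I would show that each Schur--Nevanlinna step sends boundary data to boundary data of the same type. A solution $\Phi_n$ of the $k$-data problem is linked to a solution $\Phi_{n-1}$ of the reduced $(k-1)$-data problem by \eqref{eq:tangential_iteration}, i.e.\ $\Phi_n=T_{-E}(X\Phi_{n-1})$. Evaluating this relation at a boundary node $\lambda_j\in\mathbb{T}$ and using that $T_{-E}$ is a bijection of $\mathcal{B}$ onto itself (Lemma~\ref{lem:T_E_norm_preserve}) together with $X(\lambda_j)^{-1}=X(\lambda_j)^*$, I can solve for the value of $\Phi_{n-1}$ at $\lambda_j$ and, after differentiating, for its derivative. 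This produces new boundary data $(\lambda_j,F_j',G_j')$ with $F_j'=X(\lambda_j)^{-1}T_{-E}^{-1}(F_j)$; since $T_{-E}^{-1}$ maps $\mathcal{B}$ into $\mathcal{B}$ and $X(\lambda_j)^{-1}$ is unitary, the strict contraction property $\|F_j'\|_{\mathbb{C}^{p\times q}}<1$ is preserved, while $G_j'$ is an unconstrained matrix determined by the chain rule. Because the interior problem is solvable, Lemma~\ref{thm:Nevanlinna_Algorithm} guarantees that all $n$ reduction steps are valid, so after $n$ steps the interior data is exhausted and I am left with a pure boundary problem for a free Schur function $\Phi_0$, carrying transformed data $(\lambda_j,\widetilde F_j,\widetilde G_j)_{j=1}^m$ with $\|\widetilde F_j\|_{\mathbb{C}^{p\times q}}<1$.

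The decisive step, and the one I expect to be the main obstacle, is to show that this residual pure boundary interpolation problem is always solvable. The natural route is a direct construction. The Hermite interpolating matrix polynomial $P$ with $P(\lambda_j)=\widetilde F_j$ and $P'(\lambda_j)=\widetilde G_j$ handles the jets but not the norm bound, so I would damp $P$ by a scalar factor: I seek a rational $\beta\in H^{\infty}(\mathbb{D})$ that equals $1$ to first order at every $\lambda_j$ (so that $\beta P$ keeps the prescribed jets) and has $|\beta|$ small enough away from the $\lambda_j$ to force $\sup_{\mathbb{T}}\|\beta(z)P(z)\|<1$, which by subharmonicity of $z\mapsto\|(\beta P)(z)\|$ yields $\|\beta P\|_{H^{\infty}(\mathbb{D})}<1$. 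The delicate point is that near $\lambda_j$ one has $|\beta|\to 1$ while $\|P\|\to\|\widetilde F_j\|<1$, so the product stays strictly contractive there, whereas off the interpolation nodes $|\beta|$ must be pushed below $1/\|P\|$; arranging a single $\beta$ achieving both is exactly the content of the scalar boundary construction in \cite{luxemburg2010}, which I would adapt to the present tangential, matrix-valued setting. Equivalently, one may transform the residual problem, as in \cite{luxemburg2010,wakaiki2012}, into a Nevanlinna--Pick problem with conditions only on $\mathbb{T}$ and invoke its unconditional solvability.

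Finally I would reverse the construction. Starting from a solution $\Phi_0$ of the pure boundary problem and applying \eqref{eq:tangential_iteration} $n$ times, Lemma~\ref{thm:Nevanlinna_Algorithm} produces a function $\Phi_n$ that meets all interior conditions and satisfies $\|\Phi_n\|_{H^{\infty}(\mathbb{D})}<1$, while inverting the boundary-data transformation at each step recovers $\Phi_n(\lambda_j)=F_j$ and $\Phi_n'(\lambda_j)=G_j$ for every $j$. Hence $\Phi_n$ solves Problem~\ref{probl:NPI_int_bound}, which completes the converse and therefore the theorem.
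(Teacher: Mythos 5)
Your treatment of the equivalence itself follows the same route as the paper: the forward direction is immediate, and for the converse you peel off the interior data one node at a time with the Schur--Nevanlinna step of Lemma~\ref{thm:Nevanlinna_Algorithm}, transporting the boundary data by $\widehat F_j = X(\lambda_j)^{-1}T_{-E}^{-1}(F_j)$ (with the corresponding chain-rule formula for $\widehat G_j$) and preserving $\|\widehat F_j\|_{\mathbb{C}^{p\times q}}<1$ via the unitarity of $X(\lambda_j)^{-1}=X(\lambda_j)^*$. This is exactly the paper's proof of Theorem~\ref{thm:No_boundary}, and that part of your argument is sound.

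The gap is in the step you yourself flag as the crux: the unconditional solvability of the residual pure-boundary problem (Problem~\ref{probl:NPI_bound}). Your proposed construction --- a Hermite interpolating matrix polynomial $P$ multiplied by a scalar damping factor $\beta$ with $\beta(\lambda_j)=1$, $\beta'(\lambda_j)=0$, and $|\beta|$ small off the nodes --- is not carried out, and the existence of such a $\beta$ is genuinely problematic. Indeed, any non-constant $\beta$ with $\|\beta\|_{H^{\infty}(\mathbb{D})}\le 1$ and $\beta(\lambda_j)=1$ must have $\beta'(\lambda_j)\neq 0$ by the Julia--Wolff--Carath\'eodory inequality, so the natural candidates (Blaschke-type or peak-function damping) all violate the condition $\beta'(\lambda_j)=0$; one is forced to allow $|\beta|>1$ near each node and to control the product $|\beta|\,\|P\|$ there by hand, and this balancing is precisely the hard analysis you have not supplied. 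Your fallback --- ``transform the residual problem into a Nevanlinna--Pick problem with conditions only on $\mathbb{T}$ and invoke its unconditional solvability'' --- is circular: the residual problem already is that problem, and its solvability in the matrix-valued form with prescribed values and derivatives is not an off-the-shelf citation (Luxemburg--Brown treat only the scalar case). The paper closes this gap with Lemma~\ref{lem:boundary}, which eliminates the boundary nodes one at a time using the perturbed Blaschke factor $\kappa_{\epsilon}(z)=\lambda_1^{-1}(z-\lambda_1)/\bigl((1+\epsilon)-\widebar{\lambda}_1 z\bigr)$ and a careful choice of $\epsilon$ keeping all transformed data strictly contractive; without that lemma (or a completed version of your damping construction) your proof of the sufficiency direction is incomplete.
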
 

To solve 
Problem~\ref{probl:NPI_int_bound},
we transform it
to the following problem with boundary conditions only:
\begin{problem}
	\label{probl:NPI_bound}
	Suppose that  
	$\lambda_1 ,\dots,\lambda_m \in \mathbb{T}$ are distinct.
	Consider  matrices $F_j,G_j \in \mathbb{C}^{p\times q}$
	for $j\in \{1,\dots, m\}$, and 
	suppose that 
	\begin{equation}
		\label{eq:Fj_cond} 
		\|F_j\|_{\mathbb{C}^{p\times q}} < 1 \qquad \forall j\in \{1,\dots, m\}.
	\end{equation}
	Find a rational function $\Phi \in H^{\infty}(\mathbb{D}, \mathbb{C}^{p\times q})$ such that
	$\|\Phi\|_{H^{\infty}(\mathbb{D})} < 1$ and 
	\begin{align*}
		\Phi(\lambda_j) &= F_j,\quad \Phi^{\prime}(\lambda_j) = G_j
		\qquad \forall j \in \{1,\dots, m\}.
	\end{align*}
\end{problem}
This problem is referred to as the {\em boundary Nevanlinna-Pick interpolation problem with
	interpolation data $(\lambda_j,F_j,G_j)_{j=1}^m$}.
The condition \eqref{eq:Fj_cond} is necessary for the
solvability for Problem~\ref{probl:NPI_bound}, and
the lemma below shows that the condition \eqref{eq:Fj_cond}  
is also sufficient.
We can prove the sufficiency 
by extending the Schur-Nevanlinna algorithm in Lemma~\ref{thm:Nevanlinna_Algorithm}.
\begin{lemma}
	\label{lem:boundary}
	Problem
	\ref{probl:NPI_bound} 
	is always solvable.
\end{lemma}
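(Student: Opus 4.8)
The plan is to prove Lemma~\ref{lem:boundary} by constructing a solution $\Phi$ explicitly, rather than by a solvability criterion; since this is the pure boundary problem (no interior nodes), no Pick-type positivity is needed and the only datum constraint is \eqref{eq:Fj_cond}. As in the appendix I work on $\mathbb{D}=\varphi(\mathbb{E}_1)$. The conceptual point I would exploit is that matching the \emph{values} $F_j$ is cheap because the targets satisfy $\|F_j\|_{\mathbb{C}^{p\times q}}<1$, so they stay strictly inside the open unit ball and cause no boundary rigidity, while matching the \emph{derivatives} $G_j$ can be done at arbitrarily small $H^\infty$-cost by localized perturbations. The engine is the boundary analogue of the inner Blaschke factor $\kappa$ from \eqref{eq:X_def_no_epsilon}, namely the scalar polynomial localizer
\[
b_{j,N}(z):=\left(\frac{1+\widebar{\lambda}_j z}{2}\right)^{N},\qquad N\in\mathbb{N},
\]
which satisfies $b_{j,N}(\lambda_j)=1$ and $|b_{j,N}(z)|<1$ for $z\in\overline{\mathbb{D}}\setminus\{\lambda_j\}$, and which concentrates sharply at $\lambda_j$ as $N\to\infty$ (its values and derivatives at the other nodes decay exponentially in $N$).

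The first stage handles values. Let $\widetilde e_1,\dots,\widetilde e_m$ be fixed Hermite--Lagrange basis polynomials with $\widetilde e_j(\lambda_k)=\delta_{jk}$ and $\widetilde e_j^{\,\prime}(\lambda_k)=0$ for all $j,k$, and set
\[
\Phi_0(z):=\sum_{j=1}^m F_j\,\widetilde e_j(z)\,b_{j,N}(z).
\]
By construction $\Phi_0(\lambda_k)=F_k$ exactly. The delicate part of this stage is the norm bound: writing $\rho:=\max_j\|F_j\|_{\mathbb{C}^{p\times q}}<1$, I would show $\|\Phi_0\|_{H^\infty(\mathbb{D})}\le \rho(1+o(1))<1$ for $N$ large, by a maximum-modulus estimate on $\mathbb{T}$ in which the peaking of the $b_{j,N}$ guarantees the summands have disjoint effective supports, i.e.\ $\sum_j|\widetilde e_j(z)b_{j,N}(z)|\le 1+o(1)$ uniformly on $\mathbb{T}$.

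The second stage corrects derivatives. Fix $N$ from the first stage, put $H_j:=\Phi_0^{\,\prime}(\lambda_j)$, and introduce the bumps
\[
d_j(z):=(z-\lambda_j)\,\widetilde e_j(z)\,b_{j,M}(z),\qquad M\in\mathbb{N}.
\]
A direct check gives $d_j(\lambda_k)=0$ for all $k$ and $d_j^{\,\prime}(\lambda_k)=\delta_{jk}$ (using the Hermite conditions to kill the cross terms), while $\|d_j\|_{H^\infty(\mathbb{D})}=O(1/M)$, since $\|(z-\lambda_j)b_{j,M}(z)\|_{H^\infty(\mathbb{D})}=O(1/M)$. Setting
\[
\Phi:=\Phi_0+\sum_{j=1}^m d_j\,(G_j-H_j),
\]
the values are unchanged and $\Phi^{\,\prime}(\lambda_k)=H_k+(G_k-H_k)=G_k$. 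For $M$ large the added norm is below $1-\|\Phi_0\|_{H^\infty(\mathbb{D})}$, so $\|\Phi\|_{H^\infty(\mathbb{D})}<1$; and $\Phi$ is a polynomial, hence a rational function in $H^\infty(\mathbb{D},\mathbb{C}^{p\times q})$, completing the construction.

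The step I expect to be the main obstacle is the norm preservation on the boundary, which is exactly where this problem departs from the interior Schur--Nevanlinna reduction of Lemma~\ref{thm:Nevanlinna_Algorithm}: there the inner factor $\kappa$ of \eqref{eq:X_def_no_epsilon} has modulus one on $\mathbb{T}$ and so the reduction is isometric, whereas no inner function can vanish at a point of $\mathbb{T}$, so one cannot divide out a boundary zero without losing control of the sup norm. My construction sidesteps this by never dividing: the strict slack $\rho<1$ absorbs the value-matching, and the localizers $b_{j,N},b_{j,M}$ make the derivative-matching cost $O(1/M)$. The genuinely technical point, on which the whole estimate rests, is the uniform nonoverlap bound $\sum_j|\widetilde e_j b_{j,N}|\le 1+o(1)$ on $\mathbb{T}$ together with the exactness of the Hermite interpolation at all nodes simultaneously; once these are established, choosing $N$ and then $M$ large enough yields the claim, and the same device proves Lemma~\ref{lem:boundary} for arbitrary prescribed $G_j$.
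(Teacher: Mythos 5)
Your construction is correct, and it is genuinely different from the paper's proof. The paper proceeds recursively in the spirit of the Schur--Nevanlinna algorithm: it applies the matrix M\"obius map $T_{F_1}$ to normalize the first node to $0$, divides out the \emph{almost-inner} factor $\kappa_{\epsilon}(z)=\frac{1}{\lambda_1}\frac{z-\lambda_1}{(1+\epsilon)-\widebar{\lambda}_1z}$ (choosing $\epsilon>0$ small enough that the transformed data still satisfy $\|\widehat F_j\|<1$), and thereby reduces $m$ boundary nodes to $m-1$; the price of working on $\mathbb{T}$ is precisely that $\kappa_\epsilon$ is not inner, which is why the $\epsilon$-bookkeeping in \eqref{eq:epsilon_cond_1}--\eqref{eq:epsilon_cond_2} is needed. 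You instead give a one-shot explicit polynomial solution, exploiting the strict slack $\rho:=\max_j\|F_j\|<1$ to absorb value-matching and the peaking factors $b_{j,N}$ to make derivative-matching arbitrarily cheap. Your two key estimates do hold: near $\lambda_k$ the Hermite conditions give $|\widetilde e_k(z)|\le 1+C|z-\lambda_k|^2$, and since $|b_{k,N}(e^{i\theta})|=|\cos((\theta-\theta_k)/2)|^N\le e^{-N(\theta-\theta_k)^2/8}$ locally, the product $(1+Cu)e^{-Nu/8}$ is $\le 1$ once $N\ge 8C$, while the off-diagonal terms decay exponentially in $N$ by node separation; this closes the bound $\sum_j|\widetilde e_jb_{j,N}|\le 1+o(1)$ on $\mathbb{T}$ and hence, by subharmonicity of $\|\Phi_0\|$, on $\mathbb{D}$. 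One quantitative slip: $\|(z-\lambda_j)b_{j,M}\|_{H^\infty(\mathbb{D})}$ is of order $1/\sqrt{M}$ (the maximum of $2|\sin x||\cos x|^M$ is attained at $\tan^2x=1/M$), not $O(1/M)$; this is harmless since you only need it to tend to $0$ after $N$, and hence the $H_j=\Phi_0'(\lambda_j)$ (which grow like $N$), are fixed. As for what each route buys: the paper's recursion reuses the $T_E$ machinery already set up for the interior problem and mirrors the structure of the proof of Theorem~\ref{thm:No_boundary}, whereas your argument is self-contained, produces an explicit polynomial interpolant, and makes transparent the reason the boundary problem carries no Pick-type obstruction. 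Since Theorem~\ref{thm:No_boundary} invokes Lemma~\ref{lem:boundary} only as a black box in its final step, your proof would serve there equally well.
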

%As in matrix-valued case,
\begin{proof}
	Consider Problem
	\ref{probl:NPI_bound} with
	interpolation data $(\lambda_j,F_j,G_j)_{j=1}^m$.
	We first find
	$m-1$ interpolation data
	such that if Problem~\ref{probl:NPI_bound} with these $m-1$ data
	is solvable, then the original problem with $m$ interpolation data 
	$
	(\lambda_j,F_j,G_j)_{j=1}^m
	$
	is also solvable. 
	To that purpose, we extend
	the technique developed in \cite{luxemburg2010} for the scalar-valued case.
	
	Define $A:= A(F_1)$, $B:= B(F_1)$, $C:= C(F_1)$, and $D:= D(F_1)$ as in 
	\eqref{def:ABCD_Tangential}. For $\epsilon >0$, set
	\begin{align*}
		\kappa_{\epsilon}(z) &:= \frac{1}{\lambda_1}\frac{z-\lambda_1}{(1+\epsilon)-\widebar{\lambda}_1 z} \\
				\widehat F_1 &:= \epsilon \lambda_1(I_p-F_1F_1^*)^{-1/2} G_1 (I_q-F_1^*F_1)^{-1/2}\\
	\end{align*}
		and
	\begin{align*}
		%	\label{eq:y_eps_def_matrix_valued} 
		\widehat F_j &:= \frac{1}{\kappa_{\epsilon}(\lambda_j)}T_{F_1}(F_j)
		%	\label{eq:hatF_def_matrix_valued}
		\\
		\widehat G_j &:= \frac{1}{\kappa_{\epsilon}(\lambda_j)}
		(A-\kappa_\epsilon(\lambda_j)\widehat F_j C) G_j (CF_j +D)^{-1} - 
		\frac{\kappa^{\prime}_{\epsilon}(\lambda_j)}{\kappa_{\epsilon}(\lambda_j)}  \widehat F_j
		%	\label{eq:hatG_def_matrix_valued}
	\end{align*}
	 for $j \in \{2,\dots,m\}$.
	Let us  show that there exists $\epsilon>0$ such that
	\begin{equation}
		\label{eq:hatFnorm}
		\|\widehat F_j\|_{\mathbb{C}^{p\times q}}<1\qquad \forall j \in \{1,\dots,m \}.  
	\end{equation}
	By definition,
	\[
	\|\widehat F_1\|_{\mathbb{C}^{p\times q}} \leq \epsilon 
	\|G_1\|_{\mathbb{C}^{p\times q}} \cdot \big\|(I_p-F_1F_1^*)^{-1/2} \big\|_{\mathbb{C}^{p\times p}}
	\cdot
	\big\|(I_q-F_1^*F_1)^{-1/2} \big\|_{\mathbb{C}^{q\times q}},
	\]
	and hence if
	\begin{equation}
		\label{eq:epsilon_cond_1}
		\epsilon < \frac{1}{\|G_1\|_{\mathbb{C}^{p\times q}} \cdot \big\|(I_p-F_1F_1^*)^{-1/2}\big\|_{\mathbb{C}^{p\times p}}
			\cdot \big\|(I_q-F_1^*F_1)^{-1/2} \big\|_{\mathbb{C}^{q\times q}}},
	\end{equation}
	then $\|{\widehat F}_1\|_{\mathbb{C}^{p\times q}} < 1$.
	Let $j \in \{2,\dots,m  \}$ be given. We obtain
	\begin{align}
		\|\widehat F_j \|_{\mathbb{C}^{p\times q}} 	
		%	&=\left|
		%	1 - \frac{\epsilon \lambda_1}{\lambda_j-\lambda_1}
		%	\right| \cdot \| T_{F_1}(F_j)\|_{\mathbb{C}^{p\times q}}
		\leq  
		\left( 
		1+ \frac{\epsilon }{| \lambda_j - \lambda_1|}
		\right)
		\| T_{F_1}(F_j)\|_{\mathbb{C}^{p\times q}}. \label{eq:hatF_cond}
	\end{align}
	Since $F_j \in \mathcal{B}$, it follows that
	$\|T_{F_1}(F_j)\|_{\mathbb{C}^{p\times q}} < 1$ by Lemma~\ref{lem:T_E_norm_preserve}. 
	If we choose $\epsilon > 0$ so that
	\begin{equation}
		\label{eq:epsilon_cond_2}
		\epsilon
		< \min_{j=2,\dots,m} 
		\left(
		|\lambda_j - \lambda_1|
		\left(
		\frac{1}{\|T_{F_1}(F_j)\|_{\mathbb{C}^{p\times q}} } - 1
		\right)
		\right),
	\end{equation}
	then $\|{\widehat F}_j\|_{\mathbb{C}^{p\times q}} < 1$
	for every $j \in \{2,\dots,m  \}$.
	%	If $\epsilon > 0$ satisfies \eqref{eq:epsilon_cond_2}, then
	%	\begin{equation*}
	%	\left( 
	%	1+ \frac{\epsilon }{| r_j - r_1|}
	%	\right) \cdot 
	%	\| T_{F_1}(F_j)\| < 1\qquad \forall j \in \{2,\dots,m\}.
	%	\end{equation*}
	Thus, 
	we obtain the desired inequality \eqref{eq:hatFnorm} for
	$\epsilon >0$ satisfying \eqref{eq:epsilon_cond_1}
	and \eqref{eq:epsilon_cond_2}.
	
	Assume that there exists a rational solution $\Psi_{m-1} 
	\in H^{\infty}(\mathbb{D}, \mathbb{C}^{p\times q})$ such that 
	\begin{subequations}
		\label{eq:Phi_m_1_cond}
	\begin{align}
	&\|\Phi_{m-1}\|_{H^{\infty}(\mathbb{D})}<1 \\
	& \Psi_{m-1} (\lambda_j) = \widehat F_j\qquad \forall
	j \in \{1,\dots,m \} \\
	&\Psi_{m-1} ^{\prime}(\lambda_j) = \widehat G_j\qquad \forall j \in \{2,\dots,m\}
	\end{align}
	\end{subequations}
%	with the interpolation conditions $\Psi(\lambda_j) = \widehat F_j$ for every $j \in \{1,\dots,m \}$ and
%	$\Psi^{\prime}(\lambda_j) = \widehat G_j$ for every $j \in \{2,\dots,m\}$.
	We shall show that 
	$
	\Psi_{m} := T_{F_1}^{-1}(\kappa_{\epsilon}\Psi_{m-1})
	$
	is a  solution of the original problem with $m$ interpolation data
	$(\lambda_j, F_j,G_j)_{j=1}^m$.
	By definition, $\Psi_m$ is rational. 
	Since $\|\kappa_{\epsilon}\|_{H^{\infty}(\mathbb{D})} < 1$ and $\|\Psi_{m-1}\|_{H^{\infty}(\mathbb{D})} < 1$, it follows that 
	\begin{equation*}
		%	\label{eq:y_epsPsi}
		\kappa_{\epsilon}(z)\Psi_{m-1}(z) \in \mathcal{B}\qquad \forall z \in \cl(\mathbb{D}).
	\end{equation*}
	%In fact, since $\|\kappa_{\epsilon}\|_{H^{\infty}(\mathbb{D})}<1$, it follows that
	%$\| \kappa_{\epsilon}\Psi_{m-1}\|_{H^{\infty}(\mathbb{D})} < 1$.
	Together with this, Lemma~\ref{lem:T_E_norm_preserve} yields
	$\Psi_m \in H^{\infty}(\mathbb{D}, \mathbb{C}^{p\times q})$ and
	$\|\Psi_m\|_{H^{\infty}(\mathbb{D})}<1$. 
	
	We now prove that $\Psi_m$ satisfies the interpolation conditions 
	$\Psi_m(\lambda_j) = F_j$ and $\Psi_m'(\lambda_j) = G_j$ for every $j \in \{
	1,\dots,m\}$.
	For the case  $j=1$, $\kappa_{\epsilon}(\lambda_1) = 0$ yields
	\begin{align*}
		\Psi_m (\lambda_1) =	T_{F_1}^{-1}\big(\kappa_{\epsilon}(\lambda_1)\Psi_{m-1}(\lambda_1)\big) 
		=F_1.
	\end{align*}
	By \eqref{eq:TE_inv}, we obtain
	\[
	(A- \kappa_{\epsilon}\Psi_{m-1}C
	)
	\Psi_m
	=
	\kappa_{\epsilon} \Psi_{m-1}D-B,
	\]
	which implies 
	\begin{equation}
		\label{eq:psi_diff}
		(\kappa_{\epsilon}
		\Psi_{m-1}^{\prime}+ \kappa_{\epsilon}^{\prime}\Psi_{m-1})  (C\Psi_m+D)= 
		(A-\kappa_{\epsilon} \Psi_{m-1} C)\Psi_m^{\prime}.
	\end{equation}
	Therefore, 
	\[
	\Psi_m^{\prime}(\lambda_1) = 
	\kappa_{\epsilon}^{\prime}(\lambda_1) A^{-1} \widehat F_1(CF_1+D).
	\]
	Since
	\[
	\kappa_{\epsilon}^{\prime}(z) = \frac{1}{\lambda_1} \frac{\epsilon}{\big((1+\epsilon) - \widebar{\lambda}_1 z \big)^2},
	\]
	it follows that $\kappa_{\epsilon}^{\prime}(\lambda_1) = 1/(\epsilon \lambda_1 )$.
	Using
	\[
	A^{-1} = (I_p-F_1F_1^*)^{1/2},\quad CF_1+D =  (I_q-F_1^*F_1)^{1/2},
	\]
	we derive
	$
	\Psi^{\prime}_m(\lambda_1) = G_1.
	$
	
	For $j\in \{2,\dots,m\}$, we have by the definition of $\widehat F_j$ that,
	\begin{align*}
		\Psi_m(\lambda_j) 	
		=	T_{F_1}^{-1}(\kappa_{\epsilon}(\lambda_j)\widehat F_j) =	T_{F_1}^{-1}\big(T_{F_1}(F_j)\big) = F_j.
	\end{align*}
	Using \eqref{eq:psi_diff} again, we obtain
	\[
	\kappa_{\epsilon}(\lambda_j) \widehat G_j+ 
	\kappa_{\epsilon}^{\prime}(\lambda_j) \widehat F_j =
	(A - \kappa_{\epsilon}(\lambda_j) \widehat F_j C) \Psi_m^{\prime}(\lambda_j) (CF_j +D)^{-1}.
	\]
	By the definition of $\widehat G_j$, we find that 
	\[
	\Psi_m^{\prime}(\lambda_j) = G_j\qquad \forall j \in \{2,\dots,m  \}.
	\]
	Thus $\Phi_m$ is a solution of the original problem with $m$ interpolation conditions.
	
	If we apply this procedure again to the resulting interpolation problem, i.e., the problem of finding
	a rational solution $\Psi_{m-1} 
	\in H^{\infty}(\mathbb{D}, \mathbb{C}^{p\times q})$ such that 
	the conditions given in \eqref{eq:Phi_m_1_cond} hold,
%	that has the interpolation conditions $\Psi(\lambda_j) = \widehat F_j$ for every $j \in \{1,\dots,m \}$ and
%	$\Psi^{\prime}(\lambda_j) = \widehat G_j$ for every $j \in \{2,\dots,m\}$, 
	then
	the interpolation condition at $z = \lambda_1$ is removed.
	Therefore, 
	Problem~\ref{probl:NPI_bound}
	with $m$ interpolation data can be reduced 
	to Problem~\ref{probl:NPI_bound} with $m-1$ interpolation data.  
	%to another problem~\ref{problem:M_NP_problem_with_boundary_conditions_only} 
	%that has one interpolation condition fewer.  
	Continuing in this way, we finally obtain
	Problem~\ref{probl:NPI_bound}
	with no interpolation conditions, which always admits a solution.  
	Thus Problem~\ref{probl:NPI_bound}
	is always solvable.   
	\qed
\end{proof}

By Lemmas~\ref{thm:Nevanlinna_Algorithm} and \ref{lem:boundary}, 
we obtain a proof of
Theorem~\ref{thm:No_boundary}.

\begin{proof}[of Theorem~\ref{thm:No_boundary}]
	The necessity is straightforward. 
	We prove the sufficiency.
	To this end, it is enough to show that  the following problem always has a solution:
	
	\begin{problem}
		\label{prob:proof}
		Assume that 
		Problem~\ref{probl:NPI_int} with 
		$n$ interior interpolation data $(\alpha_\ell,\xi_\ell,\eta_\ell)_{\ell=1}^n$ is solvable
		and that $\|F_j\|_{\mathbb{C}^{p \times q}} < 1$ for every $j \in \{1,\dots,m \}$.
		Find a solution of Problem~\ref{probl:NPI_int_bound} 
		with 
		$n$ interior interpolation data $(\alpha_\ell,\xi_\ell,\eta_\ell)_{\ell=1}^n$ 
		and $m$ boundary interpolation data $(\lambda_j, F_j, G_j)_{j=1}^m$.
	\end{problem}
	
	Suppose that 
	Problem~\ref{probl:NPI_int} with 
	$n$ interior interpolation  data $(\alpha_\ell,\xi_\ell,\eta_\ell)_{\ell=1}^n$ is solvable. 
	Define the matrix $E$ and the function $X$ as in Lemma~\ref{thm:Nevanlinna_Algorithm}. Then
	this lemma
	shows that 
	Problem~\ref{probl:NPI_int} with 
	$n-1$ interior interpolation  data 
	\begin{equation}
		\label{eq:n_1_interior_cond}
		\big(\alpha_\ell,X(\alpha_\ell)^*U_E(\xi_\ell,\eta_\ell),V_E(\xi_\ell,\eta_\ell)\big)_{\ell=2}^n
	\end{equation} 
	is solvable.	
	Set $A:= A(E)$, $B:= B(E)$, $C:= C(E)$, and $D:= D(E)$ as in \eqref{def:ABCD_Tangential}.
	For $j \in \{1,\dots,m\}$,
	define also
	\begin{align*}
		\widehat F_j &:= X(\lambda_j)^{-1} T_{-E}^{-1}(F_j) \\
		\widehat G_j &:= X(\lambda_j)^{-1} (A+F_jC)^{-1}G_j(-CX(\lambda_j)\widehat F_j + D) - X(\lambda_j)^{-1} X^{\prime}(\lambda_j)\widehat F_j.
	\end{align*}
	Since 
	$X(\lambda_j)^{-1} = X(\lambda_j)^*$ for every $j \in \{1,\dots,m \}$, 
	we obtain $\|X(\lambda_j)^{-1}\|_{\mathbb{C}^{p\times p}} = 1$
	and hence $\|\widehat F_j\|_{\mathbb{C}^{p\times p}} < 1$ for every $j \in \{1,\dots,m \}$. 
	Suppose that $\Phi_{n-1}$ is a solution of 
	Problem~\ref{probl:NPI_int_bound}
	with
	$n-1$ interior interpolation data given in
	\eqref{eq:n_1_interior_cond}
	and $m$ boundary interpolation data
	$
	(\lambda_j, \widehat F_j, \widehat G_j)_{j=1}^m.
	$
	Then $
	\Phi_n := T_{-E}(X \Phi_{n-1})
	$
	is a solution of Problem~\ref{probl:NPI_int_bound} with
	$n$ interior interpolation data $(\alpha_\ell,\xi_\ell,\eta_\ell)_{\ell=1}^n$ 
	and $m$ boundary interpolation data $(\lambda_j, F_j, G_j)_{j=1}^m$. In fact,
	Lemma~\ref{thm:Nevanlinna_Algorithm} shows that 
	$\Phi_n$ satisfies $\|\Phi_n\|_{H^{\infty}(\mathbb{D})} < 1$ and  $\xi_{\ell}^* \Phi_n(\alpha_\ell) = \eta_\ell^*$ for every
	$\ell \in \{1,\dots,n\}$.
	It remains to show that the boundary conditions hold. We obtain
	\[
	\Phi_n(\lambda_j) =  T_{-E}\big(X(\lambda_j) \widehat{F}_j \big)
	= 
	T_{-E}\big(T_{-E}^{-1}(F_j) \big)
	=F_j\qquad \forall j \in \{1,\dots,m \}.
	\]
	By the definition of $T_{-E}$, we obtain
	\[
	\Phi_n(-CX\Phi_{n-1}+D) = (AX\Phi_{n-1}-B),
	\]
	and hence
	\[
	\Phi_n^{\prime} (-CX \Phi_{n-1}+D) =
	(A+\Phi_nC)(X\Phi_{n-1})^{\prime}.
	\]
	This yields
	\[
	\Phi_n^{\prime}(\lambda_j) = 
	(A+F_jC)(X(\lambda_j) \widehat G_j + X^{\prime}(\lambda_j) \widehat F_j) (-CX(\lambda_j) \widehat F_j +D)^{-1} = G_j.
	\]
	Thus, we can reduce Problem~\ref{prob:proof} with $n$ interior data to
	that with $n-1$ interior data.
	Continuing in this way, we reduce
	Problem~\ref{probl:NPI_int_bound} 
	to Problem~\ref{probl:NPI_bound},
	which is always solvable by Lemma~\ref{lem:boundary}.
	This completes the proof.
	\qed
\end{proof}
%Theorem~\ref{thm:Pick_matrix_tan} and~\ref{thm:No_boundary} show that
%the solvability of Problem~\ref{problem:T_NP_problem_with_boundary}
%is also equivalent to the positive definiteness of the Pick matrix in \eqref{eq:PickMatrix}. 

In the construction of regulating controllers in Section~\ref{sec:DTOR},
a rational function $\mathbf{Y}_+\in H^{\infty}(\mathbb{E}_1,\mathbb{C}^{p \times p})$ needs
to satisfy the  interpolation condition $\mathbf{Y}_+(\infty) = 0$.
Its counterpart in $H^{\infty}(\mathbb{D},\mathbb{C}^{p \times p})$ under the transformation
$\varphi: \mathbb{E}_1 \to \mathbb{D}:z \mapsto 1/z$ is given by
the interpolation condition $(\mathbf{Y}_+\circ \varphi^{-1} )(0) = 0$.
Such a condition is excluded in Problem~\ref{probl:NPI_int_bound}, but
we can easily incorporate it into the problem.

\begin{corollary}
	Suppose that  $\alpha_1,\dots,\alpha_n \in \mathbb{D}\setminus \{0\}$ and
	$\lambda_1,\dots,\lambda_m \in \mathbb{T}$ are distinct.
	Consider vector pairs $(\xi_\ell, \eta_\ell)\in\mathbb{C}^p \times 
	\mathbb{C}^q$ for $ \ell \in \{1,\dots, n\}$ and matrices $F_j,G_j \in \mathbb{C}^{p\times q}$
	for $j\in \{1,\dots, m\}$, and 
	suppose that the norm conditions \eqref{eq:tangent_matrix_nec} are satisfied.
	Then
	the following three statements are equivalent:
	
	\begin{enumerate}
		\renewcommand{\labelenumi}{\arabic{enumi})}
		\item
		There exists a rational function $\Phi \in H^{\infty}(\mathbb{D}, \mathbb{C}^{p\times q})$ such that 
		$\|\Phi\|_{H^{\infty}(\mathbb{D})} < 1$, $\Phi(0) = 0$, and the interpolation conditions \eqref{eq:int_cond} and 
		\eqref{eq:bound_cond} hold.
		
		\item
		There exists a rational function $\Phi \in H^{\infty}(\mathbb{D}, \mathbb{C}^{p\times q})$ such that 
		$\|\Phi\|_{H^{\infty}(\mathbb{D})} < 1$, $\Phi(0) = 0$, and the interpolation conditions \eqref{eq:int_cond} hold.
		
		\item 
		The Pick matrix $P$ defined by 
		\[
		P := 
		\begin{bmatrix}
		P_{1,1} & \cdots & P_{1,n} \\
		\vdots & & \vdots \\
		P_{n,1} & \cdots & P_{n,n}
		\end{bmatrix},
		~~
		\text{where~}
		P_{j,k} := \frac{\alpha_j \widebar{\alpha}_\ell \xi_j^*\xi_\ell - \eta_j^*\eta_\ell}{1- \alpha_j\widebar{\alpha}_\ell}
		~~
		\forall j,\ell \in \{1,\dots,n\}
		\]
		is positive definite.
	\end{enumerate}
\end{corollary}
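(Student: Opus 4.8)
The plan is to absorb the constraint $\Phi(0) = 0$ into the interior interpolation data via the substitution $\Phi(z) = z\Psi(z)$, and then to invoke Theorems~\ref{thm:Pick} and~\ref{thm:No_boundary} for the reduced data. First I would record the elementary correspondence between admissible functions: if $\Phi \in H^{\infty}(\mathbb{D},\mathbb{C}^{p\times q})$ is rational with $\Phi(0)=0$ and $\|\Phi\|_{H^{\infty}(\mathbb{D})}<1$, then $\Psi := \Phi/z$ is again rational (the singularity at $0$ is removable), and since $|z|=1$ on $\mathbb{T}$ the maximum modulus principle gives $\|\Psi\|_{H^{\infty}(\mathbb{D})} = \max_{z\in\mathbb{T}}\|\Phi(z)\| = \|\Phi\|_{H^{\infty}(\mathbb{D})}<1$; conversely $\Phi := z\Psi$ recovers an admissible $\Phi$ vanishing at $0$. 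This yields a bijection between the two solution sets.

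Next I would translate the interpolation conditions under this substitution, using $\alpha_\ell \neq 0$ in an essential way. Because $\Phi(\alpha_\ell) = \alpha_\ell\Psi(\alpha_\ell)$, the interior condition $\xi_\ell^*\Phi(\alpha_\ell) = \eta_\ell^*$ is equivalent to $\tilde\xi_\ell^*\Psi(\alpha_\ell) = \eta_\ell^*$ with $\tilde\xi_\ell := \widebar{\alpha}_\ell\xi_\ell$; a direct computation gives $\tilde\xi_j^*\tilde\xi_\ell = \alpha_j\widebar{\alpha}_\ell\,\xi_j^*\xi_\ell$, so the Pick matrix of the data $(\alpha_\ell,\tilde\xi_\ell,\eta_\ell)_{\ell=1}^n$ is precisely the matrix $P$ in statement 3). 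Likewise, from $\Phi = z\Psi$ and $\Phi' = \Psi + z\Psi'$ the boundary conditions $\Phi(\lambda_j)=F_j$, $\Phi'(\lambda_j)=G_j$ become $\Psi(\lambda_j) = \widebar{\lambda}_j F_j =: \widehat F_j$ and $\Psi'(\lambda_j) = \widebar{\lambda}_j(G_j - \widehat F_j) =: \widehat G_j$, where $\|\widehat F_j\| = \|F_j\|<1$ since $\lambda_j\in\mathbb{T}$.

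With these reductions in hand, the three equivalences follow by chaining the available results. The implication 1)$\Rightarrow$2) is immediate. For 2)$\Leftrightarrow$3), I would observe that 2) is equivalent to solvability of Problem~\ref{probl:NPI_int} for $\Psi$ with interpolation data $(\alpha_\ell,\tilde\xi_\ell,\eta_\ell)_{\ell=1}^n$; whenever a solution exists, the estimate $\|\eta_\ell\| = \|\tilde\xi_\ell^*\Psi(\alpha_\ell)\| \le \|\tilde\xi_\ell\|\,\|\Psi(\alpha_\ell)\| < \|\tilde\xi_\ell\|$ shows the standing hypothesis of that problem is met, so Theorem~\ref{thm:Pick} identifies solvability with positive definiteness of $P$, i.e.\ with 3). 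For 2)$\Rightarrow$1), I would note that 3) forces the diagonal entries of $P$ to be positive, hence $\|\tilde\xi_\ell\|>\|\eta_\ell\|$, so the transformed interior and boundary data satisfy the hypotheses~\eqref{eq:tangent_matrix_nec} of Problem~\ref{probl:NPI_int_bound}; Theorem~\ref{thm:No_boundary} then upgrades solvability of the interior-only problem to solvability of the full interior-and-boundary problem for $\Psi$, which under $\Phi = z\Psi$ is exactly 1).

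The only genuinely delicate point is the automatic validity of the per-point inequality $\|\tilde\xi_\ell\|>\|\eta_\ell\|$ after the substitution: the hypothesis supplies only $\|\xi_\ell\|>\|\eta_\ell\|$, whereas $\|\tilde\xi_\ell\| = |\alpha_\ell|\,\|\xi_\ell\|$ is strictly smaller. I expect this to be the main obstacle to a naive argument, and the plan resolves it by never assuming the inequality a priori, but instead deriving it from the existence of a solution (for 2)$\Rightarrow$3)) or from positive definiteness of $P$ (for the reverse directions), so that the hypotheses of Theorems~\ref{thm:Pick} and~\ref{thm:No_boundary} are in force exactly where they are invoked.
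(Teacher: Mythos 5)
Your proposal is correct and follows essentially the same route as the paper: the substitution $\Phi = z\Psi$ converting the data to $(\alpha_\ell,\widebar{\alpha}_\ell\xi_\ell,\eta_\ell)$ and $(\lambda_j, F_j/\lambda_j, G_j/\lambda_j - F_j/\lambda_j^2)$, followed by Theorem~\ref{thm:No_boundary} and Theorem~\ref{thm:Pick}. Your explicit verification that the transformed data satisfy the standing norm hypothesis $\|\widebar{\alpha}_\ell\xi_\ell\|>\|\eta_\ell\|$ (read off from the diagonal of $P$ or from the existence of a solution) is a point the paper's proof passes over silently, and it is handled correctly.
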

\begin{proof}
	By a straightforward calculation, we have the following fact:
	A rational function $\Phi\in H^{\infty}(\mathbb{D}, \mathbb{C}^{p\times q})$ satisfies the conditions of 1) 
	%	A rational function $\Phi \in H^{\infty}(\mathbb{D}, \mathbb{C}^{p\times q})$ satisfies
	%	$\|\Phi\|_{H^{\infty}(\mathbb{D})} < 1$, $\Phi(0) = 0$, and the interpolation conditions \eqref{eq:int_cond} and 
	%	\eqref{eq:bound_cond}
	if and only if
	${\widehat \Phi}(z) := \Phi(z)/z$ is a solution of Problem~\ref{probl:NPI_int_bound} with the 
	interior interpolation data $(\alpha_\ell, \widebar{\alpha}_\ell \xi_\ell, \eta_\ell)_{\ell=1}^n$ and the 
	boundary interpolation data 
	$(\lambda_j, F_j/\lambda_j, G_j/\lambda_j - F_j/\lambda_j^2)_{j=1}^m$.
	This fact 
	together with Theorem~\ref{thm:No_boundary} shows that
	1) is true if and only if Problem~\ref{probl:NPI_int} with
	the  interpolation data $(\alpha_\ell, \widebar{\alpha}_\ell \xi_\ell, \eta_\ell)_{\ell=1}^n$
	is solvable.
	Hence, we obtain 1) $\Leftrightarrow$ 3) by Theorem~\ref{thm:Pick}.
	Using the fact mentioned above again, we obtain 1) $\Leftrightarrow$ 2).
	This completes the proof. 
	\qed
\end{proof}

\begin{remark}
		Suppose that the interpolation data have conjugate symmetry 
		in Problem~\ref{probl:NPI_int_bound}. In other words,
		suppose that 
		both $(\alpha, \xi,\eta)$ and $({\widebar \alpha}, {\widebar \xi}, {\widebar \eta})$ are in
		its interior interpolation data and that 
		$(\lambda,F,G)$ and $({\widebar\lambda},{\widebar F},{\widebar G})$ are in its
		boundary interpolation data.
		If the interpolation problem is solvable, then
		there exists a solution that is a rational function with real coefficients.
		In fact, for every rational function $\Phi$, 
		there uniquely exist rational functions $\Phi_R$ and $\Phi_I$ with real coefficients such that $\Phi = \Phi_R + i \Phi_I$.
		If a rational function $\Phi$ is a solution of the interpolation problem, then
		one can easily prove that its real part $\Phi_R$ is also a solution. 
\end{remark}

\begin{remark}
		Let $\lambda \in \mathbb{T}$. For a vector pair
		$(\xi, \eta) \in \mathbb{C}^p \times 
		\mathbb{C}^q$, define a matrix $F:=\xi\eta^*/\|\xi\|_{\mathbb{C}^{p}}^2$.
		If $\|\xi\|_{\mathbb{C}^{p}}  > \|\eta\|_{\mathbb{C}^{q}} $, then $\|F\|_{\mathbb{C}^{p \times q}}  < 1$.
		Further, if a rational function
		$\Psi \in H^{\infty}(\mathbb{D}, \mathbb{C}^{p\times q})$ satisfies
		$\Psi(\lambda) = F$, then
		$
		\xi^* \Psi(\lambda) = \eta^*
		$.
		In this way, we can transform the tangential interpolation condition $
		\xi^* \Psi(\lambda) = \eta^*
		$
		to the matrix-valued interpolation condition $\Psi(\lambda) = F$.
		This transformation is used in the design procedure of regulating controllers
		in Section~\ref{sec:DTOR} if unstable eigenvalues of $A$ lie on the boundary $\mathbb{T}$.
		Moreover,
		the above observation and Theorem~\ref{thm:No_boundary}
		indicate that for
		$\lambda \in \mathbb{T}$ and $(\xi, \eta) \in \mathbb{C}^p \times 
		\mathbb{C}^q$ with $\|\xi\|_{\mathbb{C}^{p}}  > \|\eta\|_{\mathbb{C}^{q}}$, 
		boundary interpolation conditions of the form
		$
		\xi^* \Psi(\lambda) = \eta^*
		$
		can be also ignored when we determine the solvability of the Nevanlinna-Pick interpolation problem.

%		for distinct $\lambda_1,\dots,\lambda_n \in \mathbb{T}$ and for
%		vector pairs
%		$(\xi_\ell, \eta_\ell)$ in $\mathbb{C}^p \times 
%		\mathbb{C}^q$, $ \ell \in \{1,\dots, n\}$,
%		there exists a rational function $\Psi \in H^{\infty}(\mathbb{D}, \mathbb{C}^{p\times q})$ 
%		satisfying $\|\Psi\|_{H^{\infty}(\mathbb{D})} < 1$ and $\xi_{\ell}^* \Psi(\lambda) = \eta_{\ell}^*$.
%		if and only if $\|\xi_\ell\|_{\mathbb{C}^{p}}  > \|\eta_\ell\|_{\mathbb{C}^{q}}$
%		
%		$\|\xi\|_{\mathbb{C}^{p}}  > \|\eta\|_{\mathbb{C}^{q}}$ is necessary and sufficient for the existence of 
%		a rational function $\Psi \in H^{\infty}(\mathbb{D}, \mathbb{C}^{p\times q})$ 
%		satisfying $\|\Psi\|_{H^{\infty}(\mathbb{D})} < 1$ and $\xi^* \Psi(\lambda) = \eta^*$.
\end{remark}

\section{$\Lambda$-extension of output operator of delay systems}
\label{sec:lambda_extension}
Consider the delay system \eqref{eq:delay_system}, and 
define $x$ as in \eqref{eq:well_posed_delay}.
The objective of this section is to show for a.e. $t \geq 0$, 
\begin{equation}
	\label{eq:lambda_extension_delay}
	\sum_{\ell=1}^{\widehat q} c_\ell z(t-\widehat h_\ell) = C_{\Lambda}x(t).
\end{equation}
Since $x(t) \in X_1$ for every $t \geq h_q$ and since 
$C_{\Lambda} \zeta = C \zeta$ for every $\zeta \in X_1$, 
it suffices to show \eqref{eq:lambda_extension_delay} a.e. on
$[0,h_q)$.
For simplicity of notation, we consider 
the case $\widehat q = 1$ and define $\widehat h := \widehat h_1$ and
$c := c_{1}$.

By Lemma 2.4.5 of \cite{Curtain1995}, there exists $s_0 >0$ such that 
\[
(sI-A)^{-1} 
x(t)
%\begin{bmatrix}
%x_1(t) \\ x_2(t)
%\end{bmatrix}
=\begin{bmatrix}
g_1(t) \\
g_2(t)
\end{bmatrix}
\qquad \forall s > s_0,~\forall t \in [0,h_q),
\]
where 
\begin{align*}
	g_1(t) &:= \Delta(s)^{-1} 
	\left(
	z(t) + 
	\sum_{j=1}^{q}
	\int^0_{-h_j} e^{-s(\theta+h_j) }A_jz(t+\theta) d\theta
	\right) \\
	\big(g_2(t)\big)(\theta)&:=
	e^{s\theta} g_1(t) - \int^\theta_0 e^{s(\theta-\nu)} z(t+\nu)d\nu
	\qquad \forall \theta \in [-h_q,0].
\end{align*}
Hence 
for every $s>s_0$ and every $t \in [0,h_q)$, we obtain
\begin{align*}
	Cs(sI-A)^{-1}x(t) &=
	s c \big( g_2(t) \big)(-\widehat h) \\
	&=
	sc
	\left(
	e^{-s \widehat h} g_1(t) +
	\int^{\widehat h}_0 e^{-s(\widehat h-\nu)} z(t-\nu)d\nu
	\right).
\end{align*}
Since 
\[
\lim_{s\to \infty,~\!\! s \in \mathbb{R}} s\Delta(s)^{-1} = I
\quad \text{and} \quad
z \in L^1 \big((-h_q,h_q), \mathbb{C}^n\big),
\] 
Lebesgue's dominated convergence theorem implies that in the case  $\widehat h = 0$, 
\begin{align*}
	&\lim_{s\to \infty,~\!\!s \in \mathbb{R}}
	sc\left(
	e^{-s \widehat h} g_1(t) +
	\int^{\widehat h}_0 e^{-s(\widehat h-\nu)} z(t-\nu)d\nu\right) \\
	&\qquad  = 
	\lim_{s\to \infty,~\!\!s \in \mathbb{R}}
	s c\Delta(s)^{-1}	\left(
	z(t) + 
	\sum_{j=1}^{q}
	\int^0_{-h_j} e^{-s(\theta+h_j) }A_jz(t+\theta) d\theta
	\right)  \\
	&\qquad 
	= cz(t) \qquad \forall t  \in [0,h_q).
\end{align*}
Thus, we obtain $cz(t- \widehat h ) = C_{\Lambda}x(t)$ for every $ t  \in [0,h_q)$
if $\widehat h = 0$.

In the case $\widehat h \in (0,h_q)$, 
we obtain
\[
\lim_{s \to \infty,~\!\! s\in \mathbb{R}} s e^{-s \widehat h} g_1(t) = 0\qquad \forall t \in [0,h_q).
\]
Since $B \in \mathcal{L}(U,X)$, it follows that 
$x(t) \in  \dom  (C_{\Lambda}) $ for a.e. $t\geq 0$ and
\begin{align}
C_{\Lambda}x(t) &= 
\lim_{s \to \infty,~\!\!s\in \mathbb{R}}Cs(sI-A)^{-1}x(t) \notag \\
&= 	\lim_{s \to \infty,~\!\!s\in \mathbb{R}} s
\int^{{\widehat h}}_0 e^{-s({\widehat h}-\nu)} \zeta(t-\nu)d\nu
\qquad \text{a.e.~$t\geq 0$}, \label{eq:C_Lambda_conv_ap}
\end{align}
where $\zeta := cz$.
For each $n \in \mathbb{N}$, define
 \[
f_n(t) :=  n\int^{\widehat h}_0 e^{-n(\widehat h - \nu)} \zeta(t-\nu)d\nu\qquad \forall t \in [0,h_q).
\]
We will show that
there exists a subsequence $\{f_{n_{\ell}}:\ell \in \mathbb{N}\}$ such that 
$\lim_{\ell \to \infty }f_{n_\ell}(t)=  \zeta(t - \widehat h)$ for a.e. $t \in [0,h_q)$.
Together with \eqref{eq:C_Lambda_conv_ap}, this yields
$\zeta(t-\widehat h) = C_{\Lambda}x(t)$ for a.e. $ t  \in [0,h_q)$
in the case $\widehat h \in (0,h_q)$.

%It is enough to show that 
%\begin{equation}
%	\label{eq:int_z}
%	\lim_{s \to \infty,~\!\!s\in \mathbb{R}}
%	\left| w(t - {\widehat h}) -
%	s
%	\int^{{\widehat h}}_0 e^{-s({\widehat h}-\nu)} w(t-\nu)d\nu
%	\right| = 0 \qquad \text{a.e.~} t \in [0,h_q).
%\end{equation}
%\[
%\lim_{s \to \infty,~ s\in \mathbb{R}} 
%s
%\int^{-h_{\rm o}}_0 e^{-s(h_{\rm o}+\nu)} z(t+\nu)d\nu
%\]
%exists for almost every $t \in [0,T]$ and
%\[
%-\lim_{s \to \infty,~s\in \mathbb{R}}
%s
%\int^{-h_{\rm o}}_0 e^{-s(h_{\rm o}+\nu)} z(t+\nu)d\nu
%= z(t - h_{\rm o}) \qquad \text{a.e.~} t \in [0,T].
%\]

Let $s > s_0$.
Define 
\[
\varphi(s) := s\int^{\widehat h}_0 e^{-s(\widehat h - \nu)} d\nu = 1 - e^{-{\widehat h}s}.
\]
Since $\zeta \in L^1(-h_q,h_q)$, it follows from Fubini's theorem that 
\begin{align*}
	&
	\int^{h_q}_0 \left| \zeta(t-{\widehat h}) - s\int^{\widehat h}_0 e^{-s(\widehat h - \nu)} \zeta(t-\nu)d\nu \right|dt\\
	&~ \leq
	\int^{h_q}_0 \left| \big(1 - \varphi(s)\big)\zeta(t-{\widehat h})\right| dt
	+ s 
	\int^{h_q}_0  \int^{\widehat h}_0 e^{-s(\widehat h - \nu)} \big|\zeta(t- \widehat h) - \zeta(t-\nu) \big| d\nu dt
	\\
	%&\quad \leq 
	%e^{-\widehat{h} s} \|w\|_{L^1(-h_q,h_q)} + 
	%s\int^{h_q}_0  \int^{\widehat h}_0 e^{-s(\widehat h - \nu)} \big|w(t-h) - w(t-\nu) \big| d\nu dt \\
	&~ \leq 
	e^{-\widehat{h} s} \|\zeta\|_{L^1(-h_q,h_q)} + s
	\int^{\widehat h}_0 e^{-s(\widehat h - \nu)} \int^{h_q}_0  \big|\zeta(t-\widehat h) - \zeta(t-\nu) \big| dt d\nu.
\end{align*}

Choose $\varepsilon>0$ arbitrarily.
By  the strong continuity of
the left translation semigroup on $L^1(-h_q,h_q)$ (see, e.g., 
Example I.5.4 in \cite{Engel2000}), 
there exists $\delta_0 \in (0, \widehat h)$ such that 
\[
\int^{h_q}_0 |
\zeta(t-\widehat h) - \zeta(t-\widehat h +\delta)|dt < \varepsilon\qquad \forall \delta \in [0,\delta_0).
\]
Therefore, 
\begin{align*}
	s \int^{\widehat h}_{\widehat h - \delta_0} e^{-s(\widehat h - \nu)}  \int^{h_q}_0  \big|\zeta(t-\widehat h) - \zeta(t-\nu) \big| dt d\nu
	< \varepsilon(1-e^{-\delta_0 s}) < \varepsilon.
\end{align*}
Since
\begin{align*}
	s \int^{\widehat h - \delta_0}_{0} e^{-s(\widehat h - \nu)}  &\int^{h_q}_0  \big|\zeta(t-\widehat h) - \zeta(t-\nu) \big| dt d\nu \\
	&\qquad \leq 2\|\zeta\|_{L^1(-h_q,h_q)} (e^{-\delta_0 s} - e^{-\widehat h s}),
\end{align*}
it follows that there exists $s_1 > s_0$ such that for every $s > s_1$,
\begin{align*}
	e^{-\widehat{h} s} \|\zeta\|_{L^1(-h_q,h_q)} < \varepsilon,\quad 
	s \int^{\widehat h - \delta_0}_{0} e^{-s(\widehat h - \nu)}  \int^{h_q}_0  \big|\zeta(t-\widehat h) - \zeta(t-\nu) \big| dt d\nu
	< \varepsilon.
\end{align*}
Hence we obtain
\begin{align*}
	&
	\int^{h_q}_0 \left| \zeta(t-{\widehat h}) - s\int^{\widehat h}_0 e^{-s(\widehat h - \nu)} \zeta(t-\nu)d\nu \right|dt
	< 3\varepsilon.
\end{align*}	
Since 
$\varepsilon >0$ was arbitrary,
we have that 
$\lim_{n \to \infty }\|\zeta(\cdot - \widehat h) - f_n\|_{L^1(0,h_q)}  = 0$.
%, where
%\[
%f_n(t) :=  n\int^{\widehat h}_0 e^{-n(\widehat h - \nu)} w(t-\nu)d\nu.
%\]
Then there exists a subsequence $\{f_{n_{\ell}}:\ell \in \mathbb{N}\}$ such that 
$\lim_{\ell \to \infty }f_{n_\ell}(t)=  \zeta(t - \widehat h)$ for a.e. $t \in [0,h_q)$;
see, e.g., Theorem~3.12 in \cite{Rudin1987}.
This completes the proof.
%On the other hand, 
%\begin{align*}
%	C_{\Lambda}x(t) &= 
%	\lim_{s \to \infty,~\!\!s\in \mathbb{R}}Cs(sI-A)^{-1}x(t) \\
%	&= 	\lim_{s \to \infty,~\!\!s\in \mathbb{R}} s
%	\int^{{\widehat h}}_0 e^{-s({\widehat h}-\nu)} w(t-\nu)d\nu
%	\qquad \text{a.e.~$t\geq 0$}.
%\end{align*}
%Thus \eqref{eq:int_z} and $w(t-\widehat h) = C_{\Lambda}x(t)$ hold
%for a.e. $t \in [0,h_q)$, which completes the proof.
\qed
}

\begin{acknowledgements}
The authors would like to thank Professor Lassi Paunonen
for helpful advice on robust output regulation for 
infinite-dimensional discrete-time systems.
Furthermore, 
we would  like to thank the anonymous reviewers
for their careful reading of our manuscript and many insightful comments.
\end{acknowledgements}

\end{document}